\newtheorem{thm}{Theorem}[section]
\newtheorem{lemma}[thm]{Lemma}
\newtheorem{cor}[thm]{Corollary}
\theoremstyle{remark}
\newtheorem{rk}[thm]{Remark}
\newcommand {\SN} {{\mathbb N}}
\newcommand {\SR} {{\mathbb R}}
\newcommand {\SZ} {{\mathbb Z}}
\newcommand{\bsp}{\begin{split}}
\newcommand{\esp}{\end{split}}
\newcommand{\be}{\begin{equation}}
\newcommand{\ee}{\end{equation}}
\newcommand{\bes}{\begin{equation*}}
\newcommand{\ees}{\end{equation*}}
\newcommand{\bv}\boldsymbol{}
\newcommand{\hxy}{H^{(k+1)}(x,\bv y,2\bv y)}
\newcommand{\prob}{{\bf Prob}}
\DeclareMathOperator{\vol}{Vol}
\numberwithin{equation}{section}
\begin{document}

\title[Generalized multiplication tables]{On the number of integers in a generalized multiplication table}

\author{Dimitris Koukoulopoulos}
\address{Centre de recherches math\'ematiques\\
Universit\'e de Montr\'eal\\
CP 6128 succ. Centre-Ville\\
Montr\'eal, QC H3C 3J7\\
Canada}
\email{{\tt koukoulo@CRM.UMontreal.CA}}
\thanks{The author wishes to acknowledge support in the form of a Research Assistantship from the National Science Foundation grants DMS 05-55367, DMS 08-38434 ``EMSW21-MCTP:
Research Experience for Graduate Students'' and DMS 09-01339.}
\date{\today}

\maketitle

\begin{abstract} Motivated by the Erd\H os multiplication table problem we study the following question: Given numbers $N_1,\dots,N_{k+1}$, how many distinct products of the form $n_1\cdots n_{k+1}$ with $1\le n_i\le N_i$ for $i\in\{1,\dots,k+1\}$ are there? Call $A_{k+1}(N_1,\dots,N_{k+1})$ the quantity in question. Ford established the order of magnitude of $A_2(N_1,N_2)$ and the author the one of $A_{k+1}(N,\dots,N)$ for all $k\ge2$. In the present paper we generalize these results by establishing the order of magnitude of $A_{k+1}(N_1,\dots,N_{k+1})$ for arbitrary choices of $N_1,\dots,N_{k+1}$ when $2\le k\le 5$. Moreover, we obtain a partial answer to our question when $k\ge6$. Lastly, we develop a heuristic argument which explains why the limitation of our method is $k=5$ in general and we suggest ways of improving the results of this paper.
\end{abstract}

\tableofcontents

\section{Introduction}\label{intro}
\subsection{The Erd\H os multiplication table problem and its generalizations}
In 1955 Erd\H os posed the so-called {\it multiplication table problem} \cite{erd1}: Given a large number $N$,
how many integers can be written as a product $ab$ with $a\le N$ and
$b\le N$? Erd\H os gave the first estimates of this quantity
\cite{erd1,erd2}, which were subsequently sharpened by Tenenbaum
\cite{ten}. The problem of establishing the order of magnitude of
the size of the $N\times N$ multiplication table was completely
solved by Ford in \cite{kf1,kf2}, where he showed that
$$A_2(N):=|\{ab:a\le N\;{\rm and}\;b\le N\}|\asymp\frac{N^2}{(\log N)^{Q(1/\log2)}(\log\log
N)^{3/2}}\quad(N\ge3),$$ where $$\label{Q}Q(u):=\int_1^u\log t\,dt=u\log
u-u+1\quad(u>0).$$ More generally, we may ask the same question
about higher dimensional analogues of the multiplication table
problem, that is to say, we may ask for estimates of
$$
A_{k+1}(N):=|\{n_1\cdots n_{k+1}:n_i\le N\;(1\le i\le k+1)\}|.
$$
In \cite{dk}\;the author determined the order of $A_{k+1}(N)$ for
every fixed $k\ge2$: we have that
$$A_{k+1}(N)\asymp_k\frac{N^{k+1}}{(\log
N)^{Q(k/\log(k+1))}(\log\log N)^{3/2}}\quad(N\ge 3).$$

\medskip

In the present paper we broaden our scope and study the number of
integers that appear in a $(k+1)$-dimensional multiplication table
when the side lengths of the table are different. More precisely,
given numbers $N_1,\dots,N_{k+1}$, we seek uniform bounds on
$$\label{Ak}A_{k+1}(N_1,\dots,N_{k+1}):=|\{n_1\cdots n_{k+1}:n_i\le N_i\;(1\le
i\le k+1)\}|.$$ Instead of studying $A_{k+1}(N_1,\dots,N_{k+1})$
directly, we focus on a closely related function: given $x\ge1$,
$\bv y=(y_1,\dots,y_k)\in\SR^k$ and $\bv z=(z_1,\dots,z_k)\in\SR^k$,
define 
$$
\label{Hk}H^{(k+1)}(x,\bv y,\bv z)=|\{n\le x:\exists d_1\cdots d_k|n~\text{such that}~y_i<d_i\le z_i~(1\le i\le k)\}|.
$$ 
We then have the following theorem, which establishes the expected quantitative relation
between $A_{k+1}(N_1,\dots,N_{k+1})$ and $H^{(k+1)}(x,\bv y,\bv z)$;
its proof will be given in Subsection \ref{thm1pf}.

\begin{thm}\label{thm1}Let $k\ge1$ and $3\le N_1\le N_2\le\cdots\le
N_{k+1}$. Then
$$A_{k+1}(N_1,\dots,N_{k+1})\asymp_kH^{(k+1)}\left(N_1\cdots
N_{k+1},\left(\frac{N_1}2,\dots,\frac{N_k}2\right),(N_1,\dots,N_k)\right).$$
\end{thm}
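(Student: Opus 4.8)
The plan is to reinterpret both sides as counts of integers $n\le N_1\cdots N_{k+1}$ admitting a divisor of a prescribed type, and then to compare the two counts. Putting $D=n_1\cdots n_k$ one checks immediately that $A_{k+1}(N_1,\dots,N_{k+1})$ is the number of $n\le N_1\cdots N_{k+1}$ possessing a divisor $D$ with $n/N_{k+1}\le D\le N_1\cdots N_k$ and $D\in A_k(N_1,\dots,N_k)$, while the right–hand side counts those $n\le N_1\cdots N_{k+1}$ possessing a divisor of the shape $d_1\cdots d_k$ with $N_i/2<d_i\le N_i$ (and such a $D=d_1\cdots d_k$ automatically lies in $(N_1\cdots N_k/2^k,\,N_1\cdots N_k]$). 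Thus the two counts differ only in that the family of admissible divisor–products has been cut down from $A_k(N_1,\dots,N_k)$ to $\{d_1\cdots d_k:N_i/2<d_i\le N_i\}$, and the $n$–dependent lower threshold $n/N_{k+1}$ has been replaced by the fixed $N_1\cdots N_k/2^k$; everything rests on showing that neither change affects the order of magnitude.

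For the lower bound, observe that if $n\le N_1\cdots N_{k+1}/2^k$ and $d_1\cdots d_k\mid n$ with $N_i/2<d_i\le N_i$, then $n/(d_1\cdots d_k)\le N_{k+1}$, so $n$ lies in the multiplication table; hence
\[
A_{k+1}(N_1,\dots,N_{k+1})\ \ge\ H^{(k+1)}\!\bigl(N_1\cdots N_{k+1}/2^k,\,(N_1/2,\dots,N_k/2),\,(N_1,\dots,N_k)\bigr).
\]
It then suffices to strip the constant $2^k$ from the first argument, i.e.\ to prove the quasi–linearity estimate $H^{(k+1)}(2x,\boldsymbol y,2\boldsymbol y)\asymp_k H^{(k+1)}(x,\boldsymbol y,2\boldsymbol y)$, equivalently that, up to a constant, there are at least as many admissibly structured integers in $(0,x]$ as in $(x,2x]$. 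I would isolate this as a lemma; it is one of the two technical inputs.

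For the upper bound, decompose the first $k$ coordinates of a representation dyadically: every $n\in A_{k+1}(N_1,\dots,N_{k+1})$ has a representation $n=n_1\cdots n_{k+1}$ with $n_{k+1}\le N_{k+1}$ and $N_i2^{-j_i-1}<n_i\le N_i2^{-j_i}$ for a suitable $\boldsymbol j=(j_1,\dots,j_k)\in\mathbb Z_{\ge0}^k$, so $n$ is counted by
\[
T_{\boldsymbol j}\ :=\ H^{(k+1)}\!\Bigl(N_{k+1}\textstyle\prod_{i\le k}N_i2^{-j_i},\ (N_i2^{-j_i-1})_{i\le k},\ (N_i2^{-j_i})_{i\le k}\Bigr).
\]
For each fixed $\boldsymbol j$, multiplication by $2^{j_1+\dots+j_k}$ is injective and sends the integers counted by $T_{\boldsymbol j}$ into those counted by $H':=H^{(k+1)}\bigl(N_1\cdots N_{k+1},(N_i/2)_{i\le k},(N_i)_{i\le k}\bigr)$, because $2^{j_i}d_i\in(N_i/2,N_i]$ whenever $d_i\in(N_i2^{-j_i-1},N_i2^{-j_i}]$; hence $T_{\boldsymbol j}\le H'$ for every $\boldsymbol j$. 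Since $A_{k+1}(N_1,\dots,N_{k+1})\le\sum_{\boldsymbol j}T_{\boldsymbol j}$, the problem reduces to summing the $T_{\boldsymbol j}$ without loss.

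This summation is the main obstacle. Using only $T_{\boldsymbol j}\le\min\bigl(H',\,N_{k+1}\prod_{i\le k}N_i2^{-j_i}\bigr)$, the bound $\#\{\boldsymbol j:j_1+\dots+j_k=s\}\ll_k s^{k-1}$, and the crude lower bound $H'\gg_k N_1\cdots N_{k+1}/(\log N_1\cdots N_{k+1})^{O_k(1)}$ (e.g.\ from products of primes), one obtains $A_{k+1}\ll_k H'\cdot(\log\log N_1\cdots N_{k+1})^k$ — short of the target by a power of $\log\log$, the trivial case of bounded $N_1\cdots N_{k+1}$ aside. To close the gap one must instead prove $T_{\boldsymbol j}\ll_k 2^{-c(j_1+\dots+j_k)}H'$ for some fixed $c>0$; via the injection above this says that an admissibly structured integer below $N_1\cdots N_{k+1}$ is divisible by $2^{s}$ with probability $\ll 2^{-cs}$, or equivalently that $H^{(k+1)}$ is genuinely concentrated on its top dyadic block, with the lower blocks contributing geometrically less. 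Establishing this concentration of $H^{(k+1)}$, together with the quasi–linearity lemma from the second step, is where essentially all the work lies (the case $k=1$ being due to Ford).
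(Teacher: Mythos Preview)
Your architecture matches the paper's: the lower bound by inclusion, the upper bound by dyadic decomposition into the pieces $T_{\bv j}$. The difference lies in how the pieces are summed.

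The paper never formulates or needs the concentration statement about divisibility by $2^s$. Instead, Theorem~\ref{thm1} is derived from the local-to-global estimate Theorem~\ref{thm3}, which expresses $H^{(k+1)}(x,\bv y,2\bv y)/x$ as a quantity depending only on $\bv y$ (through $S^{(k+1)}(\bv y)$ and explicit logarithmic factors). Your quasi-linearity lemma is then exactly Corollary~\ref{cor4} with $\bv y_1=\bv y_2$, and for $\bv j$ with $2^{j_i}\le\sqrt{N_i}$ for every $i$ one reads off $T_{\bv j}\asymp_k 2^{-(j_1+\cdots+j_k)}H'$ directly from the same corollary. For $\bv j$ with some $2^{j_i}>\sqrt{N_i}$ the paper drops the first $i$ coordinates, uses the monotonicity $S^{(k-i+1)}\ll_k S^{(k+1)}$ of Lemma~\ref{lgl4} to return to $H'$, and pays a factor $(\log N_i)^{k+1}$ that is absorbed by the saving $2^{-j_i}<N_i^{-1/2}$ from the geometric sum over $m_i$.

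Your concentration route, though sufficient, looks harder than the paper's direct estimate and is likely circular. To prove that at most a $2^{-cs}$-fraction of integers counted by $H'$ are divisible by $2^s$ one would presumably split on the $2$-adic valuations of the $d_i$ and land back on the same family $\{T_{\bv j'}\}$ being summed; I do not see how to establish it without something of the strength of Theorem~\ref{thm3}. Moreover, at the boundary (some $N_i/2^{j_i}$ bounded) the divisor constraint in coordinate $i$ degenerates and $T_{\bv j}$ behaves like a lower-dimensional $H^{(k')}$; comparing that with $H^{(k+1)}$ is precisely the content of Lemma~\ref{lgl4}. So the work you defer is essentially Theorem~\ref{thm3}; with that in hand, the paper dispatches Theorem~\ref{thm1} in half a page without needing either the injection $n\mapsto 2^sn$ or the concentration reformulation.
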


In light of the above theorem, it suffices to restrict ourselves to
the study of $\hxy$, which is slightly easier technically. What is
more, bounds on  $\hxy$ have applications beyond the multiplication
table problem (for example, see~\cite{kf2} for several such
applications when $k=1$). Before we state the results of this paper,
we summarize some already known estimates on $H^{(k+1)}(x,\bv y,2\bv
y)$ in the theorem below. Briefly, this theorem gives the order of
magnitude of $\hxy$ when the numbers $\log y_1,\cdots,\log y_k$ are
roughly of the same size. In particular, it establishes the order of
magnitude of $H^{(2)}(x,y,2y)$ for all $2\le y\le \sqrt{x}$. For a proof of it
we refer the reader to \cite{kf1,kf2}\;and \cite{dk}; the first two
papers handle the case $k=1$ and the latter the case $k\ge2$.

\begin{thm}[Ford \cite{kf1,kf2}, Koukoulopoulos \cite{dk}]\label{kfdk} Let $k\ge1$, $c\ge1$ and $\delta>0$. Consider numbers
$x\ge3$ and $3\le y_1\le\cdots\le y_k\le y_1^c$ with
$2^{k+1}y_1\cdots y_k\le x/y_1^\delta$. Then
$$\hxy\asymp_{k,c,\delta}\frac x{(\log y_1)^{Q(k/\log(k+1))}(\log\log y_1)^{3/2}}.$$
\end{thm}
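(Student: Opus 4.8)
\medskip

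The estimate is due to Ford \cite{kf1,kf2} for $k=1$ and to the author \cite{dk} for $k\ge2$, so I will only outline the strategy one would follow. Throughout put $L=\log\log y_1$ and $\alpha=k/\log(k+1)$. First come some routine reductions. Since $3\le y_1\le\cdots\le y_k\le y_1^c$, each $\log y_i$ is of size $\asymp_c\log y_1$ and $\log\log y_i=L+O_c(1)$, so up to constants depending on $c$ the problem is the one with $y_1=\cdots=y_k$. Next, in the range $2^{k+1}y_1\cdots y_k\le x/y_1^\delta$ the ratio $\hxy/x$ is, up to constants depending on $k$ and $\delta$, independent of $x$ (a standard fact, cf.\ \cite{kf2,dk}), so one may take $x\asymp_{k,\delta}y_1^\delta\prod_{i}y_i$; then $\log x\asymp_{k,c,\delta}\log y_1$, the integers under consideration have $\asymp L$ prime factors, and discarding prime factors exceeding $y_k$ costs only a bounded factor, so one may work throughout with $y_k$-smooth $n$.

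The two exponents are dictated by the following heuristic. If $n$ is counted by $\hxy$ one enlarges $d_1,\dots,d_k$ to pairwise coprime divisors of $n$ supported on primes $\le y_k$; the relevant data is then the way the prime factors of $n$ get distributed among the $k$ ``slots'' $d_1,\dots,d_k$ and one leftover slot. If $n$ has exactly $r$ such prime factors there are at most $(k+1)^r$ distributions, and for a ``random'' one the $k$ constraints $d_i\in(y_i,2y_i]$ hold with heuristic probability $(\log y_1)^{-k}$; since the number of $n\le x$ with $r$ prime factors is $\asymp x(\log y_1)^{-1}L^{r-1}/(r-1)!$, one is led to sum
$$\frac{x}{\log y_1}\cdot\frac{L^{r-1}}{(r-1)!}\cdot(k+1)^{r}\cdot\frac1{(\log y_1)^{k}}$$
over $r$. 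Stirling's formula shows the sum is dominated by $r\approx\alpha L$ and, since $\alpha\log(k+1)=k$, the resulting arithmetic collapses the exponent of $\log y_1$ to exactly $Q(\alpha)=Q(k/\log(k+1))$, with a Gaussian factor $L^{-1/2}$ from the width of the peak. The catch is that $(\log y_1)^{-k}$ is an overcount: among integers with $r\approx\alpha L$ prime factors the good $k$-tuples of divisors are heavily clustered, so the probability that at least one exists is smaller than the expected number of good tuples by a further factor $\asymp L$, which turns $L^{-1/2}$ into $L^{-3/2}$.

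To make this rigorous, for the upper bound I would use the inductive inequality of \cite{kf1,kf2,dk}: removing the largest prime factor $p$ of $n$ relates $\hxy$ at parameters $(x,\bv y)$ to the same quantity at $(x/p,\bv y/p)$ and to its lower-dimensional analogues; iterating this inequality and controlling the combinatorics through the near-Poissonian behaviour of the multiset $\{\log\log p:p\mid n,\ p\le y_k\}$ reproduces the rate function $Q(\alpha)$ and the $L^{-3/2}$ loss. For the lower bound I would construct the integers directly: fix $r=\lfloor\alpha L\rfloor$, pick $r$ primes from a long dyadic range below $y_1$, assign them to the $k$ slots so that each partial product first overshoots its target and is then trimmed so that $d_i\in(y_i,2y_i]$ for all $i$ simultaneously, and show that this trimming succeeds for a positive proportion of the prime tuples with probability $\gg L^{-1}$; writing the resulting $n$ as (product of the chosen primes) times a free factor $\le x/(\text{that product})$, a second-moment estimate controls the overcounting and yields the matching lower bound $\hxy\gg_{k,c,\delta}x(\log y_1)^{-Q(\alpha)}L^{-3/2}$.

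The main obstacle, in both directions, is precisely this secondary factor $L^{-3/2}$, that is, the clustering of good $k$-tuples: for the upper bound one must show that once $\omega(n)\approx\alpha L$ the chance of possessing a good $k$-tuple is $O(L^{-1})$ rather than merely $O(1)$, and for the lower bound one must exhibit a genuinely positive proportion of integers whose divisors can be aligned with all $k$ windows at once. This is exactly the delicate two-sided analysis carried out by Ford for $k=1$ and extended to higher dimensions in \cite{dk}. By comparison, the exponent $Q(\alpha)$ itself is a soft large-deviations computation, and the uniformity in $c$ and $\delta$ is routine bookkeeping once one notes that $\log y_i\asymp_c\log y_1$ throughout and that the hypothesis $2^{k+1}y_1\cdots y_k\le x/y_1^\delta$ provides exactly the room needed for the reduction in $x$.
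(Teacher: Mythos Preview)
The paper does not contain a proof of this theorem at all: immediately after the statement it says ``For a proof of it we refer the reader to \cite{kf1,kf2} and \cite{dk}'', and the result is used only as background. So there is no proof in the paper to compare your proposal against.

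Your write-up is explicitly an outline rather than a proof, and as a heuristic narrative it is broadly in line with the arguments in the cited papers and with the heuristic of Section~\ref{heur} here: the exponent $Q(k/\log(k+1))$ from optimising $(k+1)^r L^{r-1}/(r-1)!$ near $r\approx(k/\log(k+1))L$, and the extra factor $L^{-1}$ from divisor clustering. That said, your description of the actual mechanics is somewhat off. The upper bound in \cite{kf1,kf2,dk} (and in the present paper, which generalises it) is not obtained by an ``inductive inequality removing the largest prime factor''; rather, one passes through a local-to-global reduction of the type of Theorem~\ref{thm3}, replacing the counting in $\hxy$ by the sum $\sum_{\bv a}L^{(k+1)}(\bv a)/(a_1\cdots a_k)$, and then bounds $L^{(k+1)}(\bv a)$ via Lemma~\ref{ub1l0} combined with an order-statistics estimate for the positions of the $\log\log p$ (this is where the $L^{-1}$ arises, cf.\ Lemma~\ref{ub2l1}). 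Likewise, the lower bound in \cite{dk} is not a direct second-moment argument on prime tuples but goes through the low-moment inequality of Lemma~\ref{lb1l0} (H\"older with exponent $P\in(1,2]$) applied on carefully chosen sets $\mathcal{A}(\bv g)$. If you intend to turn your outline into a proof, these are the structural pieces you would need to supply; as written, the proposal is a plausible heuristic but not a proof, and since the paper itself defers the proof to the literature, there is nothing further to adjudicate here.
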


In this paper we extend Theorem \ref{kfdk}\;to a broader range of
the parameters $y_1,\dots,y_k$. In particular, when $2\le k\le 5$ we establish the order of $\hxy$ for any choice of the parameters $y_1,\dots,y_k$. In order to state our results we introduce some notation. Given numbers $3=y_0\le y_1\le\cdots\le y_k$, set
$$\label{elli}\ell_i=\log\frac{3\log y_i}{\log y_{i-1}}\quad(1\le i\le k).$$
Also, let $\label{i1}i_1$ be the smallest element of $\{1,\dots,k\}$ such that
$$\ell_{i_1}=\max\{\ell_i:1\le i\le k\}$$ and define $\beta=\beta(k;\bv
y)$ by
$$\label{beta}\beta=\min\left\{1,\frac{(1+\ell_1+\cdots+\ell_{i_1-1})(1+\ell_{i_1+1}+\cdots+\ell_k)}{\ell_{i_1}}\right\}.$$
Lastly, define $\label{alpha}\alpha=\alpha(k;\bv y)$ implicitly, via the equation
$$
\sum_{i=1}^k(k-i+2)^\alpha\log(k-i+2)\ell_i=\sum_{i=1}^k(k-i+1)\ell_i.
$$ 
Note that 
$$\alpha\ge\min_{1\le i\le k}\frac1{\log(k-i+2)}\log\left(\frac{k-i+1}{\log(k-i+2)}\right)=\frac1{\log2}\log\left(\frac1{\log2}\right)=0.528766373\dots
$$ 
as well as
$$
\alpha \le \max_{1\le i\le k}\frac1{\log(k-i+2)} 
		\log\left(\frac{k-i+1}{\log(k-i+2)}\right)
	= \frac1{\log(k+1)}\log\left(\frac k{\log(k+1)}\right) < 1
$$ 
(here we used Lemma~\ref{hl1}, which will be stated and proven in Section~\ref{heur}).

\begin{thm}\label{cor2} Let $k\in\{2,3,4,5\}$, $x\ge3$ and $3\le y_1\le\cdots\le y_k$ be such
that $2^ky_1\cdots y_k\le x/y_k$. Then 
$$
\frac\hxy x
	\asymp  \frac\beta{\sqrt{\log\log y_k}}
		\prod_{i=1}^k\left(\frac{\log y_i}{\log y_{i-1}}\right)^{-Q((k-i+2)^\alpha)}.
$$
\end{thm}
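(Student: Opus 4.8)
The plan is to prove Theorem~\ref{cor2} by establishing matching upper and lower bounds for $H^{(k+1)}(x,\bv y,2\bv y)$, with the main work being the upper bound. The natural first move is a reduction: since the hypothesis $2^ky_1\cdots y_k\le x/y_k$ guarantees enough ``room'', a standard argument (of the type used in \cite{kf1,dk}) shows that the count is, up to constants, $x$ times the probability that a random integer $n\le x$ has a divisor $d_i\in(y_i,2y_i]$ for every $i$; equivalently one studies $\prob(n\in E)$ where $E$ is the set of $n$ admitting such a chain of divisors $d_1\cdots d_k\mid n$. I would pass to squarefree $n$ composed of primes in a dyadic-friendly range and encode the problem in terms of the prime factorization: writing the primes of $n$ in increasing order and recording, for each prime, whether it is ``assigned'' to one of the $k$ divisor-slots, the event $E$ becomes a statement about the existence of an assignment of prime factors to slots so that the $i$-th partial product lands in the prescribed window. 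This is exactly the combinatorial core that makes $Q$-functions and the exponent $\alpha$ appear.

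Next I would set up the entropy/large-deviation heuristic that the problem is built around and which dictates the shape of the answer. Partition the primes of $n$ into blocks according to the scale $y_0<y_1<\cdots<y_k$ (block $i$ consisting of primes $p$ with $\log y_{i-1}<\log p\le \log y_i$, roughly), and observe that a typical $n\le x$ has about $\ell_i$-worth of primes — in the sense of $\log\log$ increments — in block $i$; here is where $\ell_i=\log\frac{3\log y_i}{\log y_{i-1}}$ enters. The event $E$ forces the primes to be distributed among the $k$ divisor-slots in a lopsided way, and the cost of this constraint, optimized over all ways of splitting the ``load'' in each block among the surviving slots, is a convex program whose value is $\sum_i Q((k-i+2)^\alpha)\ell_i$ after one solves the Lagrange condition — which is precisely the defining equation for $\alpha$. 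The factor $(k-i+2)$ appears because in block $i$ only the slots $i,i+1,\dots,k$ together with a ``discard'' option are still relevant, giving $k-i+2$ choices; the single parameter $\alpha$ arises because the Lagrange multiplier is common across blocks. The factor $\beta$ is the entropy contribution of a different bottleneck: it measures the loss from concentrating too much of the action near the single worst scale $i_1$, and it is a global ($n$-independent) multiplicative correction, while $1/\sqrt{\log\log y_k}$ is the usual local-limit/Gaussian normalization for the number of prime factors.

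For the upper bound I would run the standard iterated-Rankin (Mellin-transform) machinery: bound $H^{(k+1)}$ by summing over all admissible divisor chains, weight primes by $(k-i+2)^{s}$-type factors block by block with $s$ chosen near $\alpha$, and use the fundamental lemma of sieve theory / Mertens-type estimates to evaluate each block's Euler product, producing $(\log y_i/\log y_{i-1})^{-Q((k-i+2)^\alpha)}$ from block $i$ after the optimization in $s$. The $\beta$ and $\sqrt{\log\log y_k}$ savings come from a more delicate second-moment or Fourier-analytic step handling the concentration of $\omega(n)$ and the precise interplay between the scale $i_1$ and the rest, following the blueprint of \cite{kf2} for $k=1$ and \cite{dk} for $k\ge2$; the constraint $2\le k\le5$ is exactly what is needed for the optimizing configuration to be the ``generic'' one (as the heuristic in Section~\ref{heur} will explain), so that no additional boundary cases contribute. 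For the lower bound I would construct integers $n$ realizing the extremal configuration: choose in each block $i$ about the optimal number of primes, distributed among slots $i,\dots,k$ according to the maximizer of the convex program, and show by a second-moment argument (Cauchy–Schwarz on the number of valid divisor chains, bounding the second moment by controlling pairs of chains) that a positive proportion of such $n$ genuinely lie in $E$; this yields the matching lower bound with the same $\beta$ and $\alpha$.

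The hard part will be the upper bound in the regime where the $\ell_i$ are genuinely unbalanced, since there the naive Rankin bound with a single exponent $\alpha$ is not self-evidently optimal — one must show that no alternative allocation of primes to slots beats the $\alpha$-configuration, which is where the explicit bounds $0.52\dots\le\alpha<1$ and Lemma~\ref{hl1} are used to rule out degenerate optima, and where the restriction $k\le5$ becomes essential. Controlling the $\beta$-factor precisely (rather than just up to a constant) is the second delicate point: it requires isolating the scale $i_1$ and treating the product of divisors below and above it as a two-dimensional multiplication-table problem, then invoking (a uniform version of) Theorem~\ref{kfdk} or the $k=1$ results of Ford as a black box for that sub-problem. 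Everything else — the reduction to $H^{(k+1)}$, the passage to squarefree integers, the Mertens/sieve estimates — is routine and parallels \cite{dk} closely.
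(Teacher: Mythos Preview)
The paper's proof of Theorem~\ref{cor2} is a two-line deduction from Corollary~\ref{cor1} (hence from Theorem~\ref{thm2}): take $h=k$, so that the hypothesis $\log y_k\le(\log y_h)^{1+\delta}$ is vacuous, and check numerically that
\[
\alpha_1=\frac1{\log2}\log\frac1{\log2}>1-\frac1{\log(k+1)}\log\left(\frac{(k+1)\log(k+1)-2\log2}{k-1}\right)
\]
for $2\le k\le5$; equivalently, the range~\eqref{e00} is empty precisely when $k\le5$, so condition~\eqref{e0} in Theorem~\ref{thm2} holds for every admissible $\alpha$. That numerical inequality is the \emph{entire} role of the restriction $k\le5$; it is not a matter of ``boundary optima'' being absent in the convex program. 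What you are proposing is instead a direct attack on (a special case of) Theorem~\ref{thm2}, and at the level of a sketch your upper bound via block-wise Rankin/large-deviation optimization and your lower bound via a second-moment construction are in the same spirit as Sections~\ref{ub}--\ref{lb_proof2}.

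There is, however, a genuine gap in your plan for the factor $\beta$. You propose to isolate the scale $i_1$, treat the divisors below and above as a two-dimensional table, and import Ford's $k=1$ results as a black box. This would not recover $\beta$: splitting at $i_1$ decouples the two halves, and neither half by itself has a bottleneck (the dominant scale $i_1$ belongs to neither), so the product of the two sub-problems carries no extra saving and your upper bound would be off by $1/\beta$. In the paper $\beta$ arises from an order-statistics constraint \emph{within} a single block: the requirement $\tau_{k+1}(\bv a,\bv y,2\bv y)\ge1$ forces the primes of $a_{i_0}$ to be spread out in the sense of~\eqref{he5}, and the probability of this event is $\asymp\min\{1,U_{i_0}U_{i_0+1}/r_{i_0}\}$, obtained through Lemma~\ref{ub2l1} on the upper-bound side and Lemmas~\ref{lb22l1}--\ref{lb22l2} on the lower-bound side. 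The two factors in the numerator of $\beta$ are the boundary slacks $U_{i_0}$ and $U_{i_0+1}$ inherited from the other blocks, not the outputs of two independent multiplication-table sub-problems.
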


As we shall see later, the hypothesis that $k\in\{2,3,4,5\}$ in the above theorem is necessary: when $k\ge6$ there are choices of the parameters $y_1,\dots,y_k$ for which $\hxy$ has genuinely smaller order than what Theorem~\ref{cor2} predicts. However, if $\log y_k$ is not much bigger than $\log y_1$, then the conlcusion of Theorem~\ref{cor2} is valid. More precisely, we have the following result, which extends Theorem~\ref{kfdk}.

\begin{thm}\label{cor3} Let $k\ge 6$, $x\ge3$ and $3\le y_1\le\cdots\le y_k$
such that $2^ky_1\cdots y_k\le x/y_k$ and $\log y_k\le(\log
y_1)^{1+\delta}$ for a sufficiently small positive $\delta=\delta(k)$. Then
$$\frac\hxy x\asymp_k\frac{\log\frac{3\log y_k}{\log y_1}}{(\log\log y_1)^{3/2}}\prod_{i=1}^k\left(\frac{\log y_i}{\log
y_{i-1}}\right)^{-Q((k-i+2)^\alpha)}.$$
\end{thm}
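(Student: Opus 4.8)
The plan is to sandwich $\hxy$ between matching upper and lower bounds of the asserted order, both obtained by the Ford-type machinery that underlies Theorems~\ref{kfdk} and~\ref{cor2}; the hypothesis $\log y_k\le(\log y_1)^{1+\delta}$ is precisely what makes the two bounds meet for \emph{every} $k\ge6$. As a first step I would rewrite the exponents in this regime. Since $\ell_i\ge\log3$ for all $i$ and $\log y_k/\log y_1\le(\log y_1)^\delta$, we have
$$\ell_2+\cdots+\ell_k=\log\Bigl(3^{\,k-1}\tfrac{\log y_k}{\log y_1}\Bigr)\le(k-1)\log3+\delta\log\log y_1,\qquad\ell_1=\log3+\log\log y_1-\log\log3.$$
Hence, for $\delta=\delta(k)$ small and $\log\log y_1$ above a constant depending on $k$ (the complementary range being trivial, with both sides $\asymp_k x$), the index $i_1$ equals $1$, $\beta=\min\{1,(1+\ell_2+\cdots+\ell_k)/\ell_1\}=(1+\ell_2+\cdots+\ell_k)/\ell_1$, and, using $1+\ell_2+\cdots+\ell_k\asymp_k\log\frac{3\log y_k}{\log y_1}$, $\ell_1\asymp\log\log y_1$ and $\log\log y_k\asymp\log\log y_1$, the right-hand side of Theorem~\ref{cor3} is $\asymp_k\frac{\beta}{\sqrt{\log\log y_k}}\prod_{i=1}^k(\log y_i/\log y_{i-1})^{-Q((k-i+2)^\alpha)}$, i.e.\ the expression of Theorem~\ref{cor2}. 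So it suffices to prove that this expression gives the order of $\hxy/x$ for all $k$ under our hypothesis.

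\textbf{Upper bound.} For the upper bound I would run the argument behind the upper half of Theorem~\ref{cor2}: using $2^ky_1\cdots y_k\le x/y_k$, discard the $n$ with an atypically large prime factor and reduce to squarefree $n$ all of whose primes lie below a fixed power of $y_k$; attach to each such $n$ the vector of partial products it forms from its primes in the successive blocks $(y_{i-1},y_i]$; and estimate by a first-moment/sieve inequality the number of $n$ admitting a witness $d_1\cdots d_k\mid n$ with $y_i<d_i\le2y_i$. Optimising the resulting loss over the possible shapes of the partial products is exactly what produces the exponents $Q((k-i+2)^\alpha)$ with $\alpha$ given by the displayed equation; Gaussian concentration of the underlying random walk contributes the factor $(\log\log y_k)^{-1/2}$, and $\beta$ records the combinatorial cost of absorbing the overshoot at the heaviest block $i_1=1$. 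This direction never uses $k\le5$, so $\hxy\ll_k x\,\frac{\beta}{\sqrt{\log\log y_k}}\prod_i(\log y_i/\log y_{i-1})^{-Q((k-i+2)^\alpha)}$ holds as stated.

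\textbf{Lower bound, and the main obstacle.} This is the crux. I would construct a large family of $n\le x$ carrying the required divisor structure by a probabilistic iteration over the blocks $(y_{i-1},y_i]$: into $n$ one feeds a number of primes from the $i$-th block that is approximately Poisson with mean $(k-i+2)^\alpha$, arranged so that the $k$ partial products are, on a logarithmic scale, dense enough to fall into all of the windows $(y_i,2y_i]$ simultaneously with probability of the predicted order $\asymp\beta/\sqrt{\log\log y_k}$ times the trivial count (the hypothesis on $x$ guaranteeing there is room for the $n$ so produced). A Cauchy--Schwarz step then converts the number of constructed pairs $(n,\text{witness})$ into a lower bound for the number of distinct $n$. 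The real work --- and the only place $\delta$ must be small in terms of $k$ --- is this second-moment estimate: one must bound $\sum_n(\#\{\text{witnesses of }n\})^2$ by $O_k\bigl(\sum_n\#\{\text{witnesses of }n\}\bigr)$, i.e.\ show the witnesses are not heavily clustered, which means controlling the correlations among the Poisson blocks across all $k$ stages and checking that imperfect equidistribution costs only a bounded factor rather than a power of $\log y_k/\log y_1$. When $\log y_k\le(\log y_1)^{1+\delta}$ the blocks are comparable on the $\log\log$ scale and that loss is absorbed into the implied constant; it is exactly when the $y_i$ are allowed to be widely spread that, for $k\ge6$, this iteration runs into the bottleneck responsible for the genuine drop of $\hxy$ below the bound of Theorem~\ref{cor2} --- which is why Theorem~\ref{cor3} must be restricted.
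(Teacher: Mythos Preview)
Your first paragraph is correct and is indeed the reduction one needs: under $\log y_k\le(\log y_1)^{1+\delta}$ with $\delta$ small one has $i_1=1$, $\beta\asymp_k\bigl(\log\frac{3\log y_k}{\log y_1}\bigr)/\log\log y_1$, and $\log\log y_k\asymp\log\log y_1$, so the asserted right-hand side coincides with that of Theorem~\ref{thm2}.

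From there, however, the paper takes a much shorter path than you sketch. Theorem~\ref{cor3} is deduced in one line from Corollary~\ref{cor1} (hence from the general Theorem~\ref{thm2}) by setting $h=1$: the hypothesis $\log y_k\le(\log y_1)^{1+\delta}$ forces $\alpha$ to lie within $O_k(\delta)$ of $\alpha_k=\frac{1}{\log(k+1)}\log\frac{k}{\log(k+1)}$, and the single numerical check that $\alpha_k$ exceeds the threshold in~\eqref{e0} --- equivalently $(k+1)\log(k+1)>k\log4$ --- finishes the argument. The entire substance, upper \emph{and} lower bound, is packaged in Theorem~\ref{thm2}; the role of small $\delta$ is not an ad hoc ``absorption into the constant'' but precisely to guarantee condition~\eqref{e0}.

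Your proposal instead re-sketches the proof of Theorem~\ref{thm2} itself. That is a legitimate if much longer route, but two points in your lower-bound outline are off. First, the interpolation step is \emph{not} Cauchy--Schwarz: the paper uses H\"older with exponent $P\in(1,2]$ taken close to~$1$ (the ``method of low moments'', Lemma~\ref{lb1l0} and Section~\ref{lb_proof1}), and $P=2$ would lose a genuine power of $\log y_1$. Second, the place where $\delta$ small is actually used is in bounding the averaged $P$-th moment $W_{k+1}^P$ (Lemma~\ref{lb1l4}), which requires~\eqref{e0}; your description of the mechanism as ``blocks comparable on the $\log\log$ scale'' does not identify this, and without~\eqref{e0} the moment control genuinely fails for $k\ge6$ (cf.\ Subsection~\ref{conde0}).
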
.

\subsection{Main results} Both Theorems~\ref{cor2} and~\ref{cor3} are consequences of a more general estimate on $\hxy$, which is the main result of this paper.

\begin{thm}\label{thm2}Let $k\ge2$, $x\ge3$ and $3\le y_1\le\cdots\le y_k$ be such that $2^ky_1\cdots y_k\le x/y_k$. Then
$$\frac\hxy x\ll_k\frac\beta{\sqrt{\log\log y_k}}\prod_{i=1}^k\left(\frac{\log y_i}{\log
y_{i-1}}\right)^{-Q((k-i+2)^\alpha)}.$$ If we also assume that
\be\label{e0}\alpha\ge1+\epsilon-\frac1{\log(k+1)}\log\left(\frac{(k+1)\log(k+1)-2\log2}{k-1}\right)\ee
for some fixed $\epsilon>0$, then
$$\frac\hxy x\asymp_{k,\epsilon}\frac\beta{\sqrt{\log\log
y_k}}\prod_{i=1}^k\left(\frac{\log y_i}{\log
y_{i-1}}\right)^{-Q((k-i+2)^\alpha)}.$$
\end{thm}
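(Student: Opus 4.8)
The plan is to prove Theorem~\ref{thm2} by combining an upper bound valid for all $k\ge2$ with a matching lower bound that requires the technical hypothesis \eqref{e0}. For the \textbf{upper bound}, the natural approach is to reduce to a counting problem about the joint distribution of divisors. Writing $n\le x$ having divisors $d_1\cdots d_k\mid n$ with $y_i<d_i\le 2y_i$, one first removes the numbers $n$ with an unusually large number of prime factors or with a prime factor exceeding $y_k$ to a negligible error, using standard estimates (this is why the factor $x$ and the condition $2^ky_1\cdots y_k\le x/y_k$ appear). One is then reduced to estimating, for $n$ built only from primes $\le y_k$, the probability that $n$ has such a chain of divisors. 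The key device, as in Ford's work and \cite{dk}, is to introduce for each prime power $p^a\|n$ a ``level'' variable and to interpret the divisor condition as the existence of an ordered partition of the prime factors of $n$ into $k+1$ groups whose partial products land in the prescribed dyadic windows. A clean way to encode this is via the Dirichlet-series / Rankin-trick optimization: one writes $H^{(k+1)}\le x\cdot\mathbf{E}[\text{number of valid chains}]/(\text{something})$ and optimizes the resulting exponents. The exponent $(k-i+2)^\alpha$ and the defining equation for $\alpha$ arise precisely as the saddle point of this optimization: each of the $k+1$ ``coordinates'' in the $i$-th block contributes a factor whose weight is $(k-i+2)^\alpha$, and the constraint $\sum(k-i+2)^\alpha\log(k-i+2)\ell_i=\sum(k-i+1)\ell_i$ is the first-order condition that makes the total mass come out to the size of $n$. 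The factor $\beta/\sqrt{\log\log y_k}$ comes from a second-order (local central limit / concentration) analysis: $\beta$ measures the extra loss when the maximizing index $i_1$ has $\ell_{i_1}$ much larger than the rest, and the $1/\sqrt{\log\log y_k}$ is the usual Gaussian normalization for the number of prime factors lying in a given interval.

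For the \textbf{lower bound}, the strategy is the standard one of exhibiting an explicit large family of integers $n\le x$ that do possess the required divisor chain, and showing this family has the claimed cardinality. Concretely, one partitions the primes up to $y_k$ into consecutive ranges dictated by $y_1,\dots,y_k$, prescribes how many prime factors from each range $n$ should have (choosing these counts to match the saddle-point weights $(k-i+2)^\alpha$), and then builds $n$ as a product of primes drawn from these ranges. For a positive proportion of such $n$ one can then greedily assemble a divisor $d_i$ in each window $(y_i,2y_i]$ by a routine argument: having many prime factors in the right range, one can adjust the product to hit any target dyadic interval. Counting these $n$ via Mertens-type estimates (or the Sathe--Selberg formula for integers with a prescribed number of prime factors in each range) yields exactly the main term $x\cdot\beta(\log\log y_k)^{-1/2}\prod(\log y_i/\log y_{i-1})^{-Q((k-i+2)^\alpha)}$. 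The role of hypothesis \eqref{e0} is to guarantee that this greedy assembly does not ``over-commit'' prime factors: when $\alpha$ is large enough, the dominant blocks have enough primes that the windows can all be filled simultaneously without the ranges competing for the same primes; the inequality in \eqref{e0} is precisely the threshold, derived from Lemma~\ref{hl1}, at which the per-block budgets become compatible. Below that threshold the construction loses, which (as the paper warns) is not merely a defect of the method but reflects a genuine drop in the order of magnitude.

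The main obstacle will be the lower bound construction under hypothesis \eqref{e0}, and specifically making the greedy assembly of the $k$ divisors $d_1,\dots,d_k$ work uniformly. The difficulty is combinatorial: the divisors must be nested-compatible (the $d_i$ need not be nested as written, but the underlying prime factorizations must be disjoint across the $k+1$ blocks), so one cannot choose each $d_i$ independently. The way I would handle this is to process the windows in decreasing order of the block sizes, or equivalently from the ``heaviest'' block outward, reserving at each stage a pool of primes of the appropriate size and using a pigeonhole/averaging argument over sub-products to land in $(y_i,2y_i]$; the slack provided by \eqref{e0} is exactly what ensures each pool is large enough after the previous reservations. A second, more routine but still delicate, obstacle is the second-order analysis giving the correct constant $\beta(\log\log y_k)^{-1/2}$ in both bounds: for the upper bound one needs a concentration estimate showing that the number of valid chains is, up to constants, the expected value, and for the lower bound one needs anti-concentration so that the prescribed prime-counts are attained by a positive proportion of $n$. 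Both are handled by the local-limit machinery already developed in \cite{kf1,kf2,dk}, and the bulk of the work is checking that the parameters $\alpha$ and $\beta$ defined here specialize correctly to those earlier cases and that the error terms remain under control uniformly in the now-unconstrained ratios $\log y_k/\log y_1$.
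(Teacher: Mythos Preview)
Your proposal is largely a restatement of the heuristic in Section~\ref{heur} rather than a proof strategy, and it diverges from the paper's actual argument in both halves.

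\textbf{Upper bound.} The paper does not use a Rankin/first-moment optimization directly on the divisor chain. Instead it passes through Theorem~\ref{thm3}, which replaces $\hxy/x$ by the global sum $S^{(k+1)}(\bv y)=\sum_{\bv a}L^{(k+1)}(\bv a)/(a_1\cdots a_k)$, where $L^{(k+1)}(\bv a)$ is the \emph{volume} of the union of divisor boxes. This volume is what encodes the clustering of divisors and is the source of the factor $\beta$. The proof then decomposes $S^{(k+1)}$ by $\bv r=(\omega_k(a_1),\dots,\omega_k(a_k))$, bounds each piece by an order-statistics integral $U^{(k+1)}_{\bv r}(\bv v)$ (Lemmas~\ref{ub1l1},~\ref{ub1l2}), and only then sums over $\bv r$ using the Poisson estimates of Section~\ref{poisson}. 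A straight first-moment bound on the expected number of chains gives $\tau_{k+1}(\bv a)$, not $L^{(k+1)}(\bv a)$, and would overshoot by exactly the factor $\beta$ that you are trying to capture.

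\textbf{Lower bound.} Here there is a genuine gap. The paper does \emph{not} construct $n$ and then greedily assemble $d_1,\dots,d_k$. It again works through Theorem~\ref{thm3} and bounds $L^{(k+1)}(\bv a)$ from below via H\"older (Lemma~\ref{lb1l0}):
\[
\Bigl(\sum_{\bv a\in\mathcal A}\tfrac{L^{(k+1)}(\bv a)}{a_1\cdots a_k}\Bigr)^{1-1/P}
\Bigl(\sum_{\bv a\in\mathcal A}\tfrac{W_{k+1}^P(\bv a)}{a_1\cdots a_k}\Bigr)^{1/P}
\gg \sum_{\bv a\in\mathcal A}\tfrac{\tau_{k+1}(\bv a)}{a_1\cdots a_k},
\]
and the entire technical weight is in bounding the fractional moment $W_{k+1}^P$ for $P$ just above $1$ on a carefully chosen set $\mathcal A(\bv g)$ (Sections~\ref{lb_proof1}--\ref{lb_proof2}). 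Your greedy plan hides the difficulty in the phrase ``for a positive proportion of such $n$ one can then greedily assemble a divisor $d_i$'': proving that positive proportion \emph{is} the problem, and the $k$ targets are coupled (each prime can go into at most one $d_i$), so a one-coordinate-at-a-time pigeonhole does not obviously succeed. The low-moment method is precisely the device that replaces this combinatorial construction.

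\textbf{Role of \eqref{e0}.} Your reading of \eqref{e0} as a ``prime-budget compatibility'' threshold is not what happens in the proof. Condition~\eqref{e0} enters only in Lemma~\ref{lb1l4}, where it is equivalent (via Lemma~\ref{lb22l0a}) to the inequality $(k-i+2)^{\alpha-1}F(k-i+1,k-i)\ge(k-i+2)^\epsilon$ for all $i$; this is what forces the dominant term in the $W_{k+1}^P$ estimate to sit at the boundary and makes the H\"older loss $O(1)$. As Subsection~\ref{conde0} explains, when \eqref{e0} fails the obstruction is structural (Lemma~\ref{conv_ineq} gives genuinely stronger bounds on $L^{(k+1)}(\bv a)$), not a shortage of primes, so a greedy argument would not detect the correct threshold either.
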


Condition \eqref{e0}\;is essentially optimal in the sense that for
every fixed $\gamma$ that satisfies
\be\label{e00}\frac1{\log2}\log\left(\frac1{\log2}\right)<\gamma<1-\frac1{\log(k+1)}\log\left(\frac{(k+1)\log(k+1)-2\log2}{k-1}\right)\ee
there is a choice of $y_1\le\cdots\le y_k$ such that
$\alpha=\alpha(k;\bv y)=\gamma$ and for which the order of $\hxy$ is genuinely
smaller than the one stated above. We shall discuss this further in
the next section using some heuristic arguments. In relation to our comments following the statement of Theorem~\ref{cor2}, note that
the smallest value of $k$ for which the range~\eqref{e00} is non-empty is $k=6$.

\medskip

Despite its optimality, condition~\eqref{e0} is not very easy to work with due to the implicit definition of $\alpha$.
Below we state a weaker version of Theorem~\ref{thm2}, whose hypotheses are easier to verify.

\begin{cor}\label{cor1} Let $k\ge2$, $h\in\{1,\dots,k\}$, $x\ge3$ and $3\le y_1\le\cdots\le y_k$ such that $2^ky_1\cdots y_k\le x/y_k$,
$$
\frac1{\log(k-h+2)}\log\left(\frac{k-h+1}{\log(k-h+2)}\right)>1-\frac1{\log(k+1)}\log\left(\frac{(k+1)\log(k+1)-2\log2}{k-1}\right).
$$
and $\log y_k\le(\log y_h)^{1+\delta}$ for a sufficiently small positive $\delta=\delta(k)$. Then 
$$
\frac\hxy
x\asymp_k\frac\beta{\sqrt{\log\log y_k}}\prod_{i=1}^k\left(\frac{\log y_i}{\log y_{i-1}}\right)^{-Q((k-i+2)^\alpha)}.
$$
\end{cor}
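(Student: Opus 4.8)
The plan is to derive Corollary~\ref{cor1} as a direct consequence of Theorem~\ref{thm2}, the point being purely to verify that the hypotheses of the corollary force condition~\eqref{e0} to hold for a suitable $\epsilon>0$. First I would observe that the lower bound in Theorem~\ref{thm2} is already unconditional up to the value of $\alpha$, so the only thing to check is the matching upper-bound regime, i.e.\ that~\eqref{e0} is satisfied. The key quantity is $\alpha=\alpha(k;\bv y)$, defined implicitly by $\sum_{i=1}^k(k-i+2)^\alpha\log(k-i+2)\ell_i=\sum_{i=1}^k(k-i+1)\ell_i$, and I want a lower bound for it.

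The main step is a monotonicity/averaging argument for $\alpha$. Writing $f_i(\alpha):=(k-i+2)^\alpha\log(k-i+2)$ and recalling $g_i:=k-i+1$, the defining relation says that the $\ell_i$-weighted average of $f_i(\alpha)$ equals the $\ell_i$-weighted average of $g_i$. For each individual index $i$, the equation $f_i(\alpha_i)=g_i$ has the solution $\alpha_i=\frac1{\log(k-i+2)}\log\bigl(\frac{k-i+1}{\log(k-i+2)}\bigr)$, which is exactly the per-term exponent appearing in the displayed bounds for $\alpha$ in the excerpt. Since each $f_i$ is increasing in $\alpha$, a standard convexity-free argument shows that $\alpha$ lies between $\min_i\alpha_i$ and $\max_i\alpha_i$; more importantly, if we can show that the weight $\ell_h$ (for the distinguished index $h$ in the corollary) dominates, then $\alpha$ is pushed close to $\alpha_h=\frac1{\log(k-h+2)}\log\bigl(\frac{k-h+1}{\log(k-h+2)}\bigr)$. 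This is where the hypothesis $\log y_k\le(\log y_h)^{1+\delta}$ enters: it forces $\ell_i=\log\frac{3\log y_i}{\log y_{i-1}}$ to be small for $i>h$ (since the logarithms are all comparable beyond stage $h$), while $\ell_h$ can be as large as $\asymp\log\log\log y_h$; taking $\delta=\delta(k)$ small enough makes the contribution of the indices $i\ne h$ to both sides of the defining equation negligible compared to the $i=h$ term. Hence $\alpha\ge\alpha_h-O_k(\delta)$.

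Combining this with the corollary's arithmetic hypothesis $\alpha_h=\frac1{\log(k-h+2)}\log\bigl(\frac{k-h+1}{\log(k-h+2)}\bigr)>1-\frac1{\log(k+1)}\log\bigl(\frac{(k+1)\log(k+1)-2\log2}{k-1}\bigr)$, we get, for $\delta$ small enough, that $\alpha\ge1+\epsilon-\frac1{\log(k+1)}\log\bigl(\frac{(k+1)\log(k+1)-2\log2}{k-1}\bigr)$ with $\epsilon=\epsilon(k)>0$ equal to, say, half the gap in the strict inequality of the hypothesis. Then Theorem~\ref{thm2} applies with this $\epsilon$ and yields the asserted two-sided estimate $\hxy/x\asymp_k\frac\beta{\sqrt{\log\log y_k}}\prod_{i=1}^k\bigl(\frac{\log y_i}{\log y_{i-1}}\bigr)^{-Q((k-i+2)^\alpha)}$, with the implied constant depending only on $k$ since $\epsilon$ does.

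The main obstacle is the quantitative continuity estimate for $\alpha$ as a function of the weights $\ell_i$: one must show that the $\ell_i$ with $i\ne h$ really are negligible. The bound $\ell_i\ll_k\delta\cdot\log\log y_h$ for $i\ne h$ is not quite immediate because a single stage could in principle carry a large jump; however, since $\log y_1\le\log y_h\le\log y_k\le(\log y_h)^{1+\delta}$, one has $\sum_{i>h}\ell_i\le\log\bigl(3^k(\log y_k/\log y_h)\bigr)\le\log(3^k)+\delta\log\log y_h$, and similarly the stages up to $h$ are controlled by $\log y_1\ge3$ being bounded below; a short computation then bounds $\alpha$ from below after dividing the defining equation by $\ell_h\log(k-h+2)(k-h+2)^{\alpha}$ and using that $g_i/g_h$ and $f_i(\alpha)/f_h(\alpha)$ are bounded in terms of $k$ alone. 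Once this estimate is in place the rest is bookkeeping, and the choice of $\delta$ and $\epsilon$ in terms of $k$ is explicit.
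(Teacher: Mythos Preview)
Your overall strategy matches the paper's: verify condition~\eqref{e0} from the hypotheses and then invoke Theorem~\ref{thm2}. (A minor slip: in Theorem~\ref{thm2} the \emph{upper} bound is unconditional and~\eqref{e0} is needed for the matching \emph{lower} bound, not the other way around.) The substantive issue is in how you bound $\alpha$ from below.

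You claim that the hypothesis $\log y_k\le(\log y_h)^{1+\delta}$ makes the contributions of \emph{all} indices $i\ne h$ negligible, so that $\ell_h$ dominates and $\alpha$ is forced close to $\alpha_{k-h+1}$. This is not right: the hypothesis controls only $\sum_{i>h}\ell_i\le k\log3+\delta\log\log y_h$, which is indeed $O_k(\delta)\sum_{i\le h}\ell_i$, but it says nothing whatsoever about the relative sizes of $\ell_1,\dots,\ell_h$. For instance, if $y_1=y_2=\cdots=y_k$ then $\ell_1$ is the only large weight and $\ell_h$ (for $h\ge2$) is $O(1)$. Your sentence ``the stages up to $h$ are controlled by $\log y_1\ge3$ being bounded below'' does not remedy this.

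The fix, which is what the paper does, is to use the monotonicity of the single-index exponents rather than a dominance claim. After discarding the $i>h$ contributions on both sides of the defining equation (cost $O_k(\delta)$ in the weighted sense), one has
\[
\sum_{i=1}^h(k-i+2)^\alpha\log(k-i+2)\,\ell_i=(1+O_k(\delta))\sum_{i=1}^h(k-i+1)\,\ell_i,
\]
so there exists some $i\in\{1,\dots,h\}$ with $(k-i+2)^\alpha\log(k-i+2)\ge(1-O_k(\delta))(k-i+1)$, i.e.\ $\alpha\ge\alpha_{k-i+1}-O_k(\delta)$. By Lemma~\ref{hl1} the sequence $\{\alpha_n\}$ is increasing, hence $\alpha_{k-i+1}\ge\alpha_{k-h+1}$ for every $i\le h$, and thus $\alpha\ge\alpha_{k-h+1}-O_k(\delta)$ regardless of which $\ell_i$ ($i\le h$) carry the weight. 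Now the strict inequality in the hypothesis lets you choose $\delta=\delta(k)$ small enough to secure~\eqref{e0} with some $\epsilon=\epsilon(k)>0$, and Theorem~\ref{thm2} finishes the proof.
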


\begin{proof} We consider for the moment $\delta$ to be a free parameter.
Since $\log y_k\le(\log y_h)^{1+\delta}$, we have that
$$\sum_{i=1}^h(k-i+2)^\alpha\log(k-i+2)\ell_i=\sum_{i=1}^h(1+O_k(\delta))(k-i+1)\ell_i.$$
Therefore \be\begin{split}\alpha&\ge\min_{1\le i\le
h}\frac1{\log(k-i+2)}\log\left(\frac{k-i+1}{\log(k-i+2)}\right)-O_k(\delta)\nonumber\\
&=\frac1{\log(k-h+2)}\log\left(\frac{k-h+1}{\log(k-h+2)}\right)-O_k(\delta),\end{split}\ee
by Lemma \ref{hl1}. So if we choose $\delta$ small enough, then
\eqref{e0}\;holds and hence the desired result follows.
\end{proof}

Applying the above corollary with $h=k\le5$ gives us Theorem~\ref{cor2} immediately. Similarly, Theorem~\ref{cor3} follows by Corollary~\ref{cor1} with $h=1$; we only need to check that
\be\label{e000}\frac1{\log(k+1)}\log\left(\frac k{\log(k+1)}\right)>1-\frac1{\log(k+1)}\log\left(\frac{(k+1)\log(k+1)-2\log2}{k-1}\right)\ee
or, equivalently, that $$(k+1)\log(k+1)>k\log 4$$
for $k\ge2$, which is indeed true.

\medskip

The main tool we shall use in order to prove Theorems
\ref{thm1}\;and \ref{thm2}\;is a result that reduces the counting in
$\hxy$, which contains information about the local distribution of
factorizations of integers, to the estimation of a certain sum which
contains information about the global distribution of factorizations
of integers. More precisely, for $\bv a=(a_1,\dots,a_k)\in\SN^k$ define
$$\label{cLk}\mathcal{L}^{(k+1)}(\bv a)=\bigcup_{\substack{d_1\cdots
d_i|a_1\cdots a_i\\1\le i\le k}}\left[\log(d_1/2),\log
d_1\right)\times\cdots\times\left[\log(d_k/2),\log d_k\right),$$ and
$$\label{Lk}L^{(k+1)}(\bv a)=\vol(\mathcal{L}^{(k+1)}(\bv a)),$$ where ``$\vol$"
denotes the $k$-dimensional Lebesgue measure. Also, for $1\le y<z$
set $$\label{Pyz}\mathcal{P}_*(y,z)=\{n\in\SN:\mu^2(n)=1,p|n\Rightarrow y<p\le z\}$$ and
for $\bv t=(t_1,\dots,t_k)$ with $t_k\ge t_{k-1}\ge\cdots\ge t_1\ge1=:t_0$ set
$$\label{Pvt}\mathcal{P}_*^k(\bv t)=\{(a_1,\dots,a_k)\in\SN^k:a_i\in\mathcal{P}_*(t_{i-1},t_i)\;(1\le i\le k)\}.$$
Then we have the following estimate.

\begin{thm}\label{thm3}Let $k\ge1$, $x\ge1$ and $3\le y_1\le\cdots\le y_k$ with $2^ky_1\cdots y_k\le x/y_k$. Then
$$\frac \hxy x\asymp_k\prod_{i=1}^k\left(\frac{\log y_i}{\log y_{i-1}}\right)^{-(k-i+2)}\sum_{\bv a\in\mathcal{P}_*^k(\bv
y)}\frac{L^{(k+1)}(\bv a)}{a_1\cdots a_k}.$$
\end{thm}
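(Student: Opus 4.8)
The plan is to reduce the counting of integers $n\le x$ having a divisor pattern of the prescribed type to a counting of the "building block" tuples $\bv a\in\mathscr P_*^k(\bv y)$, together with a measure $L^{(k+1)}(\bv a)$ that records how much freedom is left in choosing the remaining prime factors of $n$. First I would set up the standard anatomy-of-integers decomposition: for $n\le x$ write $n=a_1\cdots a_k m$, where $a_i$ is the part of $n$ composed of primes in $(y_{i-1},y_i]$ and $m$ is the part composed of primes in $(y_k,\infty)$ together with possibly a bounded contribution from small primes; by the hypothesis $2^ky_1\cdots y_k\le x/y_k$ there is plenty of room for $m$. The key point is that the condition "$\exists\, d_1\cdots d_k\mid n$ with $y_i<d_i\le 2y_i$" can, after taking logarithms and sieving out the intervening primes, be translated into the statement that a certain point determined by $n$ lies in the set $\mathcal L^{(k+1)}(\bv a)$; the divisors $d_i$ are built greedily from the prime factors of $a_1\cdots a_i$ and the "slack" coming from $m$, which is why one is led to the union over $d_1\cdots d_i\mid a_1\cdots a_i$ and to boxes of the shape $[\log(d_i/2),\log d_i)$.

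The main technical device will be a Fourier/Fundamental-Lemma-of-sieve argument to handle the counting over $m$ uniformly. After fixing $\bv a$, the number of admissible $m\le x/(a_1\cdots a_k)$ with the required property is, by the fundamental lemma of the sieve (the primes dividing $m$ are all larger than $y_k$, so the sieve dimension is bounded and the error terms are negligible given the size hypothesis on $x$), asymptotically
$$\frac{x}{a_1\cdots a_k}\cdot\frac{L^{(k+1)}(\bv a)}{(\log y_k)^{?}}\cdot(\text{Mertens-type factors}),$$
and the factors $\prod_{i=1}^k(\log y_i/\log y_{i-1})^{-(k-i+2)}$ in the statement are exactly the Mertens constants that arise from summing $1/a$ over $a\in\mathscr P_*(y_{i-1},y_i)$ weighted appropriately — note that an element of $\mathscr P_*(y_{i-1},y_i)$ can be used to form any of $k-i+1$ of the divisors $d_i,\dots,d_k$ (plus the trivial option), which produces the exponent $k-i+2$. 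I would prove the upper and lower bounds separately: the upper bound by simply dropping the sieve lower-bound terms and using Rankin's trick to control the tail of the sum over $\bv a$; the lower bound by restricting to $\bv a$ with, say, $\Omega(a_i)$ close to its normal order so that the sieve asymptotic is genuinely two-sided and the inclusion into $\mathcal L^{(k+1)}(\bv a)$ is not wasteful.

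The hardest step, I expect, is establishing that the local condition defining $H^{(k+1)}$ is genuinely equivalent (up to the claimed constants) to the geometric condition "$(\log b_1,\dots,\log b_k)\in\mathcal L^{(k+1)}(\bv a)$ for some divisors $b_i$ of $m$" — in other words, showing that building the divisors $d_i$ prime by prime out of $a_1\cdots a_i$ and $m$, in the right order, loses only a bounded factor. This requires a careful greedy argument: one orders the available primes and shows that one can always land a product in the window $(y_i,2y_i]$ provided the relevant box in $\mathcal L^{(k+1)}(\bv a)$ is nonempty, using that the primes of $m$ exceed $y_k$ only mildly affects granularity (this is where one uses, e.g., that one may also throw in primes of size $O(1)$ or split $m$ suitably). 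Packaging this equivalence cleanly, so that it interfaces with the sieve estimate without circular dependence on the size of $m$, is the real content; everything else is bookkeeping with Mertens' theorem and the fundamental lemma.
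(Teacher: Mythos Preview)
Your plan has the right spirit but misses the key structural device, and the account of the upper bound is too optimistic.

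\textbf{The decomposition.} Writing $n=a_1\cdots a_k m$ with $a_i$ the $(y_{i-1},y_i]$-part of $n$ does \emph{not} by itself connect the divisor condition to $\mathcal L^{(k+1)}(\bv a)$. The mechanism in the paper is to peel off $k$ explicit primes: one works with $n=a_1\cdots a_k\,p_1\cdots p_k\,b$. For the lower bound the $p_i$ are chosen freely so that $(\log(y_1/p_1),\dots,\log(y_k/p_k))\in\mathcal L^{(k+1)}(\bv a)$; this is what makes $d_i=p_i d_i'$ land in $(y_i,2y_i]$ for some $d_1'\cdots d_i'\mid a_1\cdots a_i$, and a Vitali-type covering lemma converts ``number of boxes'' into $L^{(k+1)}(\bv a)$. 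For the upper bound one \emph{extracts} the primes $p_i=P^+(d_{\sigma(i)})$ from the given divisors (after ordering by largest prime factor). There is no greedy argument of the kind you describe, and I do not see how one would make such an argument work: the point is precisely that a single prime near $y_i$ absorbs the dyadic slack, not that one assembles $d_i$ increment by increment.

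\textbf{The exponents.} Your explanation of the factor $\prod_i(\log y_i/\log y_{i-1})^{-(k-i+2)}$ is off. These are \emph{not} Mertens constants from summing over $a\in\mathscr P_*(y_{i-1},y_i)$; that sum remains in the formula as $S^{(k+1)}(\bv y)$. Telescoping shows the product equals $\asymp_k(\log y_1\cdots\log y_{k-1})^{-1}(\log y_k)^{-2}$: one factor of $1/\log y_i$ comes from each prime sum $\sum_{p_i}1/p_i$ over a dyadic window near $y_i$, and the extra $1/\log y_k$ comes from sieving the cofactor $b$ with $P^-(b)>y_k$. The $(k-i+2)$ in your proposal belongs to $\tau_{k+1}(\bv a)$, which is a different object.

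\textbf{The upper bound is the hard direction.} After extracting $p_i=P^+(d_{\sigma(i)})$, the $a_i$ do not land in $\mathscr P_*^k(\bv y)$: the lower cutoff for the primes of $a_i$ depends on $P^+(a_1\cdots a_{i-1})$ and on $a_1\cdots a_{i-1}$ itself (one only knows $p_i>\max\{P^+(a_1\cdots a_i),\,y_i^{7/8}/(a_1\cdots a_i)\}$). The resulting sum has $k$ entangled logarithms in the denominator, and ``Rankin's trick to control the tail'' is not enough. The paper introduces an auxiliary weighted sum $S^{(k+1)}(\bv t;\bv h,\epsilon)$ over this larger range and proves an iterative inequality that strips off one coordinate at a time, reducing it to $S^{(k+1)}(\bv t)$; this is the real work in the upper bound and your plan does not anticipate it.
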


When $k=1$, the above theorem is an immediate consequence of the results and the methods in~\cite{kf1}: see Lemmas 2.1 and 3.2 there. As an immediate consequence of Theorem~\ref{thm3}, we have the following result.

\begin{cor}\label{cor4} Let $k\ge1$ be an integer and for $i\in\{1,2\}$ consider $x_i\ge1$ and $\bv y_i=(y_{i,1},\dots,y_{i,k})\in[1,+\infty)^k$.
Assume that $2^ky_{i,1}\cdots y_{i,k}\le x/y_{i,k}$ for $i\in\{1,2\}$ and that there exist constants $c$ and $C$ such that
$y_{1,j}^c\le y_{2,j}\le y_{1,j}^C$ for all $j\in\{1,\dots,k\}$. Then
$$\frac{H^{(k+1)}(x_1,\bv y_1,2\bv y_1)}{x_1}\asymp_{k,c,C}\frac{H^{(k+1)}(x_2,\bv y_2,2\bv y_2)}{x_2}.$$
\end{cor}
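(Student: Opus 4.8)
The final statement is Corollary~\ref{cor4}, which asserts that the normalized count $H^{(k+1)}(x,\bv y,2\bv y)/x$ is insensitive (up to constants depending on $k,c,C$) to replacing each $y_{1,j}$ by a fixed power $y_{2,j}$ of itself.

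=== Proof proposal ===

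\begin{proof}[Proof proposal]
The plan is to apply Theorem~\ref{thm3} to both pairs $(x_1,\bv y_1)$ and $(x_2,\bv y_2)$ and then compare the two resulting expressions factor by factor. By Theorem~\ref{thm3},
\[
\frac{H^{(k+1)}(x_i,\bv y_i,2\bv y_i)}{x_i}\asymp_k\prod_{j=1}^k\left(\frac{\log y_{i,j}}{\log y_{i,j-1}}\right)^{-(k-j+2)}\sum_{\bv a\in\mathscr{P}_*^k(\bv y_i)}\frac{L^{(k+1)}(\bv a)}{a_1\cdots a_k}\qquad(i\in\{1,2\}),
\]
where $y_{i,0}=3$. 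So it suffices to show that the product of prime-ratio factors and the Dirichlet-type sum each change only by a bounded factor when we pass from $\bv y_1$ to $\bv y_2$.

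First I would handle the elementary product. The hypothesis $y_{1,j}^c\le y_{2,j}\le y_{1,j}^C$ (together with $y_{1,j}\ge y_{1,j-1}\ge 3$, so that $\log y_{1,j}\ge\log 3>0$) gives $\log y_{2,j}\asymp_{c,C}\log y_{1,j}$ for each $j\in\{0,1,\dots,k\}$; hence each ratio $\log y_{2,j}/\log y_{2,j-1}\asymp_{c,C}\log y_{1,j}/\log y_{1,j-1}$, and since the exponents $k-j+2$ lie in the bounded range $\{2,\dots,k+1\}$, the whole product changes by at most a factor depending on $k,c,C$. (One minor point: I should record that $c$ and $C$ may be taken to satisfy $0<c\le 1\le C$ without loss of generality, by enlarging $C$ and shrinking $c$.)

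The main work is comparing the two sums. The cleanest route is to show that \emph{both} sums are $\asymp_{k,c,C}$ to a common quantity, for instance the sum $\sum_{\bv a\in\mathscr{P}_*^k(\bv y_2)}L^{(k+1)}(\bv a)/(a_1\cdots a_k)$; equivalently, it is enough to treat the case where $\bv y_2$ and $\bv y_1$ differ only in a single coordinate, say $y_{2,j_0}$ versus $y_{1,j_0}$, with all other coordinates equal, and then iterate over $j_0=1,\dots,k$. In that reduced situation one compares the range $\mathscr{P}_*(y_{j_0-1},y_{1,j_0})$ with $\mathscr{P}_*(y_{j_0-1},y_{2,j_0})$ for the $j_0$-th coordinate $a_{j_0}$, while the other coordinates range over identical prime-support intervals. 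Since $L^{(k+1)}(\bv a)$ is built from the divisor sets of the partial products $a_1\cdots a_i$, the function $L^{(k+1)}$ is monotone in each $a_j$ in a rough sense: adjoining extra prime factors can only enlarge $\mathcal{L}^{(k+1)}(\bv a)$. I expect the key estimate to be a two-sided bound showing that enlarging the top of the $j_0$-th prime interval from $y_{1,j_0}$ to $y_{2,j_0}=y_{1,j_0}^{C/c}$ (after rescaling) multiplies the sum by at most a constant depending on $k$ and $C/c$; concretely, writing $a_{j_0}=bm$ with $b$ supported on $(y_{j_0-1},y_{1,j_0}]$ and $m$ on $(y_{1,j_0},y_{2,j_0}]$, one factors $1/a_{j_0}=1/(bm)$, uses $L^{(k+1)}(\ldots bm\ldots)\le L^{(k+1)}(\ldots b\ldots)+O_k(\log m)$ or a similar crude bound on the volume increment, and sums the resulting tail $\sum_{m\in\mathscr{P}_*(y_{1,j_0},y_{2,j_0})}1/m\ll_{C/c}1$ via Mertens' theorem on the short interval $(y_{1,j_0},y_{1,j_0}^{C/c}]$. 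The reverse inequality is immediate from $\mathscr{P}_*^k(\bv y_1)\subseteq\mathscr{P}_*^k(\bv y_2)$ (when $y_{1,j_0}\le y_{2,j_0}$) together with positivity of the summands. The hard part will be controlling the volume increment $L^{(k+1)}(\bv a)$ cleanly enough — i.e.\ proving the crude but uniform bound of the shape $L^{(k+1)}(\bv a')\ll_k L^{(k+1)}(\bv a)+(\text{something summable})$ when $\bv a'$ is obtained from $\bv a$ by multiplying one coordinate by $m$ — since $\mathcal{L}^{(k+1)}$ is a union over divisors of all partial products and adding a prime factor to $a_{j_0}$ perturbs the divisor sets of $a_1\cdots a_i$ for every $i\ge j_0$ simultaneously; one likely needs the trivial bounds $L^{(k+1)}(\bv a)\le\prod_i\log(2a_1\cdots a_i)$ and a matching lower bound valid on a positive-density subset of $\bv a$'s to make the comparison go through with constants depending only on $k,c,C$.
\end{proof}
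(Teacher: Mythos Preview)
Your overall plan---apply Theorem~\ref{thm3} to both sides and compare the resulting products and sums---is exactly right, and your treatment of the elementary log-ratio product is fine. The gap is in how you propose to control the change in the sum $\sum_{\bv a}L^{(k+1)}(\bv a)/(a_1\cdots a_k)$.

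The additive bound you suggest, $L^{(k+1)}(\ldots,bm,\ldots)\le L^{(k+1)}(\ldots,b,\ldots)+O_k(\log m)$, is false in general and would in any case not yield a constant independent of $y_1$: if $m$ has $r$ prime factors then adjoining $m$ to coordinate $j_0$ can multiply the number of admissible divisor tuples by $(k-j_0+2)^r$, so the volume can grow multiplicatively, not additively; and even granting an additive bound, the tail $\sum_m(\log m)/m$ over $\mathscr{P}_*(y_{1,j_0},y_{1,j_0}^{C})$ grows with $y_{1,j_0}$. The correct tool is the sub-multiplicativity recorded in Lemma~\ref{ub1l0}(b): for coprime $\bv a$ and $\bv b$,
\[
L^{(k+1)}(a_1b_1,\dots,a_kb_k)\le\tau_{k+1}(\bv b)\,L^{(k+1)}(\bv a).
\]
With this, passing from $\mathscr{P}_*^k(\bv y_1)$ to $\mathscr{P}_*^k(\bv y_2)$ costs exactly a factor of the form $\prod_i\sum_{b_i}\tau_{k-i+2}(b_i)/b_i$, where each $b_i$ ranges over $\mathscr{P}_*(t,t^B)$ for some $B=B(c,C)$; this is $\asymp_{k,c,C}1$ by the standard estimate~\eqref{tauineq}. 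The reverse inequality follows by symmetry (swap the roles of $\bv y_1$ and $\bv y_2$), so there is no need for any ``matching lower bound on a positive-density subset''. One small point you did not flag: changing $y_{j_0}$ moves both the top of the interval for $a_{j_0}$ and the bottom of the interval for $a_{j_0+1}$, so the splitting $\bv a\mapsto(\bv a',\bv b)$ must be done for all coordinates simultaneously, which Lemma~\ref{ub1l0}(b) handles directly.
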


\begin{proof} The result follows by Theorem \ref{thm3}, Lemma~\ref{ub1l0}(a) and the standard estimate
\be\label{tauineq}
\sum_{ a\in \mathcal{P}_*(t,t^B) } \frac{\tau_m(a)} {a} 
= \prod_{ t<p\le t^B } \left( 1+ \frac{m}{p} \right) 
\asymp_{m,B} 1 \quad (t\ge 1),\ee 
where 
$$
\label{tau}\tau_m(a)=\sum_{d_1\cdots d_{m-1}|a}1=\sum_{d_1\cdots d_m=a}1\quad(m\in\SN,~a\in\SN).
$$
\end{proof}

When $k=1$, a stronger version of the above corollary is known to be
true: see Corollary 1 in \cite{kf2}.

\medskip

\subsection{Outline of the paper} The paper is organized in the following way: In Section~\ref{heur} we demonstrate a heuristic argument in support of Theorem
\ref{thm2}\;and the optimality of condition \eqref{e0}. The first three subsections of Section~\ref{loc_glob} are devoted to
establishing Theorem \ref{thm3}, whereas in the last one we prove Theorem~\ref{thm1}. In Section~\ref{poisson}
we develop some estimates related to the probability that a multidimensional Poisson random variable lies close to a hyperplane. Such estimates play a
crucial role in the proof of Theorem \ref{thm2}. Also, in combination with the heuristic arguments of Section \ref{heur}, they
explain how the parameter $\alpha$ makes its appearance in the statements of our results. In Section~\ref{ub} we give the proof of
the upper bound in Theorem \ref{thm2}. The main steps of the proof are described in Subsection~\ref{ub_outline} and proven in
Subsection \ref{ub_proof}. The proof of the lower bound in Theorem~\ref{thm2} is divided in three sections: in Section~\ref{lb_outline} we describe the main steps of our argument.
The first major such step is then carried out in Section~\ref{lb_proof1}. Finally, Section~\ref{lb_proof2}
contains the last piece of our argument and completes the proof of Theorem~\ref{thm2}.

\medskip

\subsection{Notation} We make use of some standard notation. For $n\in\SN$ we use $P^+(n)$ and $P^-(n)$ to denote the largest and smallest prime factor of $n$, respectively, with the notational conventions that $P^+(1)=1$ and $P^-(1)=+\infty$. Also, $\omega(n)$ denotes the number of distinct prime
factors of $n$. Constants implied by $\ll$, $\gg$ and $\asymp$ are absolute unless otherwise specified, e.g. by a subscript. Also, we use the letters $c$ and $C$ to denote constants, not necessarily the same ones in every place, possibly depending on certain parameters that will be specified by subscripts and other means. Also, bold letters always denote vectors whose coordinates are indexed by the same letter with subscripts, e.g. $\bv a=(a_1,\dots,a_k)$ and $\bv\xi=(\xi_1,\dots,\xi_r)$. The dimension of the vectors will not be explicitly specified if it is clear by the context. Finally, we give a table of some basic non-standard notation that we will be using with references to page numbers for its definition.

\begin{table}[htbp]
\begin{center}
\begin{tabular}{llllll}
\hline 
Symbol & Page & Symbol & Page & Symbol & Page \\
\hline
$Q(u)$ &\pageref{Q} & $\alpha$ & \pageref{alpha} & $\alpha_i$ & \pageref{alphai}  \\
$\beta$ & \pageref{beta} & $i_1$ & \pageref{i1} & $i_0$ & \pageref{i0} \\
$\ell_i$ & \pageref{elli} & $v_i$ & \pageref{vi} & $\Delta_r$ & \pageref{Dr} \\
$\rho_m$ & \pageref{rho} & $\bv e_k,\ e_{k,i}$ & \pageref{ek} & $\tau_m(a)$ & \pageref{tau} \\
$\tau_{k+1}(\bv a)$ & \pageref{tauv} & $\tau_{k+1}(a,\bv y,\bv z)$ & \pageref{tauvv} &
$\mathcal{P}_*(y,z)$ &  \pageref{Pyz}  \\
$\mathcal{P}_*^k(\bv t)$ & \pageref{Pvt} &
$H^{(k+1)}(x,\bv y,\bv z)$ & \pageref{Hk} &
$A_{k+1}(N_1,\dots,N_{k+1})$ & \pageref{Ak} \\
$\mathcal{L}^{(k+1)}(\bv a)$ & \pageref{cLk} &
$L^{(k+1)}(\bv a)$ & \pageref{Lk} &
$S^{(k+1)}(\bv t)$ & \pageref{Sk} \\
\hline
\end{tabular}
\end{center}
\end{table}

\medskip

{\bf Acknowledgement.} I would like to thank Kevin Ford for many valuable suggestions as well as for discussions that led to an earlier version of Lemma~\ref{conv_ineq}.


\section{Heuristic arguments}\label{heur}

In this section we develop a heuristic argument in support of Theorem \ref{thm2}. Its main part is given in Subsection 2.1 and it is a generalization of an argument developed by Ford in~\cite{kf1} for the case $k=1$ and subsequently by the author in~\cite{dk} for the case $k\ge2$. In Subsection~\ref{conde0} we introduce some new ideas in order to explain the appearance of condition \eqref{e0} in the statement of Theorem \ref{thm2}.

\medskip

Before we delve into the details of this argument, we introduce some additional notation and state two elementary but basic results we will be
using throughout the entire paper. For $\bv a=(a_1,\dots,a_k)\in\SN^k$ and $\bv y,\bv z\in\SR^k$ let $$\label{tauv}\tau_{k+1}(\bv a)=|\{(d_1,\dots,d_k)\in\SN^k:d_1\cdots
d_i|a_1\cdots a_i\;(1\le i\le k)\}|$$ and $$\label{tauvv}\tau_{k+1}(\bv a,\bv y,\bv z)=|\{(d_1,\dots,d_k)\in\SN^k:d_1\cdots
d_i|a_1\cdots a_i,\;y_i<d_i\le z_i~(1\le i\le k)\}|.$$ Finally, set $$\label{alphai}\alpha_i=\frac1{\log(i+1)}\log\left(\frac i{\log(i+1)}\right)\quad(i\in\SN)$$ and let $\label{i0}i_0$ be the minimum element of $\{1,\dots,k\}$ such that $$|\alpha-\alpha_{k-i_0+1}|=\min\{|\alpha-\alpha_{k-i+1}|:1\le i\le k\}.$$

\begin{lemma}\label{ub1l0} The following assertions hold:
\renewcommand{\labelenumi}{(\alph{enumi})}
\begin{enumerate}
\item For $\bv a\in\SN^k$ we have $$L^{(k+1)}(\bv a)\le\min\left\{\tau_{k+1}(\bv a)(\log2)^k,\prod_{i=1}^k(\log a_1+\cdots+\log a_i+\log2)\right\}.$$
\item If $(a_1\cdots a_k,b_1\cdots b_k)=1$, then $$L^{(k+1)}(a_1b_1,\dots,a_kb_k)\le\tau_{k+1}(\bv a)L^{(k+1)}(\bv b).$$
\end{enumerate}
\end{lemma}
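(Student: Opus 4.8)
The plan is to work directly from the description of $\mathcal{L}^{(k+1)}(\bv a)$ as a union of $\tau_{k+1}(\bv a)$ axis-parallel boxes: one box $\prod_{j=1}^k[\log(d_j/2),\log d_j)$, of volume $(\log 2)^k$, for each tuple $(d_1,\dots,d_k)$ with $d_1\cdots d_i\mid a_1\cdots a_i$ for $1\le i\le k$. I will refer to such tuples as the divisor chains of $\bv a$.

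For part (a), the bound $L^{(k+1)}(\bv a)\le\tau_{k+1}(\bv a)(\log 2)^k$ is just subadditivity of Lebesgue measure applied to this union. For the second bound I would observe that every divisor chain satisfies $d_i\le d_1\cdots d_i\le a_1\cdots a_i$, so the $i$-th coordinate of any point of the associated box lies in $[\log(d_i/2),\log d_i)\subseteq[-\log 2,\log a_1+\cdots+\log a_i)$; hence $\mathcal{L}^{(k+1)}(\bv a)$ is contained in the single box $\prod_{i=1}^k[-\log 2,\log a_1+\cdots+\log a_i)$, and taking volumes finishes the proof.

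For part (b) the idea is to split divisor chains coprimely. Since $(a_1\cdots a_k,b_1\cdots b_k)=1$, the top-level condition $d_1\cdots d_k\mid(a_1b_1)\cdots(a_kb_k)$ forces each $d_j$ to factor uniquely as $d_j=e_jf_j$, where $e_j$ is the part of $d_j$ supported on the primes of $a_1\cdots a_k$ and $f_j$ the part supported on the primes of $b_1\cdots b_k$. Comparing these two coprime parts separately in each chain condition, $(d_1,\dots,d_k)$ runs over the divisor chains of $(a_1b_1,\dots,a_kb_k)$ exactly when $(e_1,\dots,e_k)$ runs over the divisor chains of $\bv a$ and, independently, $(f_1,\dots,f_k)$ over those of $\bv b$. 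Because $\log d_j=\log e_j+\log f_j$, the box attached to $(d_j)$ is the box attached to $(f_j)$ translated by $(\log e_1,\dots,\log e_k)$, and since the admissible pairs form a product set this gives
$$\mathcal{L}^{(k+1)}(a_1b_1,\dots,a_kb_k)=\bigcup_{(e_j)}\bigl((\log e_1,\dots,\log e_k)+\mathcal{L}^{(k+1)}(\bv b)\bigr),$$
a union of $\tau_{k+1}(\bv a)$ translates of $\mathcal{L}^{(k+1)}(\bv b)$; subadditivity of measure then yields the claimed inequality.

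I do not expect a serious obstacle. The only point requiring care is to verify rigorously that the map $(d_j)\mapsto((e_j),(f_j))$ is a bijection onto the full product of the two families of divisor chains — that is, that for each $i$ the divisibility $d_1\cdots d_i\mid(a_1b_1)\cdots(a_ib_i)$ is equivalent to the pair of conditions $e_1\cdots e_i\mid a_1\cdots a_i$ and $f_1\cdots f_i\mid b_1\cdots b_i$. This is a routine prime-by-prime check that uses the coprimality hypothesis; everything else is subadditivity of measure together with an elementary containment.
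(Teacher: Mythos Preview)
Your proposal is correct and is exactly the argument the paper has in mind: the paper does not spell out a proof but refers to Lemma~3.1 of \cite{kf1}, whose $k=1$ argument is precisely the subadditivity-plus-containment for part~(a) and the coprime splitting of divisors into a union of translates for part~(b) that you describe. The one point you flag as needing care---that the map $(d_j)\mapsto((e_j),(f_j))$ is a bijection onto the product of the two families of divisor chains---is indeed routine once you note that a prime $p\mid a_1\cdots a_k$ contributing to $e_1\cdots e_i$ cannot divide $b_1\cdots b_i$, so the $p$-adic part of the condition $d_1\cdots d_i\mid (a_1\cdots a_i)(b_1\cdots b_i)$ reduces to $v_p(e_1\cdots e_i)\le v_p(a_1\cdots a_i)$.
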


\begin{proof} The proof is similar to the proof of Lemma
3.1 in \cite{kf1}.
\end{proof}

\begin{lemma}\label{hl1}The sequence $\{\alpha_i\}_{i\in\SN}$ is strictly increasing.
\end{lemma}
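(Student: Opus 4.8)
The plan is to pass to a continuous variable. Define
$$g(t)=\frac{\log(t-1)-\log\log t}{\log t}\qquad(t>1),$$
and note that $\alpha_i=g(i+1)$ for every $i\in\SN$, since the defining relation $(i+1)^{\alpha_i}=i/\log(i+1)$ rearranges to exactly this expression. Hence it suffices to prove that $g$ is strictly increasing on $[2,\infty)$, and for that I would compute $g'$ directly. A routine differentiation gives
$$g'(t)=\frac{\phi(t)}{t(\log t)^2},\qquad \phi(t):=\frac{t\log t}{t-1}-1-\log(t-1)+\log\log t,$$
so the whole problem reduces to showing $\phi(t)>0$ for all $t\ge2$.

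The key observation is that $\phi$ is itself strictly increasing on $(1,\infty)$. Differentiating once more and simplifying, one gets the clean identity
$$\phi'(t)=\frac{1}{t\log t}-\frac{\log t}{(t-1)^2}=\frac{\psi(t)}{t(\log t)(t-1)^2},\qquad \psi(t):=(t-1)^2-t(\log t)^2.$$
To see $\psi>0$ on $(1,\infty)$, I would note that $\psi(1)=\psi'(1)=0$ while $\psi''(t)=\tfrac{2}{t}(t-1-\log t)>0$ for $t>1$ (because $t-1-\log t$ vanishes at $t=1$ and has positive derivative $1-1/t$ there); integrating twice yields $\psi'>0$ and then $\psi>0$ on $(1,\infty)$, hence $\phi'>0$ there.

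With $\phi$ increasing on $[2,\infty)$, it then suffices to check the single value
$$\phi(2)=2\log2-1+\log\log2=0.0197\ldots>0,$$
which gives $\phi>0$, hence $g'>0$, on $[2,\infty)$; evaluating at $t=i+1$ completes the proof. The only mildly delicate point is that $\phi(2)$ is quite close to $0$, so one cannot afford any crude estimate near $t=2$; routing everything through the monotonicity of $\phi$ (and the convexity of $\psi$, which reduces to the elementary inequality $\log t<t-1$) sidesteps this completely, and beyond carefully bookkeeping the two differentiations there is no real obstacle.
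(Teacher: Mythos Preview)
Your proof is correct and actually sharper than the paper's. The paper proceeds by asserting (without details) that the continuous interpolation $x\mapsto\frac{1}{\log(x+1)}\log\bigl(\frac{x}{\log(x+1)}\bigr)$ is strictly increasing for $x\ge15$, and then checks numerically that $\alpha_1<\alpha_2<\cdots<\alpha_{15}$. You instead establish that $g$ is strictly increasing on all of $[2,\infty)$ by reducing to the positivity of $\phi$, proving $\phi$ is monotone via the auxiliary function $\psi$ (whose convexity follows from the elementary $\log t<t-1$), and then checking the single value $\phi(2)>0$.

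What your approach buys is a fully explicit argument with only one numerical verification rather than fourteen, and no need to split into small and large ranges. The paper's approach is shorter to write down but leaves the ``easily seen'' step undocumented and leans more heavily on direct computation of the initial $\alpha_i$'s. Your reduction through $\psi''(t)=\tfrac{2}{t}(t-1-\log t)$ is a clean way to handle the delicate fact that $\phi(2)$ is only barely positive.
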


\begin{proof} The function $$\frac1{\log(x+1)}\log\left(\frac
x{\log(x+1)}\right)$$ is easily seen to be strictly increasing for $x\ge15$. Finally, we check numerically that
$\alpha_1<\alpha_2<\cdots<\alpha_{15}$.
\end{proof}


\subsection{Basic set-up and development of the main argument}\label{main} Our goal is to understand when an integer
$n\le x$ is counted by $\hxy$. We write $n=a_1\cdots a_kb$, where 
$$
a_i=\prod_{\substack{p^\nu \| n \\ 2y_{i-1}<p\le2y_i }} p^\nu \quad(1\le i\le k).
$$
For simplicity, we assume that the numbers $a_1,\dots,a_k$ are square-free and satisfy $\log a_i\asymp\log y_i$ for all
$i\in\{1,\dots,k\}$. Observe that if $\bv d=(d_1,\dots,d_k)\in\SN^k\cap\prod_{i=1}^k(y_i,2y_i]$, then all prime factors of $d_i$ are at most $2y_i$ for all $i\in\{1,\dots,k\}$. Hence $\bv d$ satisfies the relation $d_1\cdots d_k|n$ if, and only if, $d_1\cdots d_i|a_1\cdots
a_i$ for all $i\in\{1,\dots,k\}$. Therefore the integer $n$ is counted by $\hxy$ precisely when $\tau_{k+1}(\bv a,\bv y,2\bv
y)\ge1$. Consider now the set 
$$
D_{k+1}(\bv a)=\{(\log d_1,\dots,\log d_k):d_1\cdots d_i|a_1\cdots a_i\;(1\le i\le k)\}.
$$ 
Assume for the
moment that $D_{k+1}(\bv a)$ is well-distributed in $\prod_{i=1}^k[0,\log(a_1\cdots a_i)]$. Then we should have that
\be\label{he0}\bsp
\tau_{k+1}(\bv a,\bv y,2\bv y)
		=  \left\lvert D_{k+1}(\bv a)\cap\prod_{i=1}^k  (\log y_i,\log y_i+\log2]\right\rvert
	&\approx\tau_{k+1}(\bv a)\frac{(\log 2)^k}{\prod_{i=1}^k\log(a_1\cdots a_i)} \\
	&\asymp_k \frac{\prod_{i=1}^k(k-i+2)^{\omega(a_i)}}{\prod_{i=1}^k\log y_i}.
\end{split}\ee 
The right hand side of~\eqref{he0} is at least 1 when
$$
\sum_{i=1}^k\log(k-i+2)\omega(a_i)\ge\sum_{i=1}^k\log\log y_i+O_k(1)=\sum_{i=1}^k(k-i+1)\ell_i+O_k(1).
$$
Since we expect that
$$
\lvert\{n\le x:\omega(a_i)=r_i\;(1\le i\le k)\}\rvert\approx\frac x{\log
y_k}\frac{\ell_1^{r_1-1}\cdots\ell_k^{r_k-1}}{(r_1-1)!\cdots(r_k-1)!}
$$
(for example, see \cite[Theorem 4, p. 205]{tenbook}), summing the above relation over all vectors $\bv r\in(\SN\cup\{0\})^k$ that satisfy
\be\label{he1}\sum_{i=1}^kr_i\log(k-i+2)\ge\sum_{i=1}^k\ell_i(k-i+1)+O_k(1)\ee
leads to the estimate
\be\label{he2}\hxy\approx\frac x{\log y_k}\sum_{\substack{\bv r\in(\SN\cup\{0\})^k\\\eqref{he1}}}\frac{\ell_1^{r_1-1}\cdots\ell_k^{r_k-1}}{(r_1-1)!\cdots(r_k-1)!}.\ee
Using Stirling's formula and Lagrange multipliers, we find that the maximum of
$\prod_{i=1}^k\ell_i^{r_i-1} / (r_i-1)!$ under condition \eqref{he1}\;occurs
when $r_i\sim(k-i+2)^\alpha\ell_i$ (see Section~\ref{poisson} and, in particular, Remark~\ref{sprk1} and the proof of Lemma~\ref{spl2}(a)). In fact, Lemma~\ref{spl2}(a) implies that
$$\sum_{\substack{\bv r\in(\SN\cup\{0\})^k\\\eqref{he1}}}\frac{\ell_1^{r_1-1}\cdots\ell_k^{r_k-1}}{(r_1-1)!\cdots(r_k-1)!}\asymp_k\frac{\log y_k}{\sqrt{\log\log
y_k}}\prod_{i=1}^k\left(\frac{\log y_i}{\log y_{i-1}}\right)^{-Q((k-i+2)^\alpha)},$$ so that~\eqref{he2} becomes
\be\label{he3}\hxy\approx\frac x{\sqrt{\log\log y_k}}\prod_{i=1}^k\left(\frac{\log y_i}{\log y_{i-1}}\right)^{-Q((k-i+2)^\alpha)}.\ee

\bigskip

If now $\beta\gg1$, then \eqref{he3}\;agrees with the conclusion of Theorem \ref{thm2}. However, if $\beta=o_{y_1\to\infty}(1)$, then relation \eqref{he3}\;overestimates $\hxy$ slightly. The problem lies in our assumption that $D_{k+1}(\bv a)$ is well-distributed. Actually, if $\beta=o_{y_1\to\infty}(1)$, then with high probability the elements of $D_{k+1}(\bv a)$ form large clumps. In order to measure the amount of clustering in $D_{k+1}(\bv a)$, we use the function $L^{(k+1)}(\bv a)$, which we introduced in Section \ref{intro}. We will show that, unless the prime factors of $a_1,\dots,a_k$ satisfy certain constraints, the measure of $L^{(k+1)}(\bv a)$ is small and, as a consequence, the set $D_{k+1}(\bv a)$ cannot be well-distributed.

Fix a vector $\bv r\in\SN^k$ such that
\be\label{he1b}
0\le\sum_{i=1}^kr_i\log(k-i+2)-\sum_{i=1}^k\ell_i(k-i+1)\le\log(k+1)
\ee 
and $r_i\sim(k-i+2)^\alpha\ell_i$ as $\ell_i\to\infty$, for all
$i\in\{1,\dots,k\}$, since most of the contribution to the sum in the right hand side of~\eqref{he2} comes from such vectors. Consider $n$ with $\omega(a_i)=r_i$ for all $i\in\{1,\dots,k\}$ and write $a_i=p_{i,1}\cdots p_{i,r_i}$ with $2y_{i-1}<p_{i,1}<\cdots<p_{i,r_i}\le 2y_i$. Set $$U_i=2\log(k+1)+\sum_{m=1}^{i-1}\ell_m(k-m+1)-\sum_{m=1}^{i-1}r_m\log(k-m+2)\quad(1\le i\le k).$$ Note that
\bes\begin{split}&\ell_m(k-m+1)-r_m\log(k-m+2)\\
&=(k-m+1-(k-m+2)^\alpha\log(k-m+2)+o(1))\ell_m\\
&=\log(k-m+2)((k-m+2)^{\alpha_{k-m+1}}-(k-m+2)^\alpha+o(1))\ell_m.\end{split}\ees
So Lemma~\ref{hl1} and the definition of $i_0$ imply that
\be\label{he101}U_i\asymp_k\begin{cases}1+\ell_1+\cdots+\ell_{i-1}&\text{if}~1\le i\le i_0,\cr
1+\ell_i+\cdots+\ell_k&\text{if}~i_0+1\le i\le k+1,\end{cases}\ee
where in the latter case we used~\eqref{he1b}. Assume that
there are integers $i\in\{1,\dots,k\}$ and $j\in\{1,\dots,r_i\}$ and a large number $C$ such that
$$0\le\log\log p_{i,j}-\log\log y_{i-1}\le\frac{\log(k-i+2)j-U_i-C}{k-i+1}.$$ We claim that this
causes clustering among the elements of $D_{k+1}(\bv a)$. Indeed, if we set $b_m=a_m$ for $1\le m<i$, $b_i=p_{i,1}\cdots p_{i,j}$ and
$b_m=1$ for $i<m\le k$, then a double application of Lemma~\ref{ub1l0} implies that
\be\label{he4}\begin{split}
L^{(k+1)}(\bv a)
	&\le\tau_{k+1}(a_1/b_1,\dots,a_k/b_k)L^{(k+1)}(\bv b)\\
	&\le\left((k-i+2)^{r_i-j}\prod_{m=i+1}^k(k-m+2)^{r_m}\right)\left(\prod_{m=1}^k\log(2b_1\cdots b_m)\right)\\
	&\ll_k(k-i+2)^{-j}\left(\prod_{m=i}^k(k-m+2)^{r_m}\right)
		\left(\prod_{m=1}^{i-1}\log y_m\right) \\ 
	&\qquad \times (\log y_{i-1}+\log(p_{i,1}\cdots p_{i,j}))^{k-i+1} \\
	&\lesssim\left(\prod_{m=i}^k(k-m+2)^{r_m}\right)\left(\prod_{m=1}^{i-1}\log y_m\right) 
		(\log y_{i-1})^{k-i+1}e^{-U_i-C} \\ 
	& \asymp_ke^{-C}\prod_{i=1}^k(k-i+2)^{r_i}.\end{split}\ee
The right hand side of \eqref{he4}\;is much less than $\tau_{k+1}(\bv a)=\prod_{m=1}^k(k-m+2)^{r_m}$ if $C\to\infty$, in which case there
must be many elements of $D_{k+1}(\bv a)$ that are close together. The above argument suggests that we should focus on numbers $n$ for
which \be\label{he5}\log\log p_{i,j}-\log\log y_{i-1}\ge\frac{\log(k-i+2)j-U_i-O(1)}{k-i+1}\quad(1\le i\le
k,\;R_{i-1}<j\le R_i).\ee The number of integers $n$ that satisfy conditions similar to \eqref{he5}\;was studied by Ford in
\cite{kf4}. Using similar considerations, we find that the probability that an integer $n$ satisfies \eqref{he5}\;is about
$$\prod_{i=1}^k\min\left\{1,\frac{U_iU_{i+1}}{r_i}\right\}\asymp_k\min\left\{1,\frac{(1+\ell_1+\cdots+\ell_{i_0-1})(1+\ell_{i_0+1}+\cdots+\ell_k)}{\ell_{i_0}}\right\},$$
by \eqref{he101}. Thus we are led to the refined estimate \be\label{he6}\frac{\hxy}x\approx\frac{\displaystyle\min\left\{1,\frac{(1+\ell_1+\cdots+\ell_{i_0-1})(1+\ell_{i_0+1}+\cdots+\ell_k)}{\ell_{i_0}}\right\}}
{\displaystyle\sqrt{\log\log y_k}\prod_{i=1}^k\left(\frac{\log y_i}{\log y_{i-1}}\right)^{Q((k-i+2)^\alpha)}}.\ee
Finally, we claim that
\be\label{altbeta}\min\left\{1,\frac{(1+\ell_1+\cdots+\ell_{i_0-1})(1+\ell_{i_0+1}+\cdots+\ell_k)}{\ell_{i_0}}\right\}\asymp_k\beta.\ee
To see this, fix a small parameter $\delta=\delta(k)$ and observe that if $$\sum_{i\neq i_1}\ell_i\le\delta\ell_{i_1}=\delta\max_{1\le i\le k}\ell_i,$$ then $$(k-i_1+2)^\alpha\log(k-i_1+2)\ell_{i_1}=(1+O_k(\delta))(k-i_1+1)\ell_{i_1},$$ by the definition of $\alpha$. This implies that $|\alpha-\alpha_{k-i_1+1}|\ll_k\delta$. So if $\delta=\delta(k)$ is small enough, then $i_1=i_0$ and~\eqref{altbeta} follows immediately. Consider now the case when $\sum_{i\neq i_1}\ell_i\ge\delta\ell_{i_1}$. We may also assume that $i_1\neq i_0$; else,~\eqref{altbeta} holds trivially. Under these assumptions we have that $$\beta\ge\min\left\{1,\frac{\sum_{i\neq i_1}\ell_i}{\ell_{i_1}}\right\}\ge\delta$$ and
$$\min\left\{1,\frac{(1+\ell_1+\cdots+\ell_{i_0-1})(1+\ell_{i_0+1}+\cdots+\ell_k)}{\ell_{i_0}}\right\}
\ge\min\left\{1,\frac{\ell_{i_1}}{\ell_{i_0}}\right\}=1,$$
which together prove~\eqref{altbeta} in this last case too. By~\eqref{altbeta}, we see that~\eqref{he6} agrees with the conclusion of Theorem~\ref{thm2}.


\subsection{Further analysis and optimality of condition \eqref{e0}}\label{conde0} Even though the argument given in
Subsection \ref{main}\;gives us Theorem \ref{thm2} heuristically, it does not explain the presence of condition
\eqref{e0}\;in the statement of the theorem. This deficiency stems from the fact that the only piece of information we used about
$\mathcal{L}^{(k+1)}(\bv a)$ is Lemma \ref{ub1l0}. In order to understand condition~\eqref{e0}, we need to pay closer attention to
the structure of $\mathcal{L}^{(k+1)}(\bv a)$. It turns out that when $k$ is large, the rich multiplicative structure and the high
dimension of the set $\mathcal{L}^{(k+1)}(\bv a)$ lead to many more bounds on its volume $L^{(k+1)}(\bv a)$ than just those included in
the statement of Lemma \ref{ub1l0}.

\begin{lemma}\label{conv_ineq} Consider integers $0=z_0\le z_1\le\cdots\le z_k\le z_{k+1}=k$ with
$z_i\ge i-1$ for all $i\in\{1,\dots,k\}$. Let $\bv
a=(a_1,\dots,a_k)\in\SN^k$ such that $\mu^2(a_1\cdots a_k)=1$. Then we have that
\bes\begin{split}L^{(k+1)}(\bv a)&\le\sum_{\substack{d_j|a_j\\1\le j\le k}}\left(\prod_{j=1}^k(z_j-j+1)^{\omega(d_j)}\right)\\
&\times\min\left\{\prod_{j=0}^k\log^{z_{j+1}-z_j}(2a_1\cdots a_j),(\log 2)^k\prod_{j=1}^k(k-z_j+1)^{\omega(a_j/d_j)}\right\},\end{split}\ees with the convention that $0^0=1$.
\end{lemma}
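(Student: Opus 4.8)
The plan is to bound the volume $L^{(k+1)}(\bv a)$ by slicing the region $\mathcal{L}^{(k+1)}(\bv a)$ according to how the coordinates of a point are generated by the divisor structure, and then to peel off prime factors one at a time. First I would recall, as in Lemma \ref{ub1l0}, that $\mathcal{L}^{(k+1)}(\bv a)$ is a union of boxes $[\log(d_1/2),\log d_1)\times\cdots\times[\log(d_k/2),\log d_k)$ over tuples $(d_1,\dots,d_k)$ with $d_1\cdots d_i\mid a_1\cdots a_i$ for all $i$; in particular the $i$-th coordinate of any point lies in $[0,\log(2a_1\cdots a_i))$. The key device is the integers $z_i$: I would split the $k$ coordinate directions into blocks, assigning $z_{j+1}-z_j$ of them to ``level $j$'', so that $\sum_{j=0}^k(z_{j+1}-z_j)=z_{k+1}-z_0=k$ and exactly the coordinates at level $j$ are controlled by the crude bound $\log(2a_1\cdots a_j)$, while all the remaining coordinate directions are controlled by the sharper $\tau$-type count. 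The constraint $z_i\ge i-1$ is exactly what guarantees that at most one coordinate can be at level $\ge i$ among the first $i$, so that this slicing is consistent with the nested divisibility conditions.

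Next I would carry out the prime-by-prime peeling, which is where the factors $(z_j-j+1)^{\omega(d_j)}$ and $(k-z_j+1)^{\omega(a_j/d_j)}$ come from. Writing each $a_j=d_j\cdot(a_j/d_j)$, with $a_1\cdots a_k$ squarefree so all prime factors are distinct, I would remove one prime at a time from the $a_j$'s and track how the volume changes, using part (b) of Lemma \ref{ub1l0} at each step: removing a prime $p\mid a_j$ that we assign to the ``$d_j$ part'' multiplies the bound by at most $z_j-j+1$ (the number of coordinate slots among directions $j,\dots,k$ that are \emph{not} frozen at a level $<j$, i.e.\ those that genuinely get to choose whether $p$ divides the relevant $d$), whereas removing a prime assigned to the ``$a_j/d_j$ part'' contributes a factor $k-z_j+1$ via the $\log^k 2$ bound of Lemma \ref{ub1l0}(a). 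Summing over the $2^{\omega(a_j)}$ ways of splitting each $a_j$ as $d_j\cdot(a_j/d_j)$ produces the sum over $d_j\mid a_j$ in the statement, and taking the minimum over the two ways of bounding the ``leftover'' box (the crude $\prod_j \log^{z_{j+1}-z_j}(2a_1\cdots a_j)$ versus the sharp $\log^k 2\prod_j(k-z_j+1)^{\omega(a_j/d_j)}$) is legitimate because the argument yields \emph{each} of these two bounds separately.

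The main obstacle I expect is the bookkeeping that justifies the exponents $z_j-j+1$ precisely: one has to check that after committing $j-1$ of the first $j$ coordinate directions to ``low levels'', the block of primes in $a_j$ that we route into $d_j$ really sees only $z_j-j+1$ free directions, and that the nested conditions $d_1\cdots d_i\mid a_1\cdots a_i$ do not allow more. This is essentially a combinatorial compatibility check between the chain $z_0\le z_1\le\cdots\le z_{k+1}$ and the lattice of divisibility constraints; once it is set up correctly the induction on $\omega(a_1\cdots a_k)$ runs smoothly, with the base case $\omega=0$ giving $L^{(k+1)}(\bv a)\le\min\{\prod_{j=0}^k\log^{z_{j+1}-z_j}2,\ \log^k 2\}=\log^k 2$ (using $\sum_j(z_{j+1}-z_j)=k$), which matches the stated bound with the convention $0^0=1$. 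A secondary technical point is handling the $\log 2$ shifts uniformly; these are absorbed exactly as in the proof of Lemma 3.1 of \cite{kf1}, to which the proof of Lemma \ref{ub1l0} already refers.
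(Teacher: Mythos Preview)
Your proposal has a genuine gap. The inductive prime-by-prime peeling via Lemma~\ref{ub1l0}(b) does not produce the factors $z_j-j+1$ and $k-z_j+1$ that you claim. Removing a single prime $p\mid a_j$ and applying Lemma~\ref{ub1l0}(b) gives $L^{(k+1)}(\bv a)\le (k-j+2)\,L^{(k+1)}(\bv a')$, since $\tau_{k+1}(1,\dots,1,p,1,\dots,1)=k-j+2$; there is nothing in that lemma that sees the parameters $z_j$. One might hope to recover the statement by writing $k-j+2=(z_j-j+1)+(k-z_j+1)$ and routing the two pieces to the cases $p\mid d_j$ and $p\mid a_j/d_j$, but the induction then fails to close on the first argument of the $\min$. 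In the case $p\mid a_j/d_j$ one needs $(k-z_j+1)\min\{A(\bv a'),B'\}\le\min\{A(\bv a),B\}$, where $A(\bv a)=\prod_{m=0}^k\log^{z_{m+1}-z_m}(2a_1\cdots a_m)$; the $B$-side is fine because $(k-z_j+1)B'=B$, but the $A$-side would require $(k-z_j+1)A(\bv a')\le A(\bv a)$, which is false in general (e.g.\ $k=2$, $z_1=0$, $z_2=2$, so $A(\bv a)=\log^2(2a_1)$, and $3\log^2(2a_1/p)\le\log^2(2a_1)$ fails whenever $p$ is small compared with $a_1$). Your remark that ``the argument yields each of these two bounds separately'' is therefore not justified: the two bounds are coupled through the decomposition, not proved independently.

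The paper's argument is structurally different and does not proceed by induction on $\omega$. It writes each admissible $d_i$ uniquely as $d_i=d_{i,1}\cdots d_{i,i}$ with $d_{j,j}\cdots d_{k,j}\mid a_j$, and then \emph{freezes} exactly the components $d_{i,j}$ with $j\le i\le z_j$. The key point, absent from your plan, is that once these inner components are fixed the union of the remaining boxes is a \emph{translate} of a set depending only on the unfrozen $d_{i,j}$'s; for this translated set the $i$-th coordinate ranges over an interval of length at most $\log(2a_1\cdots a_{m_i})$ (with $z_{m_i}<i\le z_{m_i+1}$), and the number of boxes in it is $\prod_j(k-z_j+1)^{\omega(a_j/D_j)}$ where $D_j=d_{j,j}\cdots d_{z_j,j}$. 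Both entries of the $\min$ thus bound the volume of the \emph{same} region, which is why the $\min$ is legitimate term-by-term. Summing over the frozen components and grouping by $D_j$ then yields the factor $(z_j-j+1)^{\omega(D_j)}$, since each prime of $D_j$ is placed in one of the $z_j-j+1$ slots $d_{j,j},\dots,d_{z_j,j}$. Your level-slicing intuition is pointing at the right combinatorics, but the mechanism that makes it work is this translate-after-partial-freezing, not repeated use of Lemma~\ref{ub1l0}(b).
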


\begin{proof} Given a $k$-tuple $(d_1,\dots,d_k)\in\SN^k$ with $d_1\cdots d_i|a_1\cdots a_i$ for $1\le i\le k$, we may uniquely write $d_i=d_{i,1}d_{i,2}\cdots d_{i,i}$, $1\le i\le k$, with $d_{j,j}d_{j+1,j}\cdots d_{k,j}|a_j$ for $1\le j\le k$. Thus $$\mathcal{L}^{(k+1)}(\bv a)=\bigcup_{\substack{d_{j,j}d_{j+1,j}\cdots d_{k,j}|a_j\\1\le j\le k}}\prod_{i=1}^k[\log(d_{i,1}d_{i,2}\cdots
d_{i,i}/2),\log(d_{i,1}d_{i,2}\cdots d_{i,i})).$$ For $i\in\{1,\dots,k\}$ define $m_i$ as the unique element of $\{0,1,\dots,k\}$ such that $z_{m_i}<i\le z_{m_i+1}$. Note that $i>z_{m_i}\ge m_i-1$ and thus $m_i\le i$. Set $$\mathcal{I}=\{(i,j):1\le j\le k,j\le i\le z_j\}=\{(i,j):1\le i\le k, m_i<j\le i\}.$$ Given numbers $d_{i,j}$, $(i,j)\in\mathcal{I}$, with $d_{j,j}\cdots d_{z_j,j}|a_j$, $1\le j\le k$, we define the set \bes\bsp\mathcal{L}(\{d_{i,j}:(i,j)\in\mathcal{I}\})&=\bigcup_{\substack{d_{i,j},\;(i,j)\notin\mathcal{I}\\1\le j\le i\le k\\d_{z_j+1,j}\cdots d_{k,j}|\frac{a_j}{d_{j,j}\cdots
d_{z_j,j}}\ \forall j}}\prod_{i=1}^k[\log(d_{i,1}\cdots
d_{i,i}/2),\log(d_{i,1}\cdots d_{i,i}))\\
&=\bigcup_{\substack{d_{z_j+1,j}\cdots d_{k,j}|\frac{a_j}{d_{j,j}\cdots
d_{z_j,j}}\\1\le j\le k}}\prod_{i=1}^k[\log(d_{i,1}\cdots
d_{i,m_i}/2),\log(d_{i,1}\cdots d_{i,m_i}))\\
&\quad+\left(\log(d_{1,m_1+1}\cdots d_{1,1}),\log(d_{2,m_2+1}\cdots
d_{2,2}),\dots,\log(d_{k,m_k+1}\cdots d_{k,k})\right).\end{split}\ees The above identity implies that $$\vol\left(\mathcal{L}(\{d_{i,j}:(i,j)\in\mathcal{I}\})\right)\le\min\left\{\prod_{i=1}^k\log(2a_1\cdots a_{m_i}),(\log2)^k\prod_{i=1}^k(k-z_i+1)^{\omega\left(\frac{a_i}{d_{i,i}\cdots d_{z_i,i}}\right)}\right\}.$$ Since $$\mathcal{L}^{(k+1)}(\bv a)=\bigcup_{\substack{d_{i,j},\;(i,j)\in\mathcal{I}\\d_{j,j}\cdots
d_{z_j,j}|a_j\ \forall j}}\mathcal{L}(\{d_{i,j}:(i,j)\in\mathcal{I}\}),$$ we find that
\bes\begin{split}L^{(k+1)}(\bv
a)&\le\sum_{\substack{d_{i,j},\;(i,j)\in\mathcal{I}\\D_j=d_{j,j}\cdots
d_{z_j,j}|a_j\ \forall j}}\min\left\{\prod_{i=1}^k\log(2a_1\cdots a_{m_i}),(\log2)^k\prod_{i=1}^k(k-z_i+1)^{\omega(a_i/D_i)}\right\}\\
&=\sum_{\substack{D_j|a_j\\1\le j\le
k}}\left(\prod_{i=1}^k(z_i-i+1)^{\omega(D_i)}\right)\\
&\qquad\times\min\left\{\prod_{i=1}^k\log(2a_1\cdots
a_{m_i}),(\log2)^k\prod_{i=1}^k(k-z_i+1)^{\omega(a_i/D_i)}\right\}.\end{split}\ees
To complete the proof of the lemma note that
$$\prod_{i=1}^k\log(2a_1\cdots
a_{m_i})=\prod_{j=0}^k\log^{z_{j+1}-z_j}(2a_1\cdots a_j).$$
\end{proof}

Using the above lemma, we show that condition~\eqref{e0} is optimal, that is to say, for every fixed $\gamma$ such that
\be\label{gamma}\frac1{\log2}\log\frac1{\log2}<\gamma<1-\frac1{\log(k+1)}\log\left(\frac{(k+1)\log(k+1)-2\log2}{k-1}\right),\ee
there are choices of $y_1\le y_2\le \cdots y_k$ such that $\alpha=\alpha(k;\bv y)=\gamma$ and \be\label{thmfails}\hxy=o\left(\frac{\beta}{\sqrt{\log\log y_k}}\prod_{i=1}^k\left(\frac{\log
y_i}{\log y_{i-1}}\right)^{-Q((k-i+2)^\alpha)}\right)\quad(y_1\to\infty).\ee The argument we give is heuristic but, if combined with the results of Sections~\ref{poisson} and~\ref{ub}, it can be made rigorous.

The right inequality in~\eqref{gamma} is equivalent to \be\label{gamma1}\frac{(k+1)\log(k+1)-2\log2}{(k+1)^{1-\gamma}}<k-1.\ee Also, inequalities~\eqref{e000} and~\eqref{gamma} imply that $$k-(k+1)^\gamma\log(k+1)>0\quad{\rm and}\quad2^\gamma\log2-1>0.$$ So if we select $y_1=y_2=\cdots=y_{k-1}$ large enough, then there is a unique $y_k\ge y_{k-1}$ such that $$\ell_k=\frac1{2^\gamma\log2-1}\sum_{i=1}^{k-1}(k-i+1-(k-i+2)^\gamma\log(k-i+2))\ell_i,$$ that is to say, there is a unique $y_k$ so that the $k$-tuple $\bv y=(y_1,\dots,y_k)$ satisfies the relation $\alpha(k,\bv y)=\gamma$. We claim that~\eqref{thmfails} holds and we support this claim with a heuristic argument:

Similarly to Subsection \ref{main}, we consider $\bv a=(a_1,\dots,a_k)$ such that $a_i\in\mathcal{P}_*(2y_{i-1},2y_i)$. Note that we necessarily have that $a_2=\cdots a_{k-1}=1$, since $y_1=\cdots = y_{k-1}$. Set $r_i=\omega(a_i)$ for all $i\in\{1,\dots,k\}$ and assume further that $\log a_i\asymp\log y_i$ for $i\in\{1,k\}$, that
\be\label{he300}
r_i\sim(k-i+2)^\alpha\ell_i=(k-i+2)^\gamma\ell_i\quad(i\in\{1,k\},\ y_1\to\infty)
\ee 
and that \eqref{he1b}~holds. We will show that 
\be\label{he11}
L^{(k+1)}(\bv a)
	= o\left(\prod_{i=1}^k\log y_i\right)\quad(y_1\to\infty).
\ee 
Indeed, Lemma~\ref{conv_ineq}, applied with $z_1=\cdots=z_k=k-1$, implies that
\bes\bsp 
L^{(k+1)}(\bv a)
	&\ll_k\sum_{\substack{d_j|a_j \\ 1\le j\le k}}\left(\prod_{j=1}^{k}(k-j)^{\omega(d_j)}\right)
		\min\left\{\log y_k,\prod_{j=1}^{k} 2^{\omega(a_j/d_j)}\right\}  \\
	& = \sum_{d_1|a_1}(k-1)^{\omega(d_1)}\min\left\{\log y_k,2^{r_k+\omega(a_1/d_1)}\right\}
\end{split}\ees
(note that all summands with $d_k>1$ vanish and $d_j=a_j=1$ for $j\in\{2,\dots,k-1\}$). The main contribution to the sum $$\sum_{d_1|a_1}(k-1)^{\omega(d_1)}2^{r_k+\omega(a_1/d_1)}=(k+1)^{r_1}2^{r_k}=\prod_{j=1}^k(k-j+2)^{r_j}
\asymp_k\prod_{i=1}^k\log y_i$$ comes from integers $d_1$ such that
\be\label{he7}\omega(d_1)\sim\frac{k-1}{k+1}\cdot r_1\quad(y_1\to\infty).\ee
If $d_1$ satisfies~\eqref{he7}, then relations~\eqref{he1b},~\eqref{gamma1}~and \eqref{he300}~ and the fact that $r_i=0$ and $\ell_i=O(1)$ for $i\in\{2,\dots,k-1\}$ imply
that \bes\bsp(r_k+\omega(a_1/d_1))\log 2-\log\log y_k&=\frac{2\log2}{k+1}r_1+(\log 2)r_k-\ell_1-\ell_k+o_k(\ell_1)\\
&=(k-1)\ell_1-\frac{(k+1)\log(k+1)-2\log2}{k+1}r_1+o_k(\ell_1)\to+\infty\end{split}\ees as $y_1\to\infty$.
Consequently, for integers $d_1$ that satisfy~\eqref{he7} we have that
$$\min\left\{\log y_k,2^{r_k+\omega(a_1/d_1)}\right\}=\log y_k=o\left(2^{r_k+\omega(a_1/d_1)}\right)\quad(y_1\to\infty),$$ which in turn implies that
relation~\eqref{he11} is indeed true. This yields that, in contrast to the prediction of the arguments in
Subsection \ref{main}, $D_{k+1}(\bv a)$ is not well-distributed for such $\bv a$. Hence, in general, relation~\eqref{he6} overestimates the size of
$\hxy$.

\begin{rk} The information about $L^{(k+1)}(\bv a)$ that is contained in Lemma~\ref{conv_ineq} makes its appearance implicitly in the statement of Lemma \ref{lb1l2}. An approach that could potentially extend Theorem~\ref{thm2} to the case when condition~\eqref{e0} fails is to insert Lemma~\ref{conv_ineq} into the proof of the upper bound in Theorem \ref{thm2} (Section \ref{ub}) and then adjust the lower bound argument accordingly (Sections~\ref{lb_outline},~\ref{lb_proof1} and~\ref{lb_proof2}).
\end{rk}

\bigskip


\section{Local-to-global estimates}\label{loc_glob}

In this section we reduce the counting in $\hxy$ to the estimation of
$$\label{Sk}S^{(k+1)}(\bv t):=\sum_{\bv a\in\mathcal{P}_*^k(\bv t)}\frac{L^{(k+1)}(\bv a)}{a_1\cdots a_k}$$ and prove Theorem \ref{thm3}. This reduction has also been carried in the author's thesis~\cite{dkthesis}, but we give it here for completeness. The basic ideas behind it can be found in \cite{kf1}\;and \cite{dk}. However, the details are more complicated, especially in the proof of the upper bound implicit in Theorem \ref{thm3}, because of the presence of more parameters. Finally, we employ Theorem~\ref{thm3} to deduce Theorem~\ref{thm1} in Subsection~\ref{thm1pf}.

\begin{rk}\label{rk_induction} In order to show Theorem~\ref{thm3} for some $k\ge1$, we may assume
without loss of generality that $y_1>C_k'$, where $C_1',C_2',\dots,C_k',\dots$ is an increasing sequence of
large constants. Indeed, suppose for the moment that
Theorem~\ref{thm3} holds for all $k\ge1$ when $y_1>C_k'$ and consider
the case when $y_1\le C_k'$. Then either $y_k\le C_k'$, in which case
Theorem~\ref{thm3} follows immediately, or there exists
$l\in\{1,\dots,k-1\}$ such that $y_l\le C_k'<y_{l+1}$. In the latter
case let $\bv y^\prime=(y_{l+1},\dots,y_k)$ and $d=\lfloor
2y_1\rfloor\cdots\lfloor 2y_l\rfloor\le2^ly_1\cdots y_l\le(2C_k')^k$
and note that
$$
H^{(k-l+1)}\left(\frac xd,\bv y^\prime,2\bv y^\prime\right)\le\hxy
\le H^{(k-l+1)}(x,\bv y^\prime,2\bv y^\prime),
$$ 
Moreover,
$$
\frac{x/d}{y_{l+1}\cdots y_k}\ge\frac x{2^ly_1\cdots
y_k}\ge2^{k-l}y_k.
$$ 
As a result, the desired bound on $\hxy$ follows by Theorem~\ref{thm3} applied to $H^{(k-l+1)}(x,\bv y^\prime,2\bv
y^\prime)$ and $H^{(k-l+1)}(x/d,\bv y^\prime,2\bv y^\prime)$, which holds since $y_{l+1}>C_k'\ge C_{k-l}'$.
\end{rk}


\subsection{Auxiliary results}

\medskip

Before we launch into the proof of Theorem \ref{thm3}, we list a few
results from number theory and analysis that we shall need. First, we state a standard sieve estimate for easy reference (see for example~\cite[Theorem 06]{Hall_Ten}).

\begin{lemma}\label{prell1} For $4\le2z\le x$ we have
$$\lvert\{n\le x:P^-(n)>z\}\rvert\asymp\frac x{\log z}.$$
\end{lemma}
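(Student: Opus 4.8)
This is a classical sieve bound, and the reference \cite{Hall_Ten} already contains it; nevertheless, here is how I would argue. Write $\Phi(x,z)=\lvert\{n\le x:P^-(n)>z\}\rvert$. The case of bounded $x$ (say $4\le x\le 16$, whence $2\le z\le 8$) is trivial, since then $\Phi(x,z)\asymp1\asymp x/\log z$; so we may assume $x$ is large. The plan is to split, both for the upper and the lower bound, according to whether $z\le x^{1/4}$ or $x^{1/4}<z\le x/2$, and to use Mertens' theorem in the form $\prod_{p\le z}(1-1/p)\asymp 1/\log z$ throughout.

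For the \emph{upper bound}: if $z\le x^{1/4}$, apply the fundamental lemma of sieve theory (of dimension $1$) to $\{1,\dots,\lfloor x\rfloor\}$ sifted by the primes $p\le z$. Since the sifting parameter $u=\log x/\log z\ge4$ is bounded away from $1$, the error term is under control and the lemma gives $\Phi(x,z)\ll x\prod_{p\le z}(1-1/p)\asymp x/\log z$. If instead $x^{1/4}<z\le x/2$, then any $n\le x$ with $P^-(n)>z$ has at most three prime factors counted with multiplicity (else $n>z^4>x$); counting primes, products of two primes, and products of three primes separately, using Chebyshev's estimate together with $\sum_{z<p\le\sqrt x}1/p\ll\log\big(\log\sqrt x/\log z\big)\ll1$, one finds $\Phi(x,z)\ll x/\log x\asymp x/\log z$, the last step because $\log z\asymp\log x$ in this range.

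For the \emph{lower bound} I would use the same dichotomy. When $z\le x^{1/4}$ the fundamental lemma also yields the matching lower bound $\Phi(x,z)\gg x\prod_{p\le z}(1-1/p)\asymp x/\log z$. When $x^{1/4}<z\le x/2$ it suffices to count the primes in $(z,x]$: by the prime number theorem their number is $\pi(x)-\pi(z)\ge\pi(x)-\pi(x/2)\sim x/(2\log x)$, which is $\gg x/\log x\asymp x/\log z$ since again $\log z\asymp\log x$ here. The one delicate point is the lower bound for intermediate $z$: a naive inclusion--exclusion (Brun's pure sieve) or a single application of Buchstab's identity just barely fails to reach up to $z=x^{1/4}$, because the error term swamps the main term $x\prod_{p\le z}(1-1/p)$; this is precisely why one appeals to the fundamental lemma, whose remainder is controlled exactly because we have restricted to $z\le x^{1/4}$, i.e. $\log x/\log z\ge4$. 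All of this is packaged in \cite[Theorem 06]{Hall_Ten}.
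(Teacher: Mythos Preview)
Your proposal is correct, and in fact both you and the paper ultimately defer to the same source: the paper gives no argument whatsoever for this lemma, simply citing \cite[Theorem~06]{Hall_Ten} as a standard sieve estimate. Your sketch---fundamental lemma when $z\le x^{1/4}$, direct counting of integers with at most three prime factors (respectively, counting primes in $(z,x]$) when $x^{1/4}<z\le x/2$---is a sound reconstruction of that result, so there is nothing to compare beyond noting that you supplied details the paper omits.
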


Next, we have the following result, which follows by Lemma 2.3(b) in \cite{dk}.

\begin{lemma}\label{prell4} Let $f:\SN\to[0,+\infty)$ be an arithmetic function that satisfies the inequality
$f(ap)\le C_f f(a)$, for all integers $a$ and all primes $p$ with $(a,p)=1$, where $C_f$ is a positive
constant depending only on $f$. Also, let $h\ge0$ and $3/2\le y\le x\le z^C$ for some $C>0$. Then
$$\sum_{\substack{a\in\mathcal{P}_*(y,x)\\a>z}}\frac{f(a)}{a\log^h(P^+(a))}
\ll_{C_f,h,C}\exp\left\{-\frac{\log z}{2\log x}\right\}\frac1{\log^hx}\sum_{a\in\mathcal{P}_*(y,x)}\frac{f(a)}a.$$
\end{lemma}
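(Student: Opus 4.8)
\textbf{Proof proposal for Lemma~\ref{prell4}.}

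The plan is to reduce the estimate to a standard application of Rankin's trick, which is the usual device for controlling the "tail" of a sum over integers with all prime factors in a fixed range. First I would observe that since every $a\in\mathscr{P}_*(y,x)$ with $a>z$ automatically satisfies $P^+(a)\le x$, we have $\log^h(P^+(a))\ge(\log y)^h$ if $h\ge 0$ and $y$ is bounded below; but this crude bound is wasteful, so instead I would keep $P^+(a)$ in the game only when it helps and estimate $\log^{-h}(P^+(a))\le\log^{-h}(P^-(a))$ is \emph{not} what we want either. The clean route is: for any parameter $\sigma\in(0,1)$ to be chosen, write
\[
\sum_{\substack{a\in\mathscr{P}_*(y,x)\\ a>z}}\frac{f(a)}{a\log^h(P^+(a))}
\le z^{-\sigma}\sum_{a\in\mathscr{P}_*(y,x)}\frac{f(a)\,a^{\sigma}}{a\log^h(P^+(a))}
= z^{-\sigma}\sum_{a\in\mathscr{P}_*(y,x)}\frac{f(a)}{a^{1-\sigma}\log^h(P^+(a))}.
\]
The main obstacle is then to show that inflating each $a$ by $a^{\sigma}$ costs at most a bounded factor, i.e. that
\[
\sum_{a\in\mathscr{P}_*(y,x)}\frac{f(a)}{a^{1-\sigma}\log^h(P^+(a))}
\ll_{C_f,h,C}\Bigl(\sum_{a\in\mathscr{P}_*(y,x)}\frac{f(a)}{a\log^h(P^+(a))}\Bigr),
\]
provided $\sigma$ is taken of the form $\sigma=c/\log x$ for a suitable absolute constant $c$.

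To prove this comparison I would invoke the hypothesis $f(ap)\le C_f f(a)$, which is exactly the submultiplicativity needed to run an Euler-product-type bound: factoring the largest prime $p=P^+(a)$ out of $a$ and iterating, one shows that for $a\in\mathscr{P}_*(y,x)$ one has $f(a)\le C_f^{\omega(a)}$ up to the value of $f$ on the empty product, and more usefully that the weighted sum with the extra factor $a^{\sigma}=\prod_{p\mid a}p^{\sigma}$ is controlled by the same sum without it, because for each prime $p\le x$ we have $p^{\sigma}=\exp(\sigma\log p)\le\exp(\sigma\log x)=e^{c}$. Thus the factor $a^{\sigma}$ contributes at most $e^{c\,\omega(a)}$, and absorbing this into $f$ (replacing $C_f$ by $C_f e^{c}$, which still satisfies the hypothesis of Lemma~2.3(b) of~\cite{dk}) gives the comparison with an implied constant depending only on $C_f$, $h$ and $c$. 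Here is where I would lean on the already-cited Lemma~2.3(b) from~\cite{dk}: it furnishes precisely the kind of stability of such weighted prime-supported sums under bounded perturbation of the multiplicativity constant, handling also the harmless $\log^{-h}(P^+(a))$ weight. The role of $h$ and the weight $\log^h(P^+(a))$ is essentially passive throughout, since $P^+(a)$ is unchanged by the Rankin weighting; one just has to make sure the reference lemma is applied with the weight in place, which it allows.

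Finally I would assemble the pieces: with $\sigma=c/\log x$ (any fixed small $c$, say $c=\tfrac12$ giving the exponent $\tfrac{\log z}{2\log x}$ as in the statement) we get $z^{-\sigma}=\exp\{-\tfrac{\sigma\log z}{1}\}=\exp\{-\tfrac{\log z}{2\log x}\}$, and then I would discard the harmless restriction $a\in\mathscr{P}_*(y,x)$ in the majorant sum and bound $\log^{-h}(P^+(a))\le\log^{-h}x$ only at the very end — wait, that last replacement goes the wrong way; instead I would note $P^+(a)\le x$ forces $\log^{-h}(P^+(a))\ge\log^{-h}x$, so to extract the clean $\log^{-h}x$ I should insert it by comparing $\sum_a f(a)/(a\log^h(P^+(a)))$ with $\log^{-h}x\sum_a f(a)/a$ using the assumption $x\le z^C$, i.e. $\log x\le C\log z$ keeps $P^+(a)\ge y\gg 1$ and the two logarithmic weights comparable up to the constant $C$ — this last comparison, again an instance of Lemma~2.3(b) or a direct partial summation, is the only slightly delicate point and depends on the hypothesis $x\le z^C$. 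Combining, the right-hand side becomes $\exp\{-\tfrac{\log z}{2\log x}\}\,\log^{-h}x\,\sum_{a\in\mathscr{P}_*(y,x)}f(a)/a$, as claimed. I expect the Rankin step to be routine; the genuine work is packaged inside the invoked lemma of~\cite{dk}, and the care needed in this proof is just to verify its hypotheses survive the perturbation $C_f\mapsto C_f e^{c}$ and the presence of the $\log^h(P^+(a))$ weight.
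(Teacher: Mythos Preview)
The paper's own proof is just a one-line citation to Lemma~2.3(b) of~\cite{dk}, so there is no argument to compare against directly; your Rankin-trick setup is almost certainly the idea underlying that cited lemma, and for $h=0$ your outline is essentially correct (the step $\sum_a f(a)a^\sigma/a\ll_{C_f}\sum_a f(a)/a$ does follow from the hypothesis, e.g.\ by writing $a^\sigma=\prod_{p\mid a}(1+(p^\sigma-1))$, expanding, and using $f(db)\le C_f^{\omega(d)}f(b)$).

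The gap is in your treatment of the weight $\log^{-h}P^+(a)$ for $h>0$. The comparison you propose at the end,
\[
\sum_{a\in\mathscr{P}_*(y,x)}\frac{f(a)}{a\log^h P^+(a)}\ \ll\ \frac{1}{\log^h x}\sum_{a\in\mathscr{P}_*(y,x)}\frac{f(a)}{a},
\]
is simply \emph{false} in general: take $f\equiv1$, $y=3/2$, $h=1$; then the left side is $\asymp\log\log x$ while the right side is $\asymp1$. Your appeal to the hypothesis $x\le z^C$ cannot rescue this, because $z$ does not appear in either sum you are trying to compare. The condition $x\le z^C$ must be used \emph{before} $z$ is eliminated, not after.

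A correct route is to decompose by the size of $P^+(a)$ \emph{first} and choose the Rankin parameter accordingly. For instance, on the range $P^+(a)\in(x^{2^{-(j+1)}},x^{2^{-j}}]$ one has $\log^{-h}P^+(a)\asymp 2^{jh}\log^{-h}x$, and applying Rankin with $\sigma_j=2^{j-1}/\log x$ (so that primes $\le x^{2^{-j}}$ still satisfy $p^{\sigma_j}\le e^{1/2}$) yields a saving of $z^{-\sigma_j}=\exp\{-2^{j-1}\log z/\log x\}$. Summing over $j\ge0$ gives
\[
\frac{1}{\log^h x}\Bigl(\sum_{a}\frac{f(a)}{a}\Bigr)\sum_{j\ge0}2^{jh}\exp\Bigl\{-2^{j-1}\frac{\log z}{\log x}\Bigr\},
\]
and it is precisely the hypothesis $x\le z^C$ (i.e.\ $\log z/\log x\ge1/C$) that makes this last series $\ll_{h,C}\exp\{-\log z/(2\log x)\}$. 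This is presumably the content packaged inside the cited Lemma~2.3(b).
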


Finally, we need a covering lemma which is a slightly different
version of Lemma 3.15 in \cite{fol}. If $r$ is a positive real
number and $I$ is a $k$-dimensional rectangle, then $r I$ will
denote the rectangle which has the same center with $I$ and $r$
times its diameter. More formally, if $\bv{x_0}$ is the center of
$I$, then $r I:=\{r(\bv x-\bv{x_0})+\bv{x_0}:\bv x\in I\}.$ The
lemma is then formulated as follows:

\begin{lemma}\label{prell3} Let $I_1,...,I_N$ be $k$-dimensional cubes of the form
$[a_1,b_1)\times\cdots\times[a_k,b_k)$ $(b_1-a_1=\cdots=b_k-a_k>0)$. Then there exists a sub-collection $I_{i_1},\dots,I_{i_M}$ of mutually disjoint cubes such that 
$$
\bigcup_{n=1}^NI_n\subset\bigcup_{m=1}^M3I_{i_m}.
$$
\end{lemma}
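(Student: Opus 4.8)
The statement to prove is Lemma~\ref{prell3}, a Vitali-type covering lemma for $k$-dimensional cubes. Let me think about how to prove this.

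\begin{proof}[Proof proposal]
The plan is to run the standard greedy selection argument underlying the Vitali covering lemma, keeping careful track of the geometry so that a dilation factor of $3$ suffices. First I would order the cubes $I_1,\dots,I_N$ by decreasing side length (equivalently, decreasing diameter), breaking ties arbitrarily; relabel so that $\operatorname{diam}(I_1)\ge\operatorname{diam}(I_2)\ge\cdots\ge\operatorname{diam}(I_N)$. Then select greedily: put $I_{i_1}=I_1$ into the sub-collection, and having chosen $I_{i_1},\dots,I_{i_j}$, let $I_{i_{j+1}}$ be the first cube in the ordering that is disjoint from all of $I_{i_1},\dots,I_{i_j}$. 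Since there are finitely many cubes, this terminates after some $M\le N$ steps, producing mutually disjoint cubes $I_{i_1},\dots,I_{i_M}$ by construction.

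The heart of the argument is the covering claim $\bigcup_{n=1}^N I_n\subset\bigcup_{m=1}^M 3I_{i_m}$. Fix an arbitrary $I_n$. If $I_n$ was itself selected, there is nothing to prove. Otherwise, by the greedy rule $I_n$ must intersect at least one previously selected cube; let $I_{i_m}$ be the first selected cube (in selection order) that meets $I_n$. Because $I_{i_m}$ was selected before $I_n$ was passed over --- and selection proceeds in order of decreasing diameter --- we have $\operatorname{diam}(I_{i_m})\ge\operatorname{diam}(I_n)$, i.e. the side length of $I_{i_m}$ is at least that of $I_n$. Now I would argue geometrically: if two axis-parallel cubes $I$ and $J$ with side lengths $s_I\ge s_J$ have a common point $\bv p$, then every point of $J$ lies within distance $\le s_J\sqrt k\le s_I\sqrt k$ (in sup-norm, $\le s_J\le s_I$) of $\bv p$, hence within distance $\le s_J + s_I$ of the center of $I$ along each coordinate axis; since $s_J\le s_I$, this is at most $\tfrac32 s_I$ along each axis, which is exactly the half-side of $3I$. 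Therefore $I_n\subset 3I_{i_m}$. As $I_n$ was arbitrary, the union is covered.

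The step I expect to require the most care is the elementary geometric estimate in the previous paragraph: one must be slightly careful about whether ``diameter'' is measured in Euclidean or sup-norm and about the placement of centers, to confirm that the factor $3$ (rather than, say, $5$ as in some formulations) is correct for cubes whose selected representative has \emph{larger} side length. Concretely, writing $I_{i_m}=\prod_{\ell=1}^k[c_\ell-\tfrac{s}2,c_\ell+\tfrac{s}2)$ with side $s$ and $I_n$ a cube of side $s'\le s$ sharing a point with it, a coordinate-by-coordinate comparison shows each coordinate of a point of $I_n$ lies in $[c_\ell-\tfrac{s}2-s',\,c_\ell+\tfrac{s}2+s')\subset[c_\ell-\tfrac{3s}2,\,c_\ell+\tfrac{3s}2)$, which is the $\ell$-th edge of $3I_{i_m}$. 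Everything else --- finiteness, disjointness of the selected sub-collection, termination of the greedy process --- is immediate. Note that the hypothesis that all the $I_n$ are genuine cubes (equal side lengths in every coordinate direction), rather than general rectangles, is what makes the clean factor $3$ possible; the proof uses this only through the identification of ``diameter ordering'' with ``side-length ordering''.
\end{proof}
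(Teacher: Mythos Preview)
Your proof is correct and is precisely the standard greedy Vitali argument. The paper itself does not supply a proof of this lemma but merely refers the reader to Lemma~3.15 of Folland's \emph{Real Analysis}; your argument is essentially the one found there, adapted to half-open axis-parallel cubes.
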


\subsection{The lower bound in Theorem~\ref{thm3}} We start with the proof of the lower bound implicit in
Theorem \ref{thm3}, which is simpler. First, we prove a weaker result; then we use Lemma \ref{prell4}\;to
complete the proof. Note that the lemma below is similar to Lemma
2.1 in \cite{kf1}, Lemma 4.1 in \cite{kf2}\;and Lemma 3.2 in
\cite{dk}.

\begin{lemma}\label{lgl1} Let $k\ge1$, $x\ge 1$ and $3=y_0\le y_1\le y_2\le\cdots\le y_k$ such that $2^ky_1\cdots y_k\le x/y_k$ and $y_1>C_k^\prime$. Then
\[
\frac{\hxy}x
\gg_k\prod_{i=1}^k\left(\frac{\log y_i}{\log y_{i-1}}\right)^{-(k-i+2)}
\sum_{\substack{\bv a\in\mathcal{P}_*^k(2\bv y)\\a_i\le y_i^{1/(8k)}\;(1\le i\le k)}}\frac{L^{(k+1)}(\bv a)}{a_1\cdots a_k}.
\]
\end{lemma}

\begin{proof} Set $$x^\prime=\frac x{2^ky_1\cdots y_k}\ge y_k.$$ Consider integers $n=a_1\cdots a_kp_1\cdots
p_kb\le x$ such that the following hold:
\renewcommand{\labelenumi}{(\arabic{enumi})}
\begin{enumerate}
\item $\bv a\in\mathcal{P}_*^k(2\bv y)$ and $a_i\le y_i^{1/(8k)}$ for $i=1,\dots,k$;
\item $p_1,\dots,p_k$ are prime numbers with $(\log(y_1/p_1),\dots,\log(y_k/p_k))\in\mathcal{L}^{(k+1)}(\bv a)$;
\item If $x^\prime\le y_k^2$, then let $b$ be a prime number $>y_k^{1/8}$; if $x^{\prime}>y_k^2$, then let $b$ be an integer with $P^-(b)>2y_k$.
\end{enumerate}
Note that for every $i\in\{1,\dots,k\}$ all prime factors of $a_i$ lie in $(2y_{i-1},y_i^{1/(8k)}]$. Also, condition (2) in the
definition of $n$ is equivalent to the existence of integers $d_1,\dots,d_k$ such that $d_1\cdots d_i|a_1\cdots a_i$ and
$y_i/p_i<d_i\le2 y_i/p_i$ for all $i\in\{1,\dots,k\}$. In particular, $\tau_{k+1}(n,\bv y,2\bv y)\ge 1$. Furthermore, we have that
$$y_i^{7/8}\le\frac {y_i}{a_1\cdots a_i}\le\frac{y_i}{d_i}<p_i\le2\frac{y_i}{d_i}\le2y_i.$$ So $(a_1\cdots a_k,p_1\cdots p_kb)=1$ and hence this representation of $n$, if it exists, is unique up to a possible permutation of $p_1,\dots,p_k$ and the prime factors of $b$ that lie in $(y_1^{7/8},2y_k]$. Since $b$ has at most one such prime factor, $n$
has a bounded number of such representations. Fix $a_1,\dots,a_k$ and $p_1,\dots,p_k$ and note that
\be\label{lge2}X:=\frac x{a_1\cdots a_kp_1\cdots p_k}\ge\frac{x^\prime}{y_k^{1/8}}\ge(x^\prime)^{7/8}>2y_k^{1/8}.\ee
We start by counting the number of possibilities for $b$. We consider two cases. First, if $x^\prime>y_k^2$, then $X>4y_k$, by \eqref{lge2}, provided that $C_k'$
is large enough. So Lemma~\ref{prell1} implies that $$\sum_{b\;{\rm admissible}}1=\sum_{b\le X,P^-(b)>2y_k}1\gg_k\frac X{\log y_k},$$ by Lemma \ref{prell1}. On the other hand, if $x^\prime\le y_k^2$, then $$X=\frac x{a_1\cdots a_kp_1\cdots p_k}\le\frac{xd_1\cdots d_k}{a_1\cdots a_ky_1\cdots y_k}\le 2^kx^{\prime}\le2^ky_k^2.$$
The above inequality and~\eqref{lge2} imply that $$\sum_{b\;{\rm admissible}}1=\sum_{\substack{y_k^{1/8}<b\le X\\b\;\text{prime}}}1\ge\sum_{\substack{X/2<b\le X\\b~{\rm prime}}}1\gg\frac X{\log X}\gg_k\frac{X}{\log y_k}.$$ In any case, we have that $$\sum_{b\;{\rm admissible}}1\gg_k\frac
X{\log y_k}$$ and, consequently, \be\label{lge3}\hxy\gg_k\frac x{\log y_k}\sum_{\substack{\bv a\in\mathcal{P}_*(2\bv y)\\a_i\le
y_i^{1/8k}\;(1\le i\le k)}}\frac1{a_1\cdots a_k}\sum_{(\log\frac{y_1}{p_1},...,\log\frac{y_k}{p_k})\in\mathcal{L}^{(k+1)}(\bv
a)} \frac1{p_1\cdots p_k}.\ee Fix $\bv a\in\mathcal{P}_*^k(2\bv y)$ with $a_i\le y_i^{1/(8k)}$ for $i=1,\dots,k$. Let $\{I_r\}_{r=1}^R$ be
the collection of cubes $[\log(d_1/2),\log d_1)\times\cdots\times[\log(d_k/2),\log d_k)$ with $d_1\cdots
d_i|a_1\cdots a_i$, $1\le i\le k$. Then for $I=[\log(d_1/2),\log d_1)\times\cdots\times[\log(d_k/2),\log d_k)$ in this collection we
have that
$$\sum_{(\log\frac{y_1}{p_1},...,\log\frac{y_k}{p_k})\in I}\frac1{p_1\cdots p_k}=\prod_{i=1}^k\sum_{y_i/d_i<p_i\le2y_i/d_i}\frac1{p_i}\gg_k\frac1{\log y_1\cdots\log y_k},$$
because $d_i\le a_1\cdots a_i\le y_i^{1/8}$ for $1\le i\le k$. By Lemma~\ref{prell3}, there exists a sub-collection $\{I_{r_s}\}_{s=1}^S$ of
mutually disjoint cubes so that $$S(3\log2)^k\ge\vol\left(\bigcup_{s=1}^S3I_{r_s}\right)\ge\vol\left(\bigcup_{r=1}^RI_r\right)=L^{(k+1)}(\bv a).$$ Hence
\bes\bsp\sum_{(\log\frac{y_1}{p_1},...,\log\frac{y_k}{p_k})\in\mathcal{L}^{(k+1)}(\bv
a)}\frac1{p_1\cdots p_k}\ge\sum_{s=1}^S\sum_{(\log\frac{y_1}{p_1},...,\log\frac{y_k}{p_k})\in
I_{r_s}}\frac1{p_1\cdots p_k}&\gg_k\frac S{\log y_1\cdots\log y_k}\\
&\gg_k\frac{L^{(k+1)}(\bv a)}{\log y_1\cdots\log y_k}.\end{split}\ees Combining the above estimate with
\eqref{lge3}\;completes the proof of the lemma.
\end{proof}

Having proven the above lemma, it is not so hard to finish the proof of the lower bound of Theorem \ref{thm3}. We give the argument below.

\begin{proof}[Proof of Theorem~\ref{thm3} (lower bound)] For every fixed $i\in\{1,\dots,k\}$ and integers $a_1,\dots,a_{i-1}$ and $a_{i+1},\dots,a_k$, the function $a_i\to
L^{(k+1)}(\bv a)$ satisfies the hypothesis of Lemma \ref{prell4}\;with $C_f=k-i+2\le k+1$, by Lemma \ref{ub1l0}(b). So
if we set 
$$
\mathcal{P}=\left\{\bv a\in\SN^k:a_i\in\mathcal{P}\left(2y_{i-1},y_i^{1/M}\right)~(1\le i\le k)\right\}
$$ 
for some sufficiently large $M=M(k)$, then
\bes\begin{split}\sum_{\substack{\bv a\in\mathcal{P}_*^k(2\bv y)\\a_i\le y_i^{1/(8k)}\,(1\le i\le k)}}\frac{L^{(k+1)}(\bv a)}{a_1\cdots a_k}\ge\sum_{\substack{\bv
a\in\mathcal{P}\\a_i\le y_i^{1/(8k)}\,(1\le i\le k)}}\frac{L^{(k+1)}(\bv a)}{a_1\cdots a_k}&=\sum_{\bv a\in\mathcal{P}}\frac{L^{(k+1)}(\bv a)}{a_1\cdots a_k}\left(1+O_k\left(e^{-\frac M{16k}}\right)\right)\\
&\ge\frac12\sum_{\bv a\in\mathcal{P}}\frac{L^{(k+1)}(\bv a)}{a_1\cdots a_k}.\end{split}\ees
By the above inequality and Lemma \ref{ub1l0}(b), we deduce that
\bes\begin{split}S^{(k+1)}(\bv y)&\le\sum_{\bv a\in\mathcal{P}}\frac{L^{(k+1)}(\bv a)}{a_1\cdots a_k}\prod_{i=1}^k\sum_{\substack{b_i\in\mathcal{P}_*(y_{i-1},2y_{i-1})\\\text{or}\;b_i\in\mathcal{P}_*(y_i^{1/M},y_i)}}
\frac{\tau_{k-i+2}(b_i)}{b_i}\ll_k\sum_{\substack{\bv a\in\mathcal{P}_*^k(2\bv y)\\a_i\le y_i^{1/(8k)}\,(1\le i\le k)}}\frac{L^{(k+1)}(\bv a)}{a_1\cdots a_k}.\end{split}\ees
Combining the above estimate with Lemma \ref{lgl1}\;completes the
proof of the lower bound in Theorem \ref{thm3}.
\end{proof}


\subsection{The upper bound in Theorem~\ref{thm3}} In this subsection we complete the proof of Theorem \ref{thm3}. Before we proceed to the proof,
we need to define some auxiliary notation. For $\bv y,\bv z\in\SR^k$ and $x\ge1$ set 
$$
H^{(k+1)}_*(x,\bv y,\bv z)
	=\lvert\{n\le x:\mu^2(n)=1,\ \exists d_1\cdots d_k|n\ {\rm such~that}\ y_i<d_i\le z_i\ (1\le i\le k)\}\rvert.
$$ 
Also, for $\bv t\in[1,+\infty)^k$, $\bv h\in[0,+\infty)^k$ and $\epsilon>0$
define 
\bes\bsp\mathcal{P}_*^k(\bv t;\epsilon)
	= \left\{\bv a\in\SN^k  :   a_i\in\mathcal{P}_*\left( \max\left\{P^+(a_1\cdots a_{i-1}),
			\frac{t_{i-1}^\epsilon}{a_1\cdots a_{i-1}}\right\},t_i\right)  \ (1\le i\le k)\right\},
\end{split}\ees where $t_0=1$, and
$$
S^{(k+1)}(\bv t;\bv h,\epsilon)=\sum_{\bv a\in\mathcal{P}_*^k(\bv
t;\epsilon)}\frac{L^{(k+1)}(\bv a)}{a_1\cdots
a_k}\prod_{i=1}^k\log^{-h_i}\left(P^+(a_1\cdots
a_i)+\frac{t_i^\epsilon}{a_1\cdots a_i}\right).
$$ 
Lastly, let $$\label{ek}\bv e_k=(e_{k,1},\dots,e_{k,k})=(\underbrace{1,\dots,1}_{k-1~\text{times}},2)\in\SR^k.$$
Then we have the following estimate.

\begin{lemma}\label{lgl2}
Let $\sqrt{C_k'}\le y_1\le\cdots\le y_k\le x$ with $2^{k+1}y_1\cdots y_k\le
x/(2y_k)^{7/8}$. Then
$$H^{(k+1)}_*(x,\bv y,2\bv y)-H^{(k+1)}_*(x/2,\bv y,2\bv
y)\ll_kxS^{(k+1)}(2\bv y;\bv e_k,7/8).$$
\end{lemma}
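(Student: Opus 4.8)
\textbf{Proof strategy for Lemma~\ref{lgl2}.} The plan is to count squarefree integers $n\in(x/2,x]$ that possess a divisor tuple in the boxes $(y_i,2y_i]$ by first peeling off their ``anatomical skeleton''. Given such an $n$, write $n=a_1\cdots a_kb$ where each $a_i$ collects the prime factors of $n$ in the range $(2y_{i-1},2y_i]$ (with $y_0=3$) and $b$ is composed of primes $>2y_k$ together with at most a controlled set of very small primes; one shows, as in the lower-bound argument and in \cite{kf1,dk}, that whenever $\tau_{k+1}(n,\bv y,2\bv y)\ge1$ the divisor $d_i$ in the $i$-th box must be built from the primes of $a_1\cdots a_i$, so in fact $(\log(y_1/d_1),\dots,\log(y_k/d_k))$ (equivalently a suitable translate) lands in the set $\mathcal{L}^{(k+1)}(\bv a)$. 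Thus the condition ``$n$ is counted'' forces $(\log y_1,\dots,\log y_k)$ modulo the obvious shift to lie in $\mathcal{L}^{(k+1)}(\bv a)$, and the heart of the matter becomes estimating, for fixed $\bv a$, the number of admissible completions $n=a_1\cdots a_k b\in(x/2,x]$.

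\textbf{Key steps.} First I would reduce to squarefree $n$ and to the dyadic window $(x/2,x]$ — this is already built into the statement, so no extra work is needed there. Second, I would set up the factorization $n=a_1\cdots a_k b$ and verify the crucial \emph{localization}: when $n$ is counted, one can choose the box-divisors $d_i$ so that $d_i\mid a_1\cdots a_i$ and $y_i/d_i< (\text{a prime or prime-power block of }b)\le 2y_i/d_i$; this is where Lemma~\ref{prell3} (the covering lemma) enters to convert the combinatorial divisor count into the Lebesgue measure $L^{(k+1)}(\bv a)$. Third, for fixed $\bv a$ I would count the completions: the $b$-part ranges over integers $\le x/(a_1\cdots a_k)$ with $P^-(b)>2y_k$ except for a bounded-size ``exceptional'' set of primes near $y_k$, and Lemma~\ref{prell1} gives $\asymp_k (x/(a_1\cdots a_k))/\log y_k$ such $b$, which after summing over the $\le (\log 2)^k\cdots$ many divisor configurations produces the weight $L^{(k+1)}(\bv a)/(a_1\cdots a_k)$ times $\prod_i \log^{-1}(\cdots)$. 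Fourth — and this is the technically heavy step — I would handle the constraint forcing $\bv a\in\mathscr{P}_*^k(2\bv y;7/8)$ rather than the cruder $\mathscr{P}_*^k(2\bv y)$: the lower bound $P^+(a_1\cdots a_{i-1})$ or $(2y_{i-1})^{7/8}/(a_1\cdots a_{i-1})$ on the smallest prime of $a_i$ reflects the fact that if $a_1\cdots a_{i-1}$ is already quite large, then $d_{i-1}$ can be taken large and the relevant primes of $b$ in the $i$-th box are correspondingly small; the exponent $7/8$ tracks how much of the ``budget'' $\log y_i$ is consumed. Finally I would assemble these pieces, noting that the $\bv e_k=(1,\dots,1,2)$ vector of exponents appears because the last box must accommodate the full $b$-part, contributing the extra $\log^{-1} y_k$ from the sieve count while each earlier box contributes one $\log^{-1}$ from the sum over primes in $(y_i/d_i,2y_i/d_i]$.

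\textbf{Main obstacle.} The delicate point is the fourth step: rather than a clean bijection, one has an over-counting of $n$ (permutations of the primes of $b$ near $y_k$, ambiguity in how primes are distributed between consecutive $a_i$ when $P^+(a_{i-1})$ is close to $P^-(a_i)$), and simultaneously an \emph{under-counting} risk if one is too restrictive in defining the skeleton. Reconciling these forces exactly the set $\mathscr{P}_*^k(2\bv y;7/8)$ and the shifted lower endpoints $\max\{P^+(a_1\cdots a_{i-1}),t_{i-1}^{7/8}/(a_1\cdots a_{i-1})\}$ in its definition; getting the constant $7/8$ (inherited from the hypothesis $2^{k+1}y_1\cdots y_k\le x/(2y_k)^{7/8}$) to propagate correctly through all $k$ boxes, and checking that the exceptional primes contribute $O_k(1)$ multiplicity so they can be absorbed, is where the proof requires care. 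The presence of $k$ nested parameters — as the paper itself flags in the remark opening this section — makes the bookkeeping substantially more involved than in the $k=1$ case of \cite{kf1}, but no genuinely new idea beyond Lemmas~\ref{prell1},~\ref{prell3},~\ref{ub1l0} and~\ref{prell4} is needed for this particular lemma.
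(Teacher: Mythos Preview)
Your decomposition is the one used for the \emph{lower} bound (Lemma~\ref{lgl1}) and the heuristic of Section~\ref{heur}, and it does not yield the upper bound stated in Lemma~\ref{lgl2}. With the static splitting $n=a_1\cdots a_k b$ where $a_i$ collects the primes of $n$ in $(2y_{i-1},2y_i]$, the condition $\tau_{k+1}(n,\bv y,2\bv y)\ge1$ is a yes/no property of $\bv a$ alone, so summing over $b$ via Lemma~\ref{prell1} gives at best
\[
H^{(k+1)}_*(x,\bv y,2\bv y)-H^{(k+1)}_*(x/2,\bv y,2\bv y)\ll_k\frac{x}{\log y_k}\sum_{\substack{\bv a\in\mathscr{P}_*^k(2\bv y)\\ \tau_{k+1}(\bv a,\bv y,2\bv y)\ge1}}\frac{1}{a_1\cdots a_k}.
\]
There is no sum over auxiliary primes $p_i\in(y_i/d_i,2y_i/d_i]$ to generate the extra factors $\prod_{i<k}\log^{-1}(\cdots)$, nor any mechanism to produce the weight $L^{(k+1)}(\bv a)$ --- the ``summing over divisor configurations'' you mention is a step from the \emph{construction} of $n$ in the lower bound, not something available when decomposing a given $n$. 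Equally, this $\bv a$ lies in $\mathscr{P}_*^k(2\bv y)$, not in $\mathscr{P}_*^k(2\bv y;7/8)$: the latter requires the lower threshold for the primes of $a_i$ to be $\max\{P^+(a_1\cdots a_{i-1}),(2y_{i-1})^{7/8}/(a_1\cdots a_{i-1})\}$, which has no relation to the fixed cut at $2y_{i-1}$.

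The paper's proof uses a genuinely different decomposition. Given $n$ and divisors $d_1,\dots,d_k$ in the boxes, it sets $d_{k+1}=n/(d_1\cdots d_k)$, chooses $\sigma\in S_{k+1}$ so that $P^+(d_{\sigma(1)})<\cdots<P^+(d_{\sigma(k+1)})$, peels off the primes $p_i=P^+(d_{\sigma(i)})$, and \emph{then} defines $a_i$ as the product of primes of $n$ in $(p_{i-1},p_i)$. This dynamic definition is what forces the lower bound $p_i>Q_i=\max\{P^+(a_1\cdots a_i),(2y_i)^{7/8}/(a_1\cdots a_i)\}$ (the $7/8$ coming from $z_i\ge y_i^{7/8}$ after reordering the $y_j$'s), places $\bv a$ in $\mathscr{P}_*^k(2\bv y;7/8)$, and --- crucially --- introduces a genuine sum over the primes $p_1,\dots,p_k$. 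That sum, analyzed via the covering Lemma~\ref{prell3}, is what converts the divisor condition into $L^{(k+1)}(\bv a)\prod_i(\log Q_i)^{-1}$ and delivers the full $\bv e_k$ vector of logarithmic savings.
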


\begin{proof} Let $n\in(x/2,x]$ be a square-free integer for which there exist integers $d_i\in(y_i,2y_i]$, $1\le i\le k$, with $d_1\cdots d_k|n$. If we set $d_{k+1}=n/(d_1\cdots d_k)$ and $y_{k+1}=x/(2^{k+1}y_1\cdots y_k)$, then we have that $n=d_1\cdots d_{k+1}$ with $y_i<d_i\le 2^{k+1}y_i$ for $1\le i\le k+1$. Let $z_1,\dots,z_{k+1}$ be the sequence
$y_1,\dots,y_{k+1}$ ordered increasingly. Also, let $\sigma$ be the unique permutation in $S_{k+1}$ for which $P^+(d_{\sigma(1)})<\cdots<P^+(d_{\sigma(k+1)})$ and set $p_j=P^+(d_{\sigma(j)})$ for $1\le j\le k+1$ and $p_0=1$. We can write $n=a_1\cdots a_k p_1\cdots p_kb$ with $P^-(b)>p_k$ and $a_i\in\mathcal{P}_*(p_{i-1},p_i)$ for all $1\le i\le k$. We claim that 
\be\label{lge7} 
p_i>Q_i  :=\max\left\{P^+(a_1\cdots a_i),\frac{(2y_i)^{7/8}}{a_1\cdots a_i}\right\}
	\quad(1\le i\le k).
\ee
Indeed, for every $j\in\{1,\dots,k\}$ we have that $y_{\sigma(j)}<d_{\sigma(j)}=p_jd$ for some $d|a_1\cdots a_j$ and therefore $y_{\sigma(j)}<p_ja_1\cdots a_j$. Consequently, 
$$
p_i=\max_{1\le j\le i}p_j  
	>  \max_{1\le j\le i}\frac{y_{\sigma(j)}}{a_1\cdots a_j}
	\ge  \frac{\max_{1\le j\le i}y_{\sigma(j)}}{a_1\cdots a_i}
	\ge \frac{z_i}{a_1\cdots a_i}\ge\frac{(2y_i)^{7/8}}{a_1\cdots a_i}\quad(1\le i\le k),
$$ 
by the definition of $z_1,\dots,z_{k+1}$ and our assumption that $y_1\le\cdots\le y_k\le \frac12y_{k+1}^{8/7}$. Moreover, 
$$
p_i=\max_{1\le j\le i}p_j>\max_{1\le j\le i}P^+(a_j)
	=P^+(a_1\cdots a_j).
$$ 
So~\eqref{lge7} follows. In addition, 
$$
P^+(a_i) < p_i 
		= P^+(d_{\sigma(i)})
		\le  \max_{1\le j\le i}P^+(d_j)
		\le 2y_i\quad(1\le i\le k),
$$ 
by the choice of
$\sigma$, and 
$$
P^-(a_i) > p_{i-1} > Q_{i-1}\quad(2\le i\le k),
$$ 
by~\eqref{lge7}. In particular, $\bv
a=(a_1,\dots,a_k)\in\mathcal{P}^k_*(2\bv y;7/8)$. Furthermore, note that 
$$
(d_{\sigma(1)}/p_1)\cdots(d_{\sigma(i)}/p_i) | a_1\cdots a_i
	\quad{\rm and}\quad
\frac{y_{\sigma(i)}}{p_i}<\frac{d_{\sigma(i)}}{p_i}\le\frac{2^{k+1}y_{\sigma(i)}}{p_i}
	\quad(1\le i\le k),
$$ 
that is to say, there are numbers $w_1,\dots,w_k\in\{1,2,2^2,\dots,2^k\}$ such that 
\be\label{lge77}
\left(\log\frac{w_1y_{\sigma(1)}}{p_1},\dots,\log\frac{w_ky_{\sigma(k)}}{p_k}\right)
	\in \mathcal{L}^{(k+1)}(\bv a).
\ee 
Lastly, observe that $p_{k+1}|b$ and consequently $b\ge p_{k+1}>p_k>Q_k$, by~\eqref{lge7}. Similarly, we have $P^-(b)>p_k>Q_k$. Combining all of the above, we deduce that
\be\label{lge9}\bsp
& H^{(k+1)}_*(x,\bv y,2\bv y)-H^{(k+1)}_*(x/2,\bv y,2\bv y) \\
& \le \sum_{\sigma\in S_{k+1}}\sum_{\substack{w_i\in\{1,2,\dots,2^k\}\\1\le i\le k}}
	\sum_{\bv a\in\mathcal{P}_*^k(2\bv y;7/8)}
	\sum_{\substack{p_1,\dots,p_k \\ \eqref{lge7},\eqref{lge77} }}
	\sum_{\substack{Q_k<b\le x/(a_1\cdots a_kp_1\cdots p_k) \\ P^-(b)>Q_k}}1 \\
&\ll_k \sum_{\sigma\in S_{k+1}}\sum_{\substack{w_i\in\{1,2,\dots,2^k\}\\1\le i\le k}}
	\sum_{\bv a\in\mathcal{P}_*^k(2\bv y;7/8)}
	\sum_{\substack{p_1,\dots,p_k \\  \eqref{lge7},\eqref{lge77}  }}
		\frac x{a_1\cdots a_kp_1\cdots p_k\log Q_k},
\end{split}\ee
by Lemma~\ref{prell1}. We fix $\sigma$, $w_1,\dots,w_k$ and $a_1,\dots,a_k$ as above and estimate the sum over the primes $p_1,\dots,p_k$ in the right hand side of~\eqref{lge9}. In order to analyze condition~\eqref{lge77}, consider the collection $\{I_r\}_{r=1}^R$ of cubes of the form $[\log(m_1/2),\log
m_1)\times\cdots\times[\log(m_k/2),\log m_k)$ with $m_1\cdots m_i|a_1\cdots a_i$ for $1\le i\le k$. By Lemma ~\ref{prell3}, there is a sub-collection $\{I_{r_s}\}_{s=1}^S$ of mutually disjoint such cubes for which $\mathcal{L}^{(k+1)}(\bv a)\subset\bigcup_{s=1}^S3I_{r_s}$.
Consider $I_{r_s}=[\log(m_1/2),\log m_1)\times\cdots\times[\log(m_k/2),\log m_k)$ in this sub-collection and set 
$$
U_i=\frac{w_i y_{ \sigma(i) } }{2m_i}   \quad(1\le i\le k).
$$ 
Then we find that 
\be\label{lge75}\bsp
&\left(\log\frac{w_1y_{\sigma(1)}}{p_1},\dots,\log\frac{w_ky_{\sigma(k)}}{p_k}\right) \\ 
&\qquad\qquad \in3I_{r_s} =  \left[\log(m_1/4),\log(2m_1)\right)\times\cdots\times\left[\log(m_k/4),\log(2m_k)\right)
\end{split}\ee
if, and only if, $U_i<p_i\le8U_i$ for all $i=1,\dots,k$. So $$\sum_{\substack{p_1,\dots,p_k\\\eqref{lge7},\eqref{lge75}}}\frac1{p_1\cdots p_k}\le\prod_{i=1}^k\sum_{\substack{U_i<p_i\le8U_i\\p_i>Q_i}}\frac1{p_i}\ll_k\prod_{i=1}^k\frac1{\log(\max\{U_i,Q_i\})}\le\prod_{i=1}^k\frac1{\log
Q_i}.$$  Therefore we deduce that $$\sum_{\substack{p_1,\dots,p_k\\\eqref{lge7},\eqref{lge77}}}\frac1{p_1\cdots
p_k}\le\sum_{s=1}^S\sum_{\substack{p_1,\dots,p_k\\\eqref{lge7},\eqref{lge75}}}\frac1{p_1\cdots
p_k}\ll_k\frac S{\log Q_1\cdots\log Q_k}\le\frac{L^{(k+1)}(\bv a)}{(\log2)^k\log Q_1\cdots\log Q_k}.$$
Inserting the above estimate into~\eqref{lge9} completes the proof of the lemma.
\end{proof}

Next, we bound the sum $S^{(k+1)}(\bv t;\bv h,\epsilon)$ from above in terms of $S^{(k+1)}(\bv t)$. This is accomplished by establishing an iterative
inequality that simplifies the complicated range of summation $\mathcal{P}_*^k(\bv t;\epsilon)$ by gradually reducing it to the
much simpler set $\mathcal{P}_*^k(\bv t)$ and, at the same time, eliminates the complicated logarithms that appear in the summands of
$S^{(k+1)}(\bv t;\bv h,\epsilon)$. Lemma~\ref{prell4} plays a crucial role in the proof of this inequality

\begin{lemma}\label{lgl3} Fix $k\ge1$, $\epsilon>0$ and $\bv h=(h_1,\dots,h_k)\in[0,+\infty)^k$.
For $\bv t=(t_1,\dots,t_k)$ with $3\le t_1\le\cdots\le t_k$ we have that
$$S^{(k+1)}(\bv t;\bv h,\epsilon)\ll_{k,\bv h,\epsilon}\left(\prod_{i=1}^k\log^{-h_i}t_i\right)S^{(k+1)}(\bv t).$$
\end{lemma}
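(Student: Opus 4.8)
The plan is to establish the bound iteratively, peeling off one coordinate at a time, starting from the last index $i=k$ and working backwards. The key structural observation is that the range $\mathscr{P}_*^k(\bv t;\epsilon)$ differs from $\mathscr{P}_*^k(\bv t)$ only in the lower endpoint of the interval governing each $a_i$: instead of $a_i$ having prime factors exceeding $t_{i-1}$, we require prime factors exceeding the larger quantity $\max\{P^+(a_1\cdots a_{i-1}), t_{i-1}^\epsilon/(a_1\cdots a_{i-1})\}$, and moreover each summand carries a logarithmic weight $\log^{-h_i}(P^+(a_1\cdots a_i)+t_i^\epsilon/(a_1\cdots a_i))$. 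The idea is to first split the sum over $a_k$ according to whether $a_k > t_{k-1}^{\epsilon/2}$ (say) or not, and to handle the logarithmic weight in the $i=k$ factor using the crude bound $\log^{-h_k}(P^+(a_1\cdots a_k) + t_k^\epsilon/(a_1\cdots a_k)) \ll_{h_k,\epsilon} \log^{-h_k} t_k$, which is valid because $P^+(a_1\cdots a_k) \le t_k$ forces $a_1\cdots a_k \le t_k^{O_k(1)}$ whenever the innermost prime factors are not too small, so $P^+ + t_k^\epsilon/(a_1\cdots a_k) \gg t_k^{c}$ for some $c = c(k,\epsilon) > 0$; one must be slightly careful and invoke Lemma~\ref{ub1l0}(a) to see that the contribution of tuples violating such a size bound is negligible via Lemma~\ref{prell4}.

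Concretely, I would induct on $k$. For the inductive step, I would use Lemma~\ref{ub1l0}(b) to factor off the $a_k$ variable: writing $\bv a = (a_1,\dots,a_{k-1},a_k)$ with $(a_1\cdots a_{k-1}, a_k)=1$, we have $L^{(k+1)}(\bv a) \le \tau_{k+1}(a_1,\dots,a_{k-1},1)\cdot(\text{a factor depending on }a_k)$, but more usefully one uses that $L^{(k+1)}$ is essentially multiplicative-like in the last coordinate up to a $\tau$-factor, so the sum over $a_k \in \mathscr{P}_*(\ast, t_k)$ of $L^{(k+1)}(\bv a)/(a_k \log^{h_k}(\cdots))$ factors as $L^{(k)}(a_1,\dots,a_{k-1})$ times a convergent Euler-product-type sum. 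The lower endpoint of the $a_k$-range depends on $a_1\cdots a_{k-1}$, so one distinguishes the case $t_{k-1}^\epsilon/(a_1\cdots a_{k-1}) \le P^+(a_1\cdots a_{k-1})$, where the range becomes simply $\mathscr{P}_*(P^+(a_1\cdots a_{k-1}), t_k)$ and the estimate \eqref{tauineq} applies directly, from the complementary case, where $a_1\cdots a_{k-1}$ is small (at most $t_{k-1}^\epsilon$) and one can afford to enlarge the range of $a_k$ down to $\mathscr{P}_*(t_{k-1}, t_k)$ at the cost of a bounded factor, again using \eqref{tauineq}. Either way, after summing over $a_k$ one is left with $\ll_{k,\bv h,\epsilon} \log^{-h_k} t_k$ times a sum over $(a_1,\dots,a_{k-1})$ of exactly the shape $S^{(k)}(\bv t'; \bv h', \epsilon')$ for $\bv t' = (t_1,\dots,t_{k-1})$, $\bv h' = (h_1,\dots,h_{k-1})$ and a possibly adjusted $\epsilon'$, to which the inductive hypothesis applies.

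The main obstacle I anticipate is bookkeeping the logarithmic weights cleanly under the iteration: the weight $\log^{-h_i}(P^+(a_1\cdots a_i) + t_i^\epsilon/(a_1\cdots a_i))$ couples all of $a_1,\dots,a_i$, and when we sum out $a_k$ we must not only produce the factor $\log^{-h_k} t_k$ but also ensure the residual weights on $a_1,\dots,a_{k-1}$ remain of the form required by $S^{(k)}(\bv t';\bv h',\epsilon')$. The resolution is that once $a_k$ is removed the weight indexed by $i \le k-1$ is untouched, so no coupling issue arises for the smaller indices; the only genuine work is in the $i=k$ weight, where we split into the two size regimes described above and apply Lemma~\ref{prell4} to discard tuples with $a_1\cdots a_k$ atypically large (these contribute a saving of $\exp\{-\frac12 \log z / \log x\}$ type, which more than absorbs any loss). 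A secondary technical point is checking that Lemma~\ref{prell4} is applicable: the relevant function $a_k \mapsto L^{(k+1)}(\bv a)$ satisfies the sub-multiplicativity hypothesis $f(a p) \le C_f f(a)$ with $C_f = k+1$ by Lemma~\ref{ub1l0}(b), so this goes through. Assembling the $k$ steps gives the stated factor $\prod_{i=1}^k \log^{-h_i} t_i$ in front of $S^{(k+1)}(\bv t)$, completing the proof.
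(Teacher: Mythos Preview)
Your induction-on-$k$ scheme has a genuine structural gap. The claim that after summing out $a_k$ ``one is left with \dots\ a sum of exactly the shape $S^{(k)}(\bv t';\bv h',\epsilon')$'' is not correct: the volume $L^{(k+1)}(\bv a)$ does \emph{not} factor as $L^{(k)}(a_1,\dots,a_{k-1})$ times anything depending only on $a_k$. Lemma~\ref{ub1l0}(b) only gives the one-sided bound
\[
L^{(k+1)}(\bv a)\le\tau_{k+1}(1,\dots,1,a_k)\,L^{(k+1)}(a_1,\dots,a_{k-1},1)
\quad\text{or}\quad
L^{(k+1)}(\bv a)\le\tau_{k+1}(a_1,\dots,a_{k-1},1)\,L^{(k+1)}(1,\dots,1,a_k),
\]
and in neither case does the surviving $L^{(k+1)}$-factor collapse to $L^{(k)}$. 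If you iterate the $\tau$-bound all the way down you arrive at $\prod_{i=1}^k(\log t_i/\log t_{i-1})^{k-i+2}$, which by Lemma~\ref{ub1l0}(a) is an \emph{upper} bound for $S^{(k+1)}(\bv t)$, not a lower bound; so the resulting inequality is strictly weaker than the lemma and cannot be closed up.

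The paper's proof avoids this by never removing a variable. It keeps the full $k$-fold sum and $L^{(k+1)}(\bv a)$ intact throughout, and instead runs an iteration in an auxiliary index $l=k,k-1,\dots,1$ that at each step (i) replaces the complicated lower endpoint $\max\{P^+(a_1\cdots a_{l-1}),t_{l-1}^{\epsilon'}/(a_1\cdots a_{l-1})\}$ in the range of $a_l$ by the simple endpoint $t_{l-1}$, and (ii) transfers the logarithmic weights indexed by $l,\dots,k$ from depending on $a_1\cdots a_l$ to depending only on $a_1\cdots a_{l-1}$. Step~(ii) is done exactly as you suggest for the single index $i=k$: split $a_l$ into size ranges $(t_{m-1}^\delta,t_m^\delta]$ and apply Lemma~\ref{prell4} to the function $a_l\mapsto L^{(k+1)}(\bv a)$. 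Step~(i) costs a factor $(\log 2t_{l-1})^{k-l+2}$, which is absorbed by augmenting the exponents $h_i\mapsto h_i+k-i+1$ for $l\le i\le k-1$; after $k$ iterations these extra exponents telescope away and one lands precisely on $\bigl(\prod_i\log^{-h_i}t_i\bigr)S^{(k+1)}(\bv t)$. So your local idea (size-split plus Lemma~\ref{prell4}) is right, but it must be embedded in an iteration that preserves $L^{(k+1)}(\bv a)$ rather than in an induction on $k$ that discards it.
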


\begin{proof} Set $\delta=\epsilon/(2k)$ and $t_0=1$. For $l\in\{1,\dots,k\}$ define
$$h_{l,i}=\begin{cases}h_i&\text{if}\;i\in\{1,\dots,l-1\}\cup\{k\},\cr h_i+k-i+1&\text{if}\;l\le i\le k-1,\end{cases}$$ and
\bes\begin{split}\mathcal{P}_l(\bv t)=\left\{\bv a\in\SN^k: a_i\in\mathcal{P}_*\left(\max\left\{P^+(a_1\cdots
a_{i-1}),t_{i-1}^{\epsilon/2+l\delta}/(a_1\cdots a_{i-1})\right\},t_i\right) \;(1\le i\le l)\right.,\\
\left.a_i\in\mathcal{P}_*(t_{i-1},t_i)\;(l+1\le i\le k)\right\}.\end{split}\ees Also, let $h_{0,i}=h_{1,i}$ for
$i\in\{1,\dots,k\}$ and $\mathcal{P}_0(\bv t)=\mathcal{P}_1(\bv t)$. Lastly, for $l\in\{0,\dots,k\}$ set $\bv
h_l=(h_{l,1},\dots,h_{l,k})$ and
\be\begin{split}\widetilde{S}_l^{(k+1)}(\bv t;\bv h_l)&=\sum_{\bv a\in\mathcal{P}_l(\bv t)}\frac{L^{(k+1)}(\bv a)}{a_1\cdots
a_k}\prod_{i=1}^l\log^{-h_{l,i}}\left(P^+(a_1\cdots a_i)+\frac{t_i^{\epsilon/2+l\delta}}{a_1\cdots a_i}\right)\\
&\qquad\times\prod_{i=l+1}^k\log^{-h_{l,i}}\left(P^+(a_1\cdots a_l)+\frac{t_i^{\epsilon/2+l\delta}}{a_1\cdots a_l}\right).\nonumber\end{split}\ee Note that \be\label{lgobs1}\widetilde{S}_k^{(k+1)}(\bv t;\bv h_k)=S^{(k+1)}(\bv t;\bv h,\epsilon)\ee and
\be\label{lgobs2}\bsp\widetilde{S}_0^{(k+1)}(\bv t;\bv h_0)&\asymp_{k,\epsilon,\bv h}\left(\prod_{i=1}^k(\log t_i)^{-h_{0,i}}\right)S^{(k+1)}(\bv t)\\
&=\left(\prod_{i=1}^{k-1}(\log t_i)^{-(h_i+k-i+1)}\right)(\log t_k)^{-h_k}S^{(k+1)}(\bv t).\end{split}\ee We claim that
\be\label{lge10}\widetilde{S}_l^{(k+1)}(\bv t;\bv h_l)\ll_{k,\bv h,\epsilon}(\log2t_{l-1})^{k-l+2}\widetilde{S}_{l-1}^{(k+1)}(\bv t;\bv h_{l-1})\quad(1\le l\le k).\ee
Clearly, if we prove~\eqref{lge10}, then the lemma will follow immediately by iterating~\eqref{lge10} and combining the resulting inequality with relations~\eqref{lgobs1} and~\eqref{lgobs2}. So we fix $l\in\{1,\dots,k\}$ and proceed to the proof of~\eqref{lge10}. Consider integers $a_1,\dots,a_{l-1}$ such that
$$a_i\in\mathcal{P}_*\left(\max\left\{P^+(a_1\cdots a_{i-1}),\frac{t_{i-1}^{\epsilon/2+l\delta}}{a_1\cdots
a_{i-1}}\right\},t_i\right)\quad(1\le i\le l-1)$$ and $a_{l+1},\dots,a_k$ such that
$$a_i\in\mathcal{P}_*(t_{i-1},t_i)\quad(l+1\le i\le k)$$ and set $$t_{l-1}'=\max\left\{P^+(a_1\cdots
a_{l-1}),\frac{t_{l-1}^{\epsilon/2+l\delta}}{a_1\cdots a_{l-1}}\right\}.$$ Observe that in order to show~\eqref{lge10} it
suffices to prove that
\be\label{lge11}\begin{split}T:&=\sum_{a_l\in\mathcal{P}_*(t_{l-1}',t_l)}\frac{L^{(k+1)}(\bv
a)}{a_l}\prod_{i=l}^k\log^{-h_{l,i}}\left(P^+(a_1\cdots a_l)+\frac{t_i^{\epsilon/2+l\delta}}{a_1\cdots a_l}\right)\\
&\ll_{k,\bv h,\epsilon}\sum_{a_l\in\mathcal{P}_*(t_{l-1}',t_l)}\frac{L^{(k+1)}(\bv a)}{a_l}\prod_{i=l}^k\log^{-h_{l,i}}\left(P^+(a_1\cdots
a_{l-1})+\frac{t_i^{\epsilon/2+(l-1)\delta}}{a_1\cdots a_{l-1}}\right).\end{split}\ee Indeed, if~\eqref{lge11} holds, then
Lemma~\ref{ub1l0}(b) and the relation
$$\sum_{a\in\mathcal{P}_*(t_{l-1}',t_{l-1})}\frac{\tau_{k-l+2}(a)}a=\prod_{t_{l-1}^\prime<p\le
t_{l-1}}\left(1+\frac{k-l+2}p\right)\ll_k\left(\frac{\log2t_{l-1}}{\log2t_{l-1}^\prime}\right)^{k-l+2}$$ imply that
\bes T\ll_{k,\bv h,\epsilon}\left(\frac{\log2t_{l-1}}{\log2t_{l-1}^\prime}\right)^{k-l+2}\sum_{a_l\in\mathcal{P}_*(t_{l-1},t_l)}\frac{L^{(k+1)}(\bv a)}{a_l}\prod_{i=l}^k\log^{-h_{l,i}}\left(P^+(a_1\cdots a_{l-1})+\frac{t_i^{\epsilon/2+(l-1)\delta}}{a_1\cdots a_{l-1}}\right)\ees thus
completing the proof of~\eqref{lge10}. To prove~\eqref{lge11} we decompose $T$ into the sums
$$
T_m:=\sum_{\substack{a_l\in\mathcal{P}_*(t_{l-1}',t_l)\\a_l\in I_m}}
	\frac{L^{(k+1)}(\bv a)}{a_l}  
		\prod_{i=l}^k\log^{-h_{l,i}}\left(P^+(a_1\cdots a_l)+\frac{t_i^{\epsilon/2+l\delta}}{a_1\cdots a_l}\right)
	\quad(l\le m\le k+1),
$$ 
where $I_l=(0,t_l^\delta]$, $I_m=(t_{m-1}^\delta,t_m^\delta]$ if $m\in\{l+1,\dots,k\}$ and
$I_{k+1}=(t_k^\delta,+\infty)$. First, we estimate $T_l$. If $a_l\in I_l$, then
$$
P^+(a_1\cdots a_l)+\frac{t_i^{\epsilon/2+l\delta}}{a_1\cdots a_l}
	\ge P^+(a_1\cdots a_{l-1})+\frac{t_i^{\epsilon/2+(l-1)\delta}}{a_1\cdots a_{l-1}}\quad(l\le i\le k)
$$ 
and thus we immediately deduce that
\be\label{lge12}
T_l \le \sum_{a_l\in\mathcal{P}_*(t_{l-1}',t_l)}\frac{L^{(k+1)}(\bv a)}{a_l}
	\prod_{i=l}^k\log^{-h_{l,i}}\left(P^+(a_1\cdots a_{l-1}) + \frac{t_i^{\epsilon/2+(l-1)\delta}}{a_1\cdots a_{l-1}}\right).
\ee
Next, we fix $m\in\{l+1,\dots,k+1\}$ and bound
$T_m$. For every $a_l\in I_m$ we have that $$P^+(a_1\cdots a_l)+\frac{t_i^{\epsilon/2+l\delta}}{a_1\cdots a_l}\ge\begin{cases}P^+(a_l)&\text{if}~l\le i<m,\cr \displaystyle
P^+(a_1\cdots a_{l-1})+\frac{t_i^{\epsilon/2+(l-1)\delta}}{a_1\cdots a_{l-1}}&\text{if}~m\le i\le k.\end{cases}$$ Moreover, the function
$a_l\to L^{(k+1)}(\bv a)$ satisfies the hypothesis of Lemma~\ref{prell4} with $C_f=k-l+2$, by Lemma~\ref{ub1l0}(b). Hence
\bes\begin{split}T_m&\le\left(\prod_{i=m}^k\log^{-h_{l,i}}\left(P^+(a_1\cdots a_{l-1})+\frac{t_i^{\epsilon/2+(l-1)\delta}}{a_1\cdots a_{l-1}}\right)\right)\sum_{\substack{a_l\in\mathcal{P}_*(t_{l-1}',t_l)\\a_l>t_{m-1}^\delta}}
\frac{L^{(k+1)}(\bv a)}{a_l(\log P^+(a_l))^{h_{l,l}+\cdots+h_{l,m-1}}}\\
&\ll_{k,\bv h,\epsilon}\left(\prod_{i=l}^k\log^{-h_{l,i}}\left(P^+(a_1\cdots
a_{l-1})+\frac{t_i^{\epsilon/2+(l-1)\delta}}{a_1\cdots a_{l-1}}\right)\right)\left(\prod_{i=l}^{m-1}\log^{h_{l,i}}t_i\right)\\
&\quad\quad\times\exp\left\{-\frac{\delta\log t_{m-1}}{2\log t_l}\right\}(\log t_l)^{-(h_{l,l}+\cdots+h_{l,m-1})}\sum_{a_l\in\mathcal{P}_*(t_{l-1}',t_l)}\frac{L^{(k+1)}(\bv a)}{a_l}\\
&\ll_{k,\bv h,\epsilon}\sum_{a_l\in\mathcal{P}_*(t_{l-1}',t_l)}\frac{L^{(k+1)}(\bv a)}{a_l}\prod_{i=l}^k\log^{-h_{l,i}}\left(P^+(a_1\cdots
a_{l-1})+\frac{t_i^{\epsilon/2+(l-1)\delta}}{a_1\cdots a_{l-1}}\right).\end{split}\ees Combining the above estimate with~\eqref{lge12} shows~\eqref{lge11}.
This completes the proof of~\eqref{lge10} and hence of the lemma.
\end{proof}

Before we prove the upper bound in Theorem~\ref{thm3}, we need one
last intermediate result.

\begin{lemma}\label{lgl4}  Let $1\le l\le k-1$ and $3\le t_1\le\cdots\le t_k$. Then
$$S^{(k-l+1)}(t_{l+1},\dots,t_k)\le(\log2)^{-l}S^{(k+1)}(t_1,\dots,t_k)$$ and $$S^{(k+1)}(t_1,\dots,t_k)\gg_k\log t_k.$$
\end{lemma}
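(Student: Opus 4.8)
The plan is to deduce both bounds from the near-product structure of $\mathcal{L}^{(k+1)}$ in its first coordinates, the second bound being a short consequence of the first.

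\emph{The first inequality.} Given $\bv b=(b_1,\dots,b_{k-l})\in\mathscr{P}_*^{k-l}(t_{l+1},\dots,t_k)$, all prime factors of $b_1$ are at most $t_{l+1}$, so $b_1$ factors uniquely as $b_1=c_1\cdots c_{l+1}$, where $c_i$ is the product of the primes of $b_1$ in $(t_{i-1},t_i]$ for $1\le i\le l$ and $c_{l+1}$ is the product of those in $(t_l,t_{l+1}]$. Then $\bv a:=(c_1,\dots,c_{l+1},b_2,\dots,b_{k-l})$ lies in $\mathscr{P}_*^k(t_1,\dots,t_k)$, the assignment $\bv b\mapsto\bv a$ is injective (one recovers $b_1=a_1\cdots a_{l+1}$ and $b_j=a_{l+j}$ for $2\le j\le k-l$), and $a_1\cdots a_k=b_1\cdots b_{k-l}$. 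Restricting the sum defining $S^{(k+1)}(t_1,\dots,t_k)$ to the image of this map reduces the first inequality to the pointwise bound $L^{(k+1)}(\bv a)\ge(\log2)^lL^{(k-l+1)}(\bv b)$.

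For that I would establish the inclusion
$$\mathcal{L}^{(k+1)}(\bv a)\ \supseteq\ [-\log2,0)^l\times\mathcal{L}^{(k-l+1)}(\bv b),$$
the factor $\mathcal{L}^{(k-l+1)}(\bv b)$ occupying the last $k-l$ coordinates; taking volumes then finishes the first inequality. To see it, let $(e_1,\dots,e_{k-l})\in\SN^{k-l}$ satisfy $e_1\cdots e_j|b_1\cdots b_j$ for $1\le j\le k-l$ and consider $(d_1,\dots,d_k):=(\underbrace{1,\dots,1}_{l},e_1,\dots,e_{k-l})$. Then $d_1\cdots d_i|a_1\cdots a_i$ for every $i$: trivially for $i\le l$, while for $i=l+j$ it reads $e_1\cdots e_j|a_1\cdots a_{l+j}=b_1\cdots b_j$, which holds by hypothesis. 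The step deserving a little attention is that $d_{l+1}$ is taken to be the whole of $e_1$ — even though $e_1$ may have prime factors smaller than $t_l$ — since only the divisibility relations among the partial products $d_1\cdots d_i$ are relevant. The box attached to $(d_1,\dots,d_k)$ equals $[-\log2,0)^l$ times the box attached to $(e_1,\dots,e_{k-l})$; taking the union over all admissible $(e_1,\dots,e_{k-l})$ gives the inclusion, hence $L^{(k+1)}(\bv a)\ge(\log2)^lL^{(k-l+1)}(\bv b)$, and summing over $\bv b$ gives $S^{(k+1)}(t_1,\dots,t_k)\ge(\log2)^lS^{(k-l+1)}(t_{l+1},\dots,t_k)$.

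\emph{The second inequality.} Applying the first inequality with $l=k-1$ gives $S^{(k+1)}(t_1,\dots,t_k)\ge(\log2)^{k-1}S^{(2)}(t_k)$. Since $[-\log2,0)=[\log(1/2),\log 1)\subseteq\mathcal{L}^{(2)}(a)$ for every $a$, we have $L^{(2)}(a)\ge\log 2$, so
$$S^{(2)}(t_k)=\sum_{a\in\mathscr{P}_*(1,t_k)}\frac{L^{(2)}(a)}{a}\ \ge\ \log 2\sum_{a\in\mathscr{P}_*(1,t_k)}\frac1a\ =\ \log 2\prod_{p\le t_k}\Bigl(1+\frac1p\Bigr)\ \gg\ \log t_k$$
by Mertens' theorem; the two displays combine to give $S^{(k+1)}(t_1,\dots,t_k)\gg_k\log t_k$ (and the same estimate applies directly to $S^{(2)}(t_1)$ when $k=1$). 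The only step that is not entirely routine is the divisor-tuple bookkeeping behind the displayed inclusion.
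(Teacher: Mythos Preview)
Your proof is correct and follows essentially the same approach as the paper: both establish the inclusion $\mathcal{L}^{(k+1)}(\bv a)\supseteq[-\log2,0)^l\times\mathcal{L}^{(k-l+1)}(a_1\cdots a_{l+1},a_{l+2},\dots,a_k)$ via divisor tuples with $d_1=\cdots=d_l=1$, take volumes, and sum (you parametrize by $\bv b$ and build $\bv a$, the paper parametrizes by $\bv a$ and combines into $\bv b$, but the map is a bijection so this is the same argument). For the second part the paper bounds $L^{(k+1)}(\bv a)\ge(\log2)^k$ directly and sums, while you route through the first part with $l=k-1$ and then bound $S^{(2)}(t_k)$; both reduce to Mertens and are minor variants of one another.
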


\begin{proof} Note that \be\begin{split}\mathcal{L}^{(k+1)}(\bv a)&\supset\bigcup_{\substack{d_1\cdots d_i|a_1\cdots a_i~(1\le i\le
k)\\d_i=1~(1\le i\le l)}}\left[\log(d_1/2),\log d_1\right)\times\cdots\times\left[\log(d_k/2),\log d_k\right)\nonumber\\
&=[-\log2,0)^l\times\mathcal{L}^{(k-l+1)}(a_1\cdots a_{l+1},a_{l+2},\dots,a_k)\end{split}\ee and, consequently,
$$L^{(k+1)}(\bv a)\ge(\log2)^lL^{(k-l+1)}(a_1\cdots a_{l+1},a_{l+2},\dots,a_k).$$ Summing over $\bv a\in\mathcal{P}^k_*(\bv t)$ then proves the first part of the lemma.

For the second part, note that $$S^{(k+1)}(\bv a)\ge(\log2)^k\sum_{\bv a\in\mathcal{P}^k_*(\bv t)}\frac1{a_1\cdots a_k}\asymp_k\log t_k.$$
\end{proof}

We are now in position to show the upper bound in Theorem \ref{thm3}. In fact, we shall prove a slightly stronger estimate,
which will be useful in the proof of Theorem \ref{thm2}.

\begin{thm}\label{thm4} Fix $k\ge1$. Let $x\ge1$ and $C_k'\le y_1\le\cdots\le y_k$ with
$2^ky_1\cdots y_k\le x/y_k$. There exists a constant $c_k$ such that
$$\frac \hxy x\ll_k\left(\prod_{i=1}^k\log^{-e_{k,i}}y_i\right)\sum_{\substack{\bv a\in\mathcal{P}_*^k(\bv
y)\\a_i\le y_i^{c_k}\;(1\le i\le k)}}\frac{L^{(k+1)}(\bv a)}{a_1\cdots a_k}.$$
\end{thm}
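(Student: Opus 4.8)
The plan is to prove Theorem~\ref{thm4} by a dyadic decomposition in the size of $x$ combined with the machinery already developed in this subsection. First I would reduce to counting square-free integers: standard arguments (writing $n = n_1 n_2$ with $n_1$ square-free, $n_2$ squarefull, summing over $n_2$) show that $\hxy \ll_k \sum_{n_2 \text{ squarefull}} H^{(k+1)}_*(x/n_2, \bv y, C_k \bv y)/\dots$, and since $\sum_{n_2 \text{ squarefull}} 1/n_2$ converges and enlarging the dilation from $2\bv y$ to $C_k\bv y$ only costs a constant (here one uses Corollary~\ref{cor4}, or rather re-runs Lemma~\ref{lgl2} with $2$ replaced by a bounded constant), it suffices to bound $H^{(k+1)}_*(x,\bv y, 2\bv y)$.

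Next I would split the range $n \le x$ dyadically: $H^{(k+1)}_*(x, \bv y, 2\bv y) \le \sum_{j \ge 0} \bigl( H^{(k+1)}_*(x/2^j, \bv y, 2\bv y) - H^{(k+1)}_*(x/2^{j+1}, \bv y, 2\bv y) \bigr)$, noting the sum is finite since the count vanishes once $x/2^j < 2^k y_1 \cdots y_k \cdot (2y_k)^{7/8}$, i.e. for $j \gg_k \log(x/(y_1\cdots y_k y_k))$. Applying Lemma~\ref{lgl2} to each difference (the hypothesis $2^{k+1}y_1\cdots y_k \le (x/2^j)/(2y_k)^{7/8}$ holds for the relevant range of $j$, after shrinking the leading constant in the statement if necessary) gives $H^{(k+1)}_*(x/2^j, \bv y, 2\bv y) - H^{(k+1)}_*(x/2^{j+1}, \bv y, 2\bv y) \ll_k (x/2^j) S^{(k+1)}(2\bv y; \bv e_k, 7/8)$. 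Summing the geometric series in $j$ yields $\hxy \ll_k x\, S^{(k+1)}(2\bv y; \bv e_k, 7/8)$. Then Lemma~\ref{lgl3}, applied with $\bv t = 2\bv y$, $\bv h = \bv e_k$ and $\epsilon = 7/8$, converts this to $\hxy \ll_k x \bigl( \prod_{i=1}^k \log^{-e_{k,i}}(2y_i)\bigr) S^{(k+1)}(2\bv y) \asymp_k x\bigl(\prod_i \log^{-e_{k,i}} y_i\bigr) S^{(k+1)}(2\bv y)$. Finally, passing from $S^{(k+1)}(2\bv y)$ back to the sum over $\mathscr{P}_*^k(\bv y)$ restricted to $a_i \le y_i^{c_k}$: one first removes the large-$a_i$ tails using Lemma~\ref{prell4} (the function $a_i \mapsto L^{(k+1)}(\bv a)$ satisfies its hypothesis with $C_f = k-i+2$ by Lemma~\ref{ub1l0}(b)), so that $S^{(k+1)}(2\bv y)$ is comparable to its restriction to $a_i \le y_i^{c_k}$; then one factors each $a_i \in \mathscr{P}_*(2y_{i-1}, 2y_i)$ as (part in $(2y_{i-1}, y_{i-1}^{O(1)}]$ or in $(y_i^{1/M}, 2y_i]$) times (core in $\mathscr{P}_*(y_{i-1}, y_i)$), applies Lemma~\ref{ub1l0}(b) to pull out a $\tau_{k-i+2}$ factor, and sums the resulting $\sum \tau_{k-i+2}(b)/b$ over the short intervals, which is $\ll_k 1$ by~\eqref{tauineq}. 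This is exactly the bookkeeping in the proof of the lower bound of Theorem~\ref{thm3}, run in reverse.

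The main obstacle is the dyadic summation step together with making Lemma~\ref{lgl2} applicable uniformly: Lemma~\ref{lgl2} requires the hypothesis $2^{k+1}y_1\cdots y_k \le (x/2^j)/(2y_k)^{7/8}$, which fails for the last few values of $j$ before the count drops to zero. I would handle the boundary terms separately by the trivial bound $H^{(k+1)}_*(x/2^j, \bv y, 2\bv y) - H^{(k+1)}_*(x/2^{j+1}, \bv y, 2\bv y) \le x/2^j \ll_k y_1 \cdots y_k y_k$ for those $O_k(1)$ values of $j$, and observe that $y_1 \cdots y_k y_k \ll_k x \bigl(\prod_i \log^{-e_{k,i}} y_i\bigr) S^{(k+1)}(2\bv y)$ because $S^{(k+1)}(2\bv y) \gg_k \log y_k \gg_k \prod_i \log^{e_{k,i}} y_i / (\text{something absorbed})$ — more precisely, using Lemma~\ref{lgl4} one has $S^{(k+1)}(2\bv y) \gg_k \prod_{i=1}^k (\log y_i)^{e_{k,i}} \cdot (\text{harmless factors})$ by iterating the first inequality of Lemma~\ref{lgl4} and the trivial bound $S^{(2)}(t) \gg \log t$, which makes the boundary contribution negligible compared to the main term. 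A secondary technical point is that Lemma~\ref{lgl2} is stated for $H^{(k+1)}_*$ with the hypothesis $\sqrt{C_k'} \le y_1$, so the constant $C_k'$ in Theorem~\ref{thm4} must be chosen at least $(C_k')^2$-large relative to the one in Lemma~\ref{lgl2}; this is cosmetic given Remark~\ref{rk_induction}.
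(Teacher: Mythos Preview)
Your overall architecture (dyadic decomposition in $x$, Lemma~\ref{lgl2}, then Lemma~\ref{lgl3}, then tail removal via Lemma~\ref{prell4}) matches the paper's. The genuine gap is in your square-free reduction. Writing $n=n_1n_2$ with $n_2$ squarefull and $n_1$ squarefree does \emph{not} give $H^{(k+1)}_*(x/n_2,\bv y,C_k\bv y)$: each $d_i$ factors as $e_if_i$ with $f_1\cdots f_k\mid n_2$, so the squarefree part $n_1$ is counted by $H^{(k+1)}_*(x/n_2,(y_i/f_i)_i,2(y_i/f_i)_i)$ with \emph{shifted} endpoints, not a dilated interval. Since $n_2$ (hence the $f_i$) is unbounded, there is no fixed $C_k$ that absorbs all these shifts, and Corollary~\ref{cor4} does not apply uniformly. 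Worse, when $n_2$ exceeds some $(\log y_m)^{2k}$ the ratio $y_i/f_i$ can drop below the threshold $\sqrt{C_k'}$ needed for Lemma~\ref{lgl2} for small $i$. The paper handles this by stratifying the squarefull part $a$ into ranges $I_m=((\log y_{m-1})^{2k},(\log y_m)^{2k}]$, dropping the constraints on $d_1,\dots,d_{m-1}$ entirely (which reduces to the lower-dimensional $H^{(k-m+2)}_*$), summing over all factorizations $f_m\cdots f_k\mid a$, and introducing $T=\max\{S^{(k+1)}(\bv t):\sqrt{y_i}\le t_i\le 2y_i\}$ to absorb the resulting shifts $z_i=y_i/f_i$ uniformly via Lemma~\ref{lgl4} and~\eqref{tauineq}.

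A smaller issue: your boundary-term argument asserts $S^{(k+1)}(2\bv y)\gg_k\prod_i(\log y_i)^{e_{k,i}}$, which is false (Lemma~\ref{lgl4} only gives $\gg_k\log y_k$). The paper avoids this by truncating the dyadic sum at $2^r\le(\log y_k)^k$ rather than at the point where Lemma~\ref{lgl2} fails; the tail $x/(a(\log y_k)^k)$ is then controlled using only $T\gg_k\log y_k$.
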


\begin{proof} Observe that it suffices to show that \be\label{lge18}\hxy\ll_kx\left(\prod_{i=1}^k\log^{-e_{k,i}}y_i\right)T,\ee
where $$T:=\max\{S^{(k+1)}(\bv t):1\le t_1\le\cdots\le t_k,~\sqrt{y_i}\le t_i\le 2y_i~(1\le i\le k)\}.$$ Indeed, assume for
the moment that~\eqref{lge18} holds. Note that $$T\ll_k S^{(k+1)}(\bv y),$$ by Lemma \ref{ub1l0}(b) and inequality~\eqref{tauineq}. Also, for every $i\in\{1,\dots,k\}$, we have that $$\sum_{\substack{\bv a\in\mathcal{P}_*^k(\bv y)\\a_i>y_i^{c_k}}}\frac{L^{(k+1)}(\bv a)}{a_1\cdots a_k}\ll_ke^{-c_k/2}S^{(k+1)}(\bv y),$$ by Lemma~\ref{prell4} applied to the arithmetic function $a_i\to L^{(k+1)}(\bv a)$. Hence, if $c_k$ is large enough, we find that
$$T\ll_kS^{(k+1)}(\bv y)\le2\sum_{\substack{\bv a\in\mathcal{P}_*^k(\bv y)\\a_i\le y_i^{c_k}~(1\le i\le k)}}\frac{L^{(k+1)}(\bv a)}{a_1\cdots a_k}$$
which, together with~\eqref{lge18}, completes the proof of the theorem.

In order to prove~\eqref{lge18}, we first reduce the counting in $\hxy$ to square-free integers. Let $n\le x$ be an integer counted by $\hxy$ and write $n=ab$ with $a$ being square-full, $b$ square-free and $(a,b)=1$. The number of $n\le x$
with $a>(\log y_k)^{2k}$ is at most $$x\sum_{\substack{a>(\log y_k)^{2k}\\a~{\rm square-full}}}\frac1a\ll\frac x{(\log
y_k)^k}.$$ Assume now that $$a\in I_m:=\{a\in\SN\cap((\log y_{m-1})^{2k},(\log y_m)^{2k}]:a~{\rm square-full}\}$$ for some $m\in\{1,\dots,k\}$, where for the convenience of notation we have set $y_0=1$. Then we may uniquely write $d_i=f_ie_i$, $m\le i\le k$, with $f_m\cdots f_k|a$ and $e_m\cdots e_k|b$. Therefore 
\be\label{lge19}\bsp 
H^{(k+1)}(x,\bv y,2\bv y)\le\sum_{m=1}^k\sum_{a\in I_m}\sum_{f_m\cdots
f_k|a}H^{(k-m+2)}_*\left(\frac xa,\left(\frac{y_m}{f_m},\dots,\frac{y_k}{f_k}\right),2\left(\frac{y_m}{f_m},\dots,\frac{y_k}{f_k}\right)\right) \\
+O\left(\frac x{(\log y_k)^k}\right).\end{split}\ee Fix $m\in\{1,\dots,k\}$, $a\in I_m$ and $f_m,\dots ,f_k$ with $f_m\cdots f_k|a$. Let $z_m,\dots,z_k$ be the sequence $y_m/f_m,\dots,y_k/f_k$ in increasing order and set $\bv z^\prime=(z_m,\dots,z_k)$. Since $y_m\le\cdots\le y_k$ and
$$\frac{y_i}{f_i}\ge\frac{y_i}a\ge\frac{y_i}{(\log y_m)^k}\ge\sqrt{y_i}\quad(m\le i\le k),$$ we have that \be\label{lge2000}\sqrt{y_i}\le z_i\le y_i\quad(m\le i\le k).\ee Next, observe that \be\label{lge20}\bsp H^{(k-m+2)}_*\left(\frac xa,\bv z',2\bv z'\right)&\le\sum_{\substack{r\in\SN\\2^r\le(\log y_k)^k}}\left(H_*^{(k-m+2)}\left(\frac x{2^{r-1}a},\bv z^\prime,2\bv z^\prime\right)-H_*^{(k-m+2)}\left(\frac x{2^ra},\bv z^\prime,2\bv z^\prime\right)\right)\\
&\quad+\frac{2x}{a(\log y_k)^k}.\end{split}\ee For $r$ with $2^r\le(\log y_k)^k$ we have that
$$\frac{x/(2^{r-1}a)}{2^{k-m+2}z_m\cdots z_k}\ge\frac x{2^ky_1\cdots y_k}\frac1{2^ra}\ge\frac{y_k}{(\log y_k)^{3k}}\ge(2z_k)^{7/8}.$$ Thus Lemma~\ref{lgl2} (applied with $k-m+1$ in place of $k$, $x/(2^{r-1}a)$ in
place of $x$ and $z_m,\dots,z_k$ in place of $y_1,\dots,y_k$), Lemmas~\ref{lgl3} and~\ref{lgl4} and relation~\eqref{lge2000} yield
\be\label{lge1001}\bsp H_*^{(k-m+2)}\left(\frac x{2^{r-1}a},\bv z^\prime,2\bv z^\prime\right)-H_*^{(k-m+2)}\left(\frac x{2^ra},\bv z^\prime,2\bv z^\prime\right)
&\ll_k\frac x{2^ra}\left(\prod_{i=m}^k(\log z_i)^{-e_{k,i}}\right)S^{(k-m+2)}(2\bv z')\\
&\ll_k\frac x{2^ra}\left(\prod_{i=m}^k(\log y_i)^{-e_{k,i}}\right)T.\end{split}\ee Since $T\gg_k\log y_k$ by Lemma~\ref{lgl4}, inequalities~\eqref{lge20} and~\eqref{lge1001} yield \bes H_*^{(k-m+2)}\left(\frac xa,\bv z^\prime,2\bv z^\prime\right)\ll_k\frac xa\left(\prod_{i=m}^k\log^{-e_{k,i}}y_i\right)T+\frac x{a(\log y_k)^k}\ll_k\frac xa\left(\prod_{i=m}^k\log^{-e_{k,i}}y_i\right)T.\ees So
\bes\bsp\sum_{a\in I_m}\sum_{f_m\cdots f_k|a}H^{(k-m+2)}_*\left(\frac xa,\bv z',2\bv z'\right)
&\ll_k\frac{xT}{\prod_{i=m}^k(\log y_i)^{e_{k,i}}}\sum_{a\in I_m}\frac{\tau_{k-m+2}(a)}a\ll_k\frac{ x T}{ \prod_{i=1}^k(\log y_i)^{e_{k,i}}}.\end{split}\ees Inserting the above estimate into~\eqref{lge19} and using the inequality $T\gg_k\log y_k$ completes the proof of the theorem.
\end{proof}


\subsection{Proof of Theorem~\ref{thm1}}\label{thm1pf}

In this subsection we prove Theorem \ref{thm1}. Consider real numbers $3=N_0\le N_1\le\cdots\le N_{k+1}$. Using an inductive argument, similar to the one given in Remark~\ref{rk_induction}, we may assume without loss of generality that $N_1\ge4(C_k')^2$. Set $\bv N=(N_1,\dots,N_k)$ and note that
\be\label{lge25}A_{k+1}(N_1,\dots,N_{k+1})\ge H^{(k+1)}\left(\frac{N_1\cdots N_{k+1}}{2^{k^2}},\frac{\bv N}{2^k},\frac{\bv N}{2^{k-1}}\right)
\asymp_k H^{(k+1)}\left(N_1\cdots N_{k+1},\frac{\bv N}2,\bv N\right),\ee
by Corollary \ref{cor4}. Also, we have that
\be\label{thm1pfe1}A_{k+1}(N_1,\dots,N_{k+1}) \le \sum_{\substack{1\le 2^{m_i}\le N_i\\1\le i\le k}} H^{(k+1)} \left( \frac{N_1\cdots N_{k+1}}{2^{m_1+\cdots+m_k}},\left(\frac{N_1}{2^{m_1+1}},\dots,\frac{N_k}{2^{m_k+1}}\right),\left(\frac{N_1}{2^{m_1}},\dots,\frac{N_k}{2^{m_k}}\right)\right).\ee
For $i\in\{0,1,\dots,k\}$ let $\mathcal{M}_i$ be the set of
vectors $\bv m\in(\SN\cup\{0\})^k$ such that $2^{m_j}\le\sqrt{N_j}$
for $i<j\le k$ and $\sqrt{N_i}<2^{m_i}\le N_i$ and set
$$T_i=\sum_{\bv m\in\mathcal{M}_i}H^{(k+1)}\left(\frac{N_1\cdots
N_{k+1}}{2^{m_1+\cdots+m_k}},\left(\frac{N_1}{2^{m_1+1}},\dots,\frac{N_k}{2^{m_k+1}}\right),
\left(\frac{N_1}{2^{m_1}},\dots,\frac{N_k}{2^{m_k}}\right)\right).$$ We have that \be\label{lgae1}A_{k+1}(N_1,\dots,N_{k+1})\le\sum_{i=0}^kT_i,\ee by~\eqref{thm1pfe1}.
We fix $i\in\{0,1,\dots,k\}$ and proceed to the estimation of $T_i$. Consider $\bv m\in\mathcal{M}_i$ and let $\bv
N^\prime=(N_{i+1}^\prime,\dots,N_k^\prime)$ be the vector whose coordinates are the sequence $\{N_j/2^{m_j+1}\}_{j=i+1}^k$ in increasing order.
We have that $\sqrt{N_j}\le 2N_j'\le N_j$ for all $i+1\le j\le k$. Thus \be\begin{split}\label{lgae2}&H^{(k+1)}\left(\frac{N_1\cdots N_{k+1}}{2^{m_1+\cdots+m_k}},\left(\frac{N_1}{2^{m_1+1}},\dots,\frac{N_k}{2^{m_k+1}}\right),\left(\frac{N_1}{2^{m_1}},\dots,\frac{N_k}{2^{m_k}}\right)\right)\\
&\le H^{(k-i+1)}\left(\frac{N_1\cdots N_{k+1}}{2^{m_1+\cdots+m_k}},\bv N^\prime,2\bv N^\prime\right)\asymp_k\frac{N_1\cdots N_{k+1}}{2^{m_1+\cdots+m_k}}S^{(k-i+1)}(\bv N^\prime)\prod_{j=i+1}^k(\log N_j)^{-e_{k,j}},\end{split}\ee by Theorem~\ref{thm3}, with the notational convention that $S^{(1)}(\emptyset)=1$. Furthermore, we have that
\be\label{lgae3}\bsp S^{(k-i+1)}(\bv N')&\le(\log 2)^{-i}S^{(k+1)}(\sqrt{N_1},\dots,\sqrt{N_i},\bv N')\\
&\asymp_k S^{(k+1)}(N_1,\dots,N_k)\asymp_k\frac{H^{(k+1)}(N_1\cdots N_{k+1},\bv N/2,\bv N)}{N_1\cdots N_{k+1}}\prod_{j=1}^k(\log N_j)^{e_{k,j}},\end{split}\ee by Lemma~\ref{lgl4}, Corolllary~\ref{cor1} and Theorem~\ref{thm3}. Combining~\eqref{lgae2} and~\eqref{lgae3} we deduce that
\bes\begin{split}&H^{(k+1)}\left(\frac{N_1\cdots N_{k+1}}{2^{m_1+\cdots+m_k}},\bv N',2\bv N'\right)\ll_k\frac{H^{(k+1)}(N_1\cdots N_{k+1},\bv N/2,\bv N)}{2^{m_1+\cdots+m_k}}(\log N_i)^{k+1}.\end{split}\ees Summing the above inequality
over $\bv m\in\mathcal{M}_i$ gives us that $$T_i\ll_k H^{(k+1)}\left(N_1\cdots N_{k+1},\frac{\bv N}2,\bv N\right)\frac{(\log N_i)^{k+1}}{\sqrt{N_i}},$$ which together with~\eqref{lge25} and~\eqref{lgae1} completes the proof of Theorem~\ref{thm1}.


\section{Linear constraints on a Poisson distribution}\label{poisson}

A $k$-dimensional Poisson distribution with parameters $z_1,\dots,z_k$ is a probability distribution on the lattice
$(\SN\cup\{0\})^k$ that assigns to each lattice point $(r_1,\dots,r_k)$ the probability
$\prod_{i=1}^ke^{-z_i}z_i^{r_i}/r_i!$. Our goal in this section is to estimate the probability that lattice points obeying such a
distribution lie close to a hyperplane and other related quantities. Throughout this entire section we fix positive real numbers $\lambda_1,\dots,\lambda_k$ and we set $\Lambda=\max_{1\le i\le k}\lambda_i$. Given $R\ge\Lambda$, let $$\mathcal{H}^k(R)=\left\{(r_1,\dots,r_k)\in(\SN\cup\{0\})^k:R-\Lambda<\sum_{i=1}^k\lambda_ir_i\le R\right\}$$
$$\mathcal{H}_-^k(R)=\left\{(r_1,\dots,r_k)\in(\SN\cup\{0\})^k:\sum_{i=1}^k\lambda_ir_i\le R\right\}$$
and $$\mathcal{H}_+^k(R)=\left\{(r_1,\dots,r_k)\in(\SN\cup\{0\})^k:\sum_{i=1}^k\lambda_ir_i\ge
R\right\}.$$ Also, define the number $\alpha(R)=\alpha(R;k,\bv
z,\bv\lambda)$ implicitly via the equation
$$\sum_{i=1}^k\lambda_ie^{\alpha(R)\lambda_i}z_i=R$$ and set
$$\mathcal{H}^k(R,\delta)=\left\{\bv r\in\mathcal{H}^k(R):\left|r_i-e^{\alpha(R)\lambda_i}z_i\right|
\le\frac{\Lambda}{\lambda_i}\max\left\{k,\delta\sqrt{e^{\alpha(R)\lambda_i}z_i}\right\}\;(1\le i\le
k)\right\}.$$

\begin{rk}\label{sprk1} The motivation for the definition of $\alpha(R)$ may be briefly summarized as follows: By Stirling's
formula, we have that \be\label{spe1}\prod_{i=1}^k\frac{z_i^{r_i}}{r_i!}\sim_k\prod_{i=1}^k\frac1{\sqrt{2\pi r_i}}\left(\frac{z_ie}{r_i}\right)^{r_i}.\ee Using Lagrange multipliers, we see that when $\bv r$ ranges over
$\mathcal{H}^k(R)$, the maximum of the right hand side in~\eqref{spe1} occurs when
$r_i=e^{\alpha(R)\lambda_i}z_i+O_{k,\bv\lambda}(1)$ for all $i\in\{1,\dots,k\}$.
\end{rk}

\begin{lemma}\label{spl1} Let $k\in\SN$, $0<\delta\le1$, $z_1,\dots,z_k\ge1$ and $\lambda_1,\dots,\lambda_k>0$. There is a constant $c=c(k,\bv\lambda)$ such that the following hold:
\begin{enumerate}
\item If $R\ge\max\{\Lambda,\delta(z_1+\cdots+z_k)\}$, then $$\prob(\mathcal{H}^k(R,\delta))\gg_{k,\bv\lambda,\delta}\frac{e^{-c|\alpha(R)|}}{\sqrt{R}}\prod_{i=1}^k\exp\{-Q(e^{\alpha(R)\lambda_i})z_i\}.$$
\item If $R\ge\Lambda$, then $$\prob(\mathcal{H}^k(R))\ll_{k,\bv\lambda}\frac{e^{c|\alpha(R)|}}{\sqrt{R}}\prod_{i=1}^k\exp\{-Q(e^{\alpha(R)\lambda_i})z_i\}.$$
\end{enumerate}
\end{lemma}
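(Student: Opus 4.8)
The plan is to estimate $\prob(\mathscr H^k(R))$ and $\prob(\mathscr H^k(R,\delta))$ by reducing everything to the one‑dimensional case and then multiplying. First I would observe that, after the change of variables suggested by Remark~\ref{sprk1}, it is natural to renormalize: write $w_i=e^{\alpha(R)\lambda_i}z_i$, so that $\sum_i\lambda_i w_i=R$ and the ``center of mass'' of the constrained region sits exactly on the hyperplane $\sum_i\lambda_i r_i=R$. For a single Poisson variable with parameter $z$ one has the classical local estimate $e^{-z}z^r/r!\asymp \frac1{\sqrt{1+r}}\exp\{-Q(r/z)z\}$ uniformly (via Stirling), together with the fact that $Q$ is convex with $Q(1)=0$, $Q'(1)=0$, so that $Q(r/z)z$ behaves near $r=w$ like a quadratic $\tfrac{(r-w)^2}{2w}$ plus controlled lower‑order terms; the factor $e^{\pm c|\alpha(R)|}$ in the statement is precisely what is needed to absorb the discrepancy between $Q(r_i/z_i)z_i$ and its linearization around $w_i$ when $r_i$ is pushed a bounded amount off $w_i$ by the constraint. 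So the first step is to record these one‑dimensional facts as sub‑lemmas (or cite the analogous statements in \cite{kf1}).

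For the \textbf{upper bound} (part (b)), I would bound $\prob(\mathscr H^k(R))$ by summing $\prod_i e^{-z_i}z_i^{r_i}/r_i!$ over the slab $R-\Lambda<\sum_i\lambda_i r_i\le R$. Using the one‑dimensional estimate on each coordinate, the summand is $\asymp_k \big(\prod_i (1+r_i)^{-1/2}\big)\exp\{-\sum_i Q(r_i/z_i)z_i\}$. I would then compare $\sum_i Q(r_i/z_i)z_i$ with $\sum_i Q(w_i/z_i)z_i=\sum_i Q(e^{\alpha(R)\lambda_i})z_i$: by convexity of $Q$ and the tangent‑line identity $\sum_i\lambda_i w_i=R\ge\sum_i\lambda_i r_i$ on the slab (up to the additive $\Lambda$), one gets $\sum_i Q(r_i/z_i)z_i\ge\sum_i Q(w_i/z_i)z_i+\tfrac{c}{R}\sum_i(r_i-w_i)^2-O(|\alpha(R)|)$ or a similar inequality, the Gaussian term providing summability transverse to the hyperplane and the $k-1$ free directions along the hyperplane contributing the $1/\sqrt R$ saving (since $\sum_i\lambda_i w_i=R$ forces one of the $w_i$ to be $\gg R/\Lambda$, and the relevant Gaussian has total mass $\asymp\sqrt{R}^{\,k-1}$ out of a volume $\asymp\sqrt R^{\,k}$). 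Carrying out the sum yields the claimed bound $\ll_{k,\bv\lambda} \frac{e^{c|\alpha(R)|}}{\sqrt R}\prod_i\exp\{-Q(e^{\alpha(R)\lambda_i})z_i\}$.

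For the \textbf{lower bound} (part (a)), I would restrict the sum to the sub‑region $\mathscr H^k(R,\delta)$, on which every $r_i$ is within $O_{k,\bv\lambda}(\max\{k,\delta\sqrt{w_i}\})$ of $w_i$; the hypothesis $R\ge\delta(z_1+\dots+z_k)$ guarantees this neighbourhood is non‑empty and actually contains $\gg_{k,\bv\lambda,\delta}\sqrt R^{\,k-1}$ lattice points inside the slab (one builds them by choosing $r_2,\dots,r_k$ freely in their allowed windows and then forcing $r_1$ into its window — a routine counting argument, plus checking the window for $r_1$ has the right length using $\sum\lambda_i w_i=R$). On each such point, the one‑dimensional lower bounds give summand $\gg_k \big(\prod_i(1+w_i)^{-1/2}\big)\exp\{-\sum_i Q(w_i/z_i)z_i - O(|\alpha(R)|)\}$, where the $O(|\alpha(R)|)$ again absorbs the quadratic correction $\tfrac{(r_i-w_i)^2}{w_i}=O(\delta^2)=O(1)$ plus the Stirling error; multiplying the per‑point bound by the number of points and noting $\prod_i(1+w_i)^{-1/2}\cdot\sqrt R^{\,k-1}\asymp 1/\sqrt R$ (up to $e^{O(|\alpha(R)|)}$) gives the stated lower bound. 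The main obstacle I anticipate is bookkeeping the error terms uniformly: keeping track of how the additive $\Lambda$ in the slab, the $O_{k,\bv\lambda}(1)$ in Remark~\ref{sprk1}, the Stirling corrections, and the quadratic deviation all collapse into a single clean factor $e^{\pm c|\alpha(R)|}/\sqrt R$ — in particular verifying that nothing worse than $e^{c|\alpha(R)|}$ is lost when $|\alpha(R)|$ is large (which is where one uses that $Q$ grows and that the off‑center shift is only $O_{k,\bv\lambda}(1)$ in each coordinate, so the loss per coordinate is $O(\lambda_i|\alpha(R)|)$).
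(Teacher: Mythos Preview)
Your overall strategy---Stirling, centre the problem at $w_i=e^{\alpha(R)\lambda_i}z_i$, and use a quadratic (Gaussian) approximation---is exactly the paper's approach. However, there is a genuine gap in the accounting that affects both parts.

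The problem is the pair of assertions that the slab contains $\gg\sqrt{R}^{\,k-1}$ lattice points of $\mathscr{H}^k(R,\delta)$, and that $\prod_i(1+w_i)^{-1/2}\cdot\sqrt{R}^{\,k-1}\asymp 1/\sqrt{R}$ up to $e^{O(|\alpha(R)|)}$. The second identity is equivalent to $\prod_i w_i\asymp R^k$, which is false: take $k=2$, $\lambda_1=\lambda_2=1$, $z_1=1$, $z_2=R-1$, so that $\alpha(R)=0$, $w_1=1$, $w_2=R-1$, and $\prod_i w_i\asymp R$, not $R^2$. Correspondingly, your lattice-point construction (``choose $r_2,\dots,r_k$ freely in their windows and force $r_1$'') breaks down here: the window for $r_1$ has width $\asymp\sqrt{w_1}=O(1)$, which cannot absorb perturbations of size $\sqrt{w_2}\asymp\sqrt{R}$ from $r_2$. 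The fix, which the paper implements, is to eliminate the coordinate with the \emph{largest} $w_i$ (call it $w_k$, so $w_k\asymp_{k,\bv\lambda} R$): then the count of lattice points is $\asymp\prod_{i=1}^{k-1}\sqrt{w_i}$, not $\sqrt{R}^{\,k-1}$, and the eliminated coordinate's window is wide enough to accept the perturbation. With this count, the product $\prod_{i=1}^k\frac{\sqrt{w_i}}{z_i}\cdot\prod_{i=1}^{k-1}\sqrt{w_i}=\frac{\prod_i w_i}{\sqrt{w_k}\prod_i z_i}=\frac{e^{\alpha(R)\sum_i\lambda_i}}{\sqrt{w_k}}$ really is $\asymp_{k,\bv\lambda}e^{O(|\alpha(R)|)}/\sqrt{R}$.

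The same issue surfaces in your upper bound. The weak inequality $\sum_iQ(r_i/z_i)z_i\ge\sum_iQ(w_i/z_i)z_i+\tfrac{c}{R}\sum_i(r_i-w_i)^2-O(|\alpha(R)|)$ is correct on the slab, but the resulting Gaussian has width $\asymp\sqrt{R}$ in \emph{every} direction, so its sum along the hyperplane is $\asymp\sqrt{R}^{\,k-1}$ and you again need the false identity above. To get the right $1/\sqrt{R}$ you must use the sharper local Hessian $\sum_i\frac{(r_i-w_i)^2}{2w_i}$, whose Gaussian sum along the hyperplane is $\asymp\prod_{i\neq k}\sqrt{w_i}$; but this quadratic form is only valid for $r_i$ comparable to $w_i$, so a separate tail argument is required. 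The paper handles this by first reparametrising (eliminating $r_k$), then splitting the $(k-1)$-dimensional range into a central region $\mathscr{R}_2$ where $r_i\le 3r_i^*+4$ for all $i$ (where the sharp quadratic bound applies), a region $\mathscr{R}_1$ where the eliminated coordinate overshoots (handled by the quadratic term in $f(\tilde{\bv r})$), and tail regions $\mathscr{R}_3(I)$ where some $r_i>3r_i^*+4$, reduced to the central case by a monotonicity argument.
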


\begin{proof} By Stirling's formula, we have that \be\label{spe0}\prod_{i=1}^ke^{-z_i}\frac{z_i^{r_i}}{r_i!}\asymp_k\left(\prod_{i=1}^k\frac{\sqrt{r_i+1}}{z_i}\right)e^{F(\bv r)},\ee where \be\label{spe00}F(\bv r)=-(z_1+\cdots+z_k)+\sum_{i=1}^k(r_i+1)\log\frac{z_ie}{r_i+1}.\ee Set $r_i^*=e^{\lambda_i\alpha(R)}z_i-1$ for $i\in\{1,\dots,k\}$. Without loss of generality, we assume that $r_k^*+1=\max_{1\le i\le k}(r_i^*+1)$, so that \be\label{spe2}r_k^*+1\asymp_{k,\bv\lambda}R.\ee In order to prove part (a) of the lemma, we shall employ quadratic approximation to $F(\bv r)$ around the point $\bv r^*$. However, for part (b) we need to be more careful: we shall reparametrize the set $\mathcal{H}^k(R)$ first and then use the saddle point method. We give the details of the proof below.

\medskip

(a) Since $R\ge\delta(z_1+\cdots+z_k)$, then~\eqref{spe2} yields $$e^{\lambda_k\alpha(R)}z_k\gg_{k,\lambda}R\ge\delta z_k$$ and thus $\alpha(R)\ge-C$ for some constant $C=C(k,\bv\lambda,\delta)$. In turn, this implies that $r_i^*+1\gg_{k,\bv\lambda,\delta}z_i\ge1$ for all $i\in\{1,\dots,k\}$. By Taylor's theorem, for every $\bv r\in\mathcal{H}^k(R,\delta)$ there is a vector $\bv\xi\in\SR^k$ that lies on the line segment connecting $\bv r$ and $\bv r^*$ and satisfies
\begin{align*} 
F(\bv r) 
	& = F(\bv r^*)+\sum_{i=1}^k\frac{\partial F(\bv r^*)}{\partial x_i}(r_i-r_i^*)
		+\frac12\sum_{1\le i,j\le k}\frac{\partial^2F(\bv\xi)}{\partial x_i\partial x_j}(r_i-r_i^*)(r_j-r_j^*) \\
	& = F(\bv r^*)-\sum_{i=1}^k\frac{(r_i-r_i^*)^2}{2\xi_i+2}
		+ O_{k,\bv\lambda}(|\alpha(R)|)=F(\bv r^*)
		+ O_{k,\bv\lambda}(1+|\alpha(R)|).
\end{align*}
Since we also have that 
\bes\bsp|\mathcal{H}^k(R,\delta)|&\ge\left\lvert\left\{\bv r\in\mathcal{H}^k(R):|r_i-r_i^*-1|\le\frac{\Lambda}{k\lambda_i}\max\left\{k,\delta\sqrt{r_i^*+1}-1\right\}~(1\le i\le k-1)\right\}\right\rvert\\
&\asymp_{k,\bv\lambda,\delta}\prod_{i=1}^{k-1}\sqrt{r_i^*+1}\asymp_{k,\bv\lambda}\sqrt{\frac{(r_1^*+1)\cdots(r_k^*+1)}R},\end{split}\ees 
by our assumption that $r_k^*+1=\max_{1\le i\le k}(r_i^*+1)$, and
\begin{equation}\label{Fr*}
F( \boldsymbol{r}^*) = - \sum_{i=1}^k Q( e^{\lambda_i \alpha(R)} ) z_i,
\end{equation}
the desired lower bound on $\prob(\mathcal{H}^k(R,\delta))$ follows.

\medskip

(b) Let $$\mathcal{R}=\{\tilde{\bv r}=(r_1,\dots,r_{k-1})\in(\SN\cup\{0\})^{k-1}:\lambda_1r_1+\cdots+\lambda_{k-1}r_{k-1}\le R\}$$ and, for $\tilde{\bv r}\in\mathcal{R}$, set $$f(\tilde{\bv r})=\frac1{\lambda_k}\left(R-\sum_{i=1}^{k-1}\lambda_ir_i\right)\quad{\rm and}\quad G(\tilde{\bv r})=F(\tilde{\bv r},f(\tilde{\bv r})),$$ where $F$ is defined by~\eqref{spe00}. Given $\tilde{\bv r}\in\mathcal{R}$, there is a positive but bounded number of integers $r_k$ such that $(r_1,\dots,r_k)\in\mathcal{H}^k(R)$: Indeed, we have that $(r_1,\dots,r_k)\in\mathcal{H}^k(R)$ if, and only if, \be\label{spe110}r_k\ge0\quad{\rm and}\quad f(\tilde{\bv r})-\Lambda/\lambda_k<r_k\le f(\tilde{\bv r}).\ee Also, relation~\eqref{spe110} and the Mean Value Theorem imply that there is some $\xi\in(r_k+1,f(\tilde{\bv r})+1)$ such that 
$$
(r_k+1)\log\frac{z_ke}{r_k+1}-(f(\tilde{\bv r})+1)\log\frac{z_ke}{f(\tilde{\bv r})+1}
	=  (f(\tilde{\bv r})-r_k)\log\frac{\xi}{z_k}.
$$ 
We have that 
$$
\log\frac\xi{z_k}\le\log\frac{R/\lambda_k+1}{z_k}=\log\frac{r_k^*+1}{z_k}
	+  O_{k,\bv\lambda}(1)\le\lambda_k|\alpha(R)|
	+ O_{k,\bv\lambda}(1),
$$ 
by~\eqref{spe2}. So~\eqref{spe0} yields that
$$
\prob(\mathcal{H}^k(R))\ll_{k,\bv\lambda}e^{\Lambda|\alpha(R)|}\sum_{\tilde{\bv r}\in\mathcal{R}}\left(\prod_{i=1}^{k-1}\frac{\sqrt{r_i+1}}{z_i}\right)\frac{\sqrt{f(\tilde{\bv r})+1}}{z_k}e^{G(\tilde{\bv r})}.
$$ 
Since we also have that $f(\tilde{\bv r})+1\le R/\lambda_k+1\asymp_{k,\bv\lambda}r_k^*+1,$ by~\eqref{spe2}, we deduce that
\be\label{spe3}
\prob(\mathcal{H}^k(R))
	\ll_{k,\bv\lambda}\frac{e^{O_{k,\bv\lambda}(|\alpha(R)|)}}{\sqrt{R}}
		\sum_{\tilde{\bv r}\in\mathcal{R}}\left(\prod_{i=1}^{k-1}\frac{\sqrt{r_i+1}}{z_i}\right)e^{G(\tilde{\bv r})}.
\ee 
In order to estimate the right hand side of~\eqref{spe3}, we shall use quadratic approximation to $G(\tilde{\bv r})$ around the point $\tilde{\bv r}^*=(r_1^*,\dots,r_{k-1}^*)$. We have that 
$$
\frac{\partial G(\tilde{\bv r}^*)}{\partial x_i}
	= \log\frac{z_i}{r_i^*+1}+\frac{\lambda_i}{\lambda_k}\log\frac{f(\tilde{\bv r}^*)+1}{z_k}	
	= \frac{\lambda_i}{\lambda_k}\log\frac{f(\tilde{\bv r}^*)+1}{r_k^*+1}
	\ll_{k,\bv\lambda}\frac1R
		\quad(1\le i\le k-1),
$$ 
by~\eqref{spe2} and~\eqref{spe110}. Also, 
$$
\frac{\partial^2G}{\partial x_i\partial x_j}(\tilde{\bv r})=-\frac{\delta_{i,j}}{r_i+1}-\frac{\lambda_i\lambda_j}{\lambda_k^2(f(\tilde{\bv r})+1)}\quad(1\le i,j\le k-1),
$$ 
where $\delta_{i,j}$ is the
standard Kronecker symbol. So for every $\tilde{\bv r}\in\mathcal{R}$ there is a vector $\bv\xi=(\xi_1,\dots,\xi_{k-1})$ that lies on the line
segment connecting $\tilde{\bv r}$ and $\tilde{\bv r}^*$ and satisfies
\be\label{spe4}\begin{split}
	G(\tilde{\bv r}) &=  G(\tilde{\bv r}^*)  
		+O_{k,\bv\lambda}\left(\frac1R\sum_{i=1}^{k-1}|r_i-r_i^*|\right)
			-   \sum_{i=1}^{k-1}\frac{(r_i-r_i^*)^2}{2(\xi_i+1)}
			-   \frac12\left(\sum_{i=1}^{k-1}\frac{\lambda_i(r_i-r_i^*)}{\lambda_k\sqrt{f(\bv\xi)+1}}\right)^2  \\
			& = G(\tilde{\bv r}^*)  
				+O_{k,\bv\lambda}(1)-\sum_{i=1}^{k-1}\frac{(r_i-r_i^*)^2}{2\xi_i+2}
				 -  \frac{(f(\tilde{\bv r})-f(\tilde{\bv r}^*))^2}{2f(\bv\xi)+2}.
\end{split}\ee
Next, we split the set $\mathcal{R}$ into certain subsets. Let 
$$
\mathcal{R}_1=\{\tilde{\bv r}\in\mathcal{R}:f(\tilde{\bv r})+1>(1+\eta)(r_k^*+1)-\Lambda/\lambda_k\},
$$ 
$$
\mathcal{R}_2=\{\tilde{\bv r}\in\mathcal{R}\setminus\mathcal{R}_1:r_i\le3r_i^*+4~(1\le i\le k-1)\},
$$ 
where $\eta=-1+2^{\lambda_k/\Lambda}>0$, and for $I\subset\{1,\dots,k-1\}$ set $$\mathcal{R}_3(I)=\{\tilde{\bv r}\in\mathcal{R}\setminus\mathcal{R}_1:r_i>3r_i^*+4~(i\in I),r_i\le3r_i^*+4~(i\notin I,1\le i\le k-1)\}.$$ If $\tilde{\bv r}\in\mathcal{R}_1$, then~\eqref{spe4} implies that 
$$
G(\tilde{\bv r})
	\le G(\tilde{\bv r}^*)
		+O_{k,\bv\lambda}(1)-\frac{(\eta r_k^*-O_{k,\bv\lambda}(1))^2}{2R/\lambda_k}
	\le G(\tilde{\bv r}_*)+O_{k,\bv\lambda}(1)-c_0R
$$ 
for some positive constant $c_0=c_0(k,\bv\lambda)$, by~\eqref{spe2}. Therefore 
\be\label{spe12}
	\sum_{\tilde{\bv r}\in\mathcal{R}_1} 
			\left(\prod_{i=1}^{k-1}\frac{\sqrt{r_i+1}}{z_i}\right)e^{G(\tilde{\bv r})}
		\ll_{k,\bv\lambda} R^{3(k-1)/2}e^{G(\tilde{\bv r}^*)-c_0R}
		\ll_{k,\bv\lambda}  e^{G(\tilde{\bv r}^*)} .
\ee
Next, if $\tilde{\bv r}\in\mathcal{R}_2$, then for any $\bv\xi$ that lies on the line
segment connecting $\tilde{\bv r}$ and $\tilde{\bv r}^*$ we have that $\xi_i\le3r_i^*+4$. Consequently, 
$$
G(\tilde{\bv r})
	\le G(\tilde{\bv r}^*)  +O_{k,\bv\lambda}(1)-\sum_{i=1}^{k-1}\frac{(r_i-r_i^*)^2}{6r_i^*+10},
$$ 
by~\eqref{spe4}. So we deduce that 
\be\label{spe10}\bsp
\sum_{\tilde{\bv r}\in\mathcal{R}_2}\left(\prod_{i=1}^{k-1}\frac{\sqrt{r_i+1}}{z_i}\right)e^{G(\tilde{\bv r})}
	&\ll_{k,\bv\lambda}\left(\prod_{i=1}^{k-1}\frac{\sqrt{r_i^*+2}}{z_i}\right)e^{G(\tilde{\bv r}^*)}
		\prod_{i=1}^{k-1}\sqrt{r_i^*+2} \\ 
	& = e^{O_{k,\bv\lambda}(1+|\alpha(R)|)+G(\tilde{\bv r}^*)}  .
\end{split}\ee 
Lastly, fix some non-empty set $I\subset\{1,\dots,k-1\}$ and $i\in I$ and consider $\tilde{\bv r}\in\mathcal{R}_3(I)$. Set 
$$
\tilde{\bv r_i}=(r_1,\dots,r_{i-1},r_i-1,r_{i+1},\dots,r_k).
$$ 
Then for every vector $\bv s$ that lies in the line segment connecting $\tilde{\bv r}$ and $\tilde{\bv r_i}$ we have that
\bes\begin{split}
	\frac{\partial G}{\partial x_i}(\bv s)
		\le  \log\frac{z_i}{r_i-1}+\frac{\lambda_i}{\lambda_k}\log\frac{f(\tilde{\bv r_i})+1}{z_k}
	& \le \log\frac{z_i}{3r_i^*+3}+\frac{\lambda_i}{\lambda_k}\log\frac{(1+\eta)(r_k^*+1)}{z_k}
		\le-\log\frac32.
\end{split}\ees 
So by the Mean Value Theorem we find that $e^{G(\tilde{\bv r})}\le\frac23e^{G(\tilde{\bv r_i})}$ and, consequently,
\bes
	\sum_{\tilde{\bv r}\in\mathcal{R}_3(I)}
		\left(\prod_{i=1}^{k-1}\frac{\sqrt{r_i+1}}{z_i}\right)e^{G(\tilde{\bv r})}
	\ll_{k,\bv\lambda}\sum_{\tilde{\bv r}\in\mathcal{R}_3(I\setminus\{i\})\cup\mathcal{R}_1}
		\left(\prod_{i=1}^{k-1}\frac{\sqrt{r_i+1}}{z_i}\right)e^{G(\tilde{\bv r})}.
\ees 
Iterating the above inequality yields that
\bes\sum_{\tilde{\bv r}\in\mathcal{R}_3(I)}
	\left(\prod_{i=1}^{k-1}\frac{\sqrt{r_i+1}}{z_i}\right)e^{G(\tilde{\bv r})}  \\
\ll_{k,\bv\lambda}\sum_{\tilde{\bv r}\in\mathcal{R}_1\cup\mathcal{R}_2}
	\left(\prod_{i=1}^{k-1}\frac{\sqrt{r_i+1}}{z_i}\right)e^{G(\tilde{\bv r})},
\ees 
since $\mathcal{R}_3(\emptyset)=\mathcal{R}_2$. Combining the above estimate with relations \eqref{spe3}, \eqref{spe12} and \eqref{spe10} shows that 
$$
\prob(\mathcal{H}^k(R))
	\le \frac{e^{O_{k,\bv\lambda}(1+|\alpha(R)|) + G(\tilde{\bv r}^*)}}{\sqrt{R}}.
$$ 
To complete the proof of the lemma, note that 
$$
|G(\tilde{\bv r}^*)-F(\bv r^*)| 
	=  |F(\tilde{\bv r}^*,f(\tilde{\bv r}^*))-F(\tilde{\bv r}^*,r_k^*)|
	\ll_{k,\bv\lambda}1+|\alpha(R)|,
$$ 
which together with\ \eqref{Fr*} implies that
$$
G(\boldsymbol{r}^*) = - \sum_{i=1}^k Q( e^{\lambda_i \alpha(R) } ) z_i
+ O_{k,\boldsymbol{\lambda} } (1+|\alpha(R)|).
$$
\end{proof}

Finally, as a consequence of Lemma~\ref{spl1}, we have the following estimates.

\begin{lemma}\label{spl2} Let $k\in\SN$, $C\ge0$, $z_1,\dots,z_k\ge1$, $\lambda_1,\dots,\lambda_k>0$ and $\mu_1,\dots,\mu_k>0$ such that $Z=\mu_1z_1+\cdots+\mu_kz_k\ge\Lambda$.
\renewcommand{\labelenumi}{(\alph{enumi})}
\begin{enumerate}\item If $\lambda_i<\mu_i$ for all $i\in\{1,\dots,k\}$, then
$$\sum_{\bv r\in\mathcal{H}^k_+(Z)}\left(1+\sum_{i=1}^k\lambda_ir_i-Z\right)^C\prod_{i=1}^k\frac{e^{-z_i}z_i^{r_i}}{r_i!}
\ll_{k,\bv\lambda,\bv\mu,C}\prob(\mathcal{H}^k(Z)).$$
\item If $\log(\mu_i/\lambda_i)<\lambda_i$ for all $i\in\{1,\dots,k\}$, then
$$\sum_{\bv r\in\mathcal{H}^k_-(Z)}\left(1+Z-\sum_{i=1}^k\lambda_ir_i\right)^C\prod_{i=1}^k\frac{e^{-z_i}(e^{\lambda_i}z_i)^{r_i}}{r_i!}
\ll_{k,\bv\lambda,\bv\mu,C}e^Z\prob(\mathcal{H}^k(Z)).$$
\end{enumerate}
\end{lemma}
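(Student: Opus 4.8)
The plan is to cut each of the two sums into ``shells'' $\mathscr{H}^k(R)$ with $R$ running over an arithmetic progression of step $\Lambda$, to estimate each shell with Lemma~\ref{spl1}, and to sum a geometric-type series. The analytic engine is the identity
$$\frac{d}{dR}\sum_{i=1}^kQ\bigl(e^{\alpha(R)\lambda_i}\bigr)z_i=\alpha(R),$$
which I would verify by differentiating the defining relation $\sum_i\lambda_ie^{\alpha(R)\lambda_i}z_i=R$ (this gives $\alpha'(R)\sum_i\lambda_i^2e^{\alpha(R)\lambda_i}z_i=1$) and using $Q'(u)=\log u$. Writing $\Phi(R)=\sum_iQ(e^{\alpha(R)\lambda_i})z_i$, this yields $\Phi(R')-\Phi(R)=\int_R^{R'}\alpha(t)\,dt$, which, since $\alpha$ is strictly increasing, can be bounded on either side by $\alpha$ evaluated at an endpoint. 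Two elementary facts are needed in addition. First, $|\alpha(Z)|\ll_{k,\bv\lambda,\bv\mu}1$: comparing $Z=\sum_i\lambda_ie^{\alpha(Z)\lambda_i}z_i$ with $\lambda_{\min}e^{\alpha(Z)\lambda_{\min}}\sum_iz_i$ and with $\Lambda e^{\alpha(Z)\lambda_{\min}}\sum_iz_i$, and using $Z=\sum_i\mu_iz_i\asymp_{\bv\mu}\sum_iz_i$, pins $\alpha(Z)$ between two constants. Second, the two hypotheses enter only through the sign of $\alpha(Z)$: if $\lambda_i<\mu_i$ for all $i$ then the defining relation at $\alpha=0$ reads $\sum_i\lambda_iz_i<Z$, whence $\alpha(Z)\ge\Lambda^{-1}\log\min_i(\mu_i/\lambda_i)=:\alpha_0>0$; if instead $\log(\mu_i/\lambda_i)<\lambda_i$ for all $i$ then the relation at $\alpha=1$ reads $\sum_i\lambda_ie^{\lambda_i}z_i>Z$, whence $\alpha(Z)\le1-\epsilon_0$ for some $\epsilon_0=\epsilon_0(k,\bv\lambda,\bv\mu)>0$ (one quantifies using $\rho:=\min_i(\lambda_ie^{\lambda_i}/\mu_i)>1$).

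For part (a): since $\lambda_ir_i\ge0$, every $\bv r\in\mathscr{H}_+^k(Z)$ lies in some $\mathscr{H}^k(Z+j\Lambda)$ with $j\in\SN\cup\{0\}$, and there $1+\sum_i\lambda_ir_i-Z\le1+(j+1)\Lambda$, so the sum in (a) is $\ll_{k,\bv\lambda,C}\sum_{j\ge0}(1+j)^C\prob(\mathscr{H}^k(Z+j\Lambda))$. Bounding each term from above by Lemma~\ref{spl1}(b) and $\prob(\mathscr{H}^k(Z))$ from below by Lemma~\ref{spl1}(a) (applicable with $\delta=\min_i\mu_i$, since $Z\ge\delta\sum_iz_i$), and using $\Phi(Z+j\Lambda)-\Phi(Z)\ge\alpha_0j\Lambda$ together with $|\alpha(Z)|\ll1$ and $\alpha(Z+j\Lambda)\le\alpha(Z)+\lambda_{\min}^{-1}\log(1+j)$ (from $\alpha'(R)\le(\lambda_{\min}R)^{-1}$), the ratio of the $j$-th term to $\prob(\mathscr{H}^k(Z))$ is $\ll_{k,\bv\lambda,\bv\mu,C}(1+j)^{C'}e^{-\alpha_0\Lambda j}$, which sums to a constant. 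This proves (a).

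For part (b): set $w_i=e^{\lambda_i}z_i$, so that $\prod_i\tfrac{e^{-z_i}(e^{\lambda_i}z_i)^{r_i}}{r_i!}=e^{\sum_i(w_i-z_i)}\prob_{\bv w}(\{\bv r\})$, where $\prob_{\bv w}$ is the Poisson law with parameters $\bv w$. The $\alpha$-function attached to $\bv w$ is $\alpha(\cdot)-1$ (its defining relation $\sum_i\lambda_ie^{\tilde\alpha\lambda_i}w_i=R$ reduces to $\sum_i\lambda_ie^{(\tilde\alpha+1)\lambda_i}z_i=R$), and a one-line computation using $\sum_i\lambda_ie^{\alpha(R)\lambda_i}z_i=R$ gives $\sum_iQ(e^{(\alpha(R)-1)\lambda_i})w_i=\Phi(R)-R+\sum_i(w_i-z_i)$; hence Lemma~\ref{spl1}(b) applied to $\prob_{\bv w}$ yields, for $R\ge\Lambda$,
$$e^{\sum_i(w_i-z_i)}\prob_{\bv w}(\mathscr{H}^k(R))\ll_{k,\bv\lambda}\frac{e^{c|\alpha(R)-1|}}{\sqrt R}\,e^{R-\Phi(R)},$$
the exponentially large prefactor having cancelled. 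I would then cover $\mathscr{H}_-^k(Z)$ by $\{\bv0\}$, the finite set $\{\bv r:0<\sum_i\lambda_ir_i\le\Lambda\}$, and the shells $\mathscr{H}^k(Z-j\Lambda)$ with $Z-j\Lambda>\Lambda$. The first two pieces contribute $\ll_{k,\bv\lambda,\bv\mu,C}(1+Z)^{O(1)}e^{-\sum_iz_i}$, which is $\ll e^Z\prob(\mathscr{H}^k(Z))$ because Lemma~\ref{spl1}(a) and the identity $Z-\Phi(Z)+\sum_iz_i=\sum_ie^{\alpha(Z)\lambda_i}\bigl(\lambda_i(1-\alpha(Z))+1\bigr)z_i\gg\sum_iz_i$ (here $\alpha(Z)<1$ is used) force $e^Z\prob(\mathscr{H}^k(Z))\gg Z^{-1/2}e^{-\Phi(Z)+Z}$ to be exponentially larger in $\sum_iz_i$. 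For the remaining shells, $\Phi(Z)-\Phi(Z-j\Lambda)\le\alpha(Z)j\Lambda\le(1-\epsilon_0)j\Lambda$ gives $e^{(Z-j\Lambda)-Z+\Phi(Z)-\Phi(Z-j\Lambda)}\le e^{-\epsilon_0j\Lambda}$, so the ratio of the $j$-th shell to $e^Z\prob(\mathscr{H}^k(Z))$ is $\ll(1+j)^C\sqrt{Z/(Z-j\Lambda)}\,e^{c|\alpha(Z-j\Lambda)-1|}\,e^{-\epsilon_0j\Lambda}$; splitting at $j=Z/(2\Lambda)$ (in the first range the $\alpha$- and square-root factors are $O(1)$; in the second they are only $\ll(2+Z)^{O(1)}$, which is killed by $e^{-\epsilon_0j\Lambda}\le e^{-\epsilon_0Z/4}e^{-\epsilon_0j\Lambda/2}$) the total is $\ll_{k,\bv\lambda,\bv\mu,C}1$. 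This proves (b).

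The main obstacle is the bookkeeping in part (b): one must carry out the substitution $z_i\mapsto w_i=e^{\lambda_i}z_i$ so that the exponentially large factor $e^{\sum_i(w_i-z_i)}$ cancels \emph{exactly} — this is precisely where the explicit shape $Q(u)=u\log u-u+1$ is used — and then control the ``deep'' shells $\mathscr{H}^k(Z-j\Lambda)$ with $Z-j\Lambda$ near $\Lambda$, where the associated $\alpha$-value can be as negative as $-c\log(2+Z)$; the saving there comes entirely from the geometric factor $e^{-\epsilon_0j\Lambda}$, which is available only because the hypothesis of (b) forces the strict inequality $\alpha(Z)<1$ with a quantitative gap. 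Part (a), by contrast, is routine once the identity $\Phi'=\alpha$ and the bound $\alpha(Z)\ge\alpha_0>0$ are in hand.
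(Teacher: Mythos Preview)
Your proposal is correct and follows essentially the same route as the paper: slice into shells $\mathscr{H}^k(Z\pm n\Lambda)$, apply Lemma~\ref{spl1}(b) to each shell, use the identity $\Phi'(R)=\alpha(R)$ (equivalently the paper's $G'(R)=-\alpha(R)$, $H'(R)=1-\alpha(R)$) to get geometric decay, and close with Lemma~\ref{spl1}(a). Your organization of part (b) via the substitution $w_i=e^{\lambda_i}z_i$ is equivalent to the paper's direct use of $H(R)=R+G(R)$, and your explicit treatment of the deep shells (splitting at $j\approx Z/(2\Lambda)$) and of the point $\bv r=\bv 0$ spells out details the paper leaves implicit.
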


\begin{proof}(a) Let $S_+$ be the sum in question. If we set $$G(R)=-\sum_{i=1}^kQ(e^{\alpha(R)\lambda_i})z_i=-R\,\alpha(R)+\sum_{i=1}^k\left(e^{\alpha(R)\lambda_i}-1\right)z_i,$$
then Lemma~\ref{spl1}(b) implies that \be\label{spe120}S_+\ll_{k,C,\bv\lambda,\bv\mu}\sum_{n=0}^\infty(1+n)^C\frac{\exp\{c|\alpha(Z+n\Lambda)|+G(Z+n\Lambda)\}}{\sqrt{Z+n\Lambda}}.\ee
Differentiating implicitly the defining equation of $\alpha(R)$, we find that there are positive constants $c_1=c_1(k,\bv\lambda)$ and $c_2=c_2(k,\bv\lambda)$ such that $$\frac{c_1}R\le\alpha'(R)=\left(\sum_{i=1}^k\lambda_i^2e^{\alpha(R)\lambda_i}z_i\right)^{-1}\le\frac{c_2}R\quad(R\ge\Lambda).$$ Also, we have that \be\label{spe341}\alpha(Z)\ge\min_{1\le i\le k}\frac1{\lambda_i}\log\left(\frac{\mu_i}{\lambda_i}\right)>0,\ee by the definition of $\alpha(Z)$ and our assumption that $\lambda_i<\mu_i$ for all $i$. So $$G'(R)=-\alpha(R)\le-\alpha(Z)<0\quad(R\ge Z).$$ Combining the above remarks, we see that the summands in the right hand side of~\eqref{spe120} decay exponentially. Hence $$S_+\ll_{k,C,\bv\lambda,\bv\mu}\frac{\exp\{c\alpha(Z)+G(Z)\}}{\sqrt{Z}},$$
which together with Lemma~\ref{spl1}(a) implies that $$S_+\ll_{k,C,\bv\lambda,\bv\mu}e^{2c\alpha(Z)}\prob(\mathcal{H}^k(Z)).$$
To complete the proof, note that \be\label{spe342}\alpha(Z)\le\max_{1\le i\le k}\frac1{\lambda_i}\log\left(\frac{\mu_i}{\lambda_i}\right)\ll_{k,\bv\lambda,\bv\mu}1,\ee by the definition of $\alpha(Z)$.

\medskip

(b) We argue as in part (a). Let $S_-$ be the sum we want to estimate. Then $$S_-\ll_{k,C,\bv\lambda,\bv\mu}\sum_{0\le n\le Z/\Lambda-1}(1+n)^C\frac{\exp\{c|\alpha(Z-n\Lambda)|+H(Z-n\Lambda)\}}{\sqrt{Z-n\Lambda}},$$
where $H(R)=R+G(R)$, by Lemma~\ref{spl1}(b). We have that $$H'(R)=1-\alpha(R)\ge1-\alpha(Z)>0\quad(R\ge Z),$$ by the first inequality in~\eqref{spe342} and our assumption that $\log(\mu_i/\lambda_i)<\lambda_i$ for all $i$. Thus $$S_-\ll_{k,C,\bv\lambda,\bv\mu}\frac{\exp\{c|\alpha(Z)|+H(Z)\}}{\sqrt{Z}}\ll_{k,\bv\lambda,\bv\mu}e^{Z+2c|\alpha(Z)|}\prob(\mathcal{H}^k(Z))
\ll_{k,\bv\lambda}e^Z\prob(\mathcal{H}^k(Z)),$$ by Lemma~\ref{spl1}(a) and relations~\eqref{spe341} and~\eqref{spe342}. This completes the proof of the lemma.
\end{proof}


\section{The upper bound in Theorem \ref{thm2}}\label{ub}

\subsection{Outline of the proof}\label{ub_outline}

In this subsection we give the key steps of the proof of the upper bound in Theorem \ref{thm2}\;with most of the technical details omitted. Observe that, in view of Corollary \ref{cor4}, we may assume that the numbers $\ell_1,\dots,\ell_k$ are sufficiently large. Our starting point is Theorem \ref{thm4}. We break the sum 
$$
\sum_{\substack{\bv a\in\mathcal{P}_*^k(\bv y)\\a_i\le y_i^{c_k}\;(1\le i\le k)}}\frac{L^{(k+1)}(\bv a)}{a_1\cdots a_k}
$$ 
into pieces according to the number of prime factors of the variables $a_1,\dots,a_k$. More precisely, set $\omega_k(a)=\lvert\{p|n:p>k\}\rvert$ and $$S^{(k+1)}_{\bv r}(\bv y)=\sum_{\substack{\bv a\in\mathcal{P}_*^k(\bv y)\\
\omega_k(a_i)=r_i,a_i\le y_i^{c_k}\\1\le i\le k}}\frac{L^{(k+1)}(\bv a)}{a_1\cdots a_k}\quad(\bv r\in(\SN\cup\{0\})^k).$$ Also, for each fixed $i\in\{1,\dots,k\}$
define a sequence of prime numbers $\lambda_{i,1},\lambda_{i,2},\dots$, as follows. Set
$\label{rho}\rho_m=(m+1)^{1/m}$ for $m\in\SN$, $\lambda_{i,0}=\max\{k,y_{i-1}\}$ and define inductively
$\lambda_{i,j}$ as the largest element of the set $\{p~{\rm prime}:\lambda_{i,0}<p\le y_i\}$ such that
\be\label{lambda1}\sum_{\lambda_{i,j-1}<p\le\lambda_{i,j}}\frac1p\le\log(\rho_{k-i+1}).\ee
Notice that the sequence $\{\lambda_{i,j}\}_{j\in\SN}$ eventually
becomes constant. Let $\label{vi}v_i$ be the smallest integer satisfying
$\lambda_{i,v_i}=\lambda_{i,v_i+1}$. Set
$$D_{i,j}=\{p\;\text{prime}:\lambda_{i,j-1}<p\le
\lambda_{i,j}\}\quad(1\le i\le k,1\le j\le v_i)$$ and observe that
\be\label{lambda2}\bigcup_{j=1}^{v_i}D_{i,j}=\{p\;{\rm
prime}:\max\{y_{i-1},k\}<p\le y_i\}\quad(1\le i\le k).\ee Also, we
have the following estimate.

\begin{lemma}\label{lambda} There exists some positive number $L_k$ such that 
$$
(\rho_{k-i+1})^{j-L_k}\le\frac{\log\lambda_{i,j}}{\log
y_{i-1}}\le(\rho_{k-i+1})^{j+L_k}\quad(1\le i\le k,1\le j\le v_i).
$$ 
Consequently, we have that
$$
v_i=\frac{\ell_i}{\log(\rho_{k-i+1})}+O_k(1)\quad(1\le i\le k).
$$
\end{lemma}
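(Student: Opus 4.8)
The plan is to first establish the two-sided bound on $\log\lambda_{i,j}/\log y_{i-1}$ and then deduce the asymptotic for $v_i$ by applying the bound at $j=v_i$. Throughout, fix $i\in\{1,\dots,k\}$ and write $\rho=\rho_{k-i+1}$ for brevity; recall $\log\rho=\frac{1}{k-i+1}\log(k-i+2)$, so $\rho>1$ is bounded away from $1$ and from $\infty$ in terms of $k$. The starting point is Mertens' theorem in the sharp form $\sum_{p\le t}1/p=\log\log t+M+O(1/\log t)$, which gives, for any $2\le u\le w$,
\be\label{lambdapf1}
\sum_{u<p\le w}\frac1p=\log\frac{\log w}{\log u}+O\!\left(\frac1{\log u}\right).
\ee
By the defining inequality~\eqref{lambda1}, each block $D_{i,j}$ contributes at most $\log\rho$ to the sum of reciprocals; by the \emph{maximality} of $\lambda_{i,j}$ in the definition, adding the next prime would exceed $\log\rho$, so each block with $j<v_i$ contributes at least $\log\rho-O(1/\log\lambda_{i,j-1})$ (the error being a single term $1/p$ with $p>\lambda_{i,j}\ge\lambda_{i,0}\ge y_{i-1}$). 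Summing~\eqref{lambdapf1} telescopically over the first $j$ blocks and using $\lambda_{i,0}=\max\{k,y_{i-1}\}$ (so $\log\lambda_{i,0}=\log y_{i-1}+O_k(1/\log y_{i-1})$, harmless since we may assume $y_1$, hence $y_{i-1}$, large by Remark~\ref{rk_induction}), I get
\be\label{lambdapf2}
\log\frac{\log\lambda_{i,j}}{\log y_{i-1}}=j\log\rho+O_k(1)\quad(1\le j\le v_i),
\ee
where the $O_k(1)$ absorbs the accumulated error terms; crucially this accumulated error is bounded uniformly in $j$ because each block's error is $O(1/\log\lambda_{i,j-1})$ and $\log\lambda_{i,j-1}\ge\log y_{i-1}$ grows while the number of terms is controlled by the total reciprocal sum $\le\ell_i=o(\log y_{i-1})$—so the sum of errors is $o(1)$, not merely $O(v_i)$. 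Exponentiating~\eqref{lambdapf2} gives $\rho^{j-L_k}\le\log\lambda_{i,j}/\log y_{i-1}\le\rho^{j+L_k}$ for a suitable $L_k$, which is the first assertion.

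For the second assertion, apply the first with $j=v_i$. By~\eqref{lambda2}, $\lambda_{i,v_i}$ is the largest prime $\le y_i$, so $\log\lambda_{i,v_i}=\log y_i+O(1)$ (and again $=\log y_i\,(1+o(1))$ since $y_i$ is large), whence $\log(\log\lambda_{i,v_i}/\log y_{i-1})=\log(\log y_i/\log y_{i-1})+o(1)=\ell_i-\log 3+o(1)$ by the definition~\eqref{elli} of $\ell_i$. Plugging into~\eqref{lambdapf2} with $j=v_i$ yields $v_i\log\rho=\ell_i+O_k(1)$, i.e. $v_i=\ell_i/\log\rho_{k-i+1}+O_k(1)$, as claimed.

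The main obstacle is the bookkeeping of error terms in~\eqref{lambdapf2}: one must check that the single-prime discrepancies incurred at each of the $v_i$ steps do \emph{not} accumulate to something of size $\asymp v_i\asymp\ell_i$ (which would be useless), but instead stay bounded. This is where one uses that the primes in question all exceed $y_{i-1}$, so every such discrepancy is $O(1/\log y_{i-1})$, and there are at most $O(\ell_i\log y_{i-1})$ primes total in $(\,y_{i-1},y_i\,]$—but in fact the relevant count is that the \emph{reciprocal} sum over the whole range is $\ell_i+O(1)$, and the number of \emph{blocks} is $v_i=O(\ell_i)$, each contributing one error term of size $O(1/\log y_{i-1})=o(1/\ell_i)$ once $y_1$ is large enough in terms of $k$. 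A clean way to phrase this rigorously is to note that $\sum_{j=1}^{v_i}1/\lambda_{i,j}\le\sum_{y_{i-1}<p\le y_i}1/p=\ell_i+O(1)$ while each individual $1/\lambda_{i,j-1}\le 1/\log y_{i-1}\to 0$, so the total error is bounded; alternatively, one can just track the partial sums directly and observe monotonicity. Everything else is a routine application of Mertens' theorem as in~\eqref{lambdapf1}.
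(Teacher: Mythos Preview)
Your high-level plan is exactly the standard one (the paper does not give a proof but refers to the analogous Lemma~4.6 of~\cite{kf2} and Lemma~3.4 of~\cite{dk}): use Mertens' theorem to convert block sums into increments of $\log\log\lambda_{i,j}$, telescope, and read off the two-sided bound. The upper bound $\log\lambda_{i,j}/\log\lambda_{i,0}\le\rho^{j+O_k(1)}$ indeed follows cleanly from a \emph{single} application of Mertens to the inequality $\sum_{\lambda_{i,0}<p\le\lambda_{i,j}}1/p\le j\log\rho$, and your deduction of $v_i=\ell_i/\log\rho_{k-i+1}+O_k(1)$ from the first part is fine.

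The gap is in the error analysis for the lower bound. Three concrete issues:
\begin{itemize}
\item The assertion ``$\ell_i=o(\log y_{i-1})$'' is false in general (take for instance $\log y_i=\exp(y_{i-1})$, which gives $\ell_i\asymp y_{i-1}$), and is vacuous for $i=1$ since $y_0=3$. Similarly, ``$\log\lambda_{i,0}=\log y_{i-1}+O_k(1/\log y_{i-1})$'' and ``$\lambda_{i,0}\ge y_{i-1}$'' fail for $i=1$, where $\lambda_{1,0}=\max\{k,3\}$.
\item If you apply Mertens block by block, the accumulated Mertens errors $\sum_m O(1/\log\lambda_{i,m-1})$ are \emph{not} controlled by the argument ``$v_i$ terms each of size $O(1/\log y_{i-1})$'': that gives only $O(\ell_i/\log y_{i-1})$, which need not be bounded. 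The fix is to apply Mertens just once to the full sum $\sum_{\lambda_{i,0}<p\le\lambda_{i,j}}1/p$, incurring a single error $O(1/\log\lambda_{i,0})=O_k(1)$.
\item For the maximality errors, your bound $\sum_{m}1/\lambda_{i,m}\le\sum_{y_{i-1}<p\le y_i}1/p=\ell_i+O(1)$ is useless: it gives $O(\ell_i)$, not $O_k(1)$.
\end{itemize}

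The correct control of the maximality errors $\sum_m 1/q_m$ (with $q_m$ the prime just past $\lambda_{i,m}$) comes from the \emph{doubly exponential} growth of the $\lambda_{i,m}$. A one-step bootstrap does it: Mertens on a single block together with $1/q_m<1/\lambda_{i,m-1}$ gives $\log\log\lambda_{i,m}-\log\log\lambda_{i,m-1}>\log\rho-C/\log\lambda_{i,m-1}$ for an absolute $C$, hence $\log\lambda_{i,m}\ge\rho^{1/2}\log\lambda_{i,m-1}$ once $\log\lambda_{i,m-1}$ exceeds a threshold depending only on $k$. Iterating, $\lambda_{i,m}\ge\exp(c_k\,\rho^{m/2})$ beyond that threshold, so $\sum_m 1/q_m\le\sum_m 1/\lambda_{i,m}=O_k(1)$. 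For $i\ge2$ the threshold is met at $m=0$ (take $C_k'$ large); for $i=1$ it is reached after $O_k(1)$ steps. With this in hand, the single application of Mertens yields $\log(\log\lambda_{i,j}/\log\lambda_{i,0})=j\log\rho+O_k(1)$ as required.
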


\begin{proof} The proof is similar to the proof of Lemma 4.6 in~\cite{kf2} and Lemma 3.4 in \cite{dk}.
\end{proof}

Set $\bv v=(v_1,\dots,v_k)$, $$\label{Dr}\Delta_r=\{(\xi_1,\dots,\xi_r)\in\SR^r:0\le\xi_1\le\cdots\le\xi_r\le1\}$$
and for $i\in\{1,\dots,k\}$ and
$\bv\xi_i=(\xi_{i,1},\dots,\xi_{i,r_i})\in \Delta_{r_i}$ define
$$F_i(\bv\xi_i)=\left(\min_{0\le j\le
r_i}\rho_{k-i+1}^{-j}(1+\rho_{k-i+1}^{v_i\xi_{i,1}}+\cdots+\rho_{k-i+1}^{v_i\xi_{i,j}})\right)^{k-i+1}.$$
We shall bound $S^{(k+1)}_{\bv r}(\bv y)$ in terms of
\be\begin{split}U^{(k+1)}_{\bv r}(\bv
v)=&\idotsint\limits_{\substack{\bv\xi_i\in
\Delta_{r_i}\\F_i(\bv\xi_i)\le C_k(k-i+2)^{v_i-r_i}\\1\le i\le
k}}\min_{1\le i\le
k}\left\{F_i(\bv\xi_i)\prod_{m=1}^{i-1}(k-m+2)^{v_m-r_m}\right\}d\bv\xi_1\cdots
d\bv\xi_k,\nonumber\end{split}\ee where $C_k$ is a sufficiently
large constant.

\begin{lemma}\label{ub1l1} If $y_1$ is large enough, then
$$
S^{(k+1)}_{\bv r}(\bv y)
\ll_k 
U^{(k+1)}_{\bv r}(\bv v) \prod_{i=1}^k\left(v_i(k-i+2)\log(\rho_{k-i+1})\right)^{r_i}.
$$
\end{lemma}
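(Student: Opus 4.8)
The plan is to establish a pointwise bound for $L^{(k+1)}(\bv a)$ in terms of the functions $F_i$, and then to recognise the resulting sum over $\bv a$ as a Riemann sum for $U^{(k+1)}_{\bv r}(\bv v)$ attached to the partition of the relevant primes into the blocks $D_{i,j}$. Throughout we argue under the hypothesis of the lemma, so $y_1$ is large; moreover, since the primes $p\le k$ contribute only an $O_k(1)$ factor (use Lemma~\ref{ub1l0}(b) together with $\sum_{y_{i-1}<p\le k}\tau_{k-i+2}(p)/p\ll_k1$), we may assume that each $a_i$ is squarefree and supported on primes in $(\max\{y_{i-1},k\},y_i]$, say $a_i=p_{i,1}\cdots p_{i,r_i}$ with $p_{i,1}<\cdots<p_{i,r_i}$. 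For $1\le s\le r_i$ let $j_{i,s}\in\{1,\dots,v_i\}$ be the index with $p_{i,s}\in D_{i,j_{i,s}}$ and put $\xi_{i,s}=j_{i,s}/v_i$, so that $\bv\xi_i=(\xi_{i,1},\dots,\xi_{i,r_i})\in\Delta_{r_i}$. By Lemma~\ref{lambda} one has $\log p_{i,s}\asymp_k\rho_{k-i+1}^{v_i\xi_{i,s}}\log y_{i-1}$, and—since by Bertrand's postulate $\lambda_{m,v_m}$ is the largest prime $\le y_m$—also $\log y_m\asymp_k\prod_{m'\le m}\rho_{k-m'+1}^{v_{m'}}$ for $0\le m\le k$.

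First I would prove the pointwise bound. Fix $i\in\{1,\dots,k\}$ and $j\in\{0,1,\dots,r_i\}$ and split $a_i=b_ic_i$ with $b_i=p_{i,1}\cdots p_{i,j}$. Applying Lemma~\ref{ub1l0}(b) with the tuple whose $\tau_{k+1}$ is taken equal to $(1,\dots,1,c_i,a_{i+1},\dots,a_k)$ and the tuple whose $L^{(k+1)}$ is taken equal to $(a_1,\dots,a_{i-1},b_i,1,\dots,1)$, then Lemma~\ref{ub1l0}(a) on the latter, and finally the estimates above together with $a_m\le y_m^{c_k}$, gives
$$L^{(k+1)}(\bv a)\ll_k(k-i+2)^{-j}\Big(\prod_{m=1}^k(k-m+2)^{r_m}\Big)\Big(\prod_{m<i}(k-m+2)^{v_m-r_m}\Big)\Big(1+\sum_{s\le j}\rho_{k-i+1}^{v_i\xi_{i,s}}\Big)^{k-i+1}.$$
Since $(k-i+2)^{-j}=\rho_{k-i+1}^{-j(k-i+1)}$, minimising over $j$ recovers $F_i(\bv\xi_i)$, and minimising over $i$ then yields
$$L^{(k+1)}(\bv a)\ll_k\Big(\prod_{m=1}^k(k-m+2)^{r_m}\Big)\min_{1\le i\le k}\Big\{F_i(\bv\xi_i)\prod_{m<i}(k-m+2)^{v_m-r_m}\Big\}.$$
Moreover, the constraint $F_i(\bv\xi_i)\le C_k(k-i+2)^{v_i-r_i}$ from the definition of $U^{(k+1)}_{\bv r}(\bv v)$ is automatic here: from $a_i\le y_i^{c_k}$ and the lower bound $\log a_i\gg_k\log y_{i-1}\sum_s\rho_{k-i+1}^{v_i\xi_{i,s}}$ (again Lemma~\ref{lambda}) one gets $\sum_s\rho_{k-i+1}^{v_i\xi_{i,s}}\ll_k\log y_i/\log y_{i-1}\asymp_k\rho_{k-i+1}^{v_i}$, hence $F_i(\bv\xi_i)\ll_k(k-i+2)^{v_i-r_i}$.

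Next I would carry out the summation. Grouping the admissible $\bv a$ according to their block pattern $\bv j=(j_{i,s})$ (which determines $\bv\xi$) and bounding, for each $i$ and $j$, $\sum_{p_1<\cdots<p_c\in D_{i,j}}(p_1\cdots p_c)^{-1}\le(\log\rho_{k-i+1})^c/c!$ by \eqref{lambda1}, the pointwise bound gives
$$S^{(k+1)}_{\bv r}(\bv y)\ll_k\prod_{i=1}^k\big((k-i+2)\log\rho_{k-i+1}\big)^{r_i}\sum_{\bv j}\Big(\prod_{i=1}^k\prod_{j=1}^{v_i}\frac1{c_{i,j}!}\Big)\min_{1\le i\le k}\Big\{F_i(\bv\xi_i)\prod_{m<i}(k-m+2)^{v_m-r_m}\Big\},$$
where $c_{i,j}$ is the number of $s$ with $j_{i,s}=j$ and the outer sum runs over nondecreasing tuples $\bv j$ with $F_i(\bv\xi_i)\ll_k(k-i+2)^{v_i-r_i}$. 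Finally I would read this as a Riemann sum for $U^{(k+1)}_{\bv r}(\bv v)$: partition $[0,1]^{r_i}$ into cubes of side $1/v_i$; the cube indexed by a nondecreasing tuple $(j_{i,1},\dots,j_{i,r_i})$ meets $\Delta_{r_i}$ in a set of volume $v_i^{-r_i}\prod_j(c_{i,j}!)^{-1}$ whose "upper corner" is $\bv\xi_i$, and moving a coordinate by $1/v_i$ changes $F_i$ by a factor of at most $\rho_{k-i+1}^{k-i+1}=k-i+2$. Hence the summand above is $\asymp_k$ the value on the cube of the integrand of $U^{(k+1)}_{\bv r}(\bv v)$, and each admissible cube lies inside its domain of integration provided the constant in the definition of $U^{(k+1)}_{\bv r}(\bv v)$ is taken large enough. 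Therefore the sum over $\bv j$ is $\ll_k\big(\prod_iv_i^{r_i}\big)U^{(k+1)}_{\bv r}(\bv v)$, which combined with the displayed estimate gives the lemma.

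The main obstacle is this last step: converting the discrete sum over block patterns into the integral $U^{(k+1)}_{\bv r}(\bv v)$. The two points to pin down are that the multinomial weights $\prod_j(c_{i,j}!)^{-1}$ are exactly the Lebesgue measures of the pieces $\Delta_{r_i}\cap(\text{cube})$, and that the slow variation of $F_i$ at scale $1/v_i$—which is precisely why the blocks $D_{i,j}$ were designed with $\sum_{p\in D_{i,j}}1/p\asymp\log\rho_{k-i+1}$—lets one pass from the corner value of the integrand to its value throughout the cube and simultaneously match the discrete constraint on $\bv j$ with the constraint defining $U^{(k+1)}_{\bv r}(\bv v)$. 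Everything else is bookkeeping of $O_k(1)$ factors coming from Mertens' and Bertrand's theorems, the small primes $p\le k$, and the constant $L_k$ of Lemma~\ref{lambda}.
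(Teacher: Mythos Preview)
Your proposal is correct and follows essentially the same approach as the paper: you obtain the pointwise bound on $L^{(k+1)}(\bv a)$ via Lemma~\ref{ub1l0}(a),(b) and Lemma~\ref{lambda}, verify the constraint $F_i(\bv\xi_i)\ll_k(k-i+2)^{v_i-r_i}$ from $a_i\le y_i^{c_k}$, then sum over block patterns using the multinomial weights $\prod_j(c_{i,j}!)^{-1}$ and identify the result as a Riemann sum for $U^{(k+1)}_{\bv r}(\bv v)$, exploiting the bounded oscillation of $F_i$ on cubes of side $1/v_i$. The paper carries this out in exactly the same way, introducing a discrete analogue $G_i$ of $F_i$ and the sets $I(\bv n_i)\subset\Delta_{r_i}$ to make the Riemann-sum step explicit.
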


Lemma \ref{ub1l1}\;will be proven in Subsection \ref{ub_proof}. Next, we give an upper bound on $U_{\bv r}^{(k+1)}(\bv v)$, but
first we need to introduce some notation. For $\bv r\in(\SN\cup\{0\})^k$, $1\le i\le k+1$ and $1\le j\le k+1$ set $$B_{i,j}=\begin{cases}-\displaystyle\sum_{m=j}^{i-1}(r_m-v_m)\log(k-m+2)&\text{if}\;1\le
j<i,\cr \displaystyle0&\text{if}\;j=i,\cr
\displaystyle\sum_{m=i}^{j-1}(r_m-v_m)\log(k-m+2)&\text{if}\;i<j.\end{cases}$$
Observe that \be\label{ub1e7}B_{i,m}+B_{m,j}=B_{i,j}\quad(1\le i,m,j\le k+1).\ee For $j\in\{1,\dots,k+1\}$ set
$$
\mathcal{R}_j=\{\bv r\in(\SN\cup\{0\})^k:B_{i,j}\ge0\;(1\le i\le k+1)\}.
$$ 
Then 
$$
\bigcup_{j=1}^{k+1}\mathcal{R}_j=(\SN\cup\{0\})^k.
$$
Indeed, for every $\bv r\in(\SN\cup\{0\})^k$ there is some $j\in\{1,\dots,k+1\}$ such that $B_{1,j}\ge B_{1,i}$ for all
$i\in\{1,\dots,k+1\}$. So $\bv r\in\mathcal{R}_j$, by \eqref{ub1e7}.

\medskip

The following estimate will be shown in Subsection \ref{ub_proof}.

\begin{lemma}\label{ub1l2}Let $j\in\{1,\dots,k+1\}$ and $\bv
r\in\mathcal{R}_j$. Then $$U_{\bv r}^{(k+1)}(\bv
v)\ll_k\min\left\{1,\frac{(1+B_{i_0,j})(1+B_{i_0+1,j})}{r_{i_0}+1}\right\}\frac{\prod_{m=1}^{j-1}(k-m+2)^{v_m-r_m}}{r_1!\cdots
r_k!}.$$
\end{lemma}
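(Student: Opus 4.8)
The goal is to bound the integral $U_{\bv r}^{(k+1)}(\bv v)$ over the simplices $\Delta_{r_i}$, subject to the constraints $F_i(\bv\xi_i)\le C_k(k-i+2)^{v_i-r_i}$, by the product $\prod_{m=1}^{j-1}(k-m+2)^{v_m-r_m}/(r_1!\cdots r_k!)$ times the clustering factor $\min\{1,(1+B_{i_0,j})(1+B_{i_0+1,j})/(r_{i_0}+1)\}$. The plan is to carry out the integration over the variables $\bv\xi_i$ one block at a time, handling the blocks $i<j$ and $i>j$ by different devices, and then to analyze the block $i=i_0$ (or, more precisely, the block where the bottleneck $r_{i_0}$ sits) carefully to extract the clustering factor. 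Throughout, I would fix $j\in\{1,\dots,k+1\}$ and $\bv r\in\mathscr{R}_j$, so that $B_{i,j}\ge0$ for all $i$, and I would freely use the additivity relation \eqref{ub1e7} and Lemma~\ref{lambda}, as well as the elementary inequality $\idotsint_{\Delta_r}d\bv\xi\le 1/r!$ together with more refined versions of it when a factor $F_i(\bv\xi_i)$ or a power of $\rho^{v_i\xi_{i,\cdot}}$ is present.

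**Key steps.** First I would record the basic fact that the ``min'' defining $F_i(\bv\xi_i)$ always includes the term $j=0$, which equals $1$, and so $F_i(\bv\xi_i)\le1$ trivially; moreover, taking $j=r_i$ in that min and using $\rho_{k-i+1}^{v_i}\asymp_k(k-i+1)\ell_i/\ell_i$-type estimates (Lemma~\ref{lambda}) shows that $F_i(\bv\xi_i)$ is, up to constants, a decreasing function of the ``mass'' $\rho_{k-i+1}^{v_i\xi_{i,m}}$ accumulated near the top of the simplex. Next, for the blocks $i$ with $i\ge j$ (so that in the $\min_{1\le i\le k}$ inside $U_{\bv r}^{(k+1)}$ these indices do not control the minimum once we have used $B_{i,j}\ge0$), I would bound $\min_{1\le i\le k}\{\cdots\}\le F_m(\bv\xi_m)\prod_{l<m}(k-l+2)^{v_l-r_l}$ for a well-chosen $m\le j$, integrate out all $\bv\xi_i$ with $i>m$ using $F_i\le1$ and $\int_{\Delta_{r_i}}1=1/r_i!$, and similarly for the blocks $i<j$; the constraint $F_i(\bv\xi_i)\le C_k(k-i+2)^{v_i-r_i}$ is what converts, via $v_i-r_i$, into the powers of $(k-m+2)$ in the target bound. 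The substantive computation is the integral of $F_{i_0}(\bv\xi_{i_0})$ itself over $\Delta_{r_{i_0}}$ restricted by its own constraint: here $F_{i_0}(\bv\xi_{i_0})=(\min_{0\le\nu\le r_{i_0}}\rho^{-\nu}(1+\rho^{v\xi_1}+\cdots+\rho^{v\xi_\nu}))^{k-i_0+1}$ with $\rho=\rho_{k-i_0+1}$, and one shows by a change of variables $u_m=\rho^{v\xi_m}$ (or directly, tracking where the running minimum is attained) that the integral of this quantity over $\{\bv\xi\in\Delta_{r_{i_0}}:F_{i_0}\le C_k(k-i_0+2)^{v-r}\}$ is $\ll_k\min\{1,(1+B_{i_0,j})(1+B_{i_0+1,j})/(r_{i_0}+1)\}/r_{i_0}!$. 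The two ``$1+B$'' factors arise because the running minimum in $F_{i_0}$ is attained both from the left end (forcing the first $\approx B_{i_0,j}$ of the $\xi$'s to be spread out over a short interval) and from the right end (similarly for the last $\approx B_{i_0+1,j}$), and each such localization costs a factor of the length of a short interval divided by the full unit interval, i.e. contributes a factor $\asymp(1+B)/(r_{i_0}+1)$ rather than $1$. Finally I would reassemble: multiplying the contributions of the blocks $i\ne i_0$ (each giving a $1/r_i!$, together with the required powers of $(k-m+2)$ from the constraints and from $\prod_{m<i}(k-m+2)^{v_m-r_m}$) by the contribution of the $i_0$-block gives exactly the claimed bound.

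**Main obstacle.** The delicate point is the extraction of the \emph{product} $(1+B_{i_0,j})(1+B_{i_0+1,j})$ rather than just one of the two factors (or their sum). This is precisely the multidimensional Poisson-near-a-hyperplane phenomenon isolated in Section~\ref{poisson}: the constraint $F_{i_0}(\bv\xi_{i_0})\le C_k(k-i_0+2)^{v_{i_0}-r_{i_0}}$ together with the membership $\bv r\in\mathscr{R}_j$ forces the mass profile of $\bv\xi_{i_0}$ to be pinned near a lower-dimensional face from \emph{both} directions simultaneously, and one must show these two localizations are essentially independent so that their probabilistic costs multiply. I expect the cleanest route is to split the simplex $\Delta_{r_{i_0}}$ according to the index $\nu^*$ at which the minimum in $F_{i_0}$ is attained, bound $\rho^{-\nu^*}(1+\sum_{m\le\nu^*}\rho^{v\xi_m})$ from below on each piece by comparison with a geometric sum, and integrate; the bookkeeping relating $\nu^*$, the constraint, and $B_{i_0,j}$, $B_{i_0+1,j}$ via \eqref{ub1e7} is where the real work lies, and it parallels (and will likely invoke) the estimates of Lemma~\ref{spl2} applied with $\lambda_i=\log\rho_{k-i+1}$, $\mu_i=\log(k-i+2)$.
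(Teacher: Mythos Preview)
Your plan has a real gap. You want to bound the integrand $\min_i\{F_i(\bv\xi_i)e^{-B_{1,i}}\}$ by the single term $F_{i_0}(\bv\xi_{i_0})e^{-B_{1,i_0}}$, integrate the remaining $\bv\xi_i$ trivially, and then recover both the main factor $e^{-B_{1,j}}=\prod_{m<j}(k-m+2)^{v_m-r_m}$ and the clustering factor from the integral of $F_{i_0}$ over its own constrained simplex. But that last integral depends only on $r_{i_0},v_{i_0}$ (equivalently, on $B_{i_0,i_0+1}$), whereas the target depends on $B_{i_0,j}$ and $B_{i_0+1,j}$, which involve \emph{all} blocks between $i_0$ and $j$. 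If, say, $j=k+1$ and $B_{i_0+1,k+1}$ is large, the statement carries a small factor $e^{-B_{i_0+1,k+1}}$ coming from the constraints on blocks $i>i_0$; once you replace the $\min$ by the $i_0$-term those constraints only restrict the domain of the trivially integrated variables and contribute nothing, so that factor cannot be recovered. Your ``left end / right end'' story for the two $(1+B)$ factors is not the mechanism either, and Lemma~\ref{spl2} plays no role here (it is used later, in Lemma~\ref{ub1l3}).

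The paper's argument is structurally different. One first partitions the domain according to the index $i$ at which the minimum is achieved: on the set $\mathcal{T}_i$ one has, by minimality, the \emph{lower bound} $F_{i_0}(\bv\xi_{i_0})\ge F_i(\bv\xi_i)e^{B_{i,i_0}}$, while combining the upper constraints $F_s\le\min\{1,C_k(k-s+2)^{v_s-r_s}\}$ for \emph{every} $s$ with $\bv r\in\mathscr{R}_j$ forces $F_i(\bv\xi_i)\le C_ke^{-B_{i,j}}$. Now dyadically decompose $\mathcal{T}_i$ into shells $C_ke^{-n}<F_i\le C_ke^{-n+1}$, $n\ge B_{i,j}$: the integrand is $\asymp e^{-n-B_{1,i}}$, and the lower bound $F_{i_0}>C_ke^{B_{i,i_0}-n}$ translates, via the definition of $F_{i_0}$, into the running-sum condition $\rho^{v_{i_0}\xi_{i_0,1}}+\cdots+\rho^{v_{i_0}\xi_{i_0,m}}\ge\rho^{m-\gamma}$ for all $m$, with $\gamma=(n-B_{i,i_0})/\log(k-i_0+2)$. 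The correct tool is then Lemma~\ref{ub2l1}, which gives directly
\[
\vol\ll_k\frac1{r_{i_0}!}\min\left\{1,\frac{(\gamma+1)\bigl(\gamma-r_{i_0}+v_{i_0}+O_k(1)\bigr)}{r_{i_0}}\right\};
\]
both factors $(1+B_{i_0,j})$ and $(1+B_{i_0+1,j})$ emerge together from this single volume estimate, since $\gamma-r_{i_0}+v_{i_0}=(n-B_{i,i_0+1})/\log(k-i_0+2)$ by \eqref{ub1e7}, and the geometric sum over $n\ge B_{i,j}$ then lands exactly on $(B_{i_0,j}+1)(B_{i_0+1,j}+1)$. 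Splitting only within block $i_0$ by the minimizer $\nu^*$, as you propose, never sees the parameter $j$ and cannot produce this.
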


By Lemma \ref{ub1l2}\;and the results of Section~\ref{poisson}, we obtain the following estimate, which will be proven in Subsection~\ref{ub_proof}.

\begin{lemma}\label{ub1l3} We have that 
$$
\sum_{\bv r\in(\SN\cup\{0\})^k}S^{(k+1)}_{\bv r}(\bv y)
\ll_k \frac {\beta}{ \sqrt{\log\log y_k} }
	\prod_{i=1}^k\left(\frac{\log y_i}{\log y_{i-1}}\right)^{k-i+2-Q((k-i+2)^\alpha)}.
$$
\end{lemma}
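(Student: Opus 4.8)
The plan is to combine Lemma~\ref{ub1l1}, Lemma~\ref{ub1l2} and the Poisson estimates of Section~\ref{poisson} with the decomposition $\bigcup_{j=1}^{k+1}\mathscr{R}_j=(\SN\cup\{0\})^k$ coming from~\eqref{ub1e8}. Concretely, I would write
$$\sum_{\bv r\in(\SN\cup\{0\})^k}S^{(k+1)}_{\bv r}(\bv y)\le\sum_{j=1}^{k+1}\sum_{\bv r\in\mathscr{R}_j}S^{(k+1)}_{\bv r}(\bv y),$$
and estimate each inner sum separately. By Lemma~\ref{ub1l1} and then Lemma~\ref{ub1l2}, for $\bv r\in\mathscr{R}_j$ we have
$$S^{(k+1)}_{\bv r}(\bv y)\ll_k\min\left\{1,\frac{(1+B_{i_0,j})(1+B_{i_0+1,j})}{r_{i_0}+1}\right\}\frac{\prod_{m=1}^{j-1}(k-m+2)^{v_m-r_m}}{r_1!\cdots r_k!}\prod_{i=1}^k\bigl(v_i(k-i+2)\log\rho_{k-i+1}\bigr)^{r_i}.$$
Using Lemma~\ref{lambda} to write $v_i=\ell_i/\log\rho_{k-i+1}+O_k(1)$, the product $\prod_i\bigl(v_i(k-i+2)\log\rho_{k-i+1}\bigr)^{r_i}/r_i!$ becomes, up to a bounded factor, $\prod_i\bigl((k-i+2)\ell_i\bigr)^{r_i}/r_i!$, which is exactly the shape of a Poisson weight with parameters $z_i=(k-i+2)\ell_i$ (or rather $e^{-z_i}z_i^{r_i}/r_i!$ after reinserting the normalizing factors $e^{-z_i}$, which I will carry along and cancel at the end against the final product). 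The remaining factor $\prod_{m=1}^{j-1}(k-m+2)^{v_m-r_m}$ together with the constraint $\bv r\in\mathscr{R}_j$ is precisely a linear constraint of the type handled by Lemma~\ref{spl2}.

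The key translation is to recognize $B_{i,j}$ as (a constant multiple of) a partial linear form $\sum_{m}\lambda_m r_m-Z_m$ in the Poisson variables, with $\lambda_m=\log(k-m+2)$. Then $\bv r\in\mathscr{R}_j$ says all the relevant partial sums are $\ge0$, the factor $\prod_{m=1}^{j-1}(k-m+2)^{v_m-r_m}=e^{B_{1,j}}$ up to bounded error, and the quantity $1+B_{i_0,j}$, $1+B_{i_0+1,j}$ are the ``distances to the hyperplane from either side'' that control the probability $\prob(\mathscr{H}^k(Z))$ of landing in a thin slab. After splitting $\mathscr{R}_j$ according to the sign of $B_{1,j}$ (equivalently, $j\le i_0$ versus $j>i_0$, matching the two cases $\mathscr{H}^k_-$ and $\mathscr{H}^k_+$ in Lemma~\ref{spl2}), I apply Lemma~\ref{spl2}(a) or (b) with $C=1$ — the hypotheses $\lambda_i<\mu_i$ or $\log(\mu_i/\lambda_i)<\lambda_i$ translating into the arithmetic inequalities $k-i+1>\log(k-i+2)$ and $k-i+2>\ldots$, which hold for all $i$ — to conclude that each sum $\sum_{\bv r\in\mathscr{R}_j}S^{(k+1)}_{\bv r}(\bv y)$ is bounded by a constant times
$$\min\left\{1,\frac{(1+U_{i_0})(1+U_{i_0+1})}{r_{i_0}^*+1}\right\}\prod_{i=1}^k\exp\{-Q(e^{\alpha\lambda_i})z_i\},$$
with $U_i\asymp_k 1+\ell_1+\cdots+\ell_{i-1}$ or $1+\ell_i+\cdots+\ell_k$ as in~\eqref{he101}, and $e^{\alpha\lambda_i}z_i\asymp(k-i+2)^\alpha\ell_i$ exactly reproducing the exponent $Q((k-i+2)^\alpha)$ once one checks that $\exp\{-Q(e^{\alpha\lambda_i})z_i\}=\bigl((\log y_i)/(\log y_{i-1})\bigr)^{-Q((k-i+2)^\alpha)}$ up to bounded factors.

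Summing over the finitely many $j\in\{1,\dots,k+1\}$ costs only a factor $O_k(1)$, and the $\beta$ in the statement appears because $\min\{1,(1+U_{i_0})(1+U_{i_0+1})/(r_{i_0}^*+1)\}\asymp_k\beta$ by exactly the computation~\eqref{altbeta} carried out in Subsection~\ref{main}: one uses $r_{i_0}^*+1\asymp\ell_{i_0}$, $U_{i_0}\asymp1+\ell_1+\cdots+\ell_{i_0-1}$, $U_{i_0+1}\asymp1+\ell_{i_0+1}+\cdots+\ell_k$, and then the case analysis showing this is comparable to $\beta=\min\{1,(1+\ell_1+\cdots+\ell_{i_1-1})(1+\ell_{i_1+1}+\cdots+\ell_k)/\ell_{i_1}\}$. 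Finally the leftover normalizing constants recombine: $\prod_i e^{-z_i}\cdot(\text{stuff})$ against $\prod_i(\log y_i/\log y_{i-1})^{k-i+2}$ produces precisely $\prod_i(\log y_i/\log y_{i-1})^{k-i+2-Q((k-i+2)^\alpha)}$, which is the claimed bound. The main obstacle I anticipate is purely bookkeeping: carefully matching the combinatorial parameters $B_{i,j}$, $v_i$, $r_i$ and the constraint sets $\mathscr{R}_j$ to the analytic objects $\alpha(R)$, $\mathscr{H}^k_\pm(Z)$, $U_i$ of Section~\ref{poisson}, making sure the $O_k(1)$ errors from Lemma~\ref{lambda} and Stirling's formula get absorbed into the implied constants rather than into the exponents, and verifying that the relevant inequalities ($k-i+1>\log(k-i+2)$ etc.) needed to invoke Lemma~\ref{spl2} genuinely hold for every $i\in\{1,\dots,k\}$ — this last point is where one implicitly uses $\alpha<1$, i.e. Lemma~\ref{hl1}.
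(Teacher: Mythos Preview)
Your overall strategy is correct and matches the paper's: combine Lemmas~\ref{ub1l1} and~\ref{ub1l2} with the decomposition~\eqref{ub1e8}, recognize the Poisson structure with $\lambda_i=\log(k-i+2)$ and $z_i\asymp\ell_i$, and invoke the estimates of Section~\ref{poisson}. However, two of your steps are not bookkeeping but real gaps.

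First, the description ``splitting $\mathscr{R}_j$ according to the sign of $B_{1,j}$, equivalently $j\le i_0$ versus $j>i_0$, and applying Lemma~\ref{spl2}(a) or~(b)'' is not what happens. For $\bv r\in\mathscr{R}_j$ one always has $B_{1,j}\ge0$, so there is no sign to split on, and $\mathscr{R}_j$ is \emph{neither} of the form $\mathscr{H}^k_+$ nor $\mathscr{H}^k_-$: it is defined by the nested system~\eqref{ub2e10}--\eqref{ub2e11}. The key observation (which the paper uses) is that these constraints \emph{factor}: \eqref{ub2e10} involves only $(r_1,\dots,r_{j-1})$ and~\eqref{ub2e11} only $(r_j,\dots,r_k)$. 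For \emph{each} fixed $j$ one writes the sum over $\mathscr{R}_j$ as a product of a sum over $\bv r_1\in\mathscr{R}_{1,j}$ and a sum over $\bv r_2\in\mathscr{R}_{2,j}$, and applies Lemma~\ref{spl2}(a) to the first factor and Lemma~\ref{spl2}(b) to the second. Both parts are used for every $j$, not one or the other depending on $j$.

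Second, you have not said how the $\bv r$-dependent factor $\min\{1,(1+B_{i_0,j})(1+B_{i_0+1,j})/(r_{i_0}+1)\}$ turns into the constant $\beta$. Lemma~\ref{spl2} only accommodates polynomial weights of the form $(1+\sum\lambda_ir_i-Z)^C$, not a minimum with~$1$ nor a denominator $r_{i_0}+1$. The paper handles this by first bounding $1+B_{i_0,j}\ll_k(1+\ell_1+\cdots+\ell_{i_0-1})(1+B_{1,j})$ and $1+B_{i_0+1,j}\ll_k(1+r_{i_0+1}+\cdots+r_k)(1+B_{k+1,j})$ via~\eqref{ub1e7}, and then taking the minimum of two separate estimates: one in which the $\min$ is replaced by~$1$, and one in which it is replaced by the product above. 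The factors $(1+B_{1,j})$ and $(1+B_{k+1,j})$ become the $C=1$ weights in Lemma~\ref{spl2}(a) and~(b) respectively, while the $1/(r_{i_0}+1)$ and $r_s+1$ factors are absorbed by shifting the Poisson index (costing a factor $(1+\ell_s)/(1+\ell_{i_0})$). Only after this decomposition does~\eqref{altbeta} apply to produce~$\beta$.
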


The upper bound in Theorem \ref{thm2}\;now follows immediately by Theorem \ref{thm4}\;and Lemma \ref{ub1l3}.


\subsection{Completion of the proof}\label{ub_proof}

In this subsection we give the proofs of Lemmas \ref{ub1l1},
\ref{ub1l2}\;and \ref{ub1l3}.

\begin{proof}[Proof of Lemma \ref{ub1l1}] Let $a_1=a_1^\prime p_{1,1}\cdots p_{1,r_1}\le y_1^{c_k}$ with $a_1^\prime\in\mathcal{P}_*(1,k)$ and $k<p_{1,1}<\cdots<p_{1,r_1}\le y_1$. Also, for $m\in\{2,\dots,k\}$ let $a_m=p_{m,1}\cdots p_{m,r_m}\le y_m^{c_k}$ with $y_{m-1}<p_{m,1}<\cdots<p_{m,r_m}$. For each $m\in\{1,\dots,k\}$ let $b_m=p_{m,1}\cdots p_{m,r_m}$. Also, for $1\le m\le k$ and $1\le i\le r_m$ define $n_{m,i}\in\{1,\dots,v_m\}$ by $p_{m,i}\in D_{m,n_{m,i}}$ and put $\bv n_m=(n_{m,1},\dots,n_{m,r_m})$. For every $i\in\{1,\dots,k\}$ Lemma \ref{ub1l0}(b)\;implies that
\bes\begin{split}
L^{(k+1)}(\bv a)
	&\le\tau_{k+1}(a_1^\prime,\underbrace{1,\dots,1}_{i-1\ \text{times}},b_{i+1},\dots,b_k)
		L^{(k+1)}(b_1,\dots,b_i,\underbrace{1,\dots,1}_{k-i\ \text{times}})\\
	&=\tau_{k+1}(a_1^\prime)\left(\prod_{m=i+1}^k(k-m+2)^{r_m}\right)
		L^{(k+1)}(b_1,\dots,b_i,\underbrace{1,\dots,1}_{k-i\ \text{times}}).
\end{split}\ees
Moreover, Lemmas \ref{ub1l0}~and \ref{lambda}~together with our assumption that $a_i\le y_i^{c_k}$ for $1\le i\le k$ imply that for every $j\in\{0,1,\dots,r_i\}$ we have
\bes\begin{split}
	&L^{(k+1)}(b_1,\dots,b_i,\underbrace{1,\dots,1}_{k-i\ \text{times}})\\
	&\quad\le(k-i+2)^{r_i-j}L^{(k+1)}(b_1,\dots,b_{i-1},p_{i,1}\cdots p_{i,j},\underbrace{1,\dots,1}_{k-i\ \text{times}})\\
	&\quad\le(k-i+2)^{r_i-j}\left(\prod_{m=1}^{i-1}\log(2b_1\cdots b_m)\right)
		\left(\log(2b_1\cdots b_{i-1})+\log(p_{i,1}\cdots  p_{i,j})\right)^{k-i+1}\\
	&\quad\ll_k(k-i+2)^{r_i-j}\left(\prod_{m=1}^{i-1}\log y_m\right)
		\left(\log y_{i-1}\left(1+\rho_{k-i+1}^{n_{i,1}}+\cdots+	\rho_{k-i+1}^{n_{i,j}}\right)\right)^{k-i+1}\\
	&\quad\asymp_k(k-i+2)^{r_i}\left(\prod_{m=1}^{i-1}(k-m+2)^{v_m}\right)
		\left(\rho_{k-i+1}^{-j}\left(1+\rho_{k-i+1}^{n_{i,1}}+\cdots+\rho_{k-i+1}^{n_{i,j}}\right)\right)^{k-i+1} .
\end{split}\ees
So if we set 
$$
G_i(\bv n_i)=\left(\min_{0\le j\le r_i}\rho_{k-i+1}^{-j}
	\left(1+\rho_{k-i+1}^{n_{i,1}}+\cdots+\rho_{k-i+1}^{n_{i,j}}\right)\right)^{k-i+1}
$$
and 
$$
G(\bv n_1,\dots,\bv n_k)=\min_{1\le i\le k}\left\{G_i(\bv n_i)\prod_{m=1}^{i-1}(k-m+2)^{v_m-r_m}\right\},
$$
then we find that 
$$
L^{(k+1)}(\bv a)
	\ll_k\tau_{k+1}(a_1^\prime)G(\bv n_1,\dots,\bv n_k)\prod_{i=1}^k(k-i+2)^{r_i}.
$$ 
Next, note that
\be\label{ub2e4}\bsp G_i(\bv n_i)&\le(k-i+2)^{-r_i}\left(1+\rho_{k-i+1}^{n_{i,1}}+\cdots+\rho_{k-i+1}^{n_{i,r_i}}\right)^{k-i+1}\\
&\asymp_k(k-i+2)^{-r_i}\left(\frac{\log2a_i}{\log y_{i-1}}\right)^{k-i+1}\ll_k(k-i+2)^{v_i-r_i},\end{split}\ee
by Lemma \ref{lambda}\;and our assumption that $a_i\le y_i^{c_k}$.
Also, 
$$
\sum_{a_1^\prime\in\mathcal{P}_*(1,k)}\frac{\tau_{k+1}(a_1')}{a_1'}\ll_k1.
$$ 
So if
$\mathcal{N}$ denotes the set of $k$-tuples $\bv n=(\bv n_1,\dots,\bv n_k)$ satisfying $1\le n_{m,1}\le\cdots\le
n_{m,r_m}\le v_m$ for $1\le m\le k$ and inequality \eqref{ub2e4}, then
\be\label{ub2e3}
	S^{(k+1)}_{\bv r}(\bv y)
		\ll_k\sum_{\bv n\in\mathcal{N}}G(\bv n)  
			\prod_{i=1}^k \left((k-i+2)^{r_i}
			\sum_{\substack{p_{i,1}<\cdots<p_{i,r_i}\\p_{i,j}\in D_{i,n_{i,j}}\\1\le j\le r_i}}
				\frac1{p_{i,1}\cdots p_{i,r_i}}\right).
\ee
Fix $i\in\{1,\dots,k\}$. Let $g_{i,s}=|\{1\le j\le r_i:n_{i,j}=s\}|$ for $s\in\{1,\dots,v_i\}$. By
\eqref{lambda1}, the sum over $p_{i,1},\dots,p_{i,r_i}$ in \eqref{ub2e3}~is at most
\be\label{ub2e5}
	\prod_{s=1}^{v_i}\frac1{g_{i,s}!}
		\left(\sum_{p\in D_{i,s}}\frac1p\right)^{g_{i,s}}
	\le  \frac{(\log(\rho_{k-i+1}))^{r_i}}{g_{i,1}!\cdots g_{i,v_i}!}
	= (v_i\log(\rho_{k-i+1}))^{r_i}\vol(I(\bv n_i)),
\ee 
where
$$
I(\bv n_i)
	:=\{\bv\xi_i\in \Delta_{r_i}:n_{i,j}-1\le v_i\xi_{i,j}<n_{i,j}~(1\le j\le r_i)\}.
$$
By \eqref{ub2e3}\;and \eqref{ub2e5}\;we deduce that
\be\label{ub2e6}
	S^{(k+1)}_{\bv r}(\bv y)
		\ll_k\left(\prod_{i=1}^k\left(v_i(k-i+2)\log(\rho_{k-i+1})\right)^{r_i}\right)
			\sum_{\bv n\in\mathcal{N}}G(\bv n)
				\vol\left(I(\bv n_1)\times\cdots\times I(\bv n_k)\right).
\ee 
Finally, note that the definition of $I(\bv n_i)$ and~\eqref{ub2e4} imply that
$$
G_i(\bv n_i)\le(k-i+2)F_i(\bv\xi_i)\le(k-i+2)G_i(\bv n_i)\le C_k(k-i+2)^{v_i-r_i+1}\quad(\bv\xi_i\in I(\bv n_i))
$$ 
for some
sufficiently large constant $C_k$ and, consequently,
$$\sum_{\bv n\in\mathcal{N}}G(\bv n)\vol\left(I(\bv
n_1)\times\cdots\times I(\bv n_k)\right)\ll_kU^{(k+1)}_{\bv r}(\bv
v).$$ Inserting the above estimate into \eqref{ub2e6}\;completes the
proof of the lemma.
\end{proof}

Our next goal is to show Lemma \ref{ub1l2}. First, we state an auxiliary result.

\begin{lemma}\label{ub2l1} Let $\mu>1$, $A\ge0$, $r,v\in\SN$ and $\gamma\ge0$. Consider the set $\mathcal{T}_\mu(r,v,\gamma)$ of all vectors $(\xi_1,\dots,\xi_r)\in \Delta_r$ such that
$\mu^{v\xi_1}+\cdots+\mu^{v\xi_j}\ge\mu^{j-\gamma}$ for $1\le j\le r$. If $\gamma\ge r-v-A$, then
$$\vol\left(\mathcal{T}_\mu(r,v,\gamma)\right)\ll_{\mu,A}\frac1{r!}\min\left\{1,\frac{(\gamma-r+v+A+1)(\gamma+1)}r\right\}.$$
\end{lemma}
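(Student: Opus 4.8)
The plan is to read this as a \emph{ballot-type} estimate for the volume of a region of the simplex $\Delta_r$ cut out by partial-sum constraints. First, the trivial bound $\vol(\mathcal{T}_\mu(r,v,\gamma))\le\vol(\Delta_r)=1/r!$ disposes of the $1$ inside the minimum, so I may assume the second term there is $\le 1$; in particular $\gamma<r$ and $\gamma-r+v+A<r$. (If $\gamma<1$ the constraint at $j=1$ only forces $\xi_1\ge(1-\gamma)/v$, a harmless shift which I absorb into the implied constant, so I may take $\gamma\ge1$.) The key structural point is that, since $\xi_1\le\cdots\le\xi_r$, the sum $\mu^{v\xi_1}+\cdots+\mu^{v\xi_j}$ lies between $\mu^{v\xi_j}$ and $j\,\mu^{v\xi_j}$; from the lower bound, $\mathcal{T}_\mu(r,v,\gamma)$ is contained in the region where $v\xi_j\ge j-\gamma-\frac{\log j}{\log\mu}$ for every $j$. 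Thus it suffices to bound the volume of a region of $\Delta_r$ defined by lower bounds $\xi_j\ge c_j$, with $c_j$ nondecreasing, equal to $0$ for $j\lesssim\gamma$, rising with slope $\asymp 1/v$ on the ``active range'' $\gamma\lesssim j\lesssim\gamma+v$, and capped at $1$; the hypothesis $\gamma\ge r-v-A$ guarantees $c_r\le 1+A/v$, so the active range reaches within $O(1/v)$ of the top.

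Next I would change variables on the active range via $\xi_j=c_j+\phi_j/v$ with $\phi_j\ge0$, integrate out the unconstrained bottom coordinates $\xi_1,\dots,\xi_{\lfloor\gamma\rfloor}$ (contributing a weight $\asymp\xi_{\gamma+1}^{\gamma}/\gamma!$), and impose the cap $\xi_r\le 1$. This converts the volume into a weighted ``staircase'' integral in the $\phi_j$'s, over $0\le\phi_j\le\phi_{j+1}+O(1)$ with a terminal constraint $\phi_r\le O(\gamma-r+v+A)$, which I would evaluate or bound above using the classical identity that for nonnegative $\psi$ the simplex $\{\psi\ge0:\psi_j\le\psi_{j+1}+1\ (1\le j\le\ell),\ \psi_{\ell+1}:=0\}$ has volume $(\ell+1)^{\ell-1}/\ell!$ (a parking-function / Cayley count). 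In the resulting estimate the two linear factors arise from the two ends of the active range --- the $\approx\gamma$ unconstrained initial steps giving the factor $\asymp\gamma+1$, and the terminal cap, of width $1-c_r\asymp(\gamma-r+v+A+1)/v$, giving the factor $\asymp\gamma-r+v+A+1$ --- while the staircase of total length $\asymp r$ (the bottom coordinates contributing to the effective length through the weight $\xi_{\gamma+1}^\gamma$) supplies the denominator $r$.

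The main difficulty is twofold. First, one must genuinely retain the ballot/reflection cancellation: crude estimates (replacing $\xi_{\gamma+1}^\gamma$ by $1$, or each staircase step by its maximum) destroy the decisive $O(1/r)$ saving, so one needs an honest Cayley-type evaluation or an Andr\'e-style reflection --- this is the heart of the matter and the natural place to adapt the volume computations of Ford. Second, the correction term $\frac{\log j}{\log\mu}$ from the decoupling above is harmless on the bulk of the active range but must not be applied near $j=r$, where it would inflate the terminal slack from $\asymp\gamma-r+v+A$ to $\asymp\gamma-r+v+\frac{\log r}{\log\mu}$, which is too large once $\gamma-r+v$ is bounded. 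Near the top one should instead keep the genuine constraint $\mu^{v\xi_1}+\cdots+\mu^{v\xi_r}\ge\mu^{r-\gamma}$: once the bottom coordinates have been localized near $0$, its left side is comparable up to an $O_\mu(1)$ factor to $\gamma+\mu^{v\xi_r}$, which restores the clean slack $\gamma-r+v+O_\mu(1)$ (the residual case $\mu^{r-\gamma}\lesssim\gamma$, where this comparison fails, has only $O(\log r)$ active coordinates and is handled directly). Granting these two points, the remaining bookkeeping is routine.
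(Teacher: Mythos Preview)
Your proposal is not wrong in spirit, but it is doing far more work than the paper does. The paper's proof of this lemma is essentially a citation: for $1\le r\le 2v$ it invokes Lemma~5.3 of \cite{dk} (see also Lemma~4.4 of \cite{kf1}) together with the trivial bound $\vol(\mathcal{T}_\mu(r,v,\gamma))\le 1/r!$, and for $r>2v$ it observes that the hypothesis $\gamma\ge r-v-A$ forces $\gamma\ge r/2-A$, whence
\[
\frac{(\gamma-r+v+A+1)(\gamma+1)}{r}\gg_A 1,
\]
so that the trivial bound already gives the claim. In other words, the ballot/Smirnov-type cancellation you are proposing to redevelop is precisely the content of the cited lemmas in \cite{kf1,dk}, which in turn rest on Ford's sharp Smirnov-statistics estimates \cite{kf3}.

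What you outline --- decouple to $v\xi_j\ge j-\gamma-\frac{\log j}{\log\mu}$, strip off the unconstrained bottom $\lfloor\gamma\rfloor$ coordinates, rescale the active range and appeal to a Cayley/parking-function staircase count, then repair the $\log j$ loss near $j=r$ --- is exactly the shape of the argument behind those cited results. Your two flagged difficulties are real and are what make those lemmas nontrivial; in particular, the need to keep the genuine top constraint rather than its decoupled weakening is the delicate point. The paper's case split $r\le 2v$ versus $r>2v$ is also worth absorbing: it sidesteps your worry about the terminal slack entirely in the range $r>2v$, since there the minimum is $\asymp_A 1$ and no ballot argument is needed. If your aim is a self-contained treatment, your sketch is a reasonable roadmap, but you should expect to essentially reproduce \cite[Lemma~5.3]{dk} or the corresponding argument in \cite{kf3}; if your aim is to prove the lemma as stated in this paper, the intended proof is the two-line reduction above.
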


\begin{proof} If $1\le r\le2v$, then the result follows by Lemma 5.3 in \cite{dk} (see also Lemma 4.4 in \cite{kf1}) and the trivial bound $\vol(\mathcal{T}_\mu(r,v,\gamma) )\le\vol(\Delta_r)=1/r!$. If $r>2v$, then we have that $\gamma\ge r-v-A\ge r/2-A$ and, consequently, $$\frac{(\gamma-r+v+A+1)(\gamma+1)}r\gg_A1.$$ So the lemma holds in this case too by the trivial estimate $\vol\left(\mathcal{T}_\mu(r,v,\gamma)\right)\le1/r!$.
\end{proof}

\begin{proof}[Proof of Lemma \ref{ub1l2}] Let $j\in\{1,\dots,k+1\}$ and $\bv r\in\mathcal{R}_j$.
For each $i\in\{1,\dots,k\}$, let $\mathcal{T}_i$ be the set of $\bv\xi=(\bv\xi_1,\dots,\bv\xi_k)\in
\Delta_{r_1}\times\cdots\times\Delta_{r_k}$ such that
\be\label{defti}
	\min_{1\le s\le k}\left\{F_s(\bv\xi_s)\prod_{m=1}^{s-1}(k-m+2)^{v_m-r_m}\right\}
		= \min_{1\le s\le k}\{F_s(\bv\xi_s)e^{-B_{1,s}}\}
		= F_i(\bv\xi_i)e^{-B_{1,i}}
\ee
and 
\[
F_s(\bv\xi_s)\le C_k(k-s+2)^{v_s-r_s}\quad(1\le s\le k).
\]
Then for every $\bv\xi\in\mathcal{T}_i$ we have that 
\[
F_i(\bv\xi_i)e^{-B_{1,i}}\le\min_{1\le s\le k}
	\left\{\min\{C_k(k-s+2)^{v_s-r_s},1\}e^{-B_{1,s}}\right\},
\]
which, together with~\eqref{ub1e7}, implies that 
\[
F_i(\bv\xi_i)\le C_ke^{B_{1,i}} \min_{1\le s\le k}e^{-\max\{B_{1,s},B_{1,s+1}\}}
	= C_k e^{-\max\{B_{i,1},\dots,B_{i,k+1}\}}.
\]
Relation \eqref{ub1e7}\;and our assumption that $\bv r\in\mathcal{R}_j$ imply that $B_{i,j}=B_{i,s}+B_{s,j}\ge B_{i,s}$
for all $s\in\{1,\dots,k+1\}$, that is to say, $\max\{B_{i,1},\dots,B_{i,k+1}\}=B_{i,j}$ and, consequently,
\[
F_i(\bv\xi_i)\le C_ke^{-B_{i,j}}.
\] 
For $i\in\{1,\dots,k\}$ and $n\ge B_{i,j}\ge\max\{B_{i,i_0},B_{i,i_0+1},0\}$, define
$\mathcal{T}_i(n)$ to be the set of $(\bv\xi_1,\dots,\bv\xi_k)\in\mathcal{T}_i$ such that
\[
C_ke^{-n}<F_i(\bv\xi_i)\le C_ke^{-n+1}.
\] 
Then for $\bv(\xi_1,\dots,\xi_k)\in\mathcal{T}_i(n)$ relations~\eqref{ub1e7} and~\eqref{defti} imply that
\[
F_{i_0}(\bv\xi_{i_0})\ge e^{B_{i,i_0}}F_i(\bv\xi_i)>C_ke^{B_{i,i_0}-n}.
\]
Hence, for every $j\in\{1,\dots,r_{i_0}\}$, we have that
\be\begin{split}
\rho_{k-i_0+1}^{-j}\left(\rho_{k-i_0+1}^{v_{i_0}\xi_{i_0,1}}
		+\cdots+\rho_{k-i_0+1}^{v_{i_0}\xi_{i_0,j}}\right)
	&\ge\max\left\{\left(F_{i_0}(\bv\xi_{i_0})\right)^{1/(k-i_0+1)}
		- \rho_{k-i_0+1}^{-j},\rho_{k-i_0+1}^{-j}\right\}  \nonumber\\
	&\ge\frac12\left(F_{i_0}(\bv\xi_{i_0})\right)^{1/(k-i_0+1)}
		\ge(\rho_{k-i_0+1})^{-\frac{n-B_{i,i_0}}{\log(k-i_0+2)}},
\end{split}\ee
provided that $C_k$ is large enough. So Lemma~\ref{ub2l1} gives us that 
\be\begin{split}
	U^{(k+1)}_{\bv r}(\bv v)
		& \le \sum_{i=1}^k\int\limits_{\mathcal{T}_i}e^{B_{i,1}}F_i(\bv\xi_i)d\bv\xi\nonumber\\
		&\le C_k\sum_{i=1}^k\sum_{n\ge B_{i,j}}e^{B_{i,1}-n+1}
			\left(\prod_{\substack{1\le j\le k\\j\neq i_0}}\frac1{r_j!}\right)
			\vol\left(\mathcal{T}_{\rho_{k-i_0+1}}\left(r_{i_0},v_{i_0},\frac{n-B_{i,i_0}}{\log(k-i_0+2)}\right)\right)\\
		&\ll_k\sum_{i=1}^k\frac{e^{B_{i,1}}}{r_1!\cdots r_k!}	
			\sum_{n\ge B_{i,j}}\frac1{e^n}\min\left\{1,\frac{(n-B_{i,i_0}+1)(n-B_{i,i_0+1}+1)}{r_{i_0}+1}\right\}\\
		&\ll_k\sum_{i=1}^k\frac{e^{B_{i,1}}}{r_1!\cdots r_k!}\frac1{e^{B_{i,j}}}	
			\min\left\{1,\frac{(B_{i,j}-B_{i,i_0}+1)(B_{i,j}-B_{i,i_0+1}+1)}{r_{i_0}+1}\right\}\\
		&=\frac{ke^{B_{j,1}}}{r_1!\cdots r_k!}\min\left\{1,\frac{(B_{i_0,j}+1)(B_{i_0+1,j}+1)}{r_{i_0}+1}\right\},
\end{split}\ee 
which completes the proof of the lemma.
\end{proof}

We conclude this section with the proof of Lemma \ref{ub1l3}.

\begin{proof}[Proof of Lemma \ref{ub1l3}]Lemmas \ref{ub1l1}\;and \ref{ub1l2}\;imply that
\be\label{ub2e9}\begin{split}\sum_{\bv
r\in(\SN\cup\{0\})^k}S^{(k+1)}_{\bv r}(\bv
y)&\ll_k\sum_{j=1}^{k+1}\sum_{\bv
r\in\mathcal{R}_j}\left(\prod_{m=1}^{j-1}(k-m+2)^{v_m}\right)\left(\prod_{m=j}^k(k-m+2)^{r_m}\right)\\
&\qquad\times\min\left\{1,\frac{(1+B_{i_0,j})(1+B_{i_0+1,j})}{r_{i_0}+1}\right\}\prod_{m=1}^k\frac{(v_m\log\rho_{k-m+1})^{r_m}}{r_m!}\\
&=:\sum_{j=1}^{k+1}T_j.\end{split}\ee We fix $j\in\{1,\dots,k+1\}$
and bound $T_j$. We have that $\bv r\in\mathcal{R}_j$ if, and only if,
\be\label{ub2e10}\sum_{m=i}^{j-1}\log(k-m+2)(r_m-v_m)\ge0\quad(1\le
i\le j-1)\ee and \be\label{ub2e11}\sum_{m=j}^i\log(k-m+2)(r_m-v_m)\le0\quad(j\le i\le
k).\ee Let $\mathcal{R}_{1,j}$ be the set of vectors $\bv
r_1=(r_1,\dots,r_{j-1})\in(\SN\cup\{0\})^{j-1}$ such that
\eqref{ub2e10}\;holds and let $\mathcal{R}_{2,j}$ be the set of
vectors $\bv r_2=(r_j,\dots,r_k)\in(\SN\cup\{0\})^{k-j+1}$ such that \eqref{ub2e11}\;holds.
Note that if $\bv r_1\in\mathcal{R}_{1,j}$, then
$$1+B_{i_0,j}=1+B_{i_0,1}+B_{1,j}\le(1+\max\{0,B_{i_0,1}\})(1+B_{1,j})\ll_k(1+\ell_1+\cdots+\ell_{i_0-1})(1+B_{1,j}),$$
since \eqref{ub2e10}\;implies that $B_{1,j}\ge0$. Similarly, if $\bv
r_2\in\mathcal{R}_{2,j}$, then
$$1+B_{i_0+1,j}=1+B_{i_0+1,k+1}+B_{k+1,j}\ll_k(1+r_{i_0+1}+\cdots+r_k)(1+B_{k+1,j}),$$
since \eqref{ub2e11}\;implies that $B_{k+1,j}\ge0$. So, if we set
$$\beta(\bv r)=\min\left\{1,\frac{(1+\ell_1+\cdots+\ell_{i_0-1})(1+r_{i_0+1}+\cdots+r_k)}{r_{i_0}+1}\right\},$$
then we have that \bes\begin{split}T_j&\ll_k\sum_{\substack{\bv
r_i\in\mathcal{R}_{i,j}\\i\in\{1,2\}}}\left(\prod_{m=1}^{j-1}(k-m+2)^{v_m}\right)\left(\prod_{m=j}^k(k-m+2)^{r_m}\right)\\
&\qquad\times\beta(\bv
r)(1+B_{1,j})(1+B_{k+1,j})\prod_{m=1}^k\frac{(v_m\log\rho_{k-m+1})^{r_m}}{r_m!}.\end{split}\ees
For $s\in\{0,1,\dots,k\}$ set
\be\begin{split}T_{j,s}&=\sum_{\substack{\bv
r_i\in\mathcal{R}_{i,j}\\i\in\{1,2\}}}\left(\prod_{m=1}^{j-1}(k-m+2)^{v_m}\right)\left(\prod_{m=j}^k(k-m+2)^{r_m}\right)\nonumber\\
&\qquad\times(1+B_{1,j})(1+B_{k+1,j})\frac{r_s+1}{r_{i_0}+1}\prod_{m=1}^k\frac{(v_m\log\rho_{k-m+1})^{r_m}}{r_m!},\end{split}\ee
where $r_0=0$. Then
\be\label{ub2e12}T_j\ll_k\min\left\{T_{j,i_0},(1+\ell_1+\cdots+\ell_{i_0-1})(T_{j,0}+T_{j,i_0+1}+T_{j,i_0+2}+\cdots+T_{j,k})\right\}.\ee
Observe that $T_{j,s}$ may be written as a product of two sums, with
the first one ranging over $\bv r_1\in\mathcal{R}_{1,j}$ and the
second one over $\bv r_2\in\mathcal{R}_{2,j}$. Lemma \ref{spl2}(a)
can be applied to the first of these sums (with $j-1$ in place of
$k$, $\{v_i\log(\rho_{k-i+1})\}_{i=1}^{j-1}$ in place of
$\{z_i\}_{i=1}^k$, $\{\log(k-i+2)\}_{i=1}^{j-1}$ in place of
$\{\lambda_i\}_{i=1}^k$ and $\{k-i+1\}_{i=1}^{j-1}$ in place of
$\{\mu_i\}_{i=1}^k$). Similarly, Lemma \ref{spl2}(b) can be applied
to the second sum. As a result, we deduce that
\be\label{ub2e13}T_{j,s}\ll_k\frac{1+\ell_s}{1+\ell_{i_0}}\left(\prod_{m=1}^k(k-m+2)^{v_m}\right)
\sum_{\substack{\bv
r_i\in\mathcal{R}_{i,j}^\prime\\i\in\{1,2\}}}\prod_{m=1}^k\frac{(v_m\log\rho_{k-m+1})^{r_m}}{r_m!},\ee
where $\ell_0=0$, $$\mathcal{R}^\prime_{1,j}=\left\{\bv
r_1\in(\SN\cup\{0\})^{j-1}:-\log(k+1)\le\sum_{m=1}^{j-1}\log(k-m+2)(r_m-v_m)\le0\right\}$$
and
$$\mathcal{R}^\prime_{2,j}=\left\{\bv r_2\in(\SN\cup\{0\})^{k-j+1}:-\log(k+1)\le\sum_{m=j}^k\log(k-m+2)(r_m-v_m)\le0\right\}.$$
Clearly, we have that
$$\mathcal{R}_{1,j}^\prime\times\mathcal{R}_{2,j}^\prime\subset\left\{\bv
r\in(\SN\cup\{0\})^k:-2\log(k+1)\le\sum_{m=1}^k\log(k-m+2)(r_m-v_m)\le0\right\},$$
which, in combination with relation~\eqref{ub2e13} and Lemmas~\ref{lambda} and~\ref{spl1}(b), implies that
$$T_{j,s}\ll_k\frac{\ell_s+1}{\ell_{i_0}+1}\frac1{\sqrt{\log\log y_k}}\prod_{i=1}^k\left(\frac{\log
y_i}{\log y_{i-1}}\right)^{k-i+2-Q((k-i+2)^\alpha)}.$$ By the above
estimate and \eqref{ub2e12}\;we deduce that
\bes T_j\ll_k\frac{\displaystyle\min\left\{1,\frac{(1+\ell_1+\cdots+\ell_{i_0-1})(1+\ell_{i_0+1}+\cdots+\ell_k)}{\ell_{i_0}}\right\}}
{\displaystyle\sqrt{\log\log y_k}}\prod_{i=1}^k\left(\frac{\log y_i}{\log y_{i-1}}\right)^{k-i+2-Q((k-i+2)^\alpha)}.\ees Finally,
inserting this inequality and~\eqref{altbeta} into~\eqref{ub2e9} proves the lemma.
\end{proof}


\section{The lower bound in Theorem \ref{thm2}: outline of the proof}\label{lb_outline}

As in the proof of the upper bound in Theorem~\ref{thm2}, our starting point in order to prove the corresponding lower
bound is Theorem \ref{thm3}. Also, we may assume that the numbers $\ell_1,\dots,\ell_k$ are large enough, by
Corollary \ref{cor4}. However, the arguments deviate significantly from those in Section \ref{ub}. As in~\cite{kf1,kf2,dk}, our strategy is to construct a subset of
$\mathcal{P}_*^k(\bv y)$ which contributes a positive proportion to $S^{(k+1)}(\bv y)$ and on which we have good control of the size of
$L^{(k+1)}(\bv a)$ via H\"older's inequality. First, for $P\in(1,+\infty)$ and $\bv a=(a_1,\dots,a_k)\in\SN^k$ set $$W_{k+1}^P(\bv a)=\sum_{\substack{d_1\cdots d_i|a_1\cdots a_i\\1\le i\le k}}\left(\sum_{\substack{d_1'\cdots d_i'|a_1\cdots a_i\\|\log(d_i^\prime/d_i)|<\log2\\1\le i\le k}}1\right)^{P-1}.$$ We have the following inequality.

\begin{lemma}\label{lb1l0} Let $P\in(1,+\infty)$ and consider a finite set $\mathcal{A}\subset\SN^k$. Then $$\left(\sum_{\bv
a\in\mathcal{A}}\frac{W_{k+1}^P(\bv a)}{a_1\cdots a_k}\right)^{1/P}\left(\frac1{(\log2)^k}\sum_{\bv
a\in\mathcal{A}}\frac{L^{(k+1)}(\bv a)}{a_1\cdots a_k}\right)^{1-1/P}\ge\sum_{\bv a\in\mathcal{A}}\frac{\tau_{k+1}(\bv a)}{a_1\cdots a_k}.$$
\end{lemma}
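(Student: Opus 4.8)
The plan is to reinterpret all three quantities geometrically in terms of the boxes building up $\mathcal{L}^{(k+1)}(\bv a)$, and then apply H\"older's inequality over the set of pairs $(\bv a,\bv d)$. For $\bv a\in\SN^k$ write $\mathcal{D}(\bv a)=\{\bv d\in\SN^k:d_1\cdots d_i\mid a_1\cdots a_i\ (1\le i\le k)\}$, and for $\bv d\in\mathcal{D}(\bv a)$ let $Q_{\bv d}=[\log(d_1/2),\log d_1)\times\cdots\times[\log(d_k/2),\log d_k)$, a box of volume $(\log2)^k$. Then $\tau_{k+1}(\bv a)=|\mathcal{D}(\bv a)|$ and $\mathcal{L}^{(k+1)}(\bv a)=\bigcup_{\bv d\in\mathcal{D}(\bv a)}Q_{\bv d}$. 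Since two half-open intervals of common length $\log2$ meet precisely when their endpoints differ by less than $\log2$, the boxes $Q_{\bv d}$ and $Q_{\bv d'}$ intersect if and only if $|\log(d_i'/d_i)|<\log2$ for all $i$. Hence, setting $M_{\bv a}(\bv d)=|\{\bv d'\in\mathcal{D}(\bv a):Q_{\bv d'}\cap Q_{\bv d}\neq\emptyset\}|$, the inner sum in the definition of $W^P_{k+1}(\bv a)$ is exactly $M_{\bv a}(\bv d)$, so $W^P_{k+1}(\bv a)=\sum_{\bv d\in\mathcal{D}(\bv a)}M_{\bv a}(\bv d)^{P-1}$; note that $M_{\bv a}(\bv d)\ge1$ always, since $\bv d$ itself is counted.

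The core of the argument, and the step I expect to be the main point, is the inequality $\sum_{\bv d\in\mathcal{D}(\bv a)}M_{\bv a}(\bv d)^{-1}\le(\log2)^{-k}L^{(k+1)}(\bv a)$. Let $N_{\bv a}(\bv x)=|\{\bv d\in\mathcal{D}(\bv a):\bv x\in Q_{\bv d}\}|$ be the covering multiplicity. If $\bv x\in Q_{\bv d}$, then every box $Q_{\bv d''}$ containing $\bv x$ also meets $Q_{\bv d}$, so $N_{\bv a}(\bv x)\le M_{\bv a}(\bv d)$; therefore $\sum_{\bv d:\,\bv x\in Q_{\bv d}}M_{\bv a}(\bv d)^{-1}\le N_{\bv a}(\bv x)\cdot N_{\bv a}(\bv x)^{-1}=1$ for every $\bv x\in\mathcal{L}^{(k+1)}(\bv a)$. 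Integrating in $\bv x$ and using $\vol(Q_{\bv d})=(\log2)^k$ gives
$$\sum_{\bv d\in\mathcal{D}(\bv a)}M_{\bv a}(\bv d)^{-1}=\frac1{(\log2)^k}\int_{\mathcal{L}^{(k+1)}(\bv a)}\sum_{\bv d:\,\bv x\in Q_{\bv d}}M_{\bv a}(\bv d)^{-1}\,d\bv x\le\frac{L^{(k+1)}(\bv a)}{(\log2)^k}.$$

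Finally I would combine these via H\"older. Writing $\sum_{\bv a\in\mathcal{A}}\tau_{k+1}(\bv a)/(a_1\cdots a_k)=\sum_{\bv a\in\mathcal{A}}\sum_{\bv d\in\mathcal{D}(\bv a)}1/(a_1\cdots a_k)$ and factoring $\frac1{a_1\cdots a_k}=\bigl(\frac{M_{\bv a}(\bv d)^{P-1}}{a_1\cdots a_k}\bigr)^{1/P}\bigl(\frac{M_{\bv a}(\bv d)^{-1}}{a_1\cdots a_k}\bigr)^{1-1/P}$, H\"older's inequality with exponents $P$ and $P/(P-1)$, applied to the finite family of pairs $(\bv a,\bv d)$, yields
$$\sum_{\bv a\in\mathcal{A}}\frac{\tau_{k+1}(\bv a)}{a_1\cdots a_k}\le\Bigl(\sum_{\bv a\in\mathcal{A}}\frac1{a_1\cdots a_k}\sum_{\bv d}M_{\bv a}(\bv d)^{P-1}\Bigr)^{1/P}\Bigl(\sum_{\bv a\in\mathcal{A}}\frac1{a_1\cdots a_k}\sum_{\bv d}M_{\bv a}(\bv d)^{-1}\Bigr)^{1-1/P}.$$
The first factor equals $\bigl(\sum_{\bv a}W^P_{k+1}(\bv a)/(a_1\cdots a_k)\bigr)^{1/P}$, and by the displayed bound the second factor is at most $\bigl((\log2)^{-k}\sum_{\bv a}L^{(k+1)}(\bv a)/(a_1\cdots a_k)\bigr)^{1-1/P}$, which is precisely the claim. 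The only delicate points are the exact matching of the geometric overlap condition with the condition $|\log(d_i'/d_i)|<\log2$ appearing in $W^P_{k+1}$, and the observation that all sums involved are finite (since $\mathcal{A}$ is finite and $\mathcal{D}(\bv a)$ is finite for each $\bv a$), so no convergence issues obstruct the application of H\"older.
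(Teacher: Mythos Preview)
Your proof is correct and is precisely the argument the paper has in mind: the paper does not spell out the proof but refers to Lemma~3.3 of~\cite{dk}, which is exactly this H\"older-over-boxes argument. The identification $W_{k+1}^P(\bv a)=\sum_{\bv d\in\mathcal{D}(\bv a)}M_{\bv a}(\bv d)^{P-1}$, the covering bound $\sum_{\bv d}M_{\bv a}(\bv d)^{-1}\le(\log 2)^{-k}L^{(k+1)}(\bv a)$ via $N_{\bv a}(\bv x)\le M_{\bv a}(\bv d)$ for $\bv x\in Q_{\bv d}$, and the final H\"older step are all carried out correctly.
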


\begin{proof}The proof is similar to the proof of Lemma 3.3 in \cite{dk}
\end{proof}

Our next goal is to bound $$\sum_{\bv a\in\mathcal{A}}\frac{W_{k+1}^P(\bv a)}{a_1\cdots a_k}$$ from above for
suitably chosen sets $\mathcal{A}\subset\SN^k$. In order to construct these sets, recall the definition of the
numbers $\lambda_{i,j}$ and $v_i$ and of the sets $D_{i,j}$ from the beginning of Subsection \ref{ub_outline}. Then for $\bv g=(\bv g_1,\dots,\bv g_k)\in(\SN\cup\{0\})^{v_1}\times\cdots\times(\SN\cup\{0\})^{v_k}$
with $\bv g_i=(g_{i,1},\dots,g_{i,v_i})$ let $$\mathcal{A}(\bv g)=\mathcal{A}_1(\bv g_1)\times\cdots\times\mathcal{A}_k(\bv g_k),$$
where for each $i\in\{1,\dots,k\}$ $\mathcal{A}_i(\bv g_i)$ is defined to be the set of square-free integers composed of exactly
$g_{i,j}$ prime factors from $D_{i,j}$ for each $j\in\{1,\dots,v_i\}$. Set $G_{i,0}=0$ and $G_{i,j}=g_{i,1}+\cdots+g_{i,j}$, $j=1,\dots,v_i$.
We shall estimate 
$$
\sum_{\bv a\in\mathcal{A}(\bv g)}\frac{W_{k+1}^P(\bv a)}{a_1\cdots a_k},
$$ 
but first we need to introduce some additional notation. Fix $P\in(1,2]$ and set $$t_{i,j}=\frac{j+(k-i+2-j)^P}{k-i+2}\quad(1\le i\le k,~0\le j\le k-i+1).$$ Also, for integers $1\le i\le k$, $\nu\ge0$ and $n\ge0$ with $\nu+n\le k-i+1$ and for $\bv g_i\in(\SN\cup\{0\})^{v_i}$, set
$$T_i(\bv g_i;\nu,n)=\sum_{0=s_0\le s_1\le \cdots\le s_n\le s_{n+1}=v_i}(\rho_{k-i+1}^{P-1})^{-(s_1+\cdots+s_n)}
\prod_{j=0}^n(t_{i,\nu+j})^{G_{i,s_{j+1}}-G_{i,s_j}}.$$ Lastly, we define $$T(\bv g)=\sum_{\substack{0=J_0\le J_1\le\cdots\le J_k\le
k\\J_i\ge i~(1\le i\le k)}}\prod_{i=1}^k(\rho_{k-i+1}^{P-1})^{-(k-J_i)v_i}T_i(\bv g_i;J_{i-1}-i+1,J_i-J_{i-1}).$$

\begin{lemma}\label{lb1l2} Let $\bv r\in\SN^k$ and $\bv g=(\bv g_1,\dots,\bv g_k)\in(\SN\cup\{0\})^{v_1}\times\cdots\times(\SN\cup\{0\})^{v_k}$
such that $G_{i,v_i}=r_i$ for all $i\in\{1,\dots,k\}$. Then $$\sum_{\bv a\in\mathcal{A}(\bv g)}\frac{W_{k+1}^P(\bv a)}{a_1\cdots
a_k}\ll_kT(\bv g)\prod_{i=1}^k\frac{((k-i+2)\log\rho_{i-1+1})^{r_i}}{g_{i,1}!\cdots g_{i,v_i}!}.$$
\end{lemma}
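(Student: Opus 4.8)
The plan is to expand $W_{k+1}^P(\bv a)$ combinatorially and then organize the resulting multiple sum over primes so that it factors, up to constants, into the product structure encoded by $T(\bv g)$. First I would fix $\bv a=(a_1,\dots,a_k)\in\mathcal{A}(\bv g)$ and write each $a_i=p_{i,1}\cdots p_{i,r_i}$ with the primes listed so that $p_{i,m}\in D_{i,n_{i,m}}$ and $1\le n_{i,1}\le\cdots\le n_{i,r_i}\le v_i$, where the multiplicities of the value $j$ among the $n_{i,\cdot}$ equal $g_{i,j}$. Summing $1/(a_1\cdots a_k)$ over $\mathcal{A}(\bv g)$ produces, by \eqref{lambda1}, a factor $\prod_{i=1}^k(\log\rho_{k-i+1})^{r_i}/(g_{i,1}!\cdots g_{i,v_i}!)$; this already accounts for part of the claimed main term, so the remaining task is to bound the ``inner'' quantity $W_{k+1}^P(\bv a)$ by $T(\bv g)\prod_i((k-i+2)v_i)^{r_i}$ — wait, more precisely, to show that on average over $\mathcal{A}(\bv g)$ the contribution of $W_{k+1}^P(\bv a)$ is $\ll_k T(\bv g)\prod_i((k-i+2))^{r_i}$ after the $1/(a_1\cdots a_k)$ weighting, with the $v_i^{r_i}$ absorbed into the volume/combinatorial count. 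The cleanest route is to interpret each choice of divisor tuple $(d_1,\dots,d_k)$ with $d_1\cdots d_i\mid a_1\cdots a_i$ via the ``staircase'' decomposition $d_i=d_{i,1}\cdots d_{i,i}$ with $d_{j,j}\cdots d_{k,j}\mid a_j$, exactly as in the proof of Lemma \ref{conv_ineq}; the inner count $\sum_{d'}1$ over near-equal $d'_i$ becomes, after the logarithmic change of scale, a count of lattice translates that is controlled by the number of prime factors of $a_j$ in each block $D_{j,n}$ that can be ``moved'' across a $\log 2$ window, which by Lemma \ref{lambda} is $O_k(1)$ primes per block.

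The heart of the argument is the following bookkeeping. Expanding the $(P-1)$-st power (recall $P\in(1,2]$, so $P-1\in(0,1]$) and using the elementary inequality $(\sum_m x_m)^{P-1}\le\sum_m x_m^{P-1}$ for nonnegative $x_m$ when $P-1\le 1$ — or rather, using it in the reverse-convenient direction together with Hölder — I would reduce $W_{k+1}^P(\bv a)$ to a sum over pairs of nested divisor chains, grouped according to (i) for each coordinate $i$, the ``breakpoint'' $J_i$ recording up to which level the two chains agree, subject to $i\le J_i$ and $J_0\le J_1\le\cdots\le J_k\le k$; and (ii) for each $i$, the thresholds $0=s_0\le s_1\le\cdots\le s_n\le s_{n+1}=v_i$ (with $n=J_i-J_{i-1}$, $\nu=J_{i-1}-i+1$) at which primes of $a_i$ in successive $D$-blocks get reassigned between the chains. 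For a fixed such combinatorial pattern, the number of ways to distribute the $G_{i,s_{j+1}}-G_{i,s_j}$ primes sitting in blocks $D_{i,s_j+1},\dots,D_{i,s_{j+1}}$ is bounded by a multinomial, and the ``cost'' of each reassignment contributes a factor $(k-i+2-j)$ or $(k-i+2)$ from Lemma \ref{ub1l0}(b)-type divisor-counting bounds while the logarithmic window constraints and Lemma \ref{lambda} contribute the powers $(\rho_{k-i+1}^{P-1})^{-s}$ and $(\rho_{k-i+1}^{P-1})^{-(k-J_i)v_i}$. Matching these weights against the definition of $t_{i,j}=(j+(k-i+2-j)^P)/(k-i+2)$ and of $T_i(\bv g_i;\nu,n)$ and $T(\bv g)$, and summing over all patterns, yields the claimed bound.

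The main obstacle I anticipate is getting the exponent $t_{i,j}$ exactly right: the term $(k-i+2-j)^P$ arises because when $j$ of the $k-i+2$ ``levels'' at coordinate $i$ are forced to coincide between the two chains, the remaining $k-i+2-j$ levels each independently contribute a divisor-count factor that gets raised to the power $P$ through the $W^P$ structure (one factor from the outer chain and $P-1$ worth from the inner chains), whereas the $j$ coincident levels contribute only linearly — hence $j+(k-i+2-j)^P$, normalized by $k-i+2$ to fit the $\rho_{k-i+1}=(k-i+2)^{1/(k-i+1)}$ scaling. Keeping the interplay between the block index $s$, the per-block $O_k(1)$ prime-moving bound, and the geometric factors $\rho^{-s}$ consistent across all coordinates — especially at the ``seams'' where $J_{i-1}$ passes to $J_i$ — will require care but no new ideas beyond those already deployed in Lemma \ref{conv_ineq} and in the proof of Lemma \ref{ub1l1}. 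Once the pattern-by-pattern estimate is in place, summing over the bounded number of breakpoint configurations $\{J_i\}$ and factoring out $\prod_i((k-i+2)\log\rho_{k-i+1})^{r_i}/(g_{i,1}!\cdots g_{i,v_i}!)$ completes the proof.
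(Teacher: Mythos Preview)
Your proposal has the right overall shape but misses the central mechanism, and your interpretation of the combinatorial parameters is off. The quantities $J_i$ are \emph{not} ``up to which level the two chains agree''; in the paper they arise as $J_i=|\mathcal{J}_1|+\cdots+|\mathcal{J}_i|$, where $\mathcal{J}_i$ records which of a family of \emph{pivot indices} $I_1,\dots,I_k$ land in the prime block $\mathcal{R}_i$. These pivot indices (and an accompanying permutation $m_1,\dots,m_k$) are defined inductively: $I_j$ is the largest element of $\mathcal{U}(\{Y_{m_s}\triangle Z_{m_s}:s\ge j\})$, the set of indices belonging to exactly one of the remaining symmetric differences. This construction is the key idea you are missing. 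It forces the system of constraints $|\sum_{Y_j}\log p-\sum_{Z_j}\log p|<\log 2$ to become \emph{triangular} in the variables $\log p_{I_1},\dots,\log p_{I_k}$ (after fixing all other primes), so that each $p_{I_j}$ is confined to a dyadic interval; this is what produces the factors $(\rho_{k-i+1}^{P-1})^{-E_{\bv g}(I_j)}$, and the $s_j$ in $T_i(\bv g_i;\nu,n)$ are precisely the block indices $E_{\bv g}(I_j)$. Your description ``the inner count\ldots is a count of lattice translates controlled by\ldots $O_k(1)$ primes per block'' does not capture this and would not by itself give the triangular reduction.

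Second, the appearance of $t_{i,\nu+j}$ does not come from the heuristic you describe. After H\"older is applied to the \emph{sum over primes} (splitting into a $(P-1)$-weighted piece carrying the $Z$-constraint and a $(2-P)$-weighted free piece) and subadditivity $(a+b)^{P-1}\le a^{P-1}+b^{P-1}$ is used to decouple configurations $(\bv I,\bv m)$, one must compute $\sum_{\bv Y}M_{\bv r}(\bv Y;\bv I;\bv m)^{P-1}$ exactly. This reduces to evaluating, for each coordinate block $\mathcal{N}_{i,j}$, a multinomial sum of the form $\bigl(\sum_t|A_{t,i,j}|^P\bigr)^{|\mathcal{N}_{i,j}|}$, where the $|A_{t,i,j}|$ are sizes of a partition of $\{0\}\cup\{s:m_s\ge i\}$ with $|K_{i,j}|$ nonempty parts and $j-i+2\le|K_{i,j}|\le k-i+2$. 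The bound $\sum_t|A_{t,i,j}|^P\le(k-i+2)t_{i,j-i+1}$ then comes from convexity: the maximum of $\sum x_t^P$ under $\sum x_t=k-i+2$, $x_t\ge1$, is attained at a vertex. None of this is contained in Lemmas~\ref{ub1l0}(b) or~\ref{conv_ineq}, which bound $L^{(k+1)}$ rather than the paired-divisor counts relevant to $W_{k+1}^P$. Without the pivot construction and this exact $M_{\bv r}$ computation, you will not recover the precise form of $T(\bv g)$.
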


The proof of Lemma \ref{lb1l2}\;will be given in Section \ref{lb_proof1}. Next, we use the above result to show that
$W_{k+1}^P(\bv a)$ is bounded on average over a union of suitably chosen sets $\mathcal{A}(\bv g)$, which we construct below. Define
\begin{align*}\mathcal{R}^*=\left\{(r_1,\dots,r_k)\in(\SN\cup\{0\})^k:-\log(k+1)\le\sum_{i=1}^k\log(k-i+2)(r_i-v_i)\le0,\right.\\
\left.|r_i-(k-i+2)^\alpha\ell_i|\le\sqrt{\ell_i}\quad(1\le i\le k)\right\}.\end{align*}
Fix $\bv r\in\mathcal{R}^*$ and $i\in\{1,\dots,k\}$
and set $$u_i'=1+\frac1{\log(k-i+2)}\sum_{j=1}^{i-1}\log(k-j+2)(v_j-r_j)$$
and $$w_i'=u_i'+v_i-r_i=1+\frac1{\log(k-i+2)}\sum_{j=1}^i\log(k-j+2)(v_j-r_j).$$
By Lemma~\ref{hl1} and the definition of $i_0$ (see also the derivation of~\eqref{he101}), we have that
\be\label{inequ}u_i'\asymp_k\begin{cases}1+\ell_1+\cdots+\ell_{i-1}&\text{if}~1\le i\le i_0,\cr
1+\ell_i+\cdots+\ell_k&\text{if}~i_0+1\le i\le k,\end{cases}\ee and
\be\label{ineqw}w_i'\asymp_k\begin{cases}1+\ell_1+\cdots+\ell_i&\text{if}~1\le i\le i_0-1,\cr
1+\ell_{i+1}+\cdots+\ell_k&\text{if}~i_0\le i\le k.\end{cases}\ee Define
$$u_i=\min\left\{u_i',\frac{r_i-v_i+\sqrt{(r_i-v_i)^2+4r_i}}2\right\}$$
and
$$w_i=u_i+v_i-r_i=\min\left\{w_i',\frac{v_i-r_i+\sqrt{(r_i-v_i)^2+4r_i}}2\right\}.$$
Note that $u_i\gg_k1$ and $w_i\gg_k1$, since $r_i\asymp_k v_i$ for
$\bv r\in\mathcal{R}^*$. Also, since $$u_i'w_i'=(u_i')^2+(v_i-r_i)u_i',$$ we have that $u_i'w_i'\le r_i$ exactly when $$u_i'\le\frac{r_i-v_i+\sqrt{(r_i-v_i)^2+4r_i}}2,$$ in which case $u_i=u_i'$ and $w_i=w_i'$. On the other hand, if $u_i'w_i'>r_i$, then we find similarly that
$$
u_i=\frac{r_i-v_i+\sqrt{(r_i-v_i)^2+4r_i}}2\quad{\rm and}\quad w_i=\frac{v_i-r_i+\sqrt{(r_i-v_i)^2+4r_i}}2.
$$ 
In any case, we have that
\be\label{betai}
	\beta_i:=\frac{u_iw_i}{r_i}=\min\left\{1,\frac{u_i'w_i'}{r_i}\right\}.
\ee 
Lastly, observe that
\be\label{beta2}
	\beta_i\asymp_k
		\begin{cases}	
			\beta &\text{if}\;i=i_0,\cr 
			1	 &\text{otherwise},
		\end{cases}
\ee 
by relations \eqref{inequ},~\eqref{ineqw} and~\eqref{altbeta}. For every $i\in\{2,\dots,k\}$ let $\mathcal{G}_i(r_i)$ be the set of vectors $\bv g_i\in(\SN\cup\{0\})^{v_i}$ such that
\be\label{lb1e2}G_{i,v_i}=r_i\quad\text{and}\quad G_{i,j}\le j+u_i\quad(1\le j\le v_i).\ee Also, let $\mathcal{G}_1(r_1)$ be the set
of vectors $\bv g_1=(g_{1,1},\dots,g_{1,v_1})\in(\SN\cup\{0\})^{v_1}$ that satisfy \eqref{lb1e2}\;with $i=1$ and have the additional
property that $g_{1,j}=0$ for $1\le j\le N-1$, where $N=N(k)$ is a sufficiently large constant to be chosen later. Finally, let $\mathcal{G}(\bv r)=\mathcal{G}_1(r_1)\times\cdots\times\mathcal{G}_k(r_k)$. Then the following estimates hold.

\begin{lemma}\label{lb1l3} For every $\bv r\in\mathcal{R}^*$ we have that $$\sum_{\bv g\in\mathcal{G}(\bv r)}\sum_{\bv a\in\mathcal{A}(\bv
g)}\frac1{a_1\cdots a_k}\gg_k\beta\prod_{i=1}^k\frac{\ell_i^{r_i}}{r_i!},$$ provided that $N$ is large enough.
\end{lemma}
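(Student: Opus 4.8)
The plan is to exploit the product structure of both $\mathcal{A}(\bv g)=\mathcal{A}_1(\bv g_1)\times\cdots\times\mathcal{A}_k(\bv g_k)$ and $\mathscr{G}(\bv r)=\mathscr{G}_1(r_1)\times\cdots\times\mathscr{G}_k(r_k)$, so that the left-hand side factors as $\prod_{i=1}^k S_i$ with $S_i=\sum_{\bv g_i\in\mathscr{G}_i(r_i)}\sum_{a_i\in\mathcal{A}_i(\bv g_i)}1/a_i$. I would prove $S_i\gg_k\beta_i\,\ell_i^{\,r_i}/r_i!$ for each $i$ and then multiply, invoking $\prod_i\beta_i\asymp_k\beta$, which is immediate from~\eqref{beta2}.

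\emph{Step 1: reduction to a purely combinatorial sum.} For a fixed $\bv g_i$ one has exactly $\sum_{a_i\in\mathcal{A}_i(\bv g_i)}1/a_i=\prod_{j=1}^{v_i}e_{g_{i,j}}(\{1/p:p\in D_{i,j}\})$, where $e_m$ denotes the $m$th elementary symmetric function. Writing $\sigma_{i,j}=\sum_{p\in D_{i,j}}1/p$, the defining property~\eqref{lambda1} of the $\lambda_{i,j}$ together with Mertens' theorem gives $\sigma_{i,j}=\log\rho_{k-i+1}+O(1/\lambda_{i,j-1})$, while Lemma~\ref{lambda} shows the $\lambda_{i,j}$ grow doubly exponentially in $j$, so that $\sum_j1/\lambda_{i,j-1}=O_k(1/\lambda_{i,0})$ is tiny; for $i=1$ one also uses that $g_{1,j}=0$ for $j<N$, so that only blocks with $\lambda_{1,j-1}\ge\lambda_{1,N-1}$ occur, and $\lambda_{1,N-1}$ is large once $N=N(k)$ is. Combining these facts with the elementary bound $e_m(\bv x)\ge\frac1{m!}\bigl(\sum_px_p\bigr)^m\bigl(1-O(m^2(\sum_px_p^2)/(\sum_px_p)^2)\bigr)$ — after first discarding the vectors $\bv g_i$ carrying an atypically large coordinate $g_{i,j}$, whose total weight under the multinomial weights $\prod_j1/g_{i,j}!$ is negligible by a large-deviation estimate, and for which the above bound would otherwise be ineffective — should yield $S_i\gg_k(\log\rho_{k-i+1})^{r_i}\sum_{\bv g_i\in\mathscr{G}_i(r_i)}\prod_j1/g_{i,j}!$. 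Since $(\log\rho_{k-i+1})v_i=\ell_i+O_k(1)$ by Lemma~\ref{lambda} and $r_i\asymp_k\ell_i$ on $\mathscr{R}^*$, this reduces the claim to
\[
\Sigma_i:=\sum_{\bv g_i\in\mathscr{G}_i(r_i)}\ \prod_{j=1}^{v_i}\frac1{g_{i,j}!}\ \gg_k\ \beta_i\,\frac{v_i^{\,r_i}}{r_i!}.
\]

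\emph{Step 2: a two-sided ballot estimate.} The right-hand side equals $\sum_{\bv g_i:\,G_{i,v_i}=r_i}\prod_j1/g_{i,j}!$, and $\Sigma_i$ is this sum restricted by the upper barrier $G_{i,j}\le j+u_i$ (and, for $i=1$, by $g_{1,j}=0$ for $j<N$, which costs only a bounded factor once $N$ is fixed). Normalising so that $r_i!\,v_i^{-r_i}\prod_j1/g_{i,j}!$ is the multinomial law of $(g_{i,1},\dots,g_{i,v_i})$, the partial sums $G_{i,j}$ behave like a random walk of slope $r_i/v_i=(k-i+2)^{\alpha-\alpha_{k-i+1}}+o(1)$, whereas the barrier has slope $1$; by Lemma~\ref{hl1} the sign of $\alpha-\alpha_{k-i+1}$, hence the subcritical/critical/supercritical dichotomy, is governed by the position of $i$ relative to $i_0$. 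For $i\ne i_0$ the slope is bounded away from $1$ and the barrier removes only a bounded factor: in the supercritical case one time-reverses, turning the upper barrier into the lower barrier $G_{i,j}\ge j-w_i$ for a walk with positive drift against it, the endpoint $r_i$ lying inside the barrier because $u_i\ge r_i-v_i$ on $\mathscr{R}^*$; thus $\Sigma_i\gg_kv_i^{r_i}/r_i!\asymp_k\beta_iv_i^{r_i}/r_i!$. For $i=i_0$ the walk is critical up to $o(1)$, and applying the reflection principle at both ends — at $0$ with clearance $u_{i_0}$ and at $v_{i_0}$ with clearance $w_{i_0}$ — should give survival probability $\asymp_ku_{i_0}w_{i_0}/r_{i_0}=\beta_{i_0}$ by~\eqref{betai}. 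This critical case is where I expect the main difficulty to lie: one must pin down the ballot probability to the exact order $\beta_{i_0}$, not merely up to logarithmic factors, and uniformly in the clearances $u_{i_0},w_{i_0}$; it is precisely here that the factor $\beta$ enters. Feeding $S_i\gg_k\beta_i\,\ell_i^{\,r_i}/r_i!$ back into the product over $i$ and using $\prod_i\beta_i\asymp_k\beta$ would then finish the proof.
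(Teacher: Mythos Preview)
Your outline matches the paper's proof almost exactly: factor as $\prod_i S_i$, bound the inner sum over $a_i$ from below by $(\log\rho_{k-i+1})^{r_i}/\prod_j g_{i,j}!$, then show $\sum_{\bv g_i\in\mathscr{G}_i(r_i)}\prod_j 1/g_{i,j}!\gg_k\beta_i\,v_i^{r_i}/r_i!$ and multiply using~\eqref{beta2}.

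Two remarks. First, your discarding step in Step~1 is unnecessary: the defining barrier $G_{i,j}\le j+u_i$ already forces $g_{i,j}\le j+u_i$, and since $\lambda_{i,j-1}$ grows doubly exponentially in $j$ (and $\lambda_{i,0}\ge y_{i-1}$ for $i\ge2$, while for $i=1$ one starts at $j=N$), the error term in your $e_m$ bound is small for \emph{every} $\bv g_i\in\mathscr{G}_i(r_i)$; this is exactly how the paper argues in~\eqref{lb22e2}--\eqref{lb22e3}. Second, your Step~2 is precisely Lemma~\ref{lb22l2} of the paper, which is obtained by a volume reinterpretation of the multinomial sum and then Lemma~\ref{lb22l1}, itself a direct consequence of Ford's sharp Smirnov-statistics estimate~\cite{kf3}. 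So you need not redevelop the reflection argument: the two-sided ballot bound $\asymp\min\{1,(u+1)(w+1)/r\}$ is available off the shelf, and the critical case $i=i_0$ you flag as the hard one is exactly what that lemma handles. For $i=1$ the extra constraint $g_{1,j}=0$ for $j<N$ just shifts the problem to $\mathscr{G}_{r_1}(u_1+N-1,v_1-N+1)$, costing only an $O_k(1)$ factor since $N=N(k)$ is fixed and $r_1\asymp_k v_1$.
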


\begin{lemma}\label{lb1l4} Assume that $\alpha$ satisfies~\eqref{e0} for some fixed $\epsilon>0$. If $P=P(k,\epsilon)$ is
close enough to 1, then for $\bv r\in\mathcal{R}^*$ we have that $$\sum_{\bv g\in\mathcal{G}(\bv r)}\sum_{\bv a\in\mathcal{A}(\bv
g)}\frac{W_{k+1}^P(\bv a)}{a_1\cdots a_k}\ll_{k,\epsilon}\beta\prod_{i=1}^k\frac{\left((k-i+2)\ell_i\right)^{r_i}}{r_i!}.$$
\end{lemma}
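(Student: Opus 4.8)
The plan is to start from Lemma~\ref{lb1l2}, which already performs the crucial term-by-term bound on $W_{k+1}^P(\bv a)$ and reduces the inner sum over $\mathcal{A}(\bv g)$ to the quantity $T(\bv g)$ times the explicit combinatorial factor $\prod_{i=1}^k((k-i+2)\log\rho_{k-i+1})^{r_i}/(g_{i,1}!\cdots g_{i,v_i}!)$. Thus it suffices to bound
$$\sum_{\bv g\in\mathscr{G}(\bv r)}T(\bv g)\prod_{i=1}^k\frac{((k-i+2)\log\rho_{k-i+1})^{r_i}}{g_{i,1}!\cdots g_{i,v_i}!}.$$
Now $T(\bv g)$ is itself a sum over the tuples $(J_0,\dots,J_k)$ with $J_0=0$, $J_i\ge i$, $J_i$ non-decreasing and $J_k=k$, of which there are only $O_k(1)$; so it is enough to treat one such tuple, and for it the summand of $T(\bv g)$ factors as $\prod_{i=1}^k\bigl[(\rho_{k-i+1}^{P-1})^{-(k-J_i)v_i}T_i(\bv g_i;J_{i-1}-i+1,J_i-J_{i-1})\bigr]$, where $T_i$ depends only on $\bv g_i$. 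Since $\mathscr{G}(\bv r)=\mathscr{G}_1(r_1)\times\cdots\times\mathscr{G}_k(r_k)$, the whole sum factors over $i$, and the lemma will follow once we show, for each $i\in\{1,\dots,k\}$ and each admissible pair $(\nu,n)$ arising from such a tuple, that
$$\Sigma_i:=(\rho_{k-i+1}^{P-1})^{-(k-J_i)v_i}\sum_{\bv g_i\in\mathscr{G}_i(r_i)}T_i(\bv g_i;\nu,n)\,\frac{((k-i+2)\log\rho_{k-i+1})^{r_i}}{g_{i,1}!\cdots g_{i,v_i}!}\ll_{k,\epsilon}\beta_i\,\frac{((k-i+2)\ell_i)^{r_i}}{r_i!},$$
and then invoke $\prod_{i=1}^k\beta_i\asymp_k\beta$ (from~\eqref{beta2}) and $v_i\log\rho_{k-i+1}=\ell_i+O_k(1)$ (from Lemma~\ref{lambda}).

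To estimate $\Sigma_i$, expand $T_i(\bv g_i;\nu,n)$ over its breakpoints $0=s_0\le s_1\le\cdots\le s_{n+1}=v_i$; because $\sum_m g_{i,m}=r_i$ the combinatorial factor distributes as $\prod_m((k-i+2)\log\rho_{k-i+1})^{g_{i,m}}$, while each weight $(t_{i,\nu+j})^{G_{i,s_{j+1}}-G_{i,s_j}}$ distributes as $\prod_{m=s_j+1}^{s_{j+1}}(t_{i,\nu+j})^{g_{i,m}}$. Hence for fixed breakpoints the inner sum becomes a ballot-constrained Poisson-type sum $\sum_{\bv g_i}\prod_m c_{i,m}^{g_{i,m}}/g_{i,m}!$, taken over those $\bv g_i$ with $G_{i,v_i}=r_i$, $G_{i,j}\le j+u_i$ for all $j$, and (when $i=1$) $g_{1,j}=0$ for $j<N$, where the block rate $c_{i,m}$ equals $(k-i+2)\log\rho_{k-i+1}\,t_{i,\nu+j}$ on the $j$-th block. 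Two inputs then control this sum: first, a ballot (reflection) estimate of the type of Lemma~4.4 of~\cite{kf1} and Lemma~5.3 of~\cite{dk}, showing that imposing the barrier $G_{i,j}\le j+u_i$ costs a factor $\asymp_k\beta_i=u_iw_i/r_i$; second, the Poisson concentration estimates of Section~\ref{poisson} (Lemmas~\ref{spl1} and~\ref{spl2}), applied with one Poisson coordinate per index $m$, which describe how the weighted count varies as the breakpoints and the block rates change. Summing over the $O_k(1)$ breakpoint configurations, the geometric factors $(\rho_{k-i+1}^{P-1})^{-(s_1+\cdots+s_n)}$ and $(\rho_{k-i+1}^{P-1})^{-(k-J_i)v_i}$ are what compensate for the fact that the block rates $c_{i,m}$ can be as large as $(k-i+2)^P\log\rho_{k-i+1}$ rather than $(k-i+2)\log\rho_{k-i+1}$.

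The main obstacle is precisely this compensation: one must verify that after all cancellations the surviving power of $(k-i+2)$ and of $\ell_i$ in $\Sigma_i$ is exactly $r_i$, equivalently that the contribution of configurations in which the prime factors of $a_i$ are clustered into few of the blocks $D_{i,j}$ (which is exactly what makes $W_{k+1}^P$ large) does not dominate that of the spread-out configuration allowed by the barrier. This is an optimization over the breakpoints, the split $(J_0,\dots,J_k)$ and the profile of $\bv g_i$ inside the barrier; the values $t_{i,j}=\bigl(j+(k-i+2-j)^P\bigr)/(k-i+2)$ are calibrated so that, for $P$ sufficiently close to $1$, this optimum is attained to leading order at the spread-out configuration precisely when $r_i\sim(k-i+2)^\alpha\ell_i$ with $\alpha$ in the range~\eqref{e0} — this is exactly where the hypotheses "$\alpha$ satisfies~\eqref{e0}" and "$P=P(k,\epsilon)$ close enough to $1$" are used (and why the conclusion can fail in the complementary range~\eqref{e00}, as discussed in Subsection~\ref{conde0}). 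Once the bound on each $\Sigma_i$ is established, multiplying over $i$, replacing $v_i\log\rho_{k-i+1}$ by $\ell_i$ via Lemma~\ref{lambda}, using $\prod_i\beta_i\asymp_k\beta$ from~\eqref{beta2}, and finally summing over the $O_k(1)$ tuples $(J_0,\dots,J_k)$ yields the claimed estimate.
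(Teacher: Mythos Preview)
Your reduction via Lemma~\ref{lb1l2} to a sum over the $O_k(1)$ tuples $(J_0,\dots,J_k)$, and then to a product over $i$ of sums over $\mathscr{G}_i(r_i)$, is correct and matches the paper. The proposal breaks down after that, in two places.

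First, the claimed bound on each individual factor, $\Sigma_i\ll_{k,\epsilon}\beta_i\,((k-i+2)\ell_i)^{r_i}/r_i!$, is in general false. The paper's actual bound~\eqref{lb22e24} carries, for $i$ smaller than the threshold index $I=\min\{i:J_i=k\}$, an extra factor $(k-i+2)^{(P-1)(r_i-v_i)}$; for $i<i_0$ one has $r_i>v_i$ (since $\alpha>\alpha_{k-i+1}$ there), so this factor is $\exp\{c_k(P-1)\ell_i\}$, which blows up with $\ell_i$. The point of the paper's argument is that these extra factors \emph{telescope} against a compensating factor $\prod_{j<I}(k-j+2)^{(P-1)(v_j-r_j)}$ arising in the $i=I$ term (this is what Lemma~\ref{lb22l4} produces when $J_{I-1}=I-1$), and the residual max over $m\in\{1,I,I+1\}$ is then controlled using the structure of $u_m',w_m'$ via~\eqref{inequ},~\eqref{ineqw}. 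Your fully decoupled scheme cannot see this cancellation.

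Second, the ``$O_k(1)$ breakpoint configurations'' for the $s_j$ is a misreading: each $s_j$ ranges over $\{0,\dots,v_i\}$, so the breakpoint sum has $\asymp v_i^n$ terms and is the heart of the matter. The tool is not the Poisson lemmas of Section~\ref{poisson} (they are not used here) but Lemmas~\ref{lb22l3} and~\ref{lb22l4}: after applying the ballot estimate (Lemma~\ref{lb22l2}) blockwise, one gets a sum over $(s_1,\dots,s_n)$ of $g(s_1,s_n)\exp\{G(s_1,\dots,s_n)\}$, and one must show this sum localizes at a corner $s_1=\cdots=s_n\in\{0,v_i\}$. This requires a monotonicity analysis of $G$ via the auxiliary function $F(x,h)$ and Lemma~\ref{lb22l0b}. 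Condition~\eqref{e0} enters precisely here, through Lemma~\ref{lb22l0a}, to guarantee $(k-i+2)^{\alpha-1}F(k-i+1,J_i'-i+1)\ge(k-i+2)^\epsilon$ when $J_i'\le k-1$; this is what forces the breakpoint sum to localize and yields the case split in~\eqref{lb22e24}. Without this step and the subsequent cross-index telescoping, the argument does not close.
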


Lemmas~\ref{lb1l3} and~\ref{lb1l4} will be proven in Section \ref{lb_proof2}. Using these results, we complete the proof of Theorem \ref{thm2}.

\begin{proof}[Proof of Theorem \ref{thm2} (lower bound)] Assume that $\alpha$ satisfies~\eqref{e0} for
some fixed $\epsilon>0$. Fix $\bv r\in\mathcal{R}^*$. For every $\bv a\in\bigcup_{\bv g\in\mathcal{G}(\bv r)}\mathcal{A}(\bv g)$ we have
that $$\tau_{k+1}(\bv a)=\prod_{i=1}^k(k-i+2)^{r_i}\asymp_k\prod_{i=1}^k(k-i+2)^{v_i}\asymp_k\prod_{i=1}^k\left(\frac{\log
y_i}{\log y_{i-1}}\right)^{k-i+1},$$ by Lemma~\ref{lambda} and the definition of $\mathcal{R}^*$. Therefore \be\label{lb1e4}\sum_{\bv
g\in\mathcal{G}(\bv r)}\sum_{\bv a\in\mathcal{A}(\bv g)}\frac{L^{(k+1)}(\bv a)}{a_1\cdots a_k}\gg_{k,\epsilon}\beta\prod_{i=1}^k\left(\frac{\log y_i}{\log
y_{i-1}}\right)^{k-i+1}\prod_{i=1}^k\frac{\ell_i^{r_i}}{r_i!},\ee by
Lemmas \ref{lb1l0}, \ref{lb1l3}\;and \ref{lb1l4}. Also, relation~\eqref{lambda2} implies that $$\bigcup_{\bv r\in\mathcal{R}^*}\bigcup_{\bv g\in\mathcal{G}(\bv r)}\mathcal{A}(\bv g)\subset\mathcal{P}_*^k(\bv y).$$ Hence, combining~\eqref{lb1e4} with
Theorem~\ref{thm3}, we deduce that 
$$
\frac\hxy x\gg_{k,\epsilon}\beta e^{-(\ell_1+\cdots+\ell_k)}	
	\sum_{\bv r\in\mathcal{R}^*}\prod_{i=1}^k\frac{\ell_i^{r_i}}{r_i!}.
$$
Finally, we have that 
$$
e^{-(\ell_1+\cdots+\ell_k)}\sum_{\bv r\in\mathcal{R}^*}\prod_{i=1}^k\frac{\ell_i^{r_i}}{r_i!}
	\gg_k\frac1{\sqrt{\log\log y_k}}\prod_{i=1}^k\left(\frac{\log y_i}{\log y_{i-1}}\right)^{-Q((k-i+2)^\alpha)},
$$ 
by Lemma~\ref{spl1}(a), which completes the proof.
\end{proof}


\section{The method of low moments}\label{lb_proof1} 
This section is devoted to establishing Lemma \ref{lb1l2}. This will be done in three steps. Throughout this entire section we fix a vector $\bv r\in\SN^k$ and a vector $\bv g=(\bv g_1,\dots,\bv
g_k)\in(\SN\cup\{0\})^{v_1}\times\cdots\times(\SN\cup\{0\})^{v_k}$ with $G_{i,v_i}=r_i$ for all $i\in\{1,\dots,k\}$.
We set $R_i=\sum_{j=1}^i r_j$ and define 
\[
\mathcal{P}_{\bv r}=\left\{(Y_1,\dots,Y_k) : 
	Y_i\subset\{1,\dots,R_i\},\;Y_i\cap Y_j=\emptyset\;\text{if}\;i\neq j\right\}.
\] 
Also, we set 
\[
\mathcal{R}_i
	=\begin{cases}\{0,1,\dots,R_1\}
		&\text{if}\;i=1\cr \{R_{i-1}+1,\dots,R_i\}
		&\text{if}\;2\le i\le k.
\end{cases}
\]
For $I\in\{0,1,\dots,R_k\}$, we define $E_{\bv g}(I)\in\bigcup_{i=1}^k\{0,1,\dots,v_i\}$ as follows: if $I=0$, we set
$E_{\bv g}(I)=0$; else, we let $i$ be the unique number in $\{1,\dots,k\}$ such that $R_{i-1}<I\le R_i$ and we define $E_{\bv g}(I)$ by
$$
G_{i,E_{\bv g}(I)-1}<I-R_{i-1}\le G_{i,E_{\bv g}(I)}.
$$ 
For $\bv Y=(Y_1,\dots,Y_k)\in\mathcal{P}_{\bv r}$, $\bv m=\{m_1,\dots,m_k\}$ a permutation of $\{1,\dots,k\}$ and $I_1,\dots,I_k\in\{0,1,\dots,R_k\}$ we put 
\begin{align*}
&M_{\bv r}(\bv Y;\bv I;\bv m) \\
	&\qquad = \left| \left\{(Z_1,\dots,Z_k)\in\mathcal{P}_{\bv r}:\bigcup_{i=j}^k\left(Z_{m_i}\cap(I_j,R_k]\right)
	= \bigcup_{i=j}^k\left(Y_{m_i}\cap(I_j,R_k]\right)\;(1\le j\le k)\right\} \right| .
\end{align*}
In addition, we let 
\[
\mathcal{J}=\left\{(\mathcal{J}_1,\dots,\mathcal{J}_k):\mathcal{J}_i\subset\{1,\dots,k\},\ 
	\sum_{m=1}^i|\mathcal{J}_m|\ge i~(1\le i\le k),\ 
	\mathcal{J}_i\cap\mathcal{J}_j=\emptyset\;\text{if}\;i\neq j\right\}
\]
and, for $(\mathcal{J}_1,\dots,\mathcal{J}_k)\in\mathcal{J}$,
we set $J_i=|\mathcal{J}_1|+\cdots+|\mathcal{J}_i|\ge i$ for all $i\in\{0,\dots,k\}$. Lastly, for a family of sets $\{X_i\}_{i\in I}$ we define
\[
\mathcal{U}(\{X_i:i\in I\}):=\left\{x\in\bigcup_{i\in I}X_i:|\{j\in I:x\in X_j\}|=1\right\}.
\] 
In particular, $\mathcal{U}(Y,Z)=Y\triangle Z$, the symmetric difference of $Y$ and $Z$.

\begin{rk}\label{lb2rk1} Assume that $Y_1,\dots,Y_n$ and $Z_1,\dots,Z_n$ satisfy $Y_i\cap Y_j=Z_i\cap Z_j=\emptyset$ for $i\neq j$. Then
\[
\mathcal{U}(\{Y_j\triangle Z_j:1\le j\le n\})
	=\left(\bigcup_{j=1}^nY_j\right)\triangle\left(\bigcup_{j=1}^nZ_j\right) .
\]
\end{rk}

\subsection{Interpolating between $L^1$ and $L^2$ estimates}\label{interpolation} The main difficulty in bounding $W_{k+1}^P(\bv a)$ when $P\in(1,2)$ is that it is hard to use combinatorial arguments directly due to the presence of the fractional exponent $P-1$ in the definition of $W_{k+1}^P(\bv a)$. To overcome this difficulty, we perform a special type of interpolation between $L^1$ and $L^2$ estimates. This is accomplished in Lemma~\ref{lb21l1} below, which is a generalization of Lemma 3.5 in~\cite{dk}.

\begin{lemma}\label{lb21l1}Let $P\in(1,2]$, $\bv r\in(\SN\cup\{0\})^k$ and $\bv g=(\bv g_1,\dots,\bv g_k)\in(\SN\cup\{0\})^{v_1}\times\cdots\times(\SN\cup\{0\})^{v_k}$ such that $G_{i,v_i}=r_i$ for $i=1,\dots,k$. Then
\be\begin{split}\sum_{\bv a\in\mathcal{A}(\bv g)}\frac{W_{k+1}^P(\bv
a)}{a_1\cdots
a_k}&\ll_k\sum_{(\mathcal{J}_1,\dots,\mathcal{J}_k)\in\mathcal{J}}\sum_{\bv
m}\sum_{\substack{I_j\in\mathcal{R}_i\\1\le i\le
k,j\in\mathcal{J}_i}}\sum_{\bv Y\in\mathcal{P}_{\bv r}}\left(M_{\bv r}(\bv
Y;\bv I;\bv
m)\right)^{P-1}\\
&\qquad\times\prod_{i=1}^k\frac{(\log(\rho_{k-i+1}))^{r_i}(\rho_{k-i+1}^{P-1})^{-(k-J_i)v_i}}{g_{i,1}!\cdots
g_{i,v_i}!}\prod_{j\in\mathcal{J}_i}(\rho_{k-i+1}^{P-1})^{-E_{\bv
g}(I_j)}.\nonumber\end{split}\ee
\end{lemma}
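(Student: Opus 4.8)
The plan is to expand the $(P-1)$-st power in the definition of $W_{k+1}^P(\bv a)$ by interpolating between the trivial $L^1$ bound (count each inner tuple once) and an $L^2$-type bound (count pairs of tuples), exactly in the spirit of Lemma 3.5 in \cite{dk}, but carrying along the extra bookkeeping coming from the $k$ blocks of prime factors. Write $P-1=(2-P)\cdot 0+(P-1)\cdot 1$ so that, via the elementary inequality $x^{P-1}\le x$ valid for $x\ge 1$ together with a dyadic or combinatorial decomposition of the set of ``close competitors'' of a given divisor tuple $\bv d$, one replaces the exponent $P-1$ by a genuine counting quantity raised to the power $P-1$. Concretely, for a fixed $\bv a\in\mathcal{A}(\bv g)$ and a fixed outer tuple $\bv d$ with $d_1\cdots d_i\mid a_1\cdots a_i$, encode $\bv d$ by a point $\bv Y=(Y_1,\dots,Y_k)\in\mathscr{P}_{\bv r}$ recording which of the $r_i$ primes in block $i$ go into which $d_j$; the inner sum counting $\bv d'$ with $|\log(d_i'/d_i)|<\log 2$ then becomes, after coarsening the metric condition to a condition on prime blocks (using Lemma~\ref{lambda} to pass from logarithms of primes to the indices $\lambda_{i,j}$), a sum over a set of the shape $M_{\bv r}(\bv Y;\bv I;\bv m)$, where $\bv m$ records the order in which the largest primes of the $d_j$ appear and $\bv I$ records the thresholds at which the ``tail agreement'' is forced.

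The key steps, in order, are: (i) fix $\bv a\in\mathcal{A}(\bv g)$ and rewrite $W_{k+1}^P(\bv a)$ as a sum over $\bv Y\in\mathscr{P}_{\bv r}$ of $(\#\{\bv Z:\ \bv Z\ \text{close to}\ \bv Y\})^{P-1}$, where closeness is the blockwise relaxation of $|\log(d_i'/d_i)|<\log 2$; (ii) for each $\bv Y$, sort the divisors by their largest prime factors, introducing the permutation $\bv m$, and introduce, for each coordinate $j$, the index $I_j$ of the block where the ``agreement from above'' between $\bv Y$ and its competitor breaks; the competitor set is then contained in some $M_{\bv r}(\bv Y;\bv I;\bv m)$, and the constraint that the tails $\bigcup_{i\ge j}Z_{m_i}\cap(I_j,R_k]$ are determined forces the combinatorial structure recorded by $(\mathcal{J}_1,\dots,\mathcal{J}_k)\in\mathscr{J}$, with $J_i\ge i$ because at stage $i$ at least $i$ coordinates must already have been ``resolved''; (iii) sum over $\bv a\in\mathcal{A}(\bv g)$: the prime in $D_{i,j}$ contributes $\sum_{p\in D_{i,j}}1/p\le\log\rho_{k-i+1}$ by \eqref{lambda1}, which produces the factor $\prod_i(\log\rho_{k-i+1})^{r_i}/(g_{i,1}!\cdots g_{i,v_i}!)$, while the weight $\rho_{k-i+1}^{P-1}$ bookkeeping the size of each prime relative to its block — together with Lemma~\ref{lambda} — yields the powers $(\rho_{k-i+1}^{P-1})^{-(k-J_i)v_i}$ and $(\rho_{k-i+1}^{P-1})^{-E_{\bv g}(I_j)}$; (iv) collect the terms and bound the resulting multiple sum by the right-hand side, absorbing all $O_k(1)$ losses from Lemma~\ref{lambda} and from the coarsening of the metric condition into the implied constant.

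The main obstacle will be step (ii): making precise the passage from the analytic closeness condition $|\log(d_i'/d_i)|<\log 2$ on all $k$ coordinates simultaneously to a clean combinatorial description of the competitor set as a single $M_{\bv r}(\bv Y;\bv I;\bv m)$, and verifying that the induced data $(\mathcal{J}_1,\dots,\mathcal{J}_k)$ genuinely lies in $\mathscr{J}$ (in particular the inequalities $\sum_{m\le i}|\mathcal{J}_m|\ge i$, which encode that the sorting by largest prime factor resolves coordinates no slower than one per stage). This is where the one-dimensional argument of \cite{dk} must be genuinely generalized, since with $k$ blocks the order statistics of the largest primes interleave the blocks and one must track, for each outer coordinate $j$, not only \emph{when} its competitor detaches but \emph{in which block}, which is exactly the role of $E_{\bv g}(I_j)$. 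Once this combinatorial dictionary is set up, steps (iii) and (iv) are routine Mertens-type estimates combined with Lemma~\ref{lambda}, and the product over $i$ in the statement emerges by grouping the contributions block by block. I would also remark, as in \cite{dk}, that Remark~\ref{lb2rk1} is what guarantees the tail-agreement conditions defining $M_{\bv r}(\bv Y;\bv I;\bv m)$ are consistent, i.e.\ that they cut out a nonempty, correctly-sized subset of $\mathscr{P}_{\bv r}$.
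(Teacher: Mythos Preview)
Your outline has the right architecture --- encode divisor tuples by $\bv Y\in\mathscr{P}_{\bv r}$, bound the competitor count by some $M_{\bv r}(\bv Y;\bv I;\bv m)$, then sum over primes block by block --- but two of the load-bearing mechanisms are misidentified, and as described the argument would not close.

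First, the interpolation is not $x^{P-1}\le x$ or a dyadic decomposition; it is H\"older's inequality applied to the \emph{sum over primes}. After writing
\[
\sum_{\bv a\in\mathcal{A}(\bv g)}\frac{W_{k+1}^P(\bv a)}{a_1\cdots a_k}
=\Bigl(\prod_{i,j}\frac1{g_{i,j}!}\Bigr)\sum_{\bv Y}\sum_{\substack{p_1,\dots,p_{R_k}\\\eqref{lb2e1}}}\frac1{p_1\cdots p_{R_k}}\,N(\bv Y;\bv p)^{P-1},
\]
one applies H\"older to the inner prime sum to get a factor $(\sum_p 1/p)^{2-P}\le\prod_i(\log\rho_{k-i+1})^{(2-P)r_i}$ and a factor $(\sum_{\bv Z}\sum_{p}1/p)^{P-1}$. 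This is what puts the prime sums \emph{inside} the $(P-1)$-st power and is the source of all the exponents $P-1$ on the $\rho$'s in the statement. Without it you cannot get the weights $(\rho_{k-i+1}^{P-1})^{-(k-J_i)v_i}$ and $(\rho_{k-i+1}^{P-1})^{-E_{\bv g}(I_j)}$ simultaneously.

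Second, and more seriously, your description of $\bv m$ and $\bv I$ (``order of the largest primes of the $d_j$'' and ``where tail agreement breaks'') is not what is needed, and would not make the closeness condition tractable. The data $(\bv m,\bv I)$ must be defined \emph{for each pair} $(\bv Y,\bv Z)$ from the symmetric differences: one sets $I_1=\max\bigl(\mathcal{U}(Y_1\triangle Z_1,\dots,Y_k\triangle Z_k)\cup\{0\}\bigr)$, lets $m_1$ be the (unique) index with $I_1\in Y_{m_1}\triangle Z_{m_1}$, and then iterates after removing $m_1$. The point of this construction is analytic, not combinatorial: with this choice the system
\[
-\log 2<\sum_{i\in Y_j\setminus Z_j}\log p_i-\sum_{i\in Z_j\setminus Y_j}\log p_i<\log 2\qquad(1\le j\le k)
\]
becomes \emph{triangular} in the variables $\log p_{I_1},\dots,\log p_{I_k}$ (up to the permutation $\bv m$), so each $p_{I_j}$ is trapped in an interval $[X_{I_j},4^kX_{I_j}]$ with $X_{I_j}$ depending only on the other primes. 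It is Mertens on this short interval, together with $p_{I_j}>\lambda_{i,E_{\bv g}(I_j)-1}$ and Lemma~\ref{lambda}, that yields the factor $(\rho_{k-i+1})^{-E_{\bv g}(I_j)}/\log y_{i-1}$; after H\"older this becomes the $P-1$ powers in the statement. ``Coarsening the metric condition to prime blocks'' does not do this --- it is the triangular structure that does the work. The membership $(\mathcal{J}_1,\dots,\mathcal{J}_k)\in\mathscr{J}$ then follows because $I_j\le R_{m_j}$ forces $|\{j:I_j\le R_i\}|\ge|\{j:m_j\le i\}|=i$; and Remark~\ref{lb2rk1} translates the $\mathcal{U}$-condition back into the tail-agreement condition defining $M_{\bv r}(\bv Y;\bv I;\bv m)$. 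Finally, one uses $(a+b)^{P-1}\le a^{P-1}+b^{P-1}$ (valid precisely for $1<P\le2$) to pull the sums over $\bv m$, $\bv I$, and $(\mathcal{J}_1,\dots,\mathcal{J}_k)$ outside the $(P-1)$-st power.
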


\begin{proof} Consider \be\label{lb2e0}\bv a=(a_1,\dots,a_k)=(p_1\cdots p_{R_1},p_{R_1+1}\cdots p_{R_2},\dots,p_{R_{k-1}+1}\cdots
p_{R_k})\in\mathcal{A}(\bv g)\ee such that \be\label{lb2e1}p_{R_{i-1}+G_{i,j-1}+1},\dots,p_{R_{i-1}+G_{i,j}}\in
D_{i,j}\quad(1\le i\le k,\;1\le j\le v_i)\ee and the primes in each interval $D_{i,j}$ for $i=1,\dots,k$ and $j=1,\dots,v_i$ are
unordered. Since the number $\prod_{i=1}^ka_i$ is square-free and $\omega(a_i)=r_i$ for all $i\in\{1,\dots,k\}$, the
$k$-tuples $(d_1,\dots,d_k)$ with $d_1\cdots d_i|a_1\cdots a_i$ for $1\le i\le k$ are in one
to one correspondence with the $k$-tuples $(Y_1,\dots,Y_k)\in\mathcal{P}_{\bv r}$ via the relation
$$d_j=\prod_{i\in Y_j}p_i\quad(1\le j\le k).$$ Using this observation twice, we find that
$$W_{k+1}^P(\bv a)=\sum_{(Y_1,\dots,Y_k)\in\mathcal{P}_{\bv r}}\left(\sum_{\substack{(Z_1,\dots,Z_k)\in\mathcal{P}_{\bv r}\\\eqref{lb2e2}}}1\right)^{P-1},$$
where for two $k$-tuples $(Y_1,\dots,Y_k)\in\mathcal{P}_{\bv r}$ and $(Z_1,\dots,Z_k)\in\mathcal{P}_{\bv r}$ condition~\eqref{lb2e2} is
defined by \be\label{lb2e2}-\log2<\sum_{i\in Y_j}\log p_i-\sum_{i\in Z_j}\log p_i<\log2\quad(1\le j\le k).\ee Moreover, each $k$-tuple $(a_1,\dots,a_k)\in\mathcal{A}(\bv g)$ has exactly $\prod_{i,j}g_{i,j}!$ representations of the form given in~\eqref{lb2e0}, corresponding to all the possible permutations of the prime numbers $p_1,\dots,p_{R_k}$ under condition~\eqref{lb2e1}. Hence \bes\bsp\sum_{\bv a\in\mathcal{A}(\bv g)}\frac{W^P_{k+1}(\bv a)}{a_1\cdots a_k}&=\left(\prod_{\substack{1\le i\le k\\1\le j\le v_i}}\frac1{g_{i,j}!}\right)\sum_{\substack{p_1,\dots,p_{R_k}\\\eqref{lb2e1}}}\frac1{p_1\cdots p_{R_k}}\sum_{(Y_1,\dots,Y_k)\in\mathcal{P}_{\bv r}}\left(\sum_{\substack{(Z_1,\dots,Z_k)\in\mathcal{P}_{\bv r}\\\eqref{lb2e2}}}1\right)^{P-1}\\
&=\left(\prod_{\substack{1\le i\le k\\1\le j\le v_i}}\frac1{g_{i,j}!}\right)\sum_{(Y_1,\dots,Y_k)\in\mathcal{P}_{\bv r}}\sum_{\substack{p_1,\dots,p_{R_k}\\\eqref{lb2e1}}}\frac1{p_1\cdots p_{R_k}}\left(\sum_{\substack{(Z_1,\dots,Z_k)\in\mathcal{P}_{\bv r}\\\eqref{lb2e2}}}1\right)^{P-1}.\end{split}\ees
So H\" older's inequality yields that
\bes\bsp\sum_{\bv a\in\mathcal{A}(\bv g)}\frac{W^P_{k+1}(\bv a)}{a_1\cdots a_k}\le\left(\prod_{\substack{1\le i\le k\\1\le j\le v_i}}\frac1{g_{i,j}!}\right)\sum_{(Y_1,\dots,Y_k)\in\mathcal{P}_{\bv r}}&\left(\sum_{\substack{p_1,\dots,p_{R_k}\\\eqref{lb2e1}}}\frac1{p_1\cdots p_{R_k}}\sum_{\substack{(Z_1,\dots,Z_k)\in\mathcal{P}_{\bv r}\\\eqref{lb2e2}}}1\right)^{P-1}\\
&\times\left(\sum_{\substack{p_1,\dots,p_{R_k}\\\eqref{lb2e1}}}\frac1{p_1\cdots p_{R_k}}\right)^{2-P}.\end{split}\ees
Note that 
\begin{align*}
	\sum_{\substack{p_1,\dots,p_{R_k}\\\eqref{lb2e1}}}
		\frac1{p_1\cdots p_{R_k}}
	\le\prod_{i=1}^k\prod_{j=1}^{v_i}\left(\sum_{p\in D_{i,j}}\frac1p\right)^{g_{i,j}}
	\le \prod_{i=1}^k(\log(\rho_{k-i+1}))^{r_i}
\end{align*}
by~\eqref{lambda1} and, consequently,
\be\label{lb2e3}\bsp 
	\sum_{\bv a\in\mathcal{A}(\bv g)}\frac{W_{k+1}^P(\bv a)}{a_1\cdots a_k}
		&\le\left(\prod_{i=1}^k\frac{(\log(\rho_{k-i+1}))^{(2-P)r_i}}{g_{i,1}!\cdots g_{i,v_i}!}\right) \\
		&\qquad\times \sum_{(Y_1,\dots,Y_k)\in\mathcal{P}_{\bv r}}\left(\sum_{(Z_1,\dots,Z_k)\in\mathcal{P}_{\bv r}}
\sum_{\substack{p_1,\dots,p_{R_k}\\\eqref{lb2e1},\eqref{lb2e2}}}\frac1{p_1\cdots p_{R_k}}\right)^{P-1} .
\end{split}\ee

Next, we estimate the sum over the primes above. In order to do so, we need to understand condition~\eqref{lb2e2}. Note that~\eqref{lb2e2} is equivalent to
\be\label{lb2e2d}-\log2<\sum_{i\in Y_j\setminus Z_j}\log p_i-\sum_{i\in Z_j\setminus Y_j}\log p_i<\log2\quad(1\le j\le k).\ee
Fix two $k$-tuples $(Y_1,\dots,Y_k)\in\mathcal{P}_{\bv r}$ and $(Z_1,\dots,Z_k)\in\mathcal{P}_{\bv r}$ and define the numbers $I_1,\dots,I_k$ and $m_1,\dots,m_k$ with $I_i\in(Y_{m_i}\triangle Z_{m_i})\cup\{0\}$ for all $i\in\{1,\dots,k\}$ inductively, as follows (see the proof of \cite[Lemma 3.5]{dk} for the motivation behind these definitions). Let $$I_1=\max\left\{\mathcal{U}(Y_1\triangle Z_1,\dots,Y_k\triangle Z_k)\cup\{0\}\right\}.$$
If $I_1=0$, set $m_1=k$. Else, define $m_1$ to be the unique element of $\{1,\dots,k\}$ such that $I_1\in Y_{m_1}\triangle Z_{m_1}$. Assume
we have defined $I_1,\dots,I_i$ for some $i\in\{1,\dots,k-1\}$ with $I_r\in (Y_{m_r}\triangle Z_{m_r})\cup\{0\}$ for $r=1,\dots,i$. Then
set $$I_{i+1}=\max\left\{\mathcal{U}\left(\{Y_j\triangle Z_j:j\in\{1,\dots,k\}\setminus\{m_1,\dots,m_i\}\}\right)\cup\{0\}\right\}.$$
If $I_{i+1}=0$, set $m_{i+1}=\max\{\{1,\dots,k\}\setminus\{m_1,\dots,m_i\}\}.$
Otherwise, define $m_{i+1}$ to be the unique element of $\{1,\dots,k\}\setminus\{m_1,\dots,m_i\}$ such that $I_{i+1}\in
Y_{m_{i+1}}\triangle Z_{m_{i+1}}$. This completes the inductive step.

Note that we must have $\{m_1,\dots,m_k\}=\{1,\dots,k\}$. Also, if we set $$\mathcal{J}_i=\{1\le j\le k:I_j\in\mathcal{R}_i\}\quad(1\le i\le k),$$ then observe that $(\mathcal{J}_1,\dots,\mathcal{J}_k)\in\mathcal{J}$, since $$J_i=\sum_{m=1}^i|\mathcal{J}_m|=|\{1\le j\le k:I_j\le R_i\}|\ge|\{1\le j\le k:m_j\le i\}|=i$$ for all $i\in\{1,\dots,k\}$. Set $\mathcal{I}=\{I_j:1\le j\le k,I_j>0\}$ and fix for the moment the primes $p_i$ for $i\in\{1,\dots,R_k\}\setminus\mathcal{I}$. Then~\eqref{lb2e2d} becomes a system of linear inequalities with respect to the set of variables $\{\log p_I:I\in\mathcal{I}\}$ that corresponds to a triangular matrix, up to a permutation of its rows. So a straightforward manipulation of the inequalities which constitute~\eqref{lb2e2d} implies that $p_I\in[X_I,4^kX_I]$ for $I\in\mathcal{I}$, where the numbers $X_I$ depend only on the primes $p_i$ for $i\in\{1,\dots,R_k\}\setminus\mathcal{I}$ and the $k$-tuples $(Y_1,\dots,Y_k)$ and $(Z_1,\dots,Z_k)$, which we have fixed. Consequently, \bes\sum_{\substack{p_I,~I\in\mathcal{I}\\\eqref{lb2e1},\eqref{lb2e2d}}}\prod_{I\in\mathcal{I}}\frac1{p_I}
\ll_k\prod_{i=1}^k\prod_{\substack{j\in\mathcal{J}_i\\I_j>0}}\frac1{\log(\max\{\lambda_{i,E_{\bv g}(I)-1},X_{I_j}\})}\ll_k\prod_{i=1}^k\prod_{j\in\mathcal{J}_i}\frac{(\rho_{k-i+1})^{-E_{\bv g}(I_j)}}{\log y_{i-1}},\ees by Lemma~\ref{lambda}. So we find that 
$$
\sum_{\substack{p_1,\dots,p_{R_k}\\\eqref{lb2e1},\eqref{lb2e2d}}}\frac1{p_1\cdots p_{R_k}}\ll_k\prod_{i=1}^k(\log(\rho_{k-i+1}))^{r_i}\prod_{j\in\mathcal{J}_i}\frac{(\rho_{k-i+1})^{-E_{\bv g}(I_j)}}{\log y_{i-1}}
$$ 
which, together with~\eqref{lb2e3}, implies that
\be\label{lb2e4}\bsp
	\sum_{\bv a\in\mathcal{A}(\bv g)}\frac{W_{k+1}^P(\bv a)}{a_1\cdots a_k}
		&\ll_k\left(\prod_{i=1}^k\frac{(\log(\rho_{k-i+1}))^{r_i}}{g_{i,1}!\cdots g_{i,v_i}!}\right) \\
		&\qquad \sum_{(Y_1,\dots,Y_k)\in\mathcal{P}_{\bv r}}\left(\sum_{(Z_1,\dots,Z_k)\in\mathcal{P}_{\bv r}}
			\prod_{i=1}^k\prod_{j\in\mathcal{J}_i}\frac{(\rho_{k-i+1})^{-E_{\bv g}(I_j)}}{\log y_{i-1}}\right)^{P-1}.
\end{split}\ee
Note that 
\be\label{lb2e5}
	\prod_{i=1}^k(\log y_{i-1})^{|\mathcal{J}_i|}
	\asymp_k\prod_{i=1}^ke^{(k-J_i)\ell_i}
	\asymp_k\prod_{i=1}^k(\rho_{k-i+1})^{(k-J_i)v_i},
\ee 
by Lemma~\ref{lambda}. Moreover, the definition of the numbers $I_1,\dots,I_k$ and $m_1,\dots,m_k$ implies that
$$
(I_j,R_k]\cap\mathcal{U}(\{Y_{m_r}\triangle Z_{m_r}:j\le r\le k\})=\emptyset\quad(1\le j\le k),
$$ 
which is equivalent to
$$
\bigcup_{r=j}^k\left(Z_{m_r}\cap(I_j,R_k]\right)
	=\bigcup_{r=j}^k\left(Y_{m_r}\cap(I_j,R_k]\right)\quad(1\le
j\le k),
$$ by Remark \ref{lb2rk1}. Hence for fixed $(Y_1,\dots,Y_k)\in\mathcal{P}_{\bv r}$, $0\le I_1,\dots,I_k\le R_k$
and $\bv m=\{m_1,\dots,m_k\}$, a permutation of $\{1,\dots,k\}$, the number of admissible $k$-tuples
$(Z_1,\dots,Z_k)\in\mathcal{P}_{\bv r}$ is at most $M_{\bv r}(\bv Y;\bv I;\bv m)$. Combining this observation with~\eqref{lb2e4} and~\eqref{lb2e5} we deduce that
\bes\begin{split}\sum_{\bv a\in\mathcal{A}(\bv g)}&\frac{W_{k+1}^P(\bv a)}{a_1\cdots
a_k}\ll_k\left(\prod_{i=1}^k\frac{(\log(\rho_{k-i+1}))^{r_i}}{g_{i,1}!\cdots g_{i,v_i}!}\right)\\
&\times\sum_{(Y_1,\dots,Y_k)\in\mathcal{P}_{\bv r}}\left(\sum_{I_1,\dots,I_k}\sum_{\bv m}M_{\bv r}(\bv Y;\bv I;\bv
m)  \prod_{i=1}^k (\rho_{k-i+1})^{ -(k-J_i) v_i } \prod_{j\in\mathcal{J}_i } (\rho_{k-i+1})^{-E_{\bv g}(I_j) }  \right)^{P-1}.
\end{split}\ees
Finally, the inequality $(a+b)^{P-1}\le a^{P-1}+b^{P-1}$ for $a\ge0$ and $b\ge0$, which
holds precisely when $1<P\le 2$, completes the proof of the lemma.
\end{proof}


\subsection{Combinatorial arguments}\label{combinatorics} In this subsection we use combinatorial arguments to calculate $M_{\bv r}(\bv Y;\bv I;\bv m)$ and, as a result, simplify the estimate given by Lemma~\ref{lb21l1}. Note that the following lemma is similar to Lemma 3.6 in~\cite{dk}.

\begin{lemma}\label{lb21l2} Let $P\in(1,+\infty)$, $\bv r\in(\SN\cup\{0\})^k$, $\bv m=\{m_1,\dots,m_k\}$ a permutation of $\{1,\dots,k\}$,
$(\mathcal{J}_1,\dots,\mathcal{J}_k)\in\mathcal{J}$ and $0\le I_1,\dots,I_k\le R_k$ such that $I_s\in\mathcal{R}_i$ for
$s\in\mathcal{J}_i$ and $1\le i\le k$. Assume that $\sigma\in S_k$ is a permutation such that $I_{\sigma(1)}\le\cdots\le I_{\sigma(k)}$. Then
$$\sum_{\bv Y\in\mathcal{P}_{\bv r}}\left(M_{\bv r}(\bv Y;\bv I;\bv
m)\right)^{P-1}\le\prod_{i=1}^k\left((k-i+2)^{r_i}\frac{(t_{i,J_i-i+1})^{R_i}}{(t_{i,J_{i-1}-i+1})^{R_{i-1}}}
\prod_{J_{i-1}<j\le
J_i}\left(\frac{t_{i,j-i}}{t_{i,j-i+1}}\right)^{I_{\sigma(j)}}\right).$$
\end{lemma}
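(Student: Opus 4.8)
The plan is to evaluate the sum over $\bv Y$ one index at a time. Fix $\bv Y\in\mathscr{P}_{\bv r}$ and an index $x\in\{1,\dots,R_k\}$, lying in the block $\mathcal{R}_{i(x)}$. A tuple $\bv Z\in\mathscr{P}_{\bv r}$ may place $x$ in any one of $Z_{i(x)},\dots,Z_k$ or in none of them, subject only to the conditions defining $M_{\bv r}(\bv Y;\bv I;\bv m)$, which assert that for every pin $j$ with $x>I_j$ the statements ``$x\in\bigcup_{i\ge j}Z_{m_i}$'' and ``$x\in\bigcup_{i\ge j}Y_{m_i}$'' have the same truth value. Listing the \emph{active} pins at $x$ (those with $x>I_j$), in increasing order of the index $j$, as $j_1<\cdots<j_{t(x)}$ where $t(x)=\#\{j:I_j<x\}$, the index sets $\{j_s,j_s+1,\dots,k\}$ form a decreasing chain, so these conditions pin down a unique ``level'' $s\in\{0,1,\dots,t(x)\}$ at which $x$ sits; given the level, the admissible placements of $x$ in $\bv Z$ are exactly the $c_{x,s}$ indices $i'$ with $j_s\le i'<j_{s+1}$ and $m_{i'}\ge i(x)$ (with $j_0:=1$, $j_{t(x)+1}:=k+1$, and a ``$+1$'' in the case $s=0$ to account for placing $x$ nowhere). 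Since distinct indices are placed independently, $M_{\bv r}(\bv Y;\bv I;\bv m)=\prod_{x=1}^{R_k}c_{x,s_x(\bv Y)}$, where $s_x(\bv Y)$ is the level of $x$ forced by $\bv Y$. The same bookkeeping applied to $\bv Y$ shows that $\mathscr{P}_{\bv r}$ factors over the indices $x$, and that for each $x$ the number of placements of $x$ in $\bv Y$ that yield level $s$ is again $c_{x,s}$; hence
$$\sum_{\bv Y\in\mathscr{P}_{\bv r}}\bigl(M_{\bv r}(\bv Y;\bv I;\bv m)\bigr)^{P-1}=\prod_{x=1}^{R_k}\sum_{s=0}^{t(x)}c_{x,s}\cdot c_{x,s}^{\,P-1}=\prod_{x=1}^{R_k}\sum_{s=0}^{t(x)}c_{x,s}^{\,P}.$$

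The next step is the pointwise bound $\sum_{s=0}^{t(x)}c_{x,s}^{\,P}\le(k-i+2)\,t_{i,\,t(x)-i+1}$ for $x$ in block $i$. The active pins split $\{1,\dots,k\}$ into the $t(x)+1$ consecutive intervals $[1,j_1),[j_1,j_2),\dots,[j_{t(x)},k]$, the last $t(x)$ of which are nonempty, so each has length at most $k-t(x)+1$; consequently $0\le c_{x,s}\le k-t(x)+1$ for all $s$, while $c_{x,0}\ge1$ and $\sum_s c_{x,s}=k-i+2$. Since $t(x)\ge J_{i-1}\ge i-1$ (using $(\mathcal{J}_1,\dots,\mathcal{J}_k)\in\mathscr{J}$) and $t(x)\le k$, we have $0\le t(x)-i+1\le k-i+1$. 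By convexity of $u\mapsto u^{P}$ the maximum of $\sum_s c_{x,s}^{\,P}$ under these constraints is attained when one part equals $k-t(x)+1$ and the remaining mass is spread in parts of size $1$ (this is forced: filling one interval to capacity leaves the others of length $1$), which gives the value $(t(x)-i+1)+(k-t(x)+1)^{P}$; since $(k-i+2)\,t_{i,\mu}=\mu+(k-i+2-\mu)^{P}$, this is precisely $(k-i+2)\,t_{i,\,t(x)-i+1}$.

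It remains to reassemble $\prod_{x=1}^{R_k}(k-i(x)+2)\,t_{i(x),\,t(x)-i(x)+1}$ into the product over blocks in the statement. For $x$ in block $i$, write $t(x)=J_{i-1}+u(x)$ where $u(x)=\#\{J_{i-1}<j\le J_i:I_{\sigma(j)}<x\}$ counts the block-$i$ pins below $x$; the number of $x$ in block $i$ with $u(x)=\nu$ equals $I_{\sigma(J_{i-1}+\nu+1)}-I_{\sigma(J_{i-1}+\nu)}$, under the boundary conventions $I_{\sigma(J_{i-1})}:=R_{i-1}$, $I_{\sigma(J_i+1)}:=R_i$. Grouping the factors $t_{i,\,J_{i-1}-i+1+\nu}$ over $\nu\in\{0,\dots,J_i-J_{i-1}\}$ and applying Abel summation produces exactly
$$(t_{i,J_i-i+1})^{R_i}\,(t_{i,J_{i-1}-i+1})^{-R_{i-1}}\prod_{J_{i-1}<j\le J_i}\Bigl(\frac{t_{i,j-i}}{t_{i,j-i+1}}\Bigr)^{I_{\sigma(j)}},$$
while the factors $k-i(x)+2$ contribute $(k-i+2)^{r_i}$ for each block; taking the product over $i=1,\dots,k$ yields the claimed inequality.

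The delicate point is the first step: verifying that, for a fixed $\bv Y$, the conditions defining $M_{\bv r}(\bv Y;\bv I;\bv m)$ genuinely decouple across the indices $x$, and that summing $(M_{\bv r}(\bv Y;\bv I;\bv m))^{P-1}$ over $\bv Y$ factors into the local sums $\sum_s c_{x,s}^{\,P}$. This requires tracking carefully how the nested chain of ``frozen'' unions constrains the placement of a single index, and matching, level by level, the number of $\bv Y$-placements against the number of $\bv Z$-placements they permit, in close analogy with Lemma~3.6 of~\cite{dk}. Once this is in place, the pointwise inequality is a short convexity computation and the reassembly is routine bookkeeping.
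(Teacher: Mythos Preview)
Your approach is essentially the paper's: you work index-by-index where the paper groups indices into the blocks $\mathcal{N}_{i,j}=\mathcal{R}_i\cap(I_{\sigma(j)},I_{\sigma(j+1)}]$, but since all $x$ in a fixed block share the same $i(x)$ and the same set of active pins, the two presentations are equivalent. Your levels $s$ are the paper's indices $t$, your $c_{x,s}$ are the paper's $|A_{t,i,j}|$, your identity $\sum_{\bv Y}(M_{\bv r})^{P-1}=\prod_x\sum_s c_{x,s}^{\,P}$ is the multinomial computation leading to the paper's relation \eqref{lb2e11}, and your reassembly matches the final telescoping there.

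The gap is in the pointwise convexity step. The constraints you actually state --- $0\le c_{x,s}\le k-t(x)+1$, $c_{x,0}\ge1$, $\sum_s c_{x,s}=k-i+2$ --- do \emph{not} force the maximum of $\sum_s c_{x,s}^{\,P}$ to be $(t(x)-i+1)+(k-t(x)+1)^P$. For instance, with $k=5$, $i=2$, $t(x)=3$, the vector $(c_0,c_1,c_2,c_3)=(2,3,0,0)$ satisfies all three constraints, yet $\sum_s c_s^{\,P}=2^P+3^P>2+3^P$ for every $P>1$. Your parenthetical ``filling one interval to capacity leaves the others of length $1$'' is a statement about the interval lengths $\ell_s$, not about the $c_{x,s}$, and does not exclude such configurations. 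What is missing is the observation that for $s\ge1$ one has $c_{x,s}=0$ only if every index $i'$ in the nonempty interval $[j_s,j_{s+1})$ satisfies $m_{i'}<i$; since there are exactly $i-1$ such indices in total and each zero part consumes at least one of them, at most $i-1$ of the $c_{x,s}$ with $s\ge1$ can vanish. Together with $c_{x,0}\ge1$ this gives at least $t(x)-i+2$ nonzero parts, which is precisely the paper's inequality $|K_{i,j}|\ge j-i+2$; the convex maximization over $n\ge t(x)-i+2$ positive integers summing to $k-i+2$ then yields your claimed value. Once you insert this one-line counting argument, the proof goes through.
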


\begin{proof} Set $\sigma(0)=0$, $\sigma(k+1)=k+1$, $I_0=0$ and $I_{k+1}=R_k$.
First, we calculate $M_{\bv r}(\bv Y;\bv I;\bv m)$ for fixed $\bv Y\in\mathcal{P}_{\bv r}$.
Let $$\mathcal{N}_{i,j}=\mathcal{R}_i\cap(I_{\sigma(j)},I_{\sigma(j+1)}]\quad(1\le i\le k,~J_{i-1}\le
j\le J_i)$$ and
$$Y_{s,i,j}=Y_s\cap\mathcal{N}_{i,j},\quad y_{s,i,j}=|Y_{s,i,j}|\quad(0\le s\le k,~1\le i\le k,~J_{i-1}\le j\le J_i),$$ where
$$Y_0=\{1,\dots,R_k\}\setminus\bigcup_{i=1}^kY_i.$$ The $k$-tuple $(Z_1,\dots,Z_k)\in\mathcal{P}_{\bv r}$ is counted by $M_{\bv r}(\bv
Y;\bv I;\bv m)$ when
\be\label{lb2e7a}\bigcup_{s=j}^k\left(Z_{m_s}\cap(I_j,R_k]\right)=\bigcup_{s=j}^k\left(Y_{m_s}\cap(I_j,R_k]\right)\quad(1\le
j\le k).\ee So if we set $$Z_{s,i,j}=Z_s\cap\mathcal{N}_{i,j}\quad(0\le s\le k,~1\le i\le k,~J_{i-1}\le
j\le J_i),$$ where $$Z_0=\{1,\dots,R_k\}\setminus\bigcup_{i=1}^kZ_i,$$ then~\eqref{lb2e7a} can be written as
\be\label{lb2e7}\bigcup_{s=\sigma(t)}^kZ_{m_s,i,j}=\bigcup_{s=\sigma(t)}^kY_{m_s,i,j}\quad(1\le i\le k,~J_{i-1}\le
j\le J_i,~0\le t\le j).\ee For $j\ge0$ let $$\chi_j:\{0,1,\dots,j,j+1\}\to\{\sigma(0),\sigma(1),\dots,\sigma(j),\sigma(k+1)\}$$
be the bijection uniquely determined by the property that $\chi_j(0)<\cdots<\chi_j(j+1)$. So the sequence
$\chi_j(0),\dots,\chi_j(j+1)$ is the sequence $\sigma(0),\dots,\sigma(j),\sigma(k+1)$ ordered increasingly. In
particular, $\chi_j(0)=\sigma(0)=0$ and $\chi_j(j+1)=\sigma(k+1)=k+1$. Note that $Z_{m_s,i,j}=Y_{m_s,i,j}=\emptyset$ if
$1\le m_s<i$, by the definition of $\mathcal{P}_{\bv r}$. So if we set $m_0=0$ and
$$A_{t,i,j}=\{\chi_j(t)\le s<\chi_j(t+1):m_s\ge i~{\rm or}~s=0\}\quad(1\le i\le k,~J_{i-1}\le j\le J_i,~0\le t\le j),$$ then~\eqref{lb2e7} is equivalent to \be\label{lb2e8}\bigcup_{s\in A_{t,i,j}}Z_{m_s,i,j}=\bigcup_{s\in A_{t,i,j}}Y_{m_s,i,j}\quad(0\le
t\le j),\ee for all $1\le i\le k$ and $J_{i-1}\le j\le J_i$. For such a pair $(i,j)$, let $M_{i,j}$ be the set of mutually disjoint
$(k-i+2)$-tuples $(Z_{0,i,j},Z_{i,i,j},Z_{i+1,i,j},\dots,Z_{k,i,j})$ that satisfy \eqref{lb2e8}. Then \be\label{lb2e9}M_{\bv r}(\bv Y;\bv
I;\bv m)=\prod_{\substack{1\le i\le k\\J_{i-1}\le j\le J_i}}M_{i,j}.\ee Moreover, it is immediate from the definition of
$M_{i,j}$ that $$M_{i,j}=\prod_{t=0}^j|A_{t,i,j}|^{\sum_{s\in
A_{t,i,j}}y_{m_s,i,j}}$$ (with the standard notational convention that $0^0=1$). Let
\be\label{lb2e10}W_{t,i,j}=\bigcup_{s\in A_{t,i,j}}Y_{m_s,i,j}\quad{\rm and}\quad w_{t,i,j}=|W_{t,i,j}|\quad(1\le i\le k,~J_{i-1}\le j\le J_i,~0\le t\le j).\ee
With this notation, we have that
$$M_{i,j}=\prod_{t=0}^j|A_{t,i,j}|^{w_{t,i,j}}.$$ Inserting the
above relation into~\eqref{lb2e9}, we deduce that $$M_{\bv r}(\bv Y;\bv I;\bv m)=\prod_{i=1}^k\prod_{j=J_{i-1}}^{J_i}\prod_{t=0}^j|A_{t,i,j}|^{w_{t,i,j}}.$$
Therefore $$S:=\sum_{\bv Y\in\mathcal{P}_{\bv r}}\left(M_{\bv r}(\bv Y;\bv I;\bv m)\right)^{P-1}=\prod_{i=1}^k\prod_{j=J_{i-1}}^{J_i}\sum_{Y_{0,i,j},Y_{i,i,j},\dots,Y_{k,i,j}}\prod_{t=0}^j|A_{t,i,j}|^{(P-1)w_{t,i,j}}.$$
Next, for fixed $i\in\{1,\dots,k\}$, $j\in\{J_{i-1},\dots,J_i\}$ and $W_{0,i,j},\dots,W_{j,i,j}$, a partition of $\mathcal{N}_{i,j}$, the
number of $Y_{0,i,j},Y_{i,i,j},\dots,Y_{k,i,j}$ that satisfy \eqref{lb2e10}~is equal to $$\prod_{t=0}^j|A_{t,i,j}|^{w_{t,i,j}}.$$ Consequently,
\be\label{lb2e11}S=\prod_{i=1}^k\prod_{j=J_{i-1}}^{J_i}\sum_{W_{0,i,j},\dots,W_{j,i,j}}\prod_{t=0}^j|A_{t,i,j}|^{Pw_{t,i,j}}
=\prod_{i=1}^k\prod_{j=J_{i-1}}^{J_i}\left(|A_{0,i,j}|^P+\cdots+|A_{j,i,j}|^P\right)^{|\mathcal{N}_{i,j}|},\ee by the multinomial theorem.
Fix $1\le i\le k$ and $J_{i-1}\le j\le J_I$ and set $$K_{i,j}=\{0\le t\le j:|A_{t,i,j}|\ge1\}.$$
We claim that \be\label{lb2e12}j-i+2\le|K_{i,j}|\le k-i+2.\ee Indeed, we have that
$$\{0\}\cup\{1\le s\le k:m_s\ge i\}=\bigcup_{t\in K_{i,j}}A_{t,i,j}\subset\bigcup_{t\in K_{i,j}}\{s\in\SZ:\chi_j(t)\le s<\chi_j(t+1)\}.$$ The above relation implies that
$$k-i+2=|\{0\}\cup\{1\le s\le k:m_s\ge i\}|=\sum_{t\in K_{i,j}}|A_{t,i,j}|\ge|K_{i,j}|$$ and
\bes\bsp 
	k-i+2&\le\left\lvert\bigcup_{t\in K_{i,j}}\{s\in\SZ:\chi_j(t)\le s<\chi_j(t+1)\}\right\rvert\\
		&=k+1-\left\lvert\bigcup_{t\in\{0,1,\dots,j\}\setminus K_{i,j}} 
			\{s\in\SZ:\chi_j(t)\le s<\chi_j(t+1)\}\right\rvert\le k-j+|K_{i,j}|,
\end{split}\ees 
which together prove \eqref{lb2e12}. Lastly, note that for $n\le X$ we have that
\[
\max\left\{\sum_{j=1}^nx_j^P:\sum_{j=1}^nx_j=X,~x_j\ge 1~(1\le j\le n)\right\}=n-1+(X-n+1)^P,
\]
since the maximum of a convex function in a simplex occurs at its vertices. Therefore
\bes\bsp|A_{0,i,j}|^P+\cdots+|A_{j,i,j}|^P\le|K_{i,j}|-1+(k-i+3-|K_{i,j}|)^P&\le j-i+1+(k-j+1)^P\\
&=(k-i+2)t_{i,j-i+1},\end{split}\ees by \eqref{lb2e12}. Finally, inserting the above inequality into \eqref{lb2e11} yields
\bes\bsp 
	S&\le\prod_{i=1}^k(k-i+2)^{r_i}\prod_{j=J_{i-1}}^{J_i}(t_{i,j-i+1})^{|\mathcal{N}_{i,j}|}\\
	&=\prod_{i=1}^k(k-i+2)^{r_i}(t_{i,J_{i-1}-i+1})^{I_{\sigma(J_{i-1}+1)}-R_{i-1}} \\
	&\qquad \times \left(\prod_{j=J_{i-1}+1}^{J_i-1}(t_{i,j-i+1})^{I_{\sigma(j+1)}-I_{\sigma(j)}}\right)
(t_{i,J_i-i+1})^{R_i-I_{\sigma(J_i)}},
\end{split}\ees 
which completes the proof of the lemma.
\end{proof}

\subsection{Proof of Lemma~\ref{lb1l2}} In this last subsection we combine the results of Subsections~\ref{interpolation} and~\ref{combinatorics}
to show Lemma \ref{lb1l2}.

\begin{proof}[Proof of Lemma \ref{lb1l2}] By Lemmas~\ref{lb21l1} and~\ref{lb21l2} we have that
\be\label{lme1}\begin{split}\sum_{\bv a\in\mathcal{A}(\bv
g)}\frac{W_{k+1}^P(\bv a)}{a_1\cdots a_k}& \ll_k \sum_{\substack{0=J_0\le J_1\le\cdots\le J_k\le k\\J_i\ge i~(1\le i\le k)}}
\sum_{\substack{0\le I_1\le\cdots\le I_k\le R_k\\I_j\in\mathcal{R}_i,~J_{i-1}<j\le J_i\\1\le i\le k}}
\prod_{i=1}^k\left(\frac{((k-i+2)\log(\rho_{k-i+1}))^{r_i}}{g_{i,1}!\cdots g_{i,v_i}!}\right.\\
&\qquad\times\left.\frac{(t_{i,J_i-i+1})^{R_i}}{(\rho_{k-i+1}^{P-1})^{(k-J_i)v_i}(t_{i,J_{i-1}-i+1})^{R_{i-1}}}\prod_{J_{i-1}<j\le J_i}(\rho_{k-i+1}^{P-1})^{-E_{\bv g}(I_j)}\left(\frac{t_{i,j-i}}{t_{i,j-i+1}}\right)^{I_j}\right).\end{split}\ee
Write $e_j=E_{\bv g}(I_j)$ for $i\in\{1,\dots,k\}$ and note that $$0\le e_{J_{i-1}+1}\le\cdots\le e_{J_i}\le v_i\quad(1\le i\le k).$$
Moreover, for $1\le i\le k$ and $J_{i-1}<j\le J_i$ we have that
\[
\sum_{I_j\in\mathcal{R}_i,~E_{\bv g}(I_j)
	=e_j}\left(\frac{t_{i,j-i}}{t_{i,j-i+1}}\right)^{I_j}
	\le\sum_{G_{i,e_j-1}+R_{i-1} \le I_j\le G_{i,e_j}+R_{i-1}}
		\left(\frac{t_{i,j-i}}{t_{i,j-i+1}}\right)^{I_j}
	\ll_{k,P}\left(\frac{t_{i,j-i}}{t_{i,j-i+1}}\right)^{G_{i,e_j}+R_{i-1}},
\] 
since $t_{i,j-i}>t_{i,j-i+1}$. Inserting the above inequality into~\eqref{lme1} completes the proof.
\end{proof}

\section{The lower bound in Theorem~\ref{thm2}: completion of the proof}\label{lb_proof2} In
this section we complete the proof of Theorem~\ref{thm2} by showing Lemmas \ref{lb1l3}\;and \ref{lb1l4}.

\subsection{Preliminaries}\label{prelim} \renewcommand{\labelenumi}{(\alph{enumi})}
We state here some inequalities we will need later. For $0<h\le x$ set $$F(x,h)=\frac{(x+1)\log(x+1)-(x-h+1)\log(x-h+1)}h.$$ 

\begin{lemma}\label{lb22l0b} The function $F$ has the following properties:
\begin{enumerate}\item For $0<h\le x$ we have
$$\frac{\partial F(x,h)}{\partial
h}<0,\quad\frac{\partial F(x,h)}{\partial x}>0\quad\text{and}\quad\frac{\partial(F(x,x))}{\partial x}>0.$$
\item For $0<h\le x-1$ we have $$F(x,h)>F(x-h,1).$$
\end{enumerate}
\end{lemma}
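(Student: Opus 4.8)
The plan is to verify the three inequalities in Lemma~\ref{lb22l0b} by direct differentiation, treating $F(x,h)$ as a smooth function on the region $\{0<h\le x\}$ and reducing each claim to an elementary one-variable inequality for the logarithm.

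First I would compute $\partial F/\partial h$. Writing $F(x,h)=\frac{\phi(x+1)-\phi(x-h+1)}{h}$ with $\phi(t)=t\log t$, one has $\partial F/\partial h=\frac{h\,\phi'(x-h+1)-(\phi(x+1)-\phi(x-h+1))}{h^2}$. Setting $a=x-h+1$ and $b=x+1$, so $0<a<b$ and $h=b-a$, the numerator equals $(b-a)\phi'(a)-(\phi(b)-\phi(a))=(b-a)(\log a+1)-(b\log b-a\log a)$. Since $\phi$ is strictly convex, $\phi(b)-\phi(a)>\phi'(a)(b-a)$, so the numerator is negative; hence $\partial F/\partial h<0$. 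For $\partial F/\partial x$ I would differentiate in $x$ with $h$ fixed: $\partial F/\partial x=\frac{\phi'(x+1)-\phi'(x-h+1)}{h}=\frac{\log(x+1)-\log(x-h+1)}{h}>0$ because $x+1>x-h+1$. For $\frac{d}{dx}F(x,x)$, note $F(x,x)=\frac{(x+1)\log(x+1)}{x}$; differentiating gives $\frac{d}{dx}F(x,x)=\frac{x(\log(x+1)+1)-(x+1)\log(x+1)}{x^2}=\frac{x-\log(x+1)}{x^2}$, which is positive since $\log(x+1)<x$ for $x>0$. This disposes of part (a).

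For part (b) I would fix $x$ and compare $F(x,h)$ with $F(x-h,1)=\frac{(x-h+1)\log(x-h+1)-(x-h)\log(x-h)}{1}$. The natural route is to introduce $g(h)=F(x,h)-F(x-h,1)$ and show $g(h)>0$ on $(0,x-1]$. A cleaner alternative — the one I'd pursue — is to write both sides as averages of $\phi'$: by the Mean Value Theorem (or by $F(x,h)=\frac{1}{h}\int_{x-h+1}^{x+1}\phi'(t)\,dt=\frac1h\int_{x-h+1}^{x+1}(\log t+1)\,dt$), $F(x,h)$ is the average of $\log t+1$ over $t\in[x-h+1,x+1]$, while $F(x-h,1)=\int_{x-h}^{x-h+1}(\log t+1)\,dt$ is the average over $t\in[x-h,x-h+1]$. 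Since $\log t+1$ is strictly increasing and the first interval $[x-h+1,x+1]$ lies strictly to the right of the second interval $[x-h,x-h+1]$ (they share only the endpoint $x-h+1$), the average over the first exceeds the average over the second, giving $F(x,h)>F(x-h,1)$. The condition $h\le x-1$ guarantees $x-h\ge1>0$ so that $\log t$ is defined on the relevant range.

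The main obstacle is essentially bookkeeping rather than genuine difficulty: one must be careful that all arguments of $\log$ stay positive (this is where the hypotheses $h\le x$ in part (a) and $h\le x-1$ in part (b) are used) and that the monotonicity/convexity comparisons are applied on the correct intervals. The integral-average reformulation of $F$ makes part (b) transparent and also re-proves the sign of $\partial F/\partial h$ (average of an increasing function over a widening-downward window) and $\partial F/\partial x$ (the window shifts right), so I would likely present all of part (a) and part (b) uniformly through that lens.
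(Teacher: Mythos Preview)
Your proof is correct. Part (a) is essentially identical to the paper's: you and the paper both compute the three derivatives directly, with only a cosmetic difference in how the sign of $\partial F/\partial h$ is justified (you invoke convexity of $t\log t$, the paper rewrites the numerator as $h+(x+1)\log(1-h/(x+1))$ and uses $\log(1-u)<-u$).

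Part (b) is where your approach genuinely diverges. The paper clears denominators, sets $g(h)=(x+1)\log(x+1)-(h+1)(x-h+1)\log(x-h+1)+h(x-h)\log(x-h)$, notes $g(0)=0$, computes $g'(h)=1+(2h-x)\log\frac{x-h+1}{x-h}$, and then splits into the cases $h\ge x/2$ and $h<x/2$ to verify $g'(h)>0$. Your integral-average reformulation---writing $F(x,h)$ as the mean of $\log t+1$ over $[x-h+1,x+1]$ and $F(x-h,1)$ as its mean over $[x-h,x-h+1]$, then observing that the first window sits strictly to the right of the second---is shorter, avoids the case split, and yields the strict inequality directly from monotonicity of $\log t$. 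It also, as you note, gives a uniform conceptual explanation for all three signs in part (a). The paper's computation has the minor advantage of giving an explicit formula for $g'(h)$, but nothing later in the paper uses that formula, so your route loses nothing.
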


\begin{proof} (a) We have that $$\frac{\partial F(x,h)}{\partial h}=\frac1{h^2}\left[h+(x+1)\log\left(1-\frac
h{x+1}\right)\right]<0\quad(0<h\le x).$$ Also, $$\frac{\partial F(x,h)}{\partial x}=\frac1h\log\left(\frac{x+1}{x-h+1}\right)>0\quad(0<h\le x).$$ Finally, $$\frac{\partial(F(x,x))}{\partial x}=\frac{x-\log(x+1)}{x^2}>0\quad(x>0).$$

\medskip

(b) Fix $x>1$ and note that it suffices to show that
$$g(h)=(x+1)\log(x+1)-(h+1)(x-h+1)\log(x-h+1)+h(x-h)\log(x-h)>0$$ for $0<h\le x-1$.
Since $g(0)=0$, it is enough to show that $g'(h)>0$. We have that
$$g'(h)=1+(2h-x)\log\left(\frac{x-h+1}{x-h}\right).$$
If $h\ge x/2$, then $g'(h)\ge1$. If $0<h<x/2$, then
$$g'(h)>1-\frac{x-2h}{x-h}=\frac h{x-h}>0.$$ In any case, we have that $g'(h)>0$,
which completes the proof of the lemma.
\end{proof}

Finally, we have the following lemma.

\begin{lemma}\label{lb22l0a} The sequence
$$\left\{1-\frac1{\log(n+2)}\log\left(\frac{(n+2)\log(n+2)-\log4}n\right)\right\}_{n\in\SN}$$
is strictly increasing.
\end{lemma}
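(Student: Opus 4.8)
The plan is to obtain the monotonicity of the sequence from the strict monotonicity of a smooth function of a real variable, whose derivative is shown to be positive analytically outside a bounded interval and numerically inside it, exactly as in the proof of Lemma~\ref{hl1}. For $u>2$ write $\ell=\log u$ and
$$G(u)=\frac{1}{\log u}\log\frac{u-2}{u\log u-2\log 2},$$
so that the $n$-th term of the sequence equals $1+G(n+2)$ (recall $\log 4=2\log 2$). Thus it is enough to prove that $G$ is strictly increasing on $[3,+\infty)$, after checking the finitely many inequalities $G(3)<G(4)<\cdots<G(17)$ directly.

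Writing $q(u)=(u-2)/(u\ell-2\log 2)$ and $v=1/q(u)$, one has $G(u)=\log q(u)/\log u$, so $G'(u)$ has the same sign as $\frac{uq'(u)}{q(u)}\log u-\log q(u)$. A short computation gives $q'(u)=(2\ell-u+2-2\log 2)/(u\ell-2\log 2)^2$, and using the identities $2\ell-u+2-2\log 2=(u-2)(v-\ell-1)$ and $u\ell/(u\ell-2\log 2)=1+2\log 2/(u\ell-2\log 2)$ one finds that this sign equals that of
$$\Phi(u):=\Bigl(1+\frac{2\log 2}{u\log u-2\log 2}\Bigr)(v-\ell-1)+\log v.$$
Now $v-\ell=2\log(u/2)/(u-2)$, and the function $u\mapsto 2\log u-u+2-2\log 2$ vanishes at $u=2$ and has negative derivative for $u>2$; hence $0<v-\ell<1$ for all $u>2$, so $v-\ell-1\in(-1,0)$. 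Since the bracketed factor is positive and $\log v>\log\ell=\log\log u$ (because $v>\ell=\log u\ge\log 3$), we get
$$\Phi(u)>\log\log u-1-\frac{2\log 2}{u\log u-2\log 2}=:\Psi(u).$$
Finally $\Psi$, being the sum of the increasing function $\log\log u$ and the increasing function $-2\log 2/(u\log u-2\log 2)$, is strictly increasing on $(2,+\infty)$, and $\Psi(17)>0$; therefore $\Phi(u)>0$, hence $G'(u)>0$, for every $u\ge 17$. Together with the numerical check $G(3)<\cdots<G(17)$ this yields $G(n+2)<G(n+3)$ for all $n\ge 1$, which is the assertion of the lemma.

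I do not expect a genuine obstacle: the closed form for $\Phi$ is a routine manipulation, and the only delicate point is that the estimate $\Phi\ge\Psi$ is nearly sharp at the threshold, since $\Psi(16)<0<\Psi(17)$, so the short numerical verification over the integers $3\le u\le 17$ cannot be dropped (just as the inequalities $\alpha_1<\cdots<\alpha_{15}$ had to be checked by hand in Lemma~\ref{hl1}). One could in principle push the analytic range lower by retaining the term $\log\bigl(1+(v-\ell)/\ell\bigr)$ that is discarded in passing from $\Phi$ to $\Psi$ and arguing separately for small $u$, but this gains little.
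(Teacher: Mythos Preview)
Your proof is correct and follows essentially the same strategy as the paper's: pass to a smooth function of a real variable, show its derivative is positive beyond some explicit threshold, and dispose of the finitely many small cases numerically. The paper works with the variable $x=n$ and shows the derivative is positive for $x\ge 14$, whereas you use $u=n+2$ and obtain the threshold $u\ge 17$ (i.e.\ $n\ge 15$); the key inequality $\log v>\log\log u$ is exactly the paper's $\log g(x)\ge\log\log(x+2)$, so the two arguments differ only in bookkeeping and in the precise crude bound used on the remaining terms.
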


\begin{proof} For $x>0$ set $$g(x)=\frac{(x+2)\log(x+2)-\log4}x\quad{\rm and}\quad G(x)=1-\frac{\log(g(x))}{\log(x+2)}.$$
First, we check numerically that $G(1)<G(2)<\cdots<G(14)$. Next, we
handle the larger terms of the sequence. We have $$G'(x)=\frac{h(x)}{x(x+2)[(x+2)\log(x+2)-\log4]\log^2(x+2)},$$
where $$h(x)=x[(x+2)\log(x+2)-\log4]\log(g(x))-(x+2)\log(x+2)(x-2\log(x+2)+\log 4).$$ Observe that for
$x\ge14$ we have $\log(g(x))\ge\log\log(x+2)\ge1$. Consequently,
\bes\bsp h(x)&\ge-x\log 4+2(x+2)\log^2(x+2)-(\log4)(x+2)\log(x+2)\\
&\ge(-x+3(x+2)\log(x+2))\log 4>0\end{split}\ees for all $x\ge14$, that
is $G'(x)>0$ for $x\ge14$ and the desired result follows.
\end{proof}

\subsection{Estimates from order statistics}\label{order_stats}

Throughout this subsection we fix a vector $\bv r\in\mathcal{R}^*$. Our goal is to bound on average the quantities $T_i(\bv g_i;\nu,n)$, which were defined in Section~\ref{lb_outline}, on average. To achieve this, we appeal to certain estimates from probability theory proven by Ford in~\cite{kf3}. Recall that
$$\Delta_r=\{\bv\xi=(\xi_1,\dots,\xi_r)\in\SR^r:0\le\xi_1\le\cdots\le\xi_r\le1\}.$$ For $r\in\SN$, $u>0$ and $v\ge1$ set
\bes\bsp Q_r(u,v)&=r!\vol\left(\left\{\bv\xi\in\Delta_r:\xi_i\ge\frac{i-u}v\quad(1\le i\le r)\right\}\right)\\
&=\mathbf{Prob}\left(\xi_i\ge\frac{i-u}v~(1\le i\le r)\,\bigg|\,\bv\xi\in\Delta_r\right).\end{split}\ees
Then we have the following estimate, which essentially follows from Theorem 1 in~\cite{kf3}. This estimate is stated in~\cite{kf4} too without proof. For the sake of completeness, we supply the details of its proof.

\begin{lemma}\label{lb22l1} Let $r\in\SN$, $u\ge1$ and $v\ge1$. If $w=u+v-r\ge1$, then $$Q_r(u,v)\asymp\min\left\{1,\frac{uw}r\right\}.$$
\end{lemma}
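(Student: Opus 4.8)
The plan is to reduce the estimate on $Q_r(u,v)$ to a statement about the order statistics of $r$ independent uniform random variables on $[0,1]$ and then to invoke Ford's result from~\cite{kf3}. Recall that if $X_1,\dots,X_r$ are i.i.d.\ uniform on $[0,1]$ and $\xi_1\le\cdots\le\xi_r$ denotes their increasing rearrangement, then $(\xi_1,\dots,\xi_r)$ is distributed uniformly on $\Delta_r$, so that
$$Q_r(u,v)=\mathbf{Prob}\left(\xi_i\ge\frac{i-u}v\ (1\le i\le r)\right).$$
The event $\{\xi_i\ge (i-u)/v\text{ for all }i\}$ says precisely that the empirical counting function $N(t)=\#\{i:\xi_i\le t\}$ stays below the line $i\mapsto u+vt$; equivalently, writing this in the ballot/queueing language of~\cite{kf3}, it is the event that a certain random walk (or the empirical process of the $X_i$'s) never crosses an affine barrier with intercept $u$ and slope $v$. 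Ford's Theorem~1 in~\cite{kf3} gives exactly the order of magnitude of such a probability; the parameter $w=u+v-r$ measures the ``slack'' of the barrier at the right endpoint $t=1$ (since at $t=1$ the line has height $u+v$ and there are $r$ points), which is why it appears symmetrically with $u$. So the first step is to match the normalization in the definition of $Q_r$ with the normalization in~\cite{kf3} and read off $Q_r(u,v)\asymp\min\{1,uw/r\}$ directly.

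I would then handle the degenerate ranges separately to make the argument self-contained, since the constants in~\cite{kf3} are stated under mild side conditions. If $u\ge r$ or $w\ge r$ (equivalently $v\ge r$), then the constraint is essentially vacuous: each individual inequality $\xi_i\ge(i-u)/v$ has a right-hand side that is $\le 0$ for all $i$ when $u\ge r$, or one shows by a crude union-bound/reflection estimate that the failure probability is bounded away from $1$, so $Q_r(u,v)\asymp 1\asymp\min\{1,uw/r\}$ in that case. The complementary case $1\le u\le r$ and $1\le w\le r$ is the substantive one, and there one applies Ford's estimate. For the lower bound in that range it is cleanest to exhibit an explicit sub-event of positive measure: roughly, force $\xi_1,\dots$ to be spread out enough in the first $\sim u$ slots and $\xi_r,\dots$ in the last $\sim w$ slots, giving a product of two ``ballot-type'' probabilities each of order a constant times $u/r$ and $w/r$ respectively, up to the truncation at $1$. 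For the upper bound one uses the two necessary conditions obtained by looking only near the left endpoint (controlled by $u$) and only near the right endpoint (controlled by $w$), each of which already yields a factor $O(u/r)$ and $O(w/r)$; combining them gives $O(\min\{1,uw/r\})$ after checking that the two constraints are, in the relevant regime, sufficiently independent.

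The main obstacle I anticipate is bookkeeping rather than conceptual: translating the precise affine barrier $\xi_i\ge(i-u)/v$ into the exact hypotheses of Theorem~1 of~\cite{kf3} (which is phrased for a particular normalization of the walk and a particular parametrization of the barrier) without losing the symmetry between $u$ and $w$, and verifying that the implied constants are absolute. A secondary technical point is that the claimed equivalence $Q_r(u,v)\asymp\min\{1,uw/r\}$ must degrade gracefully as $u\to r$ or $w\to r$: one should check that Ford's estimate and the elementary arguments above glue together at the boundary of these ranges, so that a single absolute constant works uniformly. Neither of these is deep, but they are where the care is needed; the heart of the matter — that a random walk avoiding an affine barrier with endpoint slacks $u$ and $w$ does so with probability of order $uw/r$ — is precisely the content of~\cite{kf3} and will simply be cited.
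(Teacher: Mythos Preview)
Your plan is essentially the paper's: both the upper bound and the heart of the lower bound are obtained by citing Theorem~1 of~\cite{kf3}, with separate elementary arguments for the boundary regimes. The paper's treatment of those boundary regimes is more concrete than what you sketch: for $v\ge 2r$ it uses the explicit formula $Q_r(1,v)=\frac{1+v-r}{v}(1+1/v)^{r-1}$ from \cite[Lemma~2.1(i)]{kf3}; for $v\le 2r$ it replaces $(u,w)$ by a truncated pair $(u',w')$ satisfying $u'w'/r=\min\{1,uw/r\}$, applies Theorem~1 of~\cite{kf3} when both $u',w'$ exceed a large constant, and falls back on \cite[Lemma~2.1(i)]{kf3} and \cite[Lemma~11.1]{kf2} in the two remaining thin strips. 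Your alternative suggestion for the upper bound---combining two one-sided endpoint constraints and arguing near-independence---is not needed and would be harder to make rigorous than simply invoking Theorem~1 of~\cite{kf3} directly, which already gives $Q_r(u,v)\ll uw/r$ in one stroke.
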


\begin{proof} The desired upper bound follows immediately by Theorem 1 in \cite{kf3} and the trivial bound $Q_r(u,v)\le1$. For the lower bound we distinguish several cases. First, assume that $v\ge2r$. Then $$Q_r(u,v)\ge Q_r(1,v)=\frac{1+v-r}v\left(1+\frac1v\right)^{r-1}\asymp1=\min\left\{1,\frac{uw}r\right\},$$ by~\cite[Lemma 2.1(i)]{kf3}. Next, consider the case $v\le2r$. Set $$u'=\min\left\{u,\frac{r-v+\sqrt{(r-v)^2+4r}}2\right\}\ge\frac12$$ and $$w'=u'+v-r=\min\left\{w,\frac{v-r+\sqrt{(r-v)^2+4r}}2\right\}\ge\frac12$$ By a similar argument with the one leading to~\eqref{betai}, we have that $$\min\left\{1,\frac{uw}r\right\}=\frac{u'w'}r.$$ Fix some constant $C$. If $u'\ge C$ and $w'\ge C$, then the lower bound follows by Theorem 1 in~\cite{kf3} applied to $Q_r(u',v)\le Q_r(u,v)$, provided that $C$ is large enough. If $1/2\le u'\le w'$ and $u'\le C$, then $r\le v\le2r$ and thus $$Q_r(u,v)\ge Q_r(1,v)=\frac{1+v-r}v\left(1+\frac1v\right)^{r-1}\asymp_C\frac{u'+v-r}r\asymp_C\frac{u'w'}r,$$ by~\cite[Lemma 2.1(i)]{kf3}. Finally, if $1/2\le w'\le u'$ and $w'\le C$, then $v\le r$ and thus $$Q_r(u,v)\ge Q_r(1+r-v,v)\gg\frac{1+r-v}r\asymp_C\frac{w'+r-v}r\asymp_C\frac{u'w'}r,$$ by~\cite[Lemma 11.1]{kf2}. In any case, we obtain the desired result.
\end{proof}

For $r,v\in\SN$ and $u\ge0$ set $$\mathcal{G}_r(u,v)=\{(g_1,\dots,g_v)\in(\SN\cup\{0\})^v:g_1+\cdots+g_v=r,\;g_1+\cdots+g_i\le i+u\;(1\le i\le v)\}.$$
Then an equivalent formulation of Lemma \ref{lb22l1}~is the following result.

\begin{lemma}\label{lb22l2} Let $r\in\SN$, $v\in\SN$ and $u\ge0$. If $w=u+v-r\ge0$, then
$$\sum_{\bv g\in\mathcal{G}_r(u,v)}\frac1{g_1!\cdots
g_v!}\asymp\frac{v^r}{r!}\min\left\{1,\frac{(u+1)(w+1)}r\right\}.$$
\end{lemma}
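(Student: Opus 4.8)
The plan is to deduce Lemma~\ref{lb22l2} directly from Lemma~\ref{lb22l1} by a change of variables that turns the sum $\sum_{\bv g\in\mathscr{G}_r(u,v)}1/(g_1!\cdots g_v!)$ into a volume of the type computed by $Q_r(u,v)$. First I would observe that, by the multinomial theorem, $\sum_{g_1+\cdots+g_v=r}1/(g_1!\cdots g_v!)=v^r/r!$, so the constraint-free version of the sum is exactly $v^r/r!$; the point is to control the effect of the inequalities $g_1+\cdots+g_i\le i+u$ for $1\le i\le v$. The standard device (used by Ford in~\cite{kf3,kf4} and by the author in~\cite{dk}) is to encode a tuple $\bv g\in(\SN\cup\{0\})^v$ with $g_1+\cdots+g_v=r$ by the nondecreasing step positions it induces: writing $G_i=g_1+\cdots+g_i$, the partial sums $0=G_0\le G_1\le\cdots\le G_v=r$ correspond to a point of the simplex $\Delta_r$ after rescaling. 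More precisely, I would set up a measure-preserving correspondence between the weighted count $\frac{r!}{v^r}\sum_{\bv g}\frac{1}{g_1!\cdots g_v!}$ (sum over tuples with fixed partial-sum profile) and $r!\vol(\{\bv\xi\in\Delta_r:\cdots\})$, so that the combinatorial condition $G_i\le i+u$ becomes $\xi_i\ge (i-u)/v$ for $1\le i\le r$, i.e.\ exactly the region defining $Q_r(u,v)$.

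The cleanest way I see to make this rigorous is the classical ``balls in boxes vs.\ order statistics'' identity: if $\xi_1\le\cdots\le\xi_r$ are the order statistics of $r$ i.i.d.\ uniform$[0,1]$ variables, then the number of the $\xi_j$ falling in the interval $((i-1)/v,i/v]$ has a multinomial distribution, and $\#\{j:\xi_j\le i/v\}=G_i$ in law; conditioning on $\bv\xi\in\Delta_r$ (equivalently, passing to the ordered simplex of volume $1/r!$) the event $\{G_i\le i+u\ (1\le i\le v)\}$ is equivalent to $\{\xi_i\ge (i-u)/v\ (1\le i\le r)\}$, because $G_i<i'\iff \xi_{i'}>i/v$ for integers $i,i'$, and one checks that the family of constraints indexed by $i\in\{1,\dots,v\}$ translates into the family indexed by $j\in\{1,\dots,r\}$ after eliminating vacuous inequalities (those with $i-u\le 0$ or $i>r$ play no role, matching the fact that $G_i\le i+u$ is automatic once $i+u\ge r$). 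Carrying this out gives
\[
\sum_{\bv g\in\mathscr{G}_r(u,v)}\frac1{g_1!\cdots g_v!}=\frac{v^r}{r!}\,\prob\left(\xi_i\ge\frac{i-u}{v}\ (1\le i\le r)\,\bigg|\,\bv\xi\in\Delta_r\right)=\frac{v^r}{r!}\,Q_r(u,v),
\]
and then Lemma~\ref{lb22l1}, applied with the same $u$, $v$ (and $w=u+v-r\ge0$; the boundary case $w=0$ or $u=0$ being handled by monotonicity in $u$, since $\mathscr{G}_r(u,v)$ is increasing in $u$ and one can replace $u$ by $\max\{u,1\}$ at the cost of an absolute constant), yields $\sum_{\bv g\in\mathscr{G}_r(u,v)}1/(g_1!\cdots g_v!)\asymp\frac{v^r}{r!}\min\{1,uw/r\}\asymp\frac{v^r}{r!}\min\{1,(u+1)(w+1)/r\}$, where in the last step I use that $u,w\ge 0$ so $uw$ and $(u+1)(w+1)$ differ by at most a bounded factor once we also invoke $\min\{1,\cdot\}$ and separate the small cases $u<1$ or $w<1$.

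The main obstacle I anticipate is making the translation between the two families of inequalities fully precise, i.e.\ verifying that $\{(g_1,\dots,g_v):g_1+\cdots+g_i\le i+u\ \forall i\le v\}$ corresponds \emph{exactly} (not just up to negligible sets) to $\{\bv\xi\in\Delta_r:\xi_i\ge (i-u)/v\ \forall i\le r\}$ under the order-statistics bijection, including the bookkeeping of which constraints are active and the non-integer shift by $u$. I would handle this by the elementary equivalence, valid for any real $t$ and integers $i\in\{1,\dots,v\}$, $G_i\le i+u\iff$ (for every integer $m$ with $m>i+u$) $G_i<m$, together with $G_i<m\iff \xi_m>i/v$ when $1\le m\le r$ (and $G_i<m$ is automatic when $m>r$); taking $m=\lceil i+u\rceil$ or running over all relevant $m$ reduces the $v$ constraints on the $G_i$ to the $r$ constraints $\xi_j\ge (j-u)/v$. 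Once this bijective bookkeeping is in place, the rest is the multinomial identity plus a direct appeal to Lemma~\ref{lb22l1}, with the only remaining care being the degenerate ranges of $u,w$, dispatched by the monotonicity remark above and the trivial bound $Q_r(u,v)\le1$.
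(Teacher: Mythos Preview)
Your approach is essentially the same as the paper's: encode a tuple $\bv g$ by the region $R(\bv g)\subset\Delta_r$ of ordered $r$-tuples whose $i$th box count equals $g_i$, observe $\vol(R(\bv g))=v^{-r}/(g_1!\cdots g_v!)$, translate the partial-sum constraints into the order-statistic constraints defining $Q_r$, and invoke Lemma~\ref{lb22l1}. One small improvement in the paper's execution over yours: rather than landing on $Q_r(u,v)$ and then patching the range $0\le u<1$ (or $0\le w<1$) by monotonicity, the paper observes directly that $G_i\le i+u\iff\xi_{i+\lfloor u+1\rfloor}\ge i/v$, so the resulting region is exactly the one defining $Q_r(\lfloor u+1\rfloor,v)$; since $\lfloor u+1\rfloor\ge1$ and $\lfloor u+1\rfloor+v-r$ is a positive integer (hence $\ge1$), Lemma~\ref{lb22l1} applies without any separate boundary argument, and $\min\{1,\lfloor u+1\rfloor(\lfloor u+1\rfloor+v-r)/r\}\asymp\min\{1,(u+1)(w+1)/r\}$ trivially.
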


\begin{proof} For every $\bv g\in\mathcal{G}_r(u,v)$, let $R(\bv g)$ be the set of
$\bv\xi\in\Delta_r$ such that, for any $i\in\{1,\dots,v\}$, exactly $g_i$ of the numbers $\xi_j$
lie in $[(i-1)/v,i/v)$. Then
\be\label{lb22e1}\vol(R(\bv g))=\frac1{v^r}\frac1{g_1!\cdots
g_v!}.\ee Also, we have that $g_1+\cdots+g_i\le i+u$ if, and only
if, $\xi_{i+\lfloor u+1\rfloor}\ge\frac iv$. Hence summing
\eqref{lb22e1}\;over $\bv g\in\mathcal{G}_r(u,v)$ and applying Lemma
\ref{lb22l1}\;completes the proof.
\end{proof}


\begin{lemma}\label{lb22l3} Let $\bv r\in\mathcal{R}^*$. Consider integers $1\le i\le k$, $\nu\ge0$ and $n\ge1$ with
$\nu+n\le k-i+1$ and $\eta\in(0,1]$. There exists a constant $c_k'>0$ such that the following hold:
\begin{enumerate}\item If $$\left\lvert\frac{F(k-i+1-\nu,n)}{(k-i+2)^{1-\alpha}}-1\right\rvert\ge\eta,$$ $P\le1+\eta/c_k'$ and $v_i\ge(c_k'/\eta)^2$, then
$$\sum_{\bv g_i\in\mathcal{G}_{r_i}(u_i,v_i)}\frac{T_i(\bv
g_i;\nu,n)}{g_{i,1}!\cdots g_{i,v_i}!}\ll_{k,P,\eta}\beta_i\frac{v_i^{r_i}}{r_i!}\max_{j\in\{0,n\}}(\rho_{k-i+1}^{P-1})^{-(n-j)v_i}(t_{i,\nu+j})^{r_i}.$$
\item If $P\le1+1/c_k'$ and $v_i\ge c_k'$, then
$$\sum_{\bv g_i\in\mathcal{G}_{r_i}(u_i,v_i)}\frac{T_i(\bv
g_i;\nu,n)}{g_{i,1}!\cdots
g_{i,v_i}!}\ll_{k,P}\beta_i\frac{v_i^{r_i}e^{O_k((P-1)^2v_i)}}{r_i!}\max_{j\in\{0,n\}}(\rho_{k-i+1}^{P-1})^{-(n-j)v_i}(t_{i,\nu+j})^{r_i}.$$
\end{enumerate}
\end{lemma}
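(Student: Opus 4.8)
The plan is to reduce the estimation of $\sum_{\bv g_i}T_i(\bv g_i;\nu,n)/(g_{i,1}!\cdots g_{i,v_i}!)$ to an application of Lemma~\ref{lb22l2}, after first simplifying the inner sum defining $T_i(\bv g_i;\nu,n)$ by an optimization argument. Recall that $T_i(\bv g_i;\nu,n)$ is a sum over chains $0=s_0\le s_1\le\cdots\le s_n\le s_{n+1}=v_i$ of $(\rho_{k-i+1}^{P-1})^{-(s_1+\cdots+s_n)}\prod_{j=0}^n t_{i,\nu+j}^{G_{i,s_{j+1}}-G_{i,s_j}}$. First I would fix the chain $\bv s$ and carry out the sum over $\bv g_i\in\mathscr{G}_{r_i}(u_i,v_i)$. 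Writing $r_i=G_{i,v_i}$ as a telescoping sum of the increments $G_{i,s_{j+1}}-G_{i,s_j}$, the summand factors over the blocks $[s_j,s_{j+1})$, and the constraint $G_{i,\ell}\le \ell+u_i$ is inherited by each block. A block-wise application of Lemma~\ref{lb22l2} (with the block length $s_{j+1}-s_j$ playing the role of $v$, the partial count of prime factors in that block playing the role of $r$, and the relevant slack playing the role of $u$) then yields, after summing over the admissible splits of $r_i$ among the blocks, a bound of the shape $\ll \beta_i\,(v_i^{r_i}/r_i!)$ times $\max_{\bv s}(\rho_{k-i+1}^{P-1})^{-(s_1+\cdots+s_n)}\prod_{j=0}^n t_{i,\nu+j}^{\,m_j(\bv s)}$, where $m_j(\bv s)$ is the ``expected'' number of prime factors landing in block $j$, namely proportional to $s_{j+1}-s_j$. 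The factor $\beta_i$ appears because the constraint $G_{i,\ell}\le \ell+u_i$ is binding only once, and by the definition~\eqref{betai} of $\beta_i$ and the estimates~\eqref{inequ},~\eqref{ineqw} its effect is exactly a loss of $\beta_i$; all other blocks contribute $\asymp_k 1$ via the ``$\min\{1,\cdots\}\asymp 1$'' branch of Lemma~\ref{lb22l2}.

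The heart of the argument is then the remaining optimization over the chain $\bv s$: one must show that $\max_{\bv s}(\rho_{k-i+1}^{P-1})^{-(s_1+\cdots+s_n)}\prod_{j=0}^n t_{i,\nu+j}^{\,c\,(s_{j+1}-s_j)}$ (for the appropriate proportionality constant $c$, absorbing $r_i\asymp_k v_i$) is dominated by the two extreme chains $s_1=\cdots=s_n=0$ (giving $t_{i,\nu+n}^{r_i}$) and $s_1=\cdots=s_n=v_i$ (giving $(\rho_{k-i+1}^{P-1})^{-n v_i}t_{i,\nu}^{r_i}$). Taking logarithms, this is the statement that a certain function of the real variables $0\le\sigma_1\le\cdots\le\sigma_n\le v_i$ — which is \emph{affine} in each $\sigma_j$ once the others are fixed — attains its maximum at a vertex of the simplex, and a vertex of this particular simplex corresponds to a chain of the form $0=\cdots=0\le v_i=\cdots=v_i$, i.e. to some $\max_{j\in\{0,\dots,n\}}(\rho_{k-i+1}^{P-1})^{-(n-j)v_i}t_{i,\nu+j}^{r_i}$. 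To see that among these the extremes $j=0$ and $j=n$ dominate — which is what lets us write $\max_{j\in\{0,n\}}$ rather than $\max_{0\le j\le n}$ — one checks that $\log t_{i,\nu+j}$ is convex in $j$ (this is where the formula $t_{i,j}=(j+(k-i+2-j)^P)/(k-i+2)$ and the properties of $F(x,h)$ from Lemma~\ref{lb22l0b} enter), so $j\mapsto (n-j)\log\rho_{k-i+1}^{P-1}+ (r_i/v_i)\cdot v_i\log t_{i,\nu+j}$, being a sum of an affine and a convex term, is convex and hence maximized at an endpoint.

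For part~(a), the extra hypothesis $|F(k-i+1-\nu,n)/(k-i+2)^{1-\alpha}-1|\ge\eta$ is precisely what guarantees that the interior chains are \emph{strictly} suboptimal by a definite margin: the quantity $F(k-i+1-\nu,n)$ is (up to the normalization by $\log\rho_{k-i+1}$, using $\rho_m=(m+1)^{1/m}$ and $v_i\asymp \ell_i/\log\rho_{k-i+1}$) exactly the logarithmic derivative measuring how $\log t_{i,\nu+j}^{r_i}$ trades off against $(\rho_{k-i+1}^{P-1})^{-j v_i}$ near the relevant point, and the condition says this derivative is bounded away from the critical value $(k-i+2)^{1-\alpha}$ by $\eta$. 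Hence when $P$ is within $\eta/c_k'$ of $1$ and $v_i\ge(c_k'/\eta)^2$, the error terms of size $O_k((P-1)^2 v_i)$ and $O_k(\sqrt{v_i})$ that appear in the block-wise Stirling estimates (these come from Lemma~\ref{lb22l2}, which has implied constants, applied $\le k$ times) are absorbed into the gap, and one gets the clean bound with implied constant depending only on $k,P,\eta$. For part~(b) one simply drops the gap hypothesis and retains the factor $e^{O_k((P-1)^2 v_i)}$; here $v_i\ge c_k'$ and $P\le 1+1/c_k'$ suffice for the Stirling approximations to be valid with controlled constants. I expect the main obstacle to be the bookkeeping in the first step: correctly tracking which block carries the binding constraint $G_{i,\ell}\le\ell+u_i$ (and hence the $\beta_i$ loss) through the telescoping and the sum over splits of $r_i$, and verifying that the $\asymp_k 1$ branch of Lemma~\ref{lb22l2} genuinely applies to every other block given the size relations $r_i\asymp_k v_i$, $u_i\gg_k 1$, $w_i\gg_k 1$ valid on $\mathscr{R}^*$.
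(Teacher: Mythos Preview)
Your outline is sound for part~(b) and tracks the paper's argument there closely: block-wise application of Lemma~\ref{lb22l2}, sum over the block counts $m_j$ via the multinomial identity, then optimize over chains $\bv s$, absorbing the polynomial-in-$v_i$ losses into $e^{O_k((P-1)^2v_i)}$. Your observation that the vertex values $\phi(V_j)=-(n-j)(P-1)v_i\log\rho_{k-i+1}+r_i\log t_{i,\nu+j}$ are (approximately) convex in $j$ is a nice way to pass from $\max_{0\le j\le n}$ to $\max_{j\in\{0,n\}}$, and is essentially what the paper does at the end of its proof of (b) by a direct linearization of $G(s,\dots,s)$.

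For part~(a), however, there is a real gap. After linearizing, your function on the simplex $\{0\le\sigma_1\le\cdots\le\sigma_n\le v_i\}$ is affine with partial derivatives
\[
a_j\;\approx\;(P-1)\log\rho_{k-i+1}\Bigl(\frac{F(k-i+2-\nu-j,1)}{(k-i+2)^{1-\alpha}}-1\Bigr),\qquad 1\le j\le n.
\]
The hypothesis of (a) controls only the \emph{diagonal} combination $\sum_j a_j$ (equivalently $F(k-i+1-\nu,n)$), not the individual $a_j$. For a generic $\alpha$ one of the $a_j$ can be $o(P-1)$, and then the lattice sum $\sum_{\bv s}e^{\phi(\bv s)}$ picks up a genuine factor of $v_i$ from that coordinate that cannot be absorbed (you are not allowed $e^{O_k((P-1)^2v_i)}$ in (a)). So ``interior chains are strictly suboptimal by a definite margin'' is not justified by the stated hypothesis alone.

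The paper resolves this by working with the \emph{concave} function $G(\bv s)=r_i\log\bigl(\sum_j t_{i,\nu+j}(s_{j+1}-s_j)/v_i\bigr)-(P-1)(\log\rho_{k-i+1})\sum_j s_j$ (no early linearization) and proving a collapsing-to-the-diagonal lemma \emph{before} invoking the hypothesis of~(a). The key device is an index $J$ chosen so that for $j\le J$ the cumulative derivative $\partial_{x_j}G(x_j,\dots,x_j,x_{j+1},\dots)$ is $\ge\tfrac12\eta_0(P-1)$ (by the definition of $J$), while for $j\ge J+2$ the individual $\partial_{x_j}G$ is $\le -\tfrac12\eta_0(P-1)$; this second inequality is deduced from Lemma~\ref{lb22l0b}(b), which gives $F(k-i-\nu-J,1)<F(k-i+1-\nu,J+1)$, so a \emph{fixed} $\eta_0=\eta_0(k)$ separates them. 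This reduces the multi-sum to $\sum_{s}g(s,s)e^{G(s,\dots,s)}$ uniformly in $\eta$, and only then does the hypothesis of~(a) force the one-dimensional sum to be geometric with endpoint $s=0$ or $s=v_i$. Your linearized vertex argument does not see this two-stage structure.

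A smaller point: the $\beta_i$ bookkeeping is more delicate than ``the constraint is binding only once''. The paper keeps the $\min\{1,\cdot\}$ factor from Lemma~\ref{lb22l2} for \emph{both} the first and the last block, takes the minimum of the two resulting bounds, and carries the weight $g(s_1,s_n)=\min\{(v_i-s_1+1)/(s_1+1),(s_n+1)/(v_i-s_n+1)\}$ through the chain sum; at the diagonal endpoints $g(0,0)=g(v_i,v_i)=1/(v_i+1)$ exactly cancels the extra factor of $v_i$ produced in the passage from the block sum to $v_i^{r_i+1}/r_i!$. If you bypass $g$ you will be off by a factor of $v_i$ in~(a).
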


\begin{proof} For now, we treat parts (a) and (b) together. Their proofs will deviate only towards the end. Set 
\bes\bsp 
S  &=\sum_{\bv g_i\in\mathcal{G}_{r_i}(u_i,v_i)}\frac{T_i(\bv g_i;\nu,n)}{g_{i,1}!\cdots g_{i,v_i}!}\\
	&=\sum_{0=s_0\le s_1\le\cdots\le s_n\le s_{n+1}=v_i}
		\sum_{\bv g_i\in\mathcal{G}_{r_i}(u_i,v_i)}\frac1{g_{i,1}!\cdots g_{i,v_i}!}
		\prod_{j=0}^n(t_{i,\nu+j})^{G_{i,s_{j+1}}-G_{i,s_j}}.
\end{split}\ees
Fix $0=s_0\le s_1\le\cdots\le s_n\le s_{n+1}=v_i$ and let $m_1,\dots,m_{n+1}$ be non-negative integers with
$$M_j=m_1+\cdots+m_j\le s_j+u_i\quad(1\le j\le n),\quad M_{n+1}=m_1+\cdots+m_{n+1}=r_i.$$ Also, put $M_0=0$.
Then we have that
\bes\begin{split}&\sum_{\substack{\bv g_i\in\mathcal{G}_{r_i}(u_i,v_i)\\G_{i,s_j}=M_j\\1\le j\le
n}}\frac1{g_{i,1}!\cdots g_{i,v_i}!}=\prod_{j=0}^n\sum_{\substack{(g_{i,s_j+1},\dots,g_{i,s_{j+1}})\\\in\mathcal{G}_{m_{j+1}}(u_i+s_j-M_j,s_{j+1}-s_j)}}
\frac1{g_{i,s_j+1}!\cdots g_{i,s_{j+1}}!}\\
&\quad\ll\min\left\{\frac{u_i(u_i+s_1-m_1+1)}{m_1+1},\frac{w_i(u_i+s_n-M_n+1)}{m_{n+1}+1}\right\}
\prod_{j=0}^n\frac{(s_{j+1}-s_j)^{m_{j+1}}}{m_{j+1}!},\end{split}\ees
by Lemma~\ref{lb22l2} applied for $j=0$ and $j=n$. Also, note that
\bes\bsp\frac{u_i(u_i+s_1-m_1+1)}{m_1+1}\le\frac{u_i(w_i+r_i-m_1+1)}{m_1+1}&\le\frac{u_i(w_i+1)(r_i-m_1+1)}{m_1+1}\\
&\ll\beta_i\frac{r_i(r_i-m_1+1)}{m_1+1}\end{split}\ees and, similarly,
$$\frac{w_i(u_i+s_n-M_n+1)}{m_{n+1}+1}\le\frac{w_i(u_i+1)(s_n+1)}{m_{n+1}+1}\ll\beta_i\frac{r_i(s_n+1)}{m_{n+1}+1}.$$ So
\be\label{lb22e5a}\begin{split}S&\ll\beta_ir_i\sum_{0=s_0\le
s_1\le\cdots\le s_n\le
s_{n+1}=v_i}(\rho_{k-i+1}^{P-1})^{-(s_1+\cdots+s_n)}\\
&\quad\quad\times\sum_{m_1+\cdots+m_{n+1}=r_i}\min\left\{\frac{r_i-m_1+1}{m_1+1},\frac{s_n+1}{m_{n+1}+1}\right\}
\prod_{j=0}^n\frac{(t_{i,\nu+j}(s_{j+1}-s_j))^{m_{j+1}}}{m_{j+1}!}\end{split}\ee
The inner sum in the right hand side of~\eqref{lb22e5a} satisfies the following two upper bounds: it is at most
\bes\begin{split}\sum_{m_1=0}^{r_i}&\frac{r_i-m_1+1}{m_1+1}\frac{(t_{i,\nu}s_1)^{m_1}}{m_1!}
\frac{\left(\sum_{j=1}^nt_{i,\nu+j}(s_{j+1}-s_j)\right)^{r_i-m_1}}{(r_i-m_1)!}\\
&\ll_k\frac1{r_i!}\frac{v_i-s_1+1}{s_1+1}\left(\sum_{j=0}^nt_{i,\nu+j}(s_{j+1}-s_j)\right)^{r_i}\end{split}\ees
and, also, it is at most
\bes\begin{split}(s_n+1)\sum_{m_{n+1}=0}^{r_i}&\frac{(t_{i,\nu+n}(s_{n+1}-s_n))^{m_{n+1}}}{(m_{n+1}+1)!}
\frac{\left(\sum_{j=0}^{n-1}t_{i,\nu+j}(s_{j+1}-s_j)\right)^{r_i-m_{n+1}}}{(r_i-m_{n+1})!}\\
&\ll_k\frac1{r_i!}\frac{s_n+1}{v_i-s_n+1}\left(\sum_{j=0}^nt_{i,\nu+j}(s_{j+1}-s_j)\right)^{r_i}.\end{split}\ees
Consequently, 
\be\label{lb22e6}\begin{split}
	S&\ll_k\beta_i\frac{v_i^{r_i+1}}{r_i!}
		\sum_{0\le s_1\le\cdots\le s_n\le v_i}(\rho_{k-i+1}^{P-1})^{-(s_1+\cdots+s_n)}
			\min\left\{\frac{v_i-s_1+1}{s_1+1},\frac{s_n+1}{v_i-s_n+1}\right\} \\
	&\qquad\qquad\qquad\qquad\qquad
		\times\left(\sum_{j=1}^n(t_{i,\nu+j-1}-t_{i,\nu+j})\frac{s_j}{v_i}+t_{i,\nu+n}\right)^{r_i}\\
	&=\beta_i\frac{v_i^{r_i+1}}{r_i!}\sum_{0\le s_1\le\cdots\le s_n\le v_i}g(s_1,s_n)
		\exp\{G(s_1,\dots,s_n)\},
\end{split}\ee 
where for $\bv x=(x_1,\dots,x_n)\in[0,+\infty)^n$ we have set
$$
G(\bv x)=\log\left((\rho_{k-i+1}^{P-1})^{-(x_1+\cdots+x_n)}
	\left(\sum_{j=1}^n(t_{i,\nu+j-1}-t_{i,\nu+j})\frac{x_j}{v_i}+t_{i,\nu+n}\right)^{r_i}\right)
$$ 
and for $(x,y)\in[0,+\infty)^2$ we have set 
$$
g(x,y)=\min\left\{\frac{v_i-x+1}{x+1},\frac{y+1}{v_i-y+1}\right\}.
$$
We claim that \be\label{lb22e7}S\ll_{k,P}\beta_i\frac{v_i^{r_i+1}}{r_i!}\sum_{0\le
s\le v_i}g(s,s)\exp\{G(s,\dots,s)\}.\ee To show~\eqref{lb22e7} we will make extensive use of the following simple fact:
if $b:[m,m+1]\to\SR$ is a differentiable function satisfying $b'(x)\ge\delta>0$ for all $x\in(m,m+1)$, where $\delta$ is a fixed positive number, then
\be\label{lb22obs1}\frac{e^{b(m+1)}}{e^{b(m)}}\ge e^\delta,\ee by the Mean Value Theorem.
Fix a small positive constant $\eta_0=\eta_0(k)$ to be chosen later and define $J\in\{0,1,\dots,n-1\}$ as
follows. If 
$$
\frac{F(k-i+1-\nu,1)}{(k-i+2)^{1-\alpha}}<1+\eta_0,
$$ 
then set $J=0$; else, put
$$
J=\max\left\{1\le j\le n-1:\frac{F(k-i+1-\nu,j)}{(k-i+2)^{1-\alpha}}\ge1+\eta_0\right\}.
$$
Observe that 
$$
t_{i,j}=1+\frac{(k-i+2-j)\log(k-i+2-j)}{k-i+2}(P-1)+O_k((P-1)^2)~\quad(0\le j\le k-i+1).
$$
Therefore if $1\le j\le J$, then Lemma~\ref{lb22l0b}(a) yields that 
\be\begin{split}\label{lb22e110}
	\frac{\partial}{\partial x_j}
		&(G(\underbrace{x_j,\dots,x_j}_{j\ \text{times}},x_{j+1},x_{j+2},\dots,x_n))\\
		&=-j(P-1)\log(\rho_{k-i+1})+\frac{r_i(t_{i,\nu}-t_{i,\nu+j})}{(t_{i,\nu}-t_{i,\nu+j})x_j
			+\sum_{m=j+1}^n(t_{i,\nu+m-1}-t_{i,\nu+m})x_m+t_{i,\nu+n}v_i}  \\
		&=j(P-1)\log(\rho_{k-i+1})\left(-1+\frac{F(k-i+1-\nu,j)}{(k-i+2)^{1-\alpha}}
			+O_k\left(P-1+v_i^{-1/2}\right)\right)  \\
		&\ge  \frac{\eta_0(P-1)j\log(\rho_{k-i+1}) }2>0
\end{split}\ee 
uniformly in $0\le x_j\le\cdots\le x_n\le v_i$, provided that $c_k'$ is large enough. So if $J\ge1$ and we fix $0\le s_2\le\cdots\le s_n$, then we have that
$$\sum_{0\le s_1\le s_2}g(s_1,s_n)\exp\{G(s_1,\dots,s_n)\}
\ll_{k,P}g(s_2,s_n)\exp\{G(s_2,s_2,s_3,\dots,s_n)\},$$ by~\eqref{lb22e110} with $j=1$, and \eqref{lb22obs1}. Similarly, if $J\ge2$ and we fix $0\le s_3\le\cdots\le s_n\le v_i$, then
$$\sum_{0\le s_2\le s_3}g(s_2,s_n)\exp\{G(s_2,s_2,s_3,\dots,s_n)\}\ll_{k,P}g(s_3,s_n)\exp\{G(s_3,s_3,s_3,s_4,\dots,s_n)\}.$$
Continuing in the above fashion, we deduce that
\be\label{lb22e9}\begin{split}&\sum_{0\le s_1\le\cdots\le s_n\le
v_i}g(s_1,s_n)\exp\{G(s_1,\dots,s_n)\}\\
&\quad\ll_{k,P}\sum_{0\le s_{J+1}\le\cdots\le s_n\le
v_i}g(s_{J+1},s_n)\exp\{G(\underbrace{s_{J+1},\dots,s_{J+1}}_{J+1~{\rm times}},s_{J+2},\dots,s_n)\},\end{split}\ee which also holds trivially if $J=0$. If, now, $J=n-1$, then \eqref{lb22e7}\;follows immediately by~\eqref{lb22e9}. So assume that $J<n-1$. Then Lemma
\ref{lb22l0b}(b) implies that
$$F(k-i-\nu-J,1)<F(k-i+1-\nu,J+1)\le1+\eta_0$$
and hence $$F(k-i+2-\nu-j,1)\le F(k-i-\nu-J,1)\le1-\eta_0\quad(J+2\le
j\le n),$$ provided that $2\eta_0\le
F(k-i+1-\nu,J+1)-F(k-i-\nu-J,1)$. Consequently,
\be\begin{split}\label{lb22e220}
	\frac{\partial G}{\partial x_j}(\bv x)
		&=(P-1)\log(\rho_{k-i+1}) \left(-1+\frac{F(k-i+2-\nu-j,1)}{(k-i+2)^{1-\alpha}}
			+O_k\left(P-1+v_i^{-1/2}\right)\right)\\
		&\le-\frac{\eta_0(P-1)\log(\rho_{k-i+1})}2<0\quad(J+2\le j\le n)
\end{split}\ee 
uniformly in $0\le x_1\le\cdots\le x_n\le v_i$, provided that $c_k'$ is large enough. Thus, if we fix $s_{J+1}\ge0$ and $v_i\ge s_n\ge s_{n-1}\ge\cdots\ge s_{J+3}\ge s_{J+1}$, then we find that 
$$
\sum_{s_{J+1}\le s_{J+2}\le
s_{J+3}}\exp\{G(\underbrace{s_{J+1},\dots,s_{J+1}}_{J+1~{\rm times}},s_{J+2},\dots,s_n)\}
	\ll_{k,P}\exp\{G(\underbrace{s_{J+1},\dots,s_{J+1}}_{J+2~{\rm times}},s_{J+3},\dots,s_n)\},
$$ 
by~\eqref{lb22e220} with $j=J+2$, and \eqref{lb22obs1}. Similarly, if we fix $s_{J+1}\ge0$ and $v_i\ge s_n\ge s_{n-1}\ge\cdots\ge s_{J+4}\ge s_{J+1}$, then we have $$\sum_{s_{J+1}\le s_{J+3}\le s_{J+4}}\exp\{G(\underbrace{s_{J+1},\dots,s_{J+1}}_{J+2~{\rm times}},s_{J+3},\dots,s_n)\}\ll_{k,P}\exp\{G(\underbrace{s_{J+1},\dots,s_{J+1}}_{J+3~{\rm times}},s_{J+4},\dots,s_n)\}.$$ Continuing in this fashion, we deduce that
\bes\begin{split}\sum_{0\le s_{J+1}\le\cdots\le s_n\le v_i}&g(s_{J+1},s_n)\exp\{G(\underbrace{s_{J+1},\dots,s_{J+1}}_{J+1~{\rm times}},s_{J+2},\dots,s_n)\}\\
\ll_{k,P}&\sum_{0\le s_{J+1}\le
v_i}g(s_{J+1},s_{J+1})\exp\{G(s_{J+1},\dots,s_{J+1})\}\end{split}\ees which, together with~\eqref{lb22e9} and~\eqref{lb22e6}, proves~\eqref{lb22e7} in this case too. Finally, we use~\eqref{lb22e7} to prove parts (a) and (b).

\medskip

(a) First, assume that $$\frac{F(k-i+1-\nu,n)}{(k-i+2)^{1-\alpha}}\ge1+\eta.$$ Note that 
\bes\bsp
	\frac{\partial}{\partial x}(G(x,\dots,x))
		&=n(P-1)\log(\rho_{k-i+1})\left(-1+\frac{F(k-i+1-\nu,n)}{(k-i+2)^{1-\alpha}}
			+O_k(P-1+v_i^{-1/2})\right)\\
		&\ge\frac{\eta(P-1)n\log(\rho_{k-i+1})}2>0
\end{split}\ees 
uniformly in $0\le x\le v_i$, provided that $c_k'$ is large enough. Hence
$$
\sum_{0\le s\le v_i}g(s,s)\exp\{G(s,\dots,s)\}
	\ll_{k,\eta,P}g(v_i,v_i)\exp\{G(v_i,\dots,v_i)\}=\frac{\exp\{G(v_i,\dots,v_i)\}}{v_i+1},
$$ 
by \eqref{lb22obs1}, which together with~\eqref{lb22e7} yields the desired result. Similarly, if $$\frac{F(k-i+1-\nu,n)}{(k-i+2)^{1-\alpha}}\le1-\eta,$$ then we find that 
$$
\frac{\partial}{\partial x}(G(x,\dots,x))\le-\frac{\eta(P-1)n\log(\rho_{k-i+1})}2<0,
$$ 
uniformly in $0\le x\le v_i$, and therefore
$$
\sum_{0\le s\le v_i}g(s,s)\exp\{G(s,\dots,s)\}\ll_{k,\eta,P}\frac{\exp\{G(0,\dots,0)\}}{v_i}.
$$ 
Inserting this estimate into~\eqref{lb22e7} gives us the desired result in this case as well.

\medskip

(b) By~\eqref{lb22e7}, we have that \be\label{lb22e100}S\ll_{k,P}\beta_i\frac{v_i^{r_i+2}}{r_i!}\max_{0\le s\le v_i}e^{G(s,\dots,s)}\ll_{k,P}\beta_i\frac{e^{O_k((P-1)^2v_i)}v_i^{r_i}}{r_i!}\max_{0\le s\le v_i}e^{G(s,\dots,s)}.\ee
Since $t_{i,j}=1+O_k(P-1)$ for all $j$, we find that, for any $0\le s\le v_i$, we have that \bes\bsp\log\left((t_{i,\nu}-t_{i,\nu+n})\frac{s}{v_i}+t_{i,\nu+n}\right)&=(t_{i,\nu}-t_{i,\nu+n})\frac{s}{v_i}+t_{i,\nu+n}-1+O_k((P-1)^2)\\
&=\frac{s}{v_i}\log\left(\frac{t_{i,\nu}}{t_{i,\nu+n}}\right)+\log(t_{i,\nu+n})+O_k((P-1)^2)\end{split}\ees and, consequently,
\bes\bsp
	\max_{0\le s\le v_i}G(s,\dots,s)
		=\max\{r_i\log(t_{i,\nu+n}),-(P-1)nv_i\log(\rho_{k-i+1})+r_i\log(t_{i,\nu})\} \\
			+O_k((P-1)^2v_i).
\end{split}\ees 
Inserting the above estimate into~\eqref{lb22e100} completes the proof of the lemma.
\end{proof}

The proof of the next lemma uses some ideas from the proof of~\cite[Lemmas 4.8 and 11.1]{kf2} and~\cite[Lemma 3.8]{dk}.

\begin{lemma}\label{lb22l4} Let $\bv r\in\mathcal{R}^*$ and $i\in\{1,\dots,k\}$. There is a constant $c_k''>0$ such that
$$\sum_{\bv g_i\in\mathcal{G}_{r_i}(u_i,v_i)}\frac{T_i(\bv g_i;0,k-i+1)}{g_{i,1}!\cdots
g_{i,v_i}!}\ll_{k,P}\beta_i\frac{v_i^{r_i}}{r_i!}\prod_{j=1}^{i-1}(k-j+2)^{(P-1)(v_j-r_j)},$$ provided that $P\le1+1/c_k''$.
\end{lemma}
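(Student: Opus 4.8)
The plan is to rerun, for the special pair $\nu=0$, $n=k-i+1$, the chain of estimates in the proof of Lemma~\ref{lb22l3} that leads from the definition of $T_i(\bv g_i;\nu,n)$ to the diagonalised bound~\eqref{lb22e7}; that chain is valid for every admissible $(\nu,n)$, and (as in Lemma~\ref{lb22l3}) it needs $c_k''$ large and the $\ell_i$ large, which we may assume by Corollary~\ref{cor4}. The reason for choosing this particular pair is that the auxiliary quantities telescope: $\sum_{j=1}^{k-i+1}(t_{i,j-1}-t_{i,j})=t_{i,0}-t_{i,k-i+1}=(k-i+2)^{P-1}-1$ and $t_{i,k-i+1}=1$. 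Feeding these into~\eqref{lb22e7}, the function $G(s,\dots,s)$ collapses to the explicit concave function $G(s)=-(P-1)\log(k-i+2)\,s+r_i\log(1+((k-i+2)^{P-1}-1)s/v_i)$ on $[0,v_i]$, so that the first step yields
\[
\sum_{\bv g_i\in\mathscr{G}_{r_i}(u_i,v_i)}\frac{T_i(\bv g_i;0,k-i+1)}{g_{i,1}!\cdots g_{i,v_i}!}\ll_{k}\beta_i\frac{v_i^{r_i+1}}{r_i!}\sum_{0\le s\le v_i}g(s,s)e^{G(s)},
\]
with $g(s,s)=\min\{(v_i-s+1)/(s+1),\,(s+1)/(v_i-s+1)\}$.

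Since $G$ is concave with $G(0)=0$, the position of its maximum on $[0,v_i]$ is governed by the sign of $G'$, i.e.\ by whether $r_i/v_i$ lies below, between, or above the two numbers $(P-1)\log(k-i+2)/(e^{(P-1)\log(k-i+2)}-1)$ and $(P-1)\log(k-i+2)/(1-e^{-(P-1)\log(k-i+2)})$, both of which differ from $1$ by $O(P-1)$. On the other hand, Lemma~\ref{lambda} gives $r_i/v_i=(k-i+2)^{\alpha-\alpha_{k-i+1}}(1+O(\ell_i^{-1/2}))$ for $\bv r\in\mathscr{R}^*$, and by Lemma~\ref{hl1} consecutive $\alpha_m$ ($1\le m\le k$) are $\gg_k1$-separated. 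Hence if $c_k''$ is chosen so that $P-1$ is below a suitable $k$-dependent fraction of $\min_{1\le m<k}(\alpha_{m+1}-\alpha_m)$, then for every $i\ne i_0$ the ratio $r_i/v_i$ falls strictly outside the critical interval, so $G$ is monotone on $[0,v_i]$. A routine Laplace-type estimate — concavity of $G$ together with $g(s,s)\asymp(v_i-s+1)/v_i$ near $s=v_i$ and $g(s,s)\asymp(s+1)/v_i$ near $s=0$ — then gives $\sum_{0\le s\le v_i}g(s,s)e^{G(s)}\ll_k \frac1{v_i}e^{\max_{[0,v_i]}G}$, where $\max_{[0,v_i]}G$ is either $0$ or $(P-1)\log(k-i+2)(r_i-v_i)$. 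The asserted bound follows in this case upon noting that $\mathscr{G}_{r_i}(u_i,v_i)$ is empty unless $u_i'\ge r_i-v_i$, that $(k-i+2)^{P-1}\ll_k1$, that $u_i'\gg_k1$ by~\eqref{inequ}, and that $\prod_{j<i}(k-j+2)^{v_j-r_j}=(k-i+2)^{u_i'-1}$.

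The case $i=i_0$ is the main obstacle. Here $r_{i_0}/v_{i_0}$ may fall inside the critical interval, in which case $G$ attains its maximum at an interior point $s^\ast$ with $G(s^\ast)=v_{i_0}Q(r_{i_0}/v_{i_0})+O((P-1)^2v_{i_0})$, which is unbounded in general (of order $\ell_{i_0}$); the diagonal bound of Step~1, which carries a spurious factor $e^{G(s^\ast)}$ and an extra $\sqrt{r_{i_0}}$ from the width of the peak, is then far too weak. To recover the correct bound one must exploit the saving hidden in $\beta_{i_0}\asymp\min\{1,u_{i_0}'w_{i_0}'/r_{i_0}\}$: concretely, in the estimate of the inner sum that leads to~\eqref{lb22e5a} one retains the factors $\min\{1,\cdot\}$ supplied by Lemma~\ref{lb22l2} for \emph{all} of the blocks $j=0,1,\dots,n$, not just for $j=0$ and $j=n$, and summing the resulting product of $\min\{1,\cdot\}$ factors over the block sizes produces exactly the additional decay needed to offset $e^{G(s^\ast)}$ and match the right-hand side $\beta_{i_0}(v_{i_0}^{r_{i_0}}/r_{i_0}!)\prod_{j<i_0}(k-j+2)^{(P-1)(v_j-r_j)}$. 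This is the multidimensional analogue of the order-statistics computations of Ford~(\cite[Lemmas 4.8 and 11.1]{kf2}) and the author~(\cite[Lemma 3.8]{dk}), with the monotonicity properties of Lemmas~\ref{lb22l0b} and~\ref{lb22l0a} used to locate the maximising configuration for $\nu=0$, $n=k-i_0+1$. Carrying out this refined estimate — and checking that the same refinement also handles the two borderline subcases, where $r_{i_0}/v_{i_0}$ is within $O(\ell_{i_0}^{-1/2})$ of one of the two thresholds and the Laplace argument above degenerates — is where the bulk of the work lies.
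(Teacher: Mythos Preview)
Your treatment of the case $i\neq i_0$ via the diagonalised bound~\eqref{lb22e7} is reasonable, but the case $i=i_0$ is a genuine gap: you acknowledge that the diagonal bound acquires an uncontrolled factor $e^{G(s^*)}$, and your proposed remedy---``retain the $\min\{1,\cdot\}$ factors from Lemma~\ref{lb22l2} for all blocks''---is only a suggestion, not an argument. You neither carry out the resulting multi-sum nor explain why the extra factors would produce precisely the saving $\beta_{i_0}\prod_{j<i_0}(k-j+2)^{(P-1)(v_j-r_j)}$ rather than something weaker; appealing to analogy with \cite[Lemmas 4.8, 11.1]{kf2} and \cite[Lemma 3.8]{dk} is not enough, because those results treat a single block, not a product of $k-i_0+2$ interacting blocks. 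As written, the proposal is incomplete at the one place where the lemma actually requires work.

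The paper's proof avoids this difficulty by taking a different route from the outset and treating all $i$ uniformly, without ever invoking the diagonalisation of Lemma~\ref{lb22l3}. The key move is to substitute the constraint $G_{i,s_j}\le s_j+u_i$ directly into the exponents for $j\ge2$, which pulls out an explicit factor $(t_{i,1})^{u_i}$ and leaves a sum in which only the first index $s_1$ and $G_{i,s_1}$ survive; the remaining indices $s_2,\dots,s_{k-i+1}$ then sum geometrically because $t_{i,j-1}<(\rho_{k-i+1}^{P-1})^{k-i+2-j}$ for $j\ge2$ once $P-1$ is small. The resulting one-variable sum $\sum_{m}\lambda^{G_{i,m}-m}$ with $\lambda=t_{i,0}/t_{i,1}$ is converted to an integral over the simplex $\mathscr{S}$, and Ford's order-statistics bound (as in \cite[Lemma 4.9]{kf2}, with Lemma~\ref{lb22l1} replacing \cite[Lemma 11.1]{kf2}) gives $\ll_{k,P}\beta_i\frac{v_i^{r_i}}{r_i!}\lambda^{u_i}$. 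Combining $(t_{i,1})^{u_i}\lambda^{u_i}=(t_{i,0})^{u_i}=(k-i+2)^{(P-1)u_i}$ with $u_i\le u_i'$ finishes. The point is that the factor $(k-i+2)^{(P-1)u_i}$ emerges from the \emph{constraints defining} $\mathscr{G}_{r_i}(u_i,v_i)$, not from a Laplace-type analysis of a diagonal function; this is what your approach misses for $i=i_0$.
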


\begin{proof} Since $t_{i,0}>\cdots>t_{i,k-i}>t_{i,k-i+1}=1$, we have that
$$T_i(\bv g_i;0,k-i+1)=\sum_{0\le
s_1\le\cdots\le s_{k-i+1}\le v_i}\prod_{j=1}^{k-i+1}(\rho_{k-i+1}^{P-1})^{-s_j}\left(\frac{t_{i,j-1}}{t_{i,j}}\right)^{G_{i,s_j}}.$$
Also, \bes\prod_{j=1}^{k-i+1}\left(\frac{t_{i,j-1}}{t_{i,j}}\right)^{G_{i,s_j}}
\le\left(\frac{t_{i,0}}{t_{i,1}}\right)^{G_{i,s_1}}\prod_{j=2}^{k-i+1}\left(\frac{t_{i,j-1}}{t_{i,j}}\right)^{s_j+u_i}
=\left(\frac{t_{i,0}}{t_{i,1}}\right)^{G_{i,s_1}}(t_{i,1})^{s_1+u_i}\prod_{j=2}^{k-i+1}(t_{i,j-1})^{s_j-s_{j-1}}.\ees
Thus, by setting
$$
\lambda=\frac{t_{i,0}}{t_{i,1}}=\frac{(\rho_{k-i+1}^{P-1})^{k-i+1}}{t_{i,1}},
$$
$m_1=s_1$ and $m_j=s_j-s_{j-1}$ for $j=2,\dots,k-i+1$, we deduce
that 
\be\label{lb22e13a}
T_i(\bv g_i;0,k-i+1)\le(t_{i,1})^{u_i}
	\sum_{\substack{m_1+\cdots+m_{k-i+1}\le v_i\\m_j\ge 0\;(1\le j\le k-i+1)}}
		\lambda^{G_{i,m_1}-m_1}
		\prod_{j=2}^{k-i+1}\left(\frac{t_{i,j-1}}{(\rho_{k-i+1}^{P-1})^{k-i+2-j}}\right)^{m_j}.
\ee 
Note that
\bes\bsp
\log(t_{i,j-1})&=(P-1)\frac{(k-i-j+3)\log(k-i-j+3)}{k-i+2}+O_k((P-1)^2)\\
&<(P-1)(k-i-j+2)\log(\rho_{k-i+1})\quad(2\le j\le k-i+1),
\end{split}\ees 
provided that $P-1$ is small enough, by Lemma~\ref{lb22l0b}(a). Combining the above relation with~\eqref{lb22e13a} and summing the resulting inequality over $\bv g_i\in\mathcal{G}_{r_i}(u_i,v_i)$, we find that
\be\label{lb22e13}
\sum_{\bv g_i\in\mathcal{G}_{r_i}(u,v_i)}\frac{T_i(\bv g_i;0,k-i+1)}{g_{i,1}!\cdots g_{i,v_i}!}	
	\ll_{k,P}(t_{i,1})^{u_i}\sum_{\bv g_i\in\mathcal{G}_{r_i}(u,v_i)}\frac1{g_{i,1}!\cdots g_{i,v_i}!}
		\sum_{m=0}^{v_i}\lambda^{G_{i,m}-m}=:(t_{i,1})^{u_i}T.
\ee Next, we claim that 
\be\label{lb22claim}
T\le\frac{v_i^{r_i}}{1-1/\lambda}\int_{\mathcal{D}}\left(1+\sum_{j=1}^{r_i}\lambda^{j-v_i\xi_j}\right)d\bv\xi,
\ee 
where
$$
\mathcal{D}=\left\{\bv\xi\in\Delta_{r_i}:\xi_j\ge\frac{j-\lfloor u_i+1\rfloor}{v_i}\quad(1\le
j\le r_i)\right\}.
$$ 
To see this, fix $\bv g_i\in\mathcal{G}_{r_i}(u_i,v_i)$ and consider the set $I(\bv g_i)$ of vectors $\bv\xi\in\Delta_{r_i}$ such that $$|\{1\le j\le r_i:s-1\le v_i\xi_j<s\}|=g_{i,s}\quad(1\le s\le v_i).$$ Notice that if $\bv\xi\in I(\bv g_i)$, then $v_i\xi_{j+\lfloor u_i+1\rfloor}\ge j$ for all $j$ because $\bv g_i \in \mathcal{G}_{r_i}(u_i,v_i)$, that is to say, $\bv\xi\in\mathcal{D}$. Moreover, \bes\bsp
\frac1{1-1/\lambda}\sum_{j=1}^{r_i}\lambda^{j-v_i\xi_j}
		&\ge\frac1{1-1/\lambda}\sum_{s=1}^{v_i}\lambda^{-s}\sum_{j:~v_i\xi_j\in[s-1,s)}\lambda^j \\
		&\ge\sum_{s=1}^{v_i}\sum_{m=s}^{v_i}\lambda^{-m}\sum_{j:~v_i\xi_j\in[s-1,s)}\lambda^j\\
		&=\sum_{m=1}^{v_i}\lambda^{-m}\sum_{j:~v_i\xi_j<m}\lambda^j \\
		&\ge\sum_{\substack{1\le m\le v_i\\G_{i,m}>0}}\lambda^{-m+G_{i,m}} \\
		&\ge-\frac1{1-1/\lambda}+\sum_{m=0}^{v_i}\lambda^{-m+G_{i,m}}.
\end{split}\ees
Lastly, we have that $$\vol(I(\bv g_i))=\frac1{v_i^{r_i}}\frac1{g_{i,1}\cdots g_{i,v_i}}.$$ Combining the above remarks,~\eqref{lb22claim} follows.
To bound the integral in the right hand side of~\eqref{lb22claim}, we proceed as in the proof of Lemma 4.9 in~\cite{kf2}. The only difference is that we use Lemma~\ref{lb22l1} from this paper in place of \cite[Lemma 11.1]{kf2}. This method gives us
$$T\ll_{k,P}\beta_i\frac{v_i^{r_i}}{r_i!}\lambda^{u_i}.$$ By the above
estimate, \eqref{lb22e13}~and~\eqref{lb22claim}, we deduce that
$$\sum_{\bv g_i\in\mathcal{G}_{r_i}(u_i,v_i)}\frac{T_i(\bv g_i;0,k-i+1)}{g_{i,1}!\cdots
g_{i,v_i}!}\ll_k\beta_i\frac{v_i^{r_i}}{r_i!}(k-i+2)^{(P-1)u_i}.$$
To complete the proof of the lemma, recall that
$$u_i\le1+\frac1{\log(k-i+2)}\sum_{j=1}^{i-1}\log(k-j+2)(v_j-r_j).$$
\end{proof}


\subsection{Proof of Lemmas~\ref{lb1l3} and~\ref{lb1l4}} In this subsection we establish Lemmas~\ref{lb1l3} and~\ref{lb1l4}
thus completing all the steps in the proof of the lower bound implicit in Theorem~\ref{thm2}.

\begin{proof}[Proof of Lemma \ref{lb1l3}] Since $g_{1,j}\le G_{1,j}\le j+u_1\le j+1$
for all $j\in\{1,\dots,v_1\}$, we have that
\be\label{lb22e2}\begin{split}
\sum_{a_1\in\mathcal{A}_1(\bv g_1)}\frac1{a_1}
	&=\prod_{j=N}^{v_1}\frac1{g_{1,j}!}\left(\sum_{p_1\in D_{1,j}} \frac1{p_1}
		\sum_{\substack{p_2\in D_{1,j}\\p_2\neq p_1}} \frac1{p_2}
			\cdots\sum_{\substack{p_{g_{1,j}}\in D_{1,j}\\p_{g_{1,j}}\notin\{p_1,...,p_{g_{1,j}-1}\}}}
				\frac1{p_{g_{1,j}}}\right)\\
&\ge\frac1{g_{1,1}!\cdots g_{1,v_1}!}\prod_{j=N}^{v_1}\left(\log\rho_k-\frac{g_{1,j}}{\lambda_{1,j-1}}\right)^{g_{1,j}}\\
&\ge\prod_{i=1}^k\frac{(\log\rho_k)^{r_1}}{g_{1,1}!\cdots g_{1,v_1}!}
	\prod_{j=N}^{v_1}\left(1-\frac{j+1}{(\log\rho_k)\exp\left\{\rho_k^{j-L_k-1}\right\}}\right)^{j+1}\\
	&\ge\frac12\frac{(\log\rho_k)^{r_1}}{g_{1,N}!\cdots g_{1,v_1}!},
\end{split}\ee 
by Lemma~\ref{lambda}, provided that $N$ is large. Similarly, if $i\in\{2,\dots,k\}$, then we have that 
$g_{i,j}\le G_{i,j}\le j+u_i\le j+c\log\log y_{i-1}$ for some $c=c(k)$. Therefore
\begin{align}
\sum_{a_i\in\mathcal{A}_i(\bv g_i)}\frac1{a_i}
	&\ge\frac{(\log(\rho_{k-i+1}))^{r_i}}{g_{i,1}!\cdots g_{i,v_i}!}
		\prod_{j=1}^{v_i}\left(1-\frac{j+c\log\log y_{i-1}}{(\log(\rho_{k-i+1}))(\log y_{i-1})
		\exp\left\{\rho_{k-i+1}^{j-L_k-1}\right\}}\right)^{j+c\log\log y_{i-1}}  \nonumber \\
	&\ge\frac12\prod_{i=1}^k\frac{(\log(\rho_{k-i+1}))^{r_i}}{g_{i,N}!\cdots g_{i,v_i}!}, \label{lb22e3}
\end{align}
provided that $y_{i-1}\ge y_1\ge C_k'$ is large enough. Combine~\eqref{lb22e2} and~\eqref{lb22e3} with Lemmas~\ref{lambda} and~\ref{lb22l2} and relation~\eqref{beta2} to complete the proof.
\end{proof}


\begin{proof}[Proof of Lemma \ref{lb1l4}]Fix $\bv
r\in\mathcal{R}^*$. In view of Lemmas~\ref{lambda} and~\ref{lb1l2}, it suffices to
show that
\be\label{lb22e16}\prod_{i=1}^k(\rho_{k-i+1}^{P-1})^{-(k-J_i)v_i}\sum_{\bv
g_i\in\mathcal{G}_{r_i}(u_i,v_i)}\frac{T_i(\bv
g_i;J_{i-1}-i+1,J_i-J_{i-1})}{g_{i,1}!\cdots
g_{i,v_i}!}\ll_k\beta\prod_{i=1}^k\frac{v_i^{r_i}}{r_i!}\ee for
every choice of integers $0=J_0\le J_1\le\cdots\le J_k\le k$ with
$J_i\ge i$ for all $i\in\{1,\dots,k\}$. So fix such a $(k+1)$-tuple $(J_0,J_1,\dots,J_k)$ and set $$T_i=(\rho_{k-i+1}^{P-1})^{-(k-J_i)v_i}\sum_{\bv
g_i\in\mathcal{G}_{r_i}(u_i,v_i)}\frac{T_i(\bv
g_i;J_{i-1}-i+1,J_i-J_{i-1})}{g_{i,1}!\cdots g_{i,v_i}!}\quad(1\le
i\le k).$$ Also, let $$I=\min\{1\le i\le k:J_i=k\}$$ (note that $J_k=k$, so $I$ is well-defined). We claim that
\be\label{lb22e24}T_i\ll_{k,\epsilon}\beta_i\frac{v_i^{r_i}}{r_i!}\times
\begin{cases}(k-i+2)^{(P-1)(r_i-v_i)}&{\rm if}~1\le i<I,\cr
\displaystyle\max\left\{1,(k-I+2)^{(P-1)(r_I-v_I)},\prod_{j=1}^{I-1}(k-j+2)^{(P-1)(v_j-r_j)}\right\}&{\rm if}~i=I,\cr
1&{\rm if}~I<i\le k.
\end{cases}\ee
Note that if inequality~\eqref{lb22e24} is indeed true, then
\bes\prod_{i=1}^kT_i\ll_{k,\epsilon}\left(\prod_{i=1}^k\beta_i\frac{v_i^{r_i}}{r_i!}\right)\max_{m\in\{1,I,I+1\}}\prod_{j=1}^{m-1}(k-j+2)^{(P-1)(r_j-v_j)}
\ll_k\beta\prod_{i=1}^k\frac{v_i^{r_i}}{r_i!},\ees by relations~\eqref{inequ},~\eqref{ineqw} and~\eqref{beta2},
that is~\eqref{lb22e16} holds. So establishing~\eqref{lb22e24} will complete the proof of the lemma. 

Before embarking on the proof of~\eqref{lb22e24}, we introduce some notation and prove an intermediate result. For $i\in\{1,\dots,k\}$ define $J_i'\in\{J_{i-1},J_i\}$ by 
$$
(\rho_{k-i+1}^{P-1})^{-(J_i-J_i')v_i}(t_{i,J_i'-i+1})^{r_i}
	=\max_{j\in\{J_{i-1},J_i\}}(\rho_{k-i+1}^{P-1})^{-(J_i-j)v_i}(t_{i,j-i+1})^{r_i}.
$$ 
We claim that if $i\le J_i'\le k-1$, then 
\be\label{lb22e19}
	T_i\ll_{k,\epsilon}\beta_i\frac{v_i^{r_i}}{r_i!}(k-i+2)^{(P-1)(r_i-v_i)},
\ee
provided that $P-1$ is small enough. Indeed, Lemmas~\ref{lambda} and \ref{lb22l3}(b) give us that
\begin{align*}
T_i&\cdot(k-i+2)^{(P-1)(v_i-r_i)}\\
	&\ll_k\frac{e^{O_k((P-1)^2v_i)}v_i^{r_i}}{r_i!}\frac{(\rho_{k-i+1}^{P-1})^{-(k-J_i)v_i}
		(\rho_{k-i+1}^{P-1})^{-(J_i-J_i')v_i}(t_{i,J_i'-i+1})^{r_i}}{(k-i+2)^{(P-1)(r_i-v_i)}}\\
	&=\frac{e^{O_k((P-1)^2v_i )}v_i^{r_i}}{r_i!}(\rho_{k-i+1}^{P-1})^{(J_i'-i+1)v_i}	
		\left(\frac{t_{i,J_i'-i+1}}{(k-i+2)^{P-1}}\right)^{r_i}
\end{align*}
So
\be\label{lb22e20}\begin{split}
&\log\left(\frac{ T_i \cdot(k-i+2)^{(P-1)(v_i-r_i)}} {v_i^{r_i}/r_i!} \right) \\
&\qquad= (P-1)(J_i'-i+1) \left( \ell_i-\frac{F(k-i+1,J_i'-i+1)}{k-i+2}r_i+O_k((P-1)\ell_i+1)\right) \\
&\qquad = (P-1)(J_i'-i+1)\ell_i \left( 1-\frac{F(k-i+1,J_i'-i+1)}{(k-i+2)^{1-\alpha}}
			+  O_k\left(P-1+\ell_i^{-1/2}\right) \right) .
\end{split}\ee
For every $i\in\{1,\dots,k-1\}$, condition \eqref{e0}\;and Lemma
\ref{lb22l0a}\;imply that 
$$
\alpha\ge1+\epsilon-\frac1{\log(k-i+2)}\log\left(\frac{(k-i+2)\log(k-i+2)-2\log2}{k-i}\right)
$$
or, equivalently, that
$$
(k-i+2)^{\alpha-1}F(k-i+1,k-i)\ge(k-i+2)^\epsilon.
$$
So if $i\le J_i'\le k-1$, then
\be\label{lb22e22}(k-i+2)^{\alpha-1}F(k-i+1,J_i'-i+1)\ge(k-i+2)^{\alpha-1}F(k-i+1,k-i)\ge(k-i+2)^\epsilon,\ee
by Lemma \ref{lb22l0b}(a). Inserting the above inequality into~\eqref{lb22e20} proves \eqref{lb22e19}. 

We are now in position to show~\eqref{lb22e24}. First, if $I<i\le k$, then $J_i=J_{i-1}=k$. So 
$$
T_i(\bv g_i;J_{i-1}-i+1,J_i-J_{i-1})=T_i(\bv g_i;k-i+1,0)=1
$$ 
for
every $\bv g_i\in\mathcal{G}_i(r_i)$ and~\eqref{lb22e24} follows immediately by Lemma \ref{lb22l2}. Next, let $1\le i<I$. If $J_i'\ge i$, then
~\eqref{lb22e24} follows by~\eqref{lb22e19}, since we also have that $J_i'\le J_i\le
J_{I-1}\le k-1$. Assume now that $J_i'=i-1$, in which case
$J_{i-1}=i-1$. Then
\be\begin{split}\frac{F(k-i+1-(J_{i-1}-i+1),J_i-J_{i-1})}{(k-i+2)^{1-\alpha}}=\frac{F(k-i+1,J_i-i+1)}{(k-i+2)^{1-\alpha}}&\ge\frac{F(k-i+1,k-i)}{(k-i+2)^{1-\alpha}}\\
&\ge(k-i+2)^\epsilon,\nonumber\end{split}\ee by Lemma~\ref{lb22l0b}(a) and relation~\eqref{lb22e22}. The above inequality allows us to apply Lemma \ref{lb22l3}(a) with $\eta = (k-i+2)^\epsilon-1>0$. Therefore we deduce that
$$
T_i\ll_{k,\epsilon}\beta_i\frac{v_i^{r_i}}{r_i!}
(\rho_{k-i+1}^{P-1})^{-(k-J_i)v_i}(\rho_{k-i+1}^{P-1})^{-(J_i-J_i')v_i}(t_{i,J_i'-i+1})^{r_i}=\beta_i\frac{v_i^{r_i}}{r_i!}(k-i+2)^{(P-1)(r_i-v_i)},
$$ 
that is~\eqref{lb22e24} holds in this case too. Finally, we bound from above $T_I$. If $I\le J_I'\le k-1$ or $J_{I-1}=I-1$, then~\eqref{lb22e24} follows immediately by relation \eqref{lb22e19} or Lemma~\ref{lb22l4}, respectively. So suppose that $J_I'\in\{I-1,k\}$ and $J_{I-1}\ge I$, in which case we must have $J_I'=J_I=k$. We separate two cases. Set 
$$
\eta_1=\frac{F(k-I+1,k-I+1)-F(k-I,k-I)}{2(k-I+2)^{1-\alpha}}>0
$$ 
and assume first that 
$$
\frac{F(k-I+1,J_I'-I+1)}{(k-I+2)^{1-\alpha}}=\frac{F(k-I+1,k-I+1)}{(k-I+2)^{1-\alpha}}\ge1+\eta_1.
$$ 
Inserting the above inequality into \eqref{lb22e20} implies that 
$$
T_I\ll_k\beta_I\frac{v_I^{r_I}}{r_I!}(k-I+2)^{(P-1)(r_I-v_I)},
$$ 
provided that $P-1$ is small enough, thus proving~\eqref{lb22e24} in this case. Finally, assume that 
$$
\frac{F(k-I+1,k-I+1)}{(k-I+2)^{1-\alpha}}\le1+\eta_1.
$$ 
Then \bes\bsp\frac{F(k-I+1-(J_{I-1}-I+1),J_I-J_{I-1})}{(k-I+2)^{1-\alpha}}=\frac{F(k-J_{I-1},k-J_{I-1})}{(k-I+2)^{1-\alpha}}
&\le\frac{F(k-I,k-I)}{(k-I+2)^{1-\alpha}}\\
&\le1-\eta_1\end{split}\ees which, together with Lemma~\ref{lb22l3}(a), shows that 
$$
T_I\ll_{k,\epsilon}\beta_I\frac{v_I^{r_I}}{r_I!} (\rho_{k-I+1}^{P-1})^{-(k-J_I)v_I}
	(\rho_{k-I+1}^{P-1})^{-(J_I-J_I')v_I}(t_{I,J_I'-I+1})^{r_I}
	=\beta_I\frac{v_I^{r_I}}{r_I!},
$$ 
thus proving~\eqref{lb22e24} in this last case too. This completes the proof of the lemma.
\end{proof}

\bigskip\bigskip

\bibliographystyle{alpha}

\end{document}